\documentclass[11pt]{article}

\usepackage[margin=1in]{geometry}
\usepackage{amsmath,amssymb,amsthm,mathtools}
\usepackage{enumitem}
\usepackage{booktabs}
\usepackage{array}
\usepackage{tabularx}
\usepackage{xcolor}
\usepackage{etoolbox}
\usepackage{authblk}

\usepackage[colorlinks=true,linkcolor=blue!60!black,citecolor=blue!60!black,urlcolor=blue!60!black]{hyperref}

\newtheorem{theorem}{Theorem}[section]
\newtheorem{lemma}[theorem]{Lemma}
\newtheorem{proposition}[theorem]{Proposition}
\newtheorem{corollary}[theorem]{Corollary}

\theoremstyle{definition}
\newtheorem{definition}[theorem]{Definition}
\newtheorem{example}[theorem]{Example}
\newtheorem{remark}[theorem]{Remark}

\newcommand{\exremarkendmark}{%
  \renewcommand{\qedsymbol}{\ensuremath{\lozenge}}%
  \qed}
\AtEndEnvironment{example}{\exremarkendmark}
\AtEndEnvironment{remark}{\exremarkendmark}

\newcommand{\D}{\mathcal D}
\newcommand{\N}{\mathbb N}
\newcommand{\Z}{\mathbb Z}
\newcommand{\WRP}{\mathsf{WRP}}
\newcommand{\RR}{\mathsf{RR}}
\newcommand{\SWR}{\mathsf{sRR}_1}
\newcommand{\TDFT}{\mathsf{2DFT}}
\newcommand{\MSO}{\mathsf{MSO}}
\newcommand{\area}{\operatorname{area}}
\newcommand{\dinv}{\operatorname{dinv}}
\newcommand{\bounce}{\operatorname{bounce}}
\newcommand{\coarea}{\operatorname{coarea}}
\newcommand{\defc}{\operatorname{defc}}
\newcommand{\fas}{\operatorname{fas}}
\newcommand{\tailU}{\operatorname{tailU}}
\newcommand{\val}{\operatorname{val}}
\newcommand{\dr}{\operatorname{dr}}
\newcommand{\pk}{\operatorname{pk}}
\newcommand{\Nar}{\operatorname{Nar}}
\newcommand{\polyreg}{\mathsf{PolyReg}}
\newcommand{\id}{\operatorname{id}}
\newcommand{\rc}{\operatorname{rc}}
\newcommand{\rev}{\operatorname{rev}}
\newcommand{\comp}{\operatorname{comp}}

\title{A Computational Obstruction to Swapping Area and Dinv:\\
       An Automata-Theoretic View of the $q,t$-Catalan Symmetry}

\author[1]{Jineon Baek}
\author[2]{Byung-Hak Hwang}
\author[3]{Joonhyun La}
\author[4]{Hongseok Yang}
\affil[1]{June E Huh Center for Mathematical Challenges, Korea Institute for Advanced Study (KIAS), Korea}
\affil[2]{Center for AI and Natural Sciences, Korea Institute for Advanced Study (KIAS), Korea}
\affil[3]{School of Mathematics, Korea Institute for Advanced Study (KIAS), Korea}
\affil[4]{School of Computational Sciences, Korea Institute for Advanced Study (KIAS), Korea}
\affil[ ]{\newline\texttt{{jineon.kias}@gmail.com}, \texttt{bhwang@kias.re.kr}, \texttt{joonhyun@kias.re.kr}, \texttt{hongseokyang@kias.re.kr}}

\date{August 2026}

\begin{document}
\maketitle

\begin{abstract}
Algebraic combinatorics often seeks bijections that explain identities between
distributions object by object.  Once combinatorial objects are encoded as
words, automata theory lets us study such a bijection as a word-to-word
computation and measure its memory, access to input positions, and control of
output order.  This viewpoint refines the question of existence by asking which
computational mechanisms a bijection requires.  We develop it here for Dyck
paths.

Our motivating example is the $q,t$-Catalan polynomial.  Let $\D_n$ be the set
of Dyck paths of semilength $n$, let $\D=\bigcup_{n\ge 0}\D_n$, and let
$\area,\dinv,\bounce\colon\D\to\N$ be the standard Dyck-path statistics.  Then
\[
   C_n(q,t)=\sum_{P\in\D_n} q^{\area(P)}t^{\bounce(P)}
   =
   \sum_{P\in\D_n} q^{\dinv(P)}t^{\area(P)}.
\]
Haglund's zeta map $\zeta\colon\D\to\D$ gives a bijective proof of this
identity: it preserves semilength and satisfies
$\area(\zeta(P))=\dinv(P)$ and $\bounce(\zeta(P))=\area(P)$ for every
$P\in\D$.  By contrast, to our knowledge, the full symmetry
$C_n(q,t)=C_n(t,q)$ still lacks such a direct explanation: no explicit,
uniform, semilength-preserving bijection is known that swaps $\area$ and
$\dinv$ on every Dyck path.

Polyregular maps from automata theory provide a natural computational starting
point, but we prove that neither $\zeta$ nor the classical height-sweep
bijection witnessing the Narayana symmetry is polyregular.  The missing
mechanism is a global ordering by numerical levels whose range grows with the
input.  We call its general form a \emph{rank sort} and introduce
\emph{weighted-rank polyregular maps} ($\WRP$), which extend polyregular maps
by one such sort and contain both bijections.  The class is nevertheless a
proper subclass of deterministic logspace.  We prove that $\zeta^{-1}$ lies
outside $\WRP$ and that no $\WRP$ map can realise a semilength-preserving
area--dinv swap.  Thus, the rank-sorting strategy behind $\zeta$ cannot be
extended within $\WRP$ to a full exchange of the two statistics.
\end{abstract}

\section{Introduction}

\paragraph{Bijective proofs as computations.}

Many identities in algebraic combinatorics say that two numerical
measurements, called statistics, take each value equally often on a family of
objects.  A bijective proof explains such an identity one object at a time: it
constructs a bijection $F$ such that, for every object $P$, the value of one
statistic on $P$ equals the value of the other on $F(P)$.  We study the
computational content of such explanations.  Viewed as a computation, a
bijection $F$ takes an object $P$ as input and produces $F(P)$ as output.  What
resources does this computation require?  We use the answer to distinguish
bijective proofs by the computational mechanisms they employ.

Catalan combinatorics provides a natural setting in which the combinatorial and
computational viewpoints meet.  We focus on Dyck paths, classical Catalan
objects with a natural encoding as words.  In the step-word encoding, a Dyck
path of semilength $n$ is a word in $\{U,D\}^{2n}$ with $n$ occurrences of each
letter and no prefix containing more $D$'s than $U$'s.  Write $\D_n$
for the set of such paths and let $\D=\bigcup_{n\ge 0}\D_n$.  A Dyck-path
statistic is a map $X\colon\D\to\N$.  Examples include area, dinv, and bounce,
together with the numbers of peaks, valleys, and returns to height zero.  Two
statistics $X$ and $Y$ are equidistributed on every $\D_n$ if, for every $n$
and $k$, there are equally many paths in $\D_n$ with $X$-value $k$ as with
$Y$-value $k$.  The corresponding bijective problem is to find a single
semilength-preserving bijection $F\colon\D\to\D$ satisfying
$X(F(P))=Y(P)$ for every $P\in\D$.  This condition turns the equality of counts
into an object-by-object correspondence: for every $n$ and $k$, the map $F$
sends the paths in $\D_n$ of $Y$-value $k$ bijectively onto those of $X$-value
$k$.  Under the step-word encoding, $F$ is a word-to-word transformation on the
Dyck language.  We study such transformations through a hierarchy of
automata-theoretic models that differ in their memory, access to input
positions, and control of output order.

Our central test case is the $q,t$-Catalan polynomial $C_n(q,t)$.  In terms of
the standard Dyck-path statistics
$\area,\dinv,\bounce\colon\D\to\N$, it has two expressions:
\[
   C_n(q,t)=\sum_{P\in\D_n} q^{\area(P)}t^{\bounce(P)}
   =
   \sum_{P\in\D_n} q^{\dinv(P)}t^{\area(P)}.
\]
Haglund's zeta map $\zeta\colon\D\to\D$ gives an object-by-object explanation
of the equality between these two sums: it preserves semilength and satisfies
$\area(\zeta(P))=\dinv(P)$ and $\bounce(\zeta(P))=\area(P)$ for every
$P\in\D$.  By contrast, to our knowledge, the full symmetry
$C_n(q,t)=C_n(t,q)$ still has no direct bijective explanation.  Such an
explanation would be a single, explicit, semilength-preserving bijection
$F\colon\D\to\D$, uniform across all semilengths, satisfying
$\area(F(P))=\dinv(P)$ and $\dinv(F(P))=\area(P)$ for every $P\in\D$.
Finding such a bijection is a long-standing open problem~\cite{Pons2022}.

Since $\zeta$ sends $\dinv$ to $\area$, it already realises one half of the
desired exchange.  This raises a natural question: can the computational
mechanism behind $\zeta$ be extended to a bijection that swaps $\area$ and
$\dinv$?  The operation driving $\zeta$ is a variant of the sweep-map
construction.  A sweep map assigns an integer \emph{level} to each step and
outputs the steps in level order~\cite{ArmstrongLoehrWarrington2015,
ArmstrongLoehrWarrington2016}.  In the rational Dyck-path literature, the
corresponding numerical label of a step's starting point is also called its
\emph{rank}~\cite{Xin2015}.  The operation needed here is more general: its
prospective output items need not be individual steps, and their sorting keys
may be vectors of integers.  We call this operation a \emph{rank sort}.
Classical sweep maps are the scalar, one-item-per-step case.  We treat rank
sorting as a computational resource within standard models of word-to-word
computation.

\paragraph{Computational models and main results.}

Automata theory provides a standard hierarchy of increasingly expressive
models for transforming words.  A one-way finite-state transducer processes
the input from left to right.  A two-way transducer may revisit input
positions, while a polyregular map may systematically combine a fixed number
of positions at a time, much like a program with a fixed number of nested
\texttt{for}-loops.  Moving up the hierarchy allows a map to revisit and
combine input positions more freely, produce multiple output pieces, and exert
greater control over their order.  In every model, however, the transformation
is specified by a fixed
finite-state or logical rule that is independent of the input length.  We
prove that $\zeta$ is not polyregular and therefore cannot be realised by any
model in this standard hierarchy.

To extend this hierarchy with rank sorting in a controlled way, we introduce
\emph{weighted-rank polyregular maps} ($\WRP$).  The $\WRP$ model retains
the polyregular machinery for selecting atoms, assigning a letter to each
selected atom, and specifying their order.  Each selected atom contributes one
letter to the final output word.  The model then adds one global rank
sort: each atom receives a fixed-dimensional integer vector, called its
\emph{atom rank}, computed from
finite-state scans that accumulate local weights.  These atom ranks may grow
without bound as the input length grows.  The output word is formed by listing
the atoms in lexicographic atom-rank order, with the underlying $\MSO$-defined
order breaking ties, and reading their letters.  Assigning every atom the same
atom rank recovers the underlying
polyregular map, so every polyregular map is a $\WRP$ map.  The theorem below
places $\zeta$ in $\WRP$; combined with the non-polyregularity stated above,
this shows that the inclusion is strict.  In combinatorial terms, the model
captures a general \emph{decorate, rank, and sort} pattern: for $\zeta$, selected
atoms derived from the input path receive height-based atom ranks, while other
running step weights give additive level sorts.  Thus, $\WRP$ is both a
principled extension of an established computational model and a way to
isolate a recurring source of global reordering in Catalan and lattice-path
combinatorics.  The results below delineate what the model can express and
where its limitations begin.

On the computational side, we prove the strict inclusions
\[
   \polyreg\ \subsetneq\ \WRP
   \ \subsetneq\ \text{deterministic logspace}.
\]
The upper bound has a concrete memory interpretation.  On an input of length
$n$, naming an input position, or storing a counter of polynomial magnitude,
costs $\Theta(\log n)$ bits.  A logarithmic-space evaluator can therefore
retain only a fixed number of such indices and counters at once.  A $\WRP$ map
can nevertheless carry out its global rank sort within this constraint by
recomputing atoms from the input instead of storing the polynomial-size
collection of potential atoms.  In this sense, $\WRP$ adds a genuinely global
ordering mechanism while remaining close to the finite-memory viewpoint of
automata.

Our first result establishes both that $\WRP$ captures its motivating example,
the zeta map, and that the rank-sort operation genuinely increases expressive
power.
\begin{theorem}[Zeta in $\WRP$, not $\polyreg$]
\label{thm:intro-zeta-classification}
There exists a $\WRP$ map $Z$ on words over $\{U,D\}$ such that
$Z(P)=\zeta(P)$ for every $P\in\D$.
No polyregular map agrees with $\zeta$ on every Dyck path.
\end{theorem}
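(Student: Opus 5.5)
The statement has two halves, and I would prove them separately: membership of $\zeta$ in $\WRP$ by an explicit construction, and non-polyregularity by a growth or pumping argument.

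\textbf{Construction of $Z\in\WRP$.} I would use Haglund's area-sequence description of $\zeta$.
\begin{enumerate}[label=(\roman*)]
\item For a Dyck path $P$ with area sequence $(a_1,\dots,a_n)$, the image $\zeta(P)$ is obtained as follows. For $k=0,1,2,\dots$ in turn, scan the entries left to right. Emit $U$ for each entry equal to $k$ and $D$ for each entry equal to $k-1$, i.e.\ $\zeta$ reads off the area sequence level by level.
\item In step-word terms, the entry $a_i$ is the height at which the $i$-th $U$ starts. So each $U$ step of $P$ produces two atoms: a $U$-atom with rank $(2a_i)$ and a $D$-atom with rank $(2a_i+3)$, or whatever scalar encoding interleaves levels correctly.
\item Within one level, the $U$'s at level $k$ and the $D$'s from level $k-1$ must appear in the original left-to-right order of the $U$ steps. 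So I would instead give both atoms the common rank $k=a_i$ for the $U$-atom and $k=a_i+1$ for the $D$-atom. Ties are then broken by the $\MSO$ order, which is input position order.
\item This needs only a unary polyregular selection (two copies of each $U$ position) and one scalar atom rank. The rank is the running height, computed by a finite-state scan with weights $+1$ for $U$ and $-1$ for $D$.
\end{enumerate}
I then verify agreement with $\zeta$ on all of $\D$ by checking against the paper's definition. If that definition is via bounce or dinv rather than area sequences, I first cite the standard equivalence (Haglund, Loehr). The $\WRP$ definition's allowance for ranks computed by weighted scans is exactly what handles step (ii).

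\textbf{Non-polyregularity.} I would exhibit a family of inputs on which $\zeta$ performs a reordering that no polyregular map can do. The cleanest route is a known structural property of polyregular functions: their outputs restricted to ultimately periodic input families $u v^{m} w$ behave like $\N$-polynomially-pumpable or rational families. More precisely, the set $\{(m,f(uv^mw))\}$ is such that $f(uv^m w)$ lies in a finite union of iterated-loop shapes, and the number of factor alternations is polynomially controlled with finitely many "patterns".

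The hardest step is choosing inputs where $\zeta$ genuinely sorts by an unbounded number of levels. My first candidate is the pyramid $U^mD^m$, with area sequence $0,1,\dots,m-1$. But its image has a simple shape, roughly $(UD)^m$-like, which is polyregular-friendly and gives no contradiction. So I would instead take sequences like $(U^mD^m)^m$ or staircases interleaved with pyramids. These force the output to have one block per level $k$, with each block collecting contributions from all $m$ pyramids. The output is then essentially a transpose of an $m\times m$ grid indexed by (copy, level), with length-dependent block contents.

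I would combine this with the fact that polyregular maps are closed under composition with regular maps. Composing with an $\MSO$-definable interpretation, I would extract a function like $a^{m}\#\cdots$ to $\ $ a sequence whose image encodes a word whose $k$-th block has length varying with $k$, such as $\prod_{k} a^{k}b$ or its transposed counterpart. I would then derive a contradiction using the theorem that, on inputs $a^m$, polyregular outputs are finite unions of $\N$-polynomial ``nested loop'' words of bounded nesting, with each block's content determined by an $\MSO$ type from a finite set. The transposition of a grid with non-uniform row patterns violates this. Getting this final reduction to a clean known obstruction is where I expect the main difficulty.
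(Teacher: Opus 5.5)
Your construction of $Z$ is correct and is the paper's own (Theorem~\ref{thm:zeta-wrp}). You must discard the encoding $2a_i,2a_i+3$ of step (ii), which would put every $U$ of a level before its $D$'s. With that done, the ranks $\rho(i)$ and $\rho(i)+1$ with left-to-right ties reproduce the area-sequence scan. Since $\zeta$ is defined by that scan (Definition~\ref{def:zeta}), no bounce/dinv equivalence is needed.

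The lower bound, however, is not proved, and the route you sketch does not lead to one as it stands.
\begin{itemize}
\item The families you name produce no contradiction. $(U^mD^m)^m$ is sent to $U^m(DU)^{m(m-1)}D^m$. The staircase of pyramids $\prod_{k=1}^mU^kD^k$ is sent to $U^m\,D(DU)^{m-1}\,D(DU)^{m-2}\cdots D(DU)\,D$, which a 2DFT writes by walking the input blocks from right to left.
\item A ``transpose'' with non-uniform rows is therefore not by itself beyond polyregular maps, when the row structure is governed by order comparisons between positions.
\item One-parameter regular slices $uv^mw$ of Dyck paths can never work. Here $v$ must have net height $0$, so heights are bounded on the slice. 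By Theorem~\ref{thm:bounded-rank-collapse}, the restriction of your $Z$ to the slice is then already polyregular.
\item The nested-loop structure theorem for polyregular maps on unary inputs is neither stated precisely nor cited. You also never exhibit an output violating it --- you flag this yourself as the open step.
\end{itemize}

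The missing idea is to make $\zeta$ compare two \emph{independent} unbounded parameters, so that its output encodes a nonregular relation between them. You then need only that polyregular maps of linear growth have regular preimages of regular languages.

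The paper takes $P_{m,n}=U^mD^mU^nD^n$, the image of $a^m\#b^n$ under an elementary 2DFT. It computes $\zeta(P_{m,n})=UU(DU)^{2m-2}DD(UD)^{n-m}$ for $m\le n$ and $UU(DU)^{2n-1}DD(UD)^{m-n-1}$ for $m>n$. The parity of the $DU$-exponent before the first $DD$ thus detects $m\le n$. For the regular probe $R=\{UU(DU)^{2q}DD(UD)^s:q,s\ge0\}$, the set $\{a^m\#b^n:\zeta(P_{m,n})\in R\}=\{a^m\#b^n:m\le n\}$ is therefore not regular.

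Now take a hypothetical polyregular realisation, compose it with that 2DFT, and restrict to the regular language $\{a^m\#b^n:m,n\ge1\}$. The result is polyregular with linear growth, hence a 2DFT by Theorem~\ref{thm:polyregular-linear-collapse}, and its preimage of $R$ would have to be regular. Restricting to this regular family of encodings is also what neutralises the arbitrary behaviour of a realising map off Dyck paths.
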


A second classification result concerns a classical height-sweep bijection.
This bijection is known to exchange two natural local statistics on Dyck
paths, the numbers of valleys and double rises.  We show both that the map is
realised by the particularly simple fragment $\SWR$ of $\WRP$ and that no
polyregular map realises it on all Dyck paths.
\begin{theorem}[A classical Narayana bijection in $\WRP$]
\label{thm:intro-narayana-sweep}
Let $\val(P)$ and $\dr(P)$ denote, respectively, the number of valleys
($DU$ factors) and double rises ($UU$ factors) in a Dyck path $P$.
There exists a map $H\in\SWR\subseteq\WRP$, called the \emph{height-sweep
map}, such that, for every $n$, the restriction of $H$ to $\D_n$ is a
bijection from $\D_n$ onto $\D_n$ satisfying
$\dr(H(P))=\val(P)$ and $\val(H(P))=\dr(P)$ for every
$P\in\D_n$.
Consequently, $H$ realises the bivariate Narayana
symmetry
\[
   \sum_{P\in\D_n} q^{\val(P)}t^{\dr(P)}
   =
   \sum_{P\in\D_n} q^{\dr(P)}t^{\val(P)}.
\]
Furthermore, no polyregular map agrees with $H$ on every Dyck path.
\end{theorem}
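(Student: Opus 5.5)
The plan has three parts: define the height-sweep map $H$, prove its combinatorial properties on each $\D_n$, and show it is not polyregular. I take $H$ to be the classical sweep. Label each step of $P$ by the height of its starting point, and list the steps in increasing order of that level. Ties are broken by scanning from right to left within a level. This is the Armstrong--Loehr--Warrington sweep with modulus $(n,n+1)$ degenerated to the Catalan case, or equivalently the height-sweep underlying $\zeta$ without its bounce refinement.

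\textbf{Membership in $\SWR$.} The selected atoms are the single positions of the word. The atom rank is the scalar running sum with weight $+1$ on $U$ and $-1$ on $D$, taken over the prefix strictly before the position. The tie-breaking order is the reverse position order, which is $\MSO$-definable. Each atom outputs its own letter. So $H$ uses only the one-dimensional, one-atom-per-step fragment, and I expect this check to be routine from the definition of $\SWR$.

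\textbf{Bijectivity and exchange of statistics.} First, $H(P)$ is a Dyck path of the same semilength. For each level $h$, count the $U$ steps starting at height $h$ and the $D$ steps starting at height $h+1$; these two numbers are equal. A Dyck-prefix argument over the blocks of levels $0,1,2,\dots$ then shows that the sorted word never goes below zero.

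Next, the exchange of statistics. In $H(P)$, a $UU$ factor arises either inside a level block or at a block boundary. A block-by-block analysis should match each $UU$ factor with a $DU$ factor of $P$, namely a valley at height $h$ whose $U$ is followed, in reverse scan order within the level, by another $U$ at that level, and symmetrically for $DU$ factors of $H(P)$ and $UU$ factors of $P$.

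For bijectivity I would construct the inverse explicitly. Reading $H(P)$ determines the number of steps at each level. The original path is rebuilt by the standard sweep-inversion that follows level blocks, as in the proofs that sweep maps are invertible in the Catalan case. Alternatively, since $\D_n$ is finite, it suffices to prove injectivity by reconstructing $P$ level by level.

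\textbf{Non-polyregularity.} The key tool is the growth and structure bound for polyregular maps: outputs have polynomial size, and every fixed output position is $\MSO$-interpretable. I would use the pumping-type consequence that on a family $u\,v^k\,w$ the output is eventually an $\MSO$-definable, semilinear-shaped function of $k$. I would reuse the same technique already used for $\zeta$ in Theorem~\ref{thm:intro-zeta-classification}.

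Choose a family such as $P_k=(UD)^k U^k D^k$ or, more usefully, $U^k(DU)^kD^k$, where sorting by height interleaves steps from distant regions. On such a family the output has the form $\prod_{h} (\text{block}_h)$, whose block lengths vary with $h$ in a way that forces a non-ultimately-periodic pattern. I would then invoke the known fact that the image under a polyregular map of a regular language is not necessarily regular, but the output on $u v^k w$ has bounded "$\MSO$-type" complexity. Concretely, the map $k\mapsto H(P_k)$ would need to produce a word like $\prod_{h=1}^{k}U^{h}D^{h}$-ish, and I would show that counting letters between consecutive boundaries requires comparing unboundedly many distinct lengths, contradicting the fact that a $d$-fold polyregular output over $u v^k w$ decomposes into polynomially many periodic pieces of bounded number.

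This pumping step is the main obstacle. I would first try to reduce to the corresponding lemma used for $\zeta$, since $H$ is the core height-sort in $\zeta$.
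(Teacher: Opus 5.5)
Your $\SWR$ presentation is exactly the paper's (Proposition~\ref{prop:alw-sweep-swr} with right-to-left ties), and your Dyck-preservation argument is fine. The bijectivity and statistic exchange, however, are only gestured at (``should match'', ``reconstruct level by level''). The idea that makes them work in the paper (Appendix~\ref{app:narayana-sweep-proof}) is the contour bijection to plane forests. Under it, $H(P)=U^{r}\prod_{v}D\,U^{c(v)}$, where the product runs over the vertices in right-to-left breadth-first order, $r$ is the number of roots, and $c(v)$ is the number of children of $v$. Every level block of positive level begins with a $D$, so the ``block boundary'' case in your $UU$ analysis is empty. The maximal $U$-runs are therefore $U^r$ and the nonempty $U^{c(v)}$, which gives $\dr(H(P))=\pk(P)-1=\val(P)$; then $\val(H(P))=\dr(P)$ follows from $\val+\dr=n-1$. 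The data $r,c(v_1),\ldots,c(v_n)$ read off from $H(P)$ determine the forest, which gives injectivity and hence bijectivity on the finite set $\D_n$. Citing general sweep-map bijectivity would also be acceptable for this part.

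The genuine gap is the non-polyregularity argument, which cannot succeed as proposed. First, your families produce nothing like $\prod_h U^hD^h$. For $k\ge2$ one computes $H(U^k(DU)^kD^k)=U(DU)^{k-2}DU^{k+1}D^{k+1}$ and $H((UD)^kU^kD^k)=U^{k+1}DU\,D^{k}(DU)^{k-2}D$. A 2DFT easily produces these from the input, so nothing is contradicted. Second, the structural fact you intend to contradict is false. The map $a^k\mapsto\prod_{h=1}^{k}U^hD^h$ is polyregular: use arity two, select pairs $(x,y)$ with $y\le x$ in two copies labelled $U$ and $D$, and order by $x$, then by copy, then by $y$. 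So unboundedly many distinct block lengths are no obstruction. Third, for any slice $\{uv^kw:k\ge0\}\subseteq\D$ the block $v$ has net height $0$. Heights are therefore bounded on the slice, and by Theorem~\ref{thm:bounded-rank-collapse} the restriction of $H$ to it is polyregular, so no single-slice argument can work.

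What is missing is a family with \emph{two} independent parameters whose output encodes a nonregular relation between them, combined with regularity of preimages. The paper takes $P_{m,n}=U^mD^mU^nD^n$ and shows (Lemma~\ref{lem:H-two-pyramid}) that the exponent of $DU$ before the central $DD$ in $H(P_{m,n})$ is $2n-2$ if $n\le m$ and $2m-1$ if $n>m$. Hence the regular language $R=\{UU(DU)^{2q}DD(UD)^s:q,s\ge0\}$ detects $n\le m$. Now precompose a putative polyregular realisation with the 2DFT $a^m\#b^n\mapsto P_{m,n}$ and restrict to $\{a^m\#b^n:m,n\ge1\}$. This yields a linear-growth polyregular map, hence a 2DFT by Theorem~\ref{thm:polyregular-linear-collapse}. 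Its preimage of $R$ would then be the nonregular language $\{a^m\#b^n:n\le m\}$, a contradiction (Proposition~\ref{prop:two-pyramid-criterion}). Finally, there is no reduction from $H$ to the $\zeta$ lemma. What carries over is this criterion, with the roles of $m$ and $n$ exchanged and a fresh closed-form computation for $H$.
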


Thus, both $\zeta$ and $H$ require the rank-sort mechanism beyond
polyregularity.  These two independent examples show that $\WRP$ captures a
recurring Catalan mechanism rather than a single isolated construction.  We
next ask whether its rank-sorting mechanism is powerful enough to produce a
uniform bijection explaining the full $q,t$-Catalan symmetry.  Our central
theorem gives a negative answer.
\begin{theorem}[No $\WRP$ area--dinv swap]
\label{thm:intro-no-swap}
There is no $\WRP$ map $T$ on words over $\{U,D\}$ that sends every Dyck path
$P$ to a Dyck path of the same semilength and satisfies
$\area(T(P))=\dinv(P)$ and $\dinv(T(P))=\area(P)$ for every $P\in\D$.
\end{theorem}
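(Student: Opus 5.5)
We would argue by contradiction, isolating a single family of Dyck paths on which any area--dinv swap must act in a way that $\WRP$ cannot compute. The natural tool is a \emph{bounded-information} property of $\WRP$ maps. The atom set and the letters come from a fixed polyregular (MSO) specification, and the atom ranks are integer vectors of fixed dimension built from finite-state scans. So the output, as a function of the input, should be determined by a finite-state profile of the input together with polynomially many integer parameters, each a sum of local weights.

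The first step is a structural lemma of this kind. For a fixed $\WRP$ map and a family of inputs of the form $u v^{k} w$ (or with several pumped blocks $v_1^{k_1}\cdots v_m^{k_m}$), the output should have a specific shape once the $k_i$ exceed a threshold depending only on the map. The relevant shape is: a concatenation of boundedly many blocks, each a power of a word from a finite set, interleaved according to orderings of linear functions of the $k_i$. The reason is that the atoms selected by a fixed tuple of MSO-types form arithmetic families indexed by positions in the pumped blocks. Their ranks are affine in those indices and in the $k_i$. Sorting affine families lexicographically, with MSO tie-breaking, yields eventually periodic output.

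A consequence we will need is that statistics of $T(P)$ grow along a pumped family as piecewise quasi-polynomials of bounded degree. Moreover, the number of distinct ``pieces'' is bounded independently of the input size.

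Next we choose the test family. The natural candidates are paths where $\area$ and $\dinv$ behave very differently under pumping. One option is staircase-like or block-concatenated paths $P_{a,b}$, whose area is a quadratic form in the block sizes while dinv depends on comparisons of block heights. The intended choice is a family $P_{k_1,\dots,k_m}$ on which $(\area,\dinv)$ is known explicitly. The images $T(P)$ must then be Dyck paths with prescribed pair $(\dinv(P),\area(P))$. For a well-chosen family, the paths realising that pair are few and rigid, for instance unique by a counting argument in the extreme ranges of $C_n(q,t)$, such as when area is maximal or near maximal. For such paths the required $T(P)$ can be written down explicitly: it is a path whose structure encodes a number that grows with $k$ in a non-affine way, or that requires comparing unboundedly many heights. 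One natural choice is paths with area near $\binom n2$, whose dinv partners are forced into near-staircase shapes with prescribed bounce-type structure.

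The final step compares the two descriptions. The structural lemma forces $T(P_k)$ to be eventually of the form $x_1 y_1^{\ell_1(k)} x_2 \cdots$, with the $\ell_i$ affine in $k$ and only boundedly many blocks. The rigidity step, by contrast, forces $T(P_k)$ to contain unboundedly many distinct block patterns, or blocks whose lengths grow non-affinely (for instance like $\sqrt{k}$, or in a number of blocks that grows with $k$). This contradiction proves the theorem.

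The main obstacle is the rigidity step: locating a family of paths on which the area--dinv swap is forced, up to finitely many choices, and provably has unbounded ``block complexity''. The first attempt would be paths with $\dinv$ minimal or maximal relative to $\area$ in a one-parameter region of the $(\area,\dinv)$ table, using the explicit description of such paths (for instance via $\zeta$, which converts dinv to area and area to bounce). There the fibres are small, and the forced images must look like staircases with $\Theta(\sqrt{n})$ distinct step lengths. That is incompatible with the bounded number of periodic blocks given by the structural lemma.
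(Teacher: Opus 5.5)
Your overall plan is the same in spirit as the paper's: eventual regularity of a $\WRP$ map along a pumped family, played against a forced image whose shape grows non-affinely. But the step you yourself call the main obstacle is exactly the one that carries the proof, and it is missing.

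\textbf{The missing rigidity step.} The needed fact is the inequality $\dinv(Q)\le\coarea(Q)=\binom N2-\area(Q)$ on $\D_N$, together with uniqueness of its equality case. For each $c$ there is exactly one $Q\in\D_N$ with $\dinv(Q)=\coarea(Q)=c$. Its area sequence is $(0,1,\ldots,a-1)$ followed by $b-d$ copies of $a-1$ and then $d$ copies of $a-2$, where $c=\binom{b+1}2+d$, $0\le d\le b$ and $a=N-b$. Hence any input on the extremal line $\area+\dinv=\binom N2$ has a uniquely forced image. The paper applies this to $W_n=U(UD)^nD$, which has area $n$, dinv $\binom n2$ and semilength $n+1$. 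Its forced image is $U^a(DU)^{b-d}D(DU)^dD^{a-1}$ with $\binom{b+1}2\le n<\binom{b+2}2$ and $a=n+1-b$.

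\textbf{Your guesses point the wrong way.} The forced images do not have $\Theta(\sqrt n)$ distinct step lengths or unboundedly many block patterns: there are five blocks, so a block-count contradiction fails. The choice of family is also delicate. For example, $(UD)^n$ lies on the extremal line, but it is forced to $U^nD^n$, which is perfectly affine and gives no contradiction. What works is an input whose area grows linearly. Then $b\sim\sqrt{2n}$, and the first-ascent length $\fas=n+1-b$ is not eventually affine on any residue class. Block decompositions of a word are not unique, so the contradiction must be read off a canonical invariant of this kind. The paper uses the pair $(\fas,\tailU)$, whose range is the non-semilinear set $S_{\mathrm{tri}}$.

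\textbf{The structural lemma is false as stated.} This is the other unproved piece. Without a growth hypothesis, a polyregular (hence $\WRP$) map can send $W_n$ to $UD\,U^2D\cdots U^{n+1}D$. Take pairs $y\le x$ of $U$-positions labelled $U$, with one $D$ after each $x$. This output is not a bounded concatenation of powers.

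\textbf{Even with linear growth, it needs a real proof.} Adding the hypothesis $|T(W_n)|=O(n)$, which a semilength-preserving swap satisfies, makes the lemma plausible. Still, ``sorting affine families with MSO tie-breaking yields eventually periodic output'' needs an actual argument when atoms are tuples of arity greater than one, ranks are vectors, and ties are broken by an arbitrary MSO order.

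The paper never describes the output word. It shows that selection, labels, atom-rank comparisons and the tie-order are Presburger-definable in $n$ and the repetition indices on the slice. A bounded-counting lemma then makes $\{(n,\fas(T(W_n)),\tailU(T(W_n)))\}$ semilinear; it rests on Woods' piecewise quasi-polynomial counting, forced to degree at most one by the linear bound. Your final step needs something of this strength, applied to a canonical count such as $\fas$.
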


The theorem is stronger than the nonexistence of a $\WRP$ bijection: it rules
out every semilength-preserving $\WRP$ map satisfying both identities, whether
or not the map is bijective.  It does not, however, assert that no
area--dinv swapping bijection exists.  The zeta map realises one of the two
required identities, since $\area(\zeta(P))=\dinv(P)$, but no $\WRP$ map can
realise both simultaneously.  Thus, any uniform bijection giving a direct
bijective proof of $C_n(q,t)=C_n(t,q)$ must use computational resources beyond
the single rank-sort layer available in $\WRP$.

A second limitation concerns inversion.  Although $\zeta$ belongs to $\WRP$,
its inverse cannot be realised within the same model:
\begin{theorem}[Inverse zeta outside $\WRP$]
\label{thm:intro-inverse-zeta}
The inverse zeta bijection $\zeta^{-1}\colon\D\to\D$ cannot be realised by any
$\WRP$ map.
\end{theorem}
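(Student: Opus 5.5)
We need to show that no $\WRP$ map agrees with $\zeta^{-1}$ on every Dyck path. The plan is to find a structural invariant satisfied by every $\WRP$ map and to exhibit an explicit family of Dyck paths on which $\zeta^{-1}$ violates it. Since the formal definition of $\WRP$ is not yet fixed here, we rely only on the features described in the introduction. The output is a listing of polyregularly selected atoms, ordered first by fixed-dimensional integer atom ranks and then by an $\MSO$-defined tie-break. The atom ranks are produced by finite-state scans that accumulate local weights. The key consequence to extract is a \emph{locality/pumping} property. Consider inputs of the form $u\,v^{k}\,w$, with $v$ pumped. The atom ranks are then affine functions of $k$ on each of finitely many ``types'' of atoms, with the type determined by the finite-state behaviour. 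Hence, for large $k$, the relative order of two atom types is eventually constant, decided by comparing slopes and intercepts. The output word is therefore a concatenation of boundedly many blocks, each of which is an ultimately periodic interleaving governed by polyregular structure.

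The first step is to make this precise as a lemma. For fixed $u,v,w$, and for $k$ in a suitable arithmetic progression, the output $F(uv^kw)$ has a bounded number of maximal ``periodic runs''. Equivalently, it has a description by a bounded number of blocks whose lengths are eventually polynomial in $k$, with the number of blocks independent of $k$.

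The second step uses the inverse direction of $\zeta$. Since $\zeta$ sorts by heights (bounce/area levels), $\zeta^{-1}$ must \emph{undo} a rank sort. Information that $\zeta$ placed contiguously by level must be redistributed into a nested, height-interleaved pattern whose combinatorial complexity grows with $k$. We will choose a pumped family, for instance $P_k=\zeta\bigl((U^{k}D^{k})\cdot\ldots\bigr)$, or more naturally a family like $(UUD)^kD^k$-type paths whose $\zeta^{-1}$-preimages we compute via the bounce-path description of $\zeta^{-1}$. The goal is a family for which the number of alternations between distinct periodic patterns in $\zeta^{-1}(P_k)$ grows unboundedly in $k$. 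A staircase-like preimage with $k$ distinct bounce blocks of varying lengths would work. This family would then contradict the first step.

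The main obstacle is the first step, which is showing that a single global rank sort cannot create unboundedly many ``interleaving phases'' on a pumped family. Polyregular selection may produce polynomially many atoms indexed by tuples of positions, so the ranks are polynomial in multiple pumping parameters, not just affine in $k$. To handle this, we will try working with multi-indexed atoms. The approach is to show that the rank of an atom indexed by positions $(i_1,\dots,i_d)$ in the pumped region is affine in the indices on each of finitely many cells. Lexicographic comparison of affine functions then partitions the index space into boundedly many polyhedral regions with a uniform order. If this yields too coarse an invariant, the fallback is a counting argument. In that approach we would bound the number of distinct outputs of $F$ on an MSO-definable input family with few ``pumping-equivalence'' classes, for example showing polynomially many outputs of a certain shape. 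We would then exhibit a family where $\zeta^{-1}$, being injective, is forced to produce superpolynomially many shapes.
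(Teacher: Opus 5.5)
\textbf{The gap is in your second step, and it is structural rather than a missing computation.} Your invariant sees only one pumping parameter, and $\zeta^{-1}$ appears to satisfy it on every such family.

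\emph{Single pumped block.} If $uv^kw$ is a Dyck path for infinitely many $k$, then $v$ has net height $0$, so all heights are bounded by some $H$. Write $h_j$ for the height before the $j$th $D$ and $n_r$ for the number of area entries of the preimage equal to $r$. The bounce decomposition gives $n_r=h_{T_{r-1}+1}\le H$ with $T_r=T_{r-1}+n_r$. Since $h_j$ is periodic in the middle of the word, this recursion is eventually periodic. Hence $\zeta^{-1}(uv^kw)$ has bounded width and an eventually periodic level-by-level structure, with nothing that produces unboundedly many phases.

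\emph{Several blocks pumped by one $k$.} This does not help either. On $W_{m,n}=U^m(UD)^nD^m$ one has $\zeta^{-1}(W_{m,n})=(U^bD^b)^s(U^{b-1}D^{b-1})^{m+1-s}$ with $b=\lceil (m+n)/(m+1)\rceil$. Along any line $m=\alpha k+\beta$, $n=\gamma k+\delta$ there are two cases. If $\alpha=0$, this is boundedly many pyramids whose heights are affine in $k$ on residue classes. If $\alpha>0$, $b$ is eventually constant. Either way the output is boundedly many periodic blocks with affine lengths, which is exactly what your lemma permits. The ``wild'' regime, about $m$ pyramids of height about $m$, occurs only near $n\approx m^2$, and no linear pumping reaches it.

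\emph{Your staircase candidate.} It shows the same thing from the other side. The path made of pyramids of heights $1,\ldots,k$ has $\zeta$-image $U^k\prod_{r=1}^{k}D(DU)^{k-r}$. That word has quadratic length and unboundedly many phases, so it is not a pumped input family.

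\emph{The fallback.} It does not work as stated. $\zeta^{-1}$ is a bijection of each $\D_N$, but so is the identity, which lies in $\WRP$. Counting distinct outputs therefore separates nothing, and ``shapes'' is never made precise. Independently of all this, your first step is only sketched, and its claim that two atom types are eventually ordered by slope comparison does not account for atoms with different repetition indices.

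\textbf{What is missing is a genuinely two-parameter obstruction.} The paper's proof has three steps.
\begin{enumerate}
\item Take any $\WRP$ map $T$ with $|T(W_{m,n})|=O(m+n)$. On $W_{m,n}$, selection, labels, tie-orders and atom-rank comparisons are Presburger-definable in $m,n$ and the repetition indices; the ranks are affine on finitely many cells. Linearly bounded Presburger counts have semilinear graphs. Together these make $\{(m,n,\fas(T(W_{m,n})),\tailU(T(W_{m,n})))\}$ semilinear.
\item The balanced-pyramid preimage gives $\fas(\zeta^{-1}(W_{m,n}))=\lceil (m+n)/(m+1)\rceil$.
\item The section $f=m$ of this graph is the band $m^2-m\le n\le m^2$. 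Its quadratic lower envelope is impossible for a semilinear set.
\end{enumerate}
Division by a parameter is quasi-affine along every line but not semilinear jointly. That is precisely why an argument that pumps a single parameter cannot detect it.
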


The no-swap and inverse-zeta theorems are the paper's two principal lower
bounds on what $\WRP$ itself can realise.  The first rules out every $\WRP$
realisation of an area--dinv swap.  The second shows that, under the
realisation convention, $\WRP$-realisability is not preserved by inversion:
$\zeta$ belongs to $\WRP$, whereas $\zeta^{-1}$, for which Thomas and
Williams~\cite{ThomasWilliams2018} gave an explicit construction, does not.

Both lower bounds exploit the same restriction on $\WRP$ maps.  We evaluate a
hypothetical map on specially chosen Dyck paths and associate with each output
a tuple of simple integer-valued features, such as the numbers of
$U$-steps before and after its first $D$.  For a $\WRP$ map realising either
target transformation, the set of such feature tuples must be a finite union
of patterns obtained from fixed base tuples by repeatedly adding fixed integer
steps; sets of this kind are called \emph{semilinear}.  An area--dinv swap
would instead force its feature tuples to fill a region with a quadratic
boundary, while $\zeta^{-1}$ would force its feature tuples to follow a
pattern governed by integer division.  Neither set is semilinear.  Together
with the $\WRP$ realisations of zeta and the classical Narayana bijection,
these lower bounds delineate the scope of the model: one rank-sort layer
suffices to realise several natural Catalan constructions, but it cannot
realise a full area--dinv swap or compute $\zeta^{-1}$.

\paragraph{Proof idea for the no-swap theorem.}

The proof compares two descriptions of the same set.  Suppose that a $\WRP$
map $T$ swaps area and dinv while preserving semilength, and apply $T$ to
\[
   W_n=U(UD)^nD,\qquad n\ge 1,
\]
and write $Q_n=T(W_n)$.  From each output we extract its \emph{first-ascent
pair} $(\fas(Q_n),\tailU(Q_n))$, where the two coordinates count the $U$-steps
before and after the first $D$, respectively.  Let
$S_T=\{(\fas(Q_n),\tailU(Q_n)):n\ge 1\}$.
A general property of $\WRP$ maps implies that $S_T$ must be semilinear.
We define this term below; at this stage, only the logical contrast matters.
The two statistic identities required of $T$ force the same set $S_T$ not to
be semilinear.  Thus, a hypothetical map $T$ would make $S_T$ both semilinear
and nonsemilinear, which is impossible.

We now explain the two sides.  For each $n\ge1$, the path $W_n$ has semilength
$n+1$, area $n$, and dinv $\binom n2$.
Because $T$ preserves semilength and swaps the two statistics, its output
$Q_n$ must satisfy
\[
   Q_n\in\D_{n+1},\qquad
   \area(Q_n)=\binom n2,\qquad
   \dinv(Q_n)=n.
\]
The maximum possible area in $\D_{n+1}$ is $\binom{n+1}{2}$, so
$\binom{n+1}{2}-\area(Q_n)=n=\dinv(Q_n)$.
Thus, equality holds in the general inequality
\[
   \dinv(Q)\le \binom N2-\area(Q),
   \qquad Q\in\D_N.
\]
The equality cases are rigid: for each $n$, the required statistics determine
$Q_n$ uniquely as a near-staircase with a short two-level tail.  As $n$
varies, the first-ascent pairs of these forced outputs range over exactly
\[
   S_{\mathrm{tri}}
   =
   \left\{(a,b)\in\N^2
      \;:\;
      b\ge 1,\quad
      \binom b2+1\le a\le\binom{b+1}{2}+1
   \right\}.
\]
Consequently, the swap identities force $S_T=S_{\mathrm{tri}}$.

A set of integer tuples is \emph{semilinear} if it is a finite union of
sets obtained from fixed base tuples by adding arbitrary nonnegative integer
combinations of finitely many fixed step vectors.  The semilinearity theorem proved in
Section~\ref{sec:slice-semilinearity} shows that the set $S_T$ associated with
a $\WRP$ map must be semilinear.
However, $S_{\mathrm{tri}}$ is not semilinear: for each fixed $b$, its smallest
first coordinate is $\binom b2+1$, which grows quadratically with $b$.  By
contrast, the lower boundary of a
semilinear set with finite sections must eventually be affine on each residue
class.  Therefore $S_T$ cannot equal $S_{\mathrm{tri}}$.

Section~\ref{sec:slice-semilinearity} proves the semilinearity conclusion for
$\WRP$ maps, while Section~\ref{sec:wrapped-flat} derives the forced set
$S_{\mathrm{tri}}$ and proves that it is not semilinear.  Combining the two
conclusions proves the no-swap theorem.

\paragraph{Relation to previous work.}

Our approach brings together the theory of Catalan sweep maps and
automata-theoretic models of word transformations.  The rank-sorting
constructions motivating $\WRP$ come from the sweep-map
literature~\cite{ArmstrongLoehrWarrington2015,
ArmstrongLoehrWarrington2016,ThomasWilliams2018}, while its logical component
extends the standard hierarchy of regular and polyregular
transductions~\cite{EngelfrietHoogeboom2001,Bojanczyk2018}.  Related
complexity-theoretic programmes ask when counting functions have combinatorial
interpretations~\cite{Pak2024WhatIs,IkenmeyerPak2022}; here we ask instead for
the resources needed to compute an explicit bijection between already known
objects.  Section~\ref{sec:related-work} returns to these comparisons and
formulates the resulting open problems after the models and results needed to
make the relationships precise have been developed.

\paragraph{Outline.}

Section~\ref{sec:dyck} reviews Dyck paths, their statistics, and the named
bijections considered in the paper.  Section~\ref{sec:models} reviews the
established automata-theoretic models of word-to-word computation used
throughout the paper.  Section~\ref{sec:wrp} introduces $\WRP$, develops its
structural and computational properties, and places the principal Catalan
maps in the resulting hierarchy.  The longer structural proofs are deferred
to Appendix~\ref{app:wrp-structural-proofs}.  Section~\ref{sec:zeta} then
determines the position of $\zeta$ in the hierarchy.
Section~\ref{sec:narayana-sweep} places the classical height-sweep bijection
$H$, which realises the bivariate Narayana symmetry, in
$\SWR\setminus\polyreg$ and hence inside $\WRP$.  A self-contained proof of
its known combinatorial properties
is given in Appendix~\ref{app:narayana-sweep-proof}.
Sections~\ref{sec:slice-semilinearity} and~\ref{sec:wrapped-flat} develop the
semilinearity method and use it to prove the no-swap theorem.
Section~\ref{sec:inverse-zeta} studies $\zeta^{-1}$ using a related argument.
Section~\ref{sec:lean} describes the scope and status of the accompanying
machine-checked Lean formalisation.  Finally,
Section~\ref{sec:discussion} places the results in the related
algebraic-combinatorial and automata-theoretic literatures and presents the
open problems that they leave.

\section{Dyck paths, statistics, and named bijections}\label{sec:dyck}

We now fix the Dyck-path conventions and the step-word encoding used
throughout the paper.  We define area sequences and the statistics
$\area$, $\dinv$, and $\coarea$; describe Haglund's zeta map in a form
that scans the numerical levels of its area sequence; and list the named
bijections considered later.  The underlying Catalan notions are standard,
but we recall them to make the paper self-contained and to fix notation.

Throughout the paper, $|w|$ denotes the length of a word $w$, while $|S|$
denotes the cardinality of a finite set $S$.

\begin{definition}[Dyck paths and area sequences]
A \emph{word} over an alphabet is a finite sequence of symbols from that
alphabet.  A \emph{Dyck path} of semilength $n$ is a word
$P\in\{U,D\}^{2n}$ such that every prefix has at least as many $U$'s as $D$'s
and the whole word has exactly $n$ $U$'s and $n$ $D$'s.  Write $\D_n$ for the
set of such paths and $\D=\bigcup_{n\ge 0}\D_n$.  For a word
$w\in\{U,D\}^*$ and $0\le k\le |w|$, define the height after its first $k$
letters by
\[
   h_w(k)=|\{1\le i\le k:w_i=U\}|-|\{1\le i\le k:w_i=D\}|.
\]
If $P\in\D_n$ and $p_1<\cdots<p_n$ are the positions of its $U$-steps, the
\emph{area sequence} of $P$ is
\[
   a(P)=(a_1,\ldots,a_n),\qquad a_i=h_P(p_i-1).
\]
\end{definition}

Thus, $a_i$ is the height immediately before the $i$th up-step.  Under the
usual identification of $U$ with a north step and $D$ with an east step,
this agrees with the standard area sequence of a Dyck path.

\begin{remark}[The step-word encoding]\label{rmk:step-word}
The preceding definition represents a Dyck path of semilength $n$ by its
length-$2n$ word over $\{U,D\}$, with one letter per step, read from left to
right.  We call this representation the \emph{step-word encoding}.  Every
classification of a Dyck-path map in this paper is relative to this encoding,
for both its input and its output.

This qualification matters because the transformation representing a fixed
abstract bijection may belong to different complexity classes under different
encodings.  For example, an area-sequence encoding makes the height before
each up-step explicit, whereas in the step-word encoding this information must
be recovered from prefix sums.
\end{remark}

\begin{definition}[Area, dinv, coarea]
For $P\in\D_n$ with area sequence $a=(a_1,\ldots,a_n)$, define
\[
   \area(P)=\sum_{i=1}^n a_i,\qquad
   \dinv(P)=|\{(i,j):1\le i<j\le n,\ a_i-a_j\in\{0,1\}\}|.
\]
The maximum possible area in $\D_n$ is $\binom n2$; write
$\coarea(P)=\binom n2-\area(P)$.  Following Lee, Li, and
Loehr~\cite[Definition~1.4]{LeeLiLoehr2018}, the \emph{deficit} of $P$ is
$\defc(P)=\coarea(P)-\dinv(P)$.
A path with $\defc(P)=0$ is called \emph{deficit-zero}.
\end{definition}

\begin{example}
The $U$-steps of $P=UUDDUD\in\D_3$ occur at positions $1,2,5$ and begin at
heights $0,1,0$.  Thus, $a(P)=(0,1,0)$, $\area(P)=1$, and
$\coarea(P)=\binom32-1=2$.
Of the three pairs $(i,j)$ with $1\le i<j\le3$, exactly $(1,3)$ and
$(2,3)$ contribute to $\dinv(P)$: they give
$a_1-a_3=0$ and $a_2-a_3=1$, respectively.  Hence $\dinv(P)=2$.
\end{example}

\begin{definition}[The Haglund zeta map]\label{def:zeta}
Set $\zeta(\varepsilon)=\varepsilon$ for the empty path.  For
$P\in\D_n$ with $n\ge 1$ and area sequence $a=(a_1,\ldots,a_n)$, the
\emph{zeta map} $\zeta(P)$ is defined by the following area-sequence scan: for
each level $r=0,1,\ldots,\max_i a_i+1$, scan $a_1,\ldots,a_n$ from left to
right and, at each position $i$, append
\[
   U\text{ if }a_i=r,\qquad D\text{ if }a_i=r-1,\qquad
   \text{nothing otherwise.}
\]
\end{definition}

The algebraic $q,t$-Catalan sequence was introduced by Garsia and
Haiman~\cite{GarsiaHaiman1996}.  A precursor of the inverse zeta map appeared
in work of Andrews, Krattenthaler, Orsina, and Papi~\cite{AndrewsEtAl2002};
its relation to the zeta and sweep maps is explained by Armstrong, Loehr, and
Warrington~\cite{ArmstrongLoehrWarrington2015}.  The Dyck-path terminology and
formulation used here follow Haglund~\cite{Haglund2008}.  The classical zeta
theorem states that
$\zeta(P)$ is a Dyck path of semilength $n$, that
$\zeta$ restricts to a bijection $\D_n\to\D_n$, and that
\[
   \area(\zeta(P))=\dinv(P),
   \qquad
   \bounce(\zeta(P))=\area(P).
\]
Here $\bounce$ denotes the standard bounce statistic from $q,t$-Catalan
theory.  Its definition plays no role in our arguments; we use only the
identity $\bounce(\zeta(P))=\area(P)$.  Readers unfamiliar with bounce may
therefore regard it simply as the output statistic appearing in the classical
zeta theorem.

\begin{example}
For $P=UUDDUD$, the area sequence is $(0,1,0)$.  At level $0$, entries $1$
and $3$ of the area sequence are equal to $0$, so they produce $UU$.  At
level $1$, the three entries contribute $D,U,D$ from left to right, producing
$DUD$.  At level $2$, only entry $2$ contributes, producing $D$.  Concatenating
the three blocks gives
$\zeta(UUDDUD)=UUDUDD$.  Indeed,
$\area(UUDUDD)=0+1+1=2=\dinv(UUDDUD)$.
\end{example}

The principal Dyck-path bijections studied in this paper are summarised in
Table~\ref{tab:named-bijections}.  Some are defined later, when they first
enter the arguments.  Section~\ref{sec:landscape} places all of them in the
model hierarchy for word-to-word computation.

\begin{table}[t]
\centering
\small
\renewcommand{\arraystretch}{1.25}
\begin{tabularx}{\textwidth}{@{}>{\raggedright\arraybackslash}p{0.17\textwidth}
                                   >{\raggedright\arraybackslash}X
                                   >{\raggedright\arraybackslash}X@{}}
\toprule
Name & Definition & Notes \\
\midrule
Identity $\id$ & $P\mapsto P$ & Preserves every statistic. \\
Reverse-complement $\rc$ &
  $w\mapsto \comp(\rev(w))$, where $\rev$ reverses the word and
  $\comp$ swaps $U\leftrightarrow D$. &
  An involution preserving each $\D_n$. \\
Zeta $\zeta$ & Definition~\ref{def:zeta}. &
  Sends $\dinv$ to $\area$. \\
Height sweep $H$ &
  Orders steps by increasing starting height, with right-to-left ties
  (Section~\ref{sec:narayana-sweep}). &
  A classical bijection that swaps valleys ($DU$ factors) and double rises
  ($UU$ factors). \\
Zeta inverse $\zeta^{-1}$ &
  The inverse of Definition~\ref{def:zeta}
  (Section~\ref{sec:inverse-zeta}). &
  Satisfies $\dinv(\zeta^{-1}(P))=\area(P)$. \\
\bottomrule
\end{tabularx}
\caption{Principal Dyck-path bijections studied in this paper.}
\label{tab:named-bijections}
\end{table}

\section{Established models of word-to-word computation}\label{sec:models}

We now review the established models for word-to-word computation used in the
rest of the paper.  We first explain what it means for a machine to realise a
map specified only on Dyck paths, then introduce $\MSO$ on words and its
connection with regular languages.  From there we move through deterministic
two-way finite-state transducers to polyregular maps.  In automata theory, a
machine that reads an input word and produces an output word is called a
\emph{transducer}.  Section~\ref{sec:wrp} will extend these standard models by
the numerical ordering operation needed for zeta-like maps.

\subsection{Realisation}

Our Catalan maps are specified only on Dyck paths, whereas the machine models
introduced below have arbitrary words as their ambient inputs.  \emph{Realisation}
is the convention that reconciles these two domains.  To state it, recall that
an \emph{alphabet} is a finite set of symbols and that $\Sigma^*$ denotes the set
of all finite words over $\Sigma$.  Thus, $\{U,D\}^*$ contains every finite step
word, with the Dyck paths forming a proper subset.

\begin{definition}[Realisation]\label{def:relative}
Fix finite alphabets $\Sigma$ (inputs) and $\Gamma$ (outputs), and let
$X\subseteq\Sigma^*$ and $f\colon X\to\Gamma^*$.  A partial map
$T\colon\Sigma^*\rightharpoonup\Gamma^*$, such as one computed by the machine
models defined below, \emph{realises $f$} if
$X\subseteq\mathrm{dom}(T)$ and $T(x)=f(x)$ for every $x\in X$.  Its behaviour
on inputs outside $X$, including whether it is defined there at all, is left
unconstrained.
\end{definition}

The point of this convention is to separate two different tasks.  Given an
arbitrary word in $\{U,D\}^*$, its height after a prefix is the number of $U$'s
in that prefix minus the number of $D$'s.  Deciding whether the word is a Dyck
path requires checking that every prefix has nonnegative height and that the
final height is zero.  Our aim, however, is to measure the resources
needed to compute a Catalan map when the input is a Dyck path, not the resources
needed to decide whether an arbitrary input is a Dyck path.  Realisation
therefore requires agreement with the intended map on every Dyck input but
imposes no condition on inputs that are not Dyck paths.

\subsection{Monadic second-order logic on words}\label{sec:mso}

Monadic second-order logic ($\MSO$) provides the common logical language for
the models of word transformations introduced below.  On an input word
$w=a_1a_2\cdots a_n$, it can state properties of the numbered positions
$1,\ldots,n$: for example, a formula can select positions carrying $U$, assign
output letters to selected positions, or specify their order in the output.
Crucially, the same finite collection of formulas works uniformly for words of
every length.  We first describe the structure on which these formulas are
interpreted, and then recall the connection between $\MSO$ sentences and
finite automata.

\paragraph{Words as labelled positions.}
Fix a finite alphabet $\Sigma$ (for us, $\Sigma=\{U,D\}$).  We represent a word
$w=a_1a_2\cdots a_n$ by its set of \emph{positions}
$\mathrm{Pos}(w)=\{1,\ldots,n\}$.  This set carries the left-to-right order
$<$ and, for each letter $a\in\Sigma$, a predicate $P_a$ marking the
positions labelled by $a$.  Thus, $P_a(i)$ holds exactly when $a_i=a$; on a
Dyck word, for example, $P_U(i)$ says that position $i$ is an up-step.  This
ordered, letter-labelled set is the word structure seen by $\MSO$ formulas.

\paragraph{Formulas.}
An $\MSO$ formula is interpreted in the structure associated with a fixed
input word $w$.  Position variables $x,y,z,\ldots$ range over
$\mathrm{Pos}(w)$, while set variables $X,Y,Z,\ldots$ range over its subsets.
Thus, position variables do not range over all integers, nor do set variables
range over arbitrary sets of integers: the input word supplies the entire
universe of the formula.  The adjective ``monadic'' refers to this
quantification over sets of individual positions.  The atomic formulas are
$x<y$ (``$x$ lies left of $y$''), $x=y$, $P_a(x)$ (``position $x$ carries
letter $a$''), and
$x\in X$ (``position $x$ belongs to the set $X$'').  Larger formulas are built
from these with the Boolean connectives $\wedge,\vee,\neg,\rightarrow$ and the
quantifiers $\exists x,\forall x$ over positions and $\exists X,\forall X$ over
sets of positions.

\paragraph{Satisfaction.}
A formula with no free variables is a \emph{sentence}.  We write
$w\models\varphi$ when the word structure associated with $w$ satisfies
$\varphi$.  For example, $w\models\exists x\,P_U(x)$ means that $w$ contains at
least one $U$.  If
$\varphi(x_1,\ldots,x_k)$ has free position variables and
$i_1,\ldots,i_k\in\mathrm{Pos}(w)$, then
$w\models\varphi(i_1,\ldots,i_k)$ means that $\varphi$ is true when $x_j$ is
assigned the concrete position $i_j$ for every $j$.  Thus, one formula
uniformly selects the
set of position tuples
\[
   \{(i_1,\ldots,i_k)\in\mathrm{Pos}(w)^k : w\models\varphi(i_1,\ldots,i_k)\}
\]
on each input word $w$.  The transduction definitions below use such selected
tuples to construct output words.

\begin{example}[Peaks and parity in $\MSO$]
The formula $\mathrm{succ}(x,y):=x<y\wedge\neg\exists z\,(x<z\wedge z<y)$ says
``$y$ is the position immediately after $x$''.  Using it,
\[
   \varphi(x)\ :=\ P_U(x)\ \wedge\ \exists y\,\bigl(\mathrm{succ}(x,y)\wedge P_D(y)\bigr)
\]
defines on each word the set of positions $x$ that start a $UD$ factor (a
\emph{peak}), and the sentence $\exists x\,\varphi(x)$ says that the word has a
peak.  Quantification over sets can also impose a global marking across the
word.  For instance, $\MSO$ can say that the number of $U$-steps is even.  Let
\[
\begin{aligned}
   \mathrm{first}_U(x)&:=P_U(x)\wedge\neg\exists z\,(z<x\wedge P_U(z)),\\
   \mathrm{last}_U(x)&:=P_U(x)\wedge\neg\exists z\,(x<z\wedge P_U(z)),\\
   \mathrm{next}_U(x,y)&:=P_U(x)\wedge P_U(y)\wedge x<y
      \wedge\neg\exists z\,(x<z\wedge z<y\wedge P_U(z)),\\
   \mathrm{opp}_X(x,y)&:=
   \bigl(x\in X\wedge y\notin X\bigr)
   \vee
   \bigl(x\notin X\wedge y\in X\bigr).
\end{aligned}
\]
Then, the sentence
\[
\exists X\,\Bigl[
   \forall x\,(\mathrm{first}_U(x)\rightarrow x\in X)
   \wedge
   \forall x\,(\mathrm{last}_U(x)\rightarrow x\notin X)
   \wedge
   \forall x\,\forall y\,(\mathrm{next}_U(x,y)\rightarrow
      \mathrm{opp}_X(x,y))
\Bigr]
\]
says that the $U$-positions can be marked alternately by $X$, beginning with a
marked position and ending with an unmarked one.  The sentence therefore holds
exactly when the number of $U$-steps is even; when there are no $U$-steps, all
three conditions are vacuous, so the sentence holds as required.
\end{example}

\paragraph{Regular languages.}
Having seen how formulas with free variables select positions, we now turn to
sentences, which will be used to restrict the domains of transductions.  A
\emph{language} is a set of words $L\subseteq\Sigma^*$.  Every $\MSO$ sentence
$\varphi$ defines the language $\{w:w\models\varphi\}$.  Its machine-theoretic
counterpart is the deterministic finite automaton, the standard model for
recognising languages with a fixed amount of memory.

\begin{definition}[Deterministic finite automaton]\label{def:dfa}
A \emph{deterministic finite automaton} (DFA) over a finite alphabet $\Sigma$ is
a tuple $M=(Q,q_0,\delta,F)$ consisting of a finite set $Q$ of \emph{states}, an
\emph{initial state} $q_0\in Q$, a \emph{transition function}
$\delta\colon Q\times\Sigma\to Q$, and a set $F\subseteq Q$ of \emph{accepting
states}.
\end{definition}

To describe how a DFA processes a whole word, write $\varepsilon$ for the
empty word and $wa$ for the result of appending $a$ to $w$.  The transition
function extends uniquely to $\delta^*\colon Q\times\Sigma^*\to Q$ satisfying
\[
   \delta^*(q,\varepsilon)=q,
   \qquad
   \delta^*(q,wa)=\delta(\delta^*(q,w),a).
\]
For $q\in Q$ and $w\in\Sigma^*$, the value $\delta^*(q,w)$ is the state reached
after reading $w$ from state $q$.

Equivalently, the \emph{run} of $M$ on $w=a_1\cdots a_n$ is the state sequence
$q_0,q_1,\ldots,q_n$ with $q_i=\delta(q_{i-1},a_i)$.  Its final state is
$\delta^*(q_0,w)$.  The automaton \emph{accepts} $w$ when this state belongs to
$F$, and it \emph{recognises} the language
$L(M)=\{w\in\Sigma^*: \delta^*(q_0,w)\in F\}$.

\begin{definition}[Regular language]\label{def:regular-language}
A language $L\subseteq\Sigma^*$ is \emph{regular} if $L=L(M)$ for some DFA
$M$ over $\Sigma$.
\end{definition}

Equivalently, a regular language can be decided by scanning the input once
from left to right with only a fixed amount of memory, and in particular
without a counter that can grow with the input.  The classical
\emph{B\"uchi--Elgot--Trakhtenbrot
theorem}~\cite{Thomas1997} connects this machine model to the logic above: a
language is regular if and only if it equals
$\{w:w\models\varphi\}$ for some $\MSO$ sentence $\varphi$.  Thus
``$\MSO$-definable'' and ``regular'' describe the same languages.
Accordingly, throughout the paper a \emph{regular condition} on words means a
condition whose satisfying words form a regular language; equivalently, it can
be expressed by an $\MSO$ sentence.

The Dyck language is a standard nonregular example: recognising it requires
tracking an unbounded prefix height.  This is why the realisation convention
of Definition~\ref{def:relative} separates recognition of Dyck inputs from
computation of a Catalan map on those inputs.  The $\MSO$--DFA equivalence will
let us move between logical domain conditions and finite-state language
arguments throughout the paper.

\subsection[Two-way finite-state transducers]{Two-way finite-state transducers and their $\MSO$ presentation}
\label{sec:2dft-mso}

Having used finite automata to recognise languages, we now turn to machines
that compute word-to-word transformations.  A deterministic two-way
finite-state transducer (2DFT) has a finite set of control states and an input
head that can move one position to the left or right.  Each transition may
emit a word, so the machine can revisit input positions and produce output in
an order different from their left-to-right order.  It has no counter or work
tape whose size grows with the input: apart from the position of its head, its
only memory is its current state.  This is the first model of word-to-word
transformation in our hierarchy.

There is an equivalent declarative description using the $\MSO$ introduced in
Section~\ref{sec:mso}.  Rather than tracing the movements of a head, an
\emph{$\MSO$ string transduction} begins with finitely many copy names, each
attached to the input positions.  Formulas select some of the resulting
potential atoms, assign a letter to each selected atom, and linearly order the
selected atoms;
reading their labels in that order produces the output word.  The theorem of
Engelfriet and Hoogeboom (Theorem~\ref{thm:eh}) says that these operational and
logical descriptions define exactly the same partial word-to-word maps.  We
retain both viewpoints: the machine description provides operational
intuition, while the logical description is extended in the definitions of
polyregular and $\WRP$ maps.

\begin{definition}[Deterministic two-way finite-state transducer]
\label{def:2dft}
A \emph{deterministic two-way finite-state transducer} (2DFT)
$T=(Q,\Sigma,\Gamma,q_0,F,\eta)$ has a finite state set $Q$, an input alphabet
$\Sigma$, an output alphabet $\Gamma$, an initial state $q_0\in Q$, accepting
states $F\subseteq Q$, and a partial transition-output function
\[
   \eta\colon Q\times(\Sigma\cup\{\vdash,\dashv\})
     \rightharpoonup Q\times\{-1,+1\}\times\Gamma^*,
\]
where ${\vdash},{\dashv}\notin\Sigma$ are the left and right end markers.  We require
the end-marker discipline that a transition from $\vdash$, if defined, moves
right, and a transition from $\dashv$, if defined, moves left.
\end{definition}

The equation $\eta(q,a)=(q',d,u)$ describes one step of the machine.  When the
current state is $q$ and the head scans $a$, the machine changes its state to
$q'$, moves one position to the left if $d=-1$ and to the right if $d=+1$, and
emits $u$.  Thus, the three components of the transition value specify the new
state, the head movement, and the emitted word, respectively.

\begin{definition}[Configuration, run, and output]
Fix a 2DFT $T=(Q,\Sigma,\Gamma,q_0,F,\eta)$.  For an input
$w=a_1\cdots a_n$, put $a_0=\vdash$ and $a_{n+1}=\dashv$.  A
\emph{configuration of $T$ on $w$} is a pair
$(q,i)\in Q\times\{0,\ldots,n+1\}$.  The \emph{run of $T$ on $w$} is the unique
maximal sequence of configurations
\[
   (q_0,i_0),(q_1,i_1),(q_2,i_2),\ldots
\]
with $i_0=0$ such that, whenever $\eta(q_t,a_{i_t})=(q_{t+1},d_t,u_t)$,
the next head position is $i_{t+1}=i_t+d_t$ and the transition emits
$u_t\in\Gamma^*$.  The run is \emph{finite} if it ends in a configuration
$(q_m,i_m)$ for which $\eta(q_m,a_{i_m})$ is undefined; otherwise it is
\emph{infinite}.  A finite run is \emph{accepting} if $q_m\in F$, and in that case,
the \emph{output of $T$ on $w$} is the concatenation of the emitted words:
\[
   T(w)=u_0u_1\cdots u_{m-1}.
\]
The empty concatenation, when $m=0$, is $\varepsilon$.
If the run is infinite or finite but nonaccepting, then $T(w)$ is undefined.
\end{definition}

Determinism and partiality play different roles here.  Because $\eta$ is a
function, every configuration has at most one successor, so the maximal run is
unique.  Because $\eta$ need not be defined everywhere, the run may halt; its
halting state need not be accepting, and the run may instead be infinite.
Thus, a 2DFT generally computes a partial word-to-word map.

We now describe the same class of maps without referring to runs.

\begin{definition}[$\MSO$ string transduction]\label{def:mso-transduction}
Fix an integer $K\ge 1$, called the \emph{copy count}, and let
$C=\{1,\ldots,K\}$ be the finite set of \emph{copy names}.  For an input word
$w$, each \emph{potential atom} is a pair $(c,i)$ consisting of a copy name
$c\in C$ and a concrete input position
$i\in\mathrm{Pos}(w)$.  Thus, $w$ has $K\,|w|$ potential atoms.  A partial function
$f\colon\Sigma^*\rightharpoonup\Gamma^*$ is a \emph{deterministic $\MSO$
string transduction} if it is specified by the following finite family of
$\MSO$ formulas:
\begin{enumerate}[label=(\roman*)]
\item a sentence $\varphi_{\mathrm{dom}}$ fixing the domain,
$\mathrm{dom}(f)=\{w:w\models\varphi_{\mathrm{dom}}\}$;
\item for each $c\in C$, a \emph{selection formula} $\varphi_c(x)$, which marks
the positions retained under that copy name.  The \emph{selected atoms} of $w$ are $\mathrm{At}(w)=
  \{(c,i):c\in C,\ i\in\mathrm{Pos}(w),\
                 w\models\varphi_c(i)\}$;
\item for each $c\in C$ and each output letter $\gamma\in\Gamma$, a
\emph{label formula} $\psi_{c,\gamma}(x)$.  For every
$w\in\mathrm{dom}(f)$ and every selected atom $(c,i)\in\mathrm{At}(w)$, there
must be exactly one $\gamma$ such that $w\models\psi_{c,\gamma}(i)$.  This
unique letter is the \emph{label} of the atom, written
$\mathrm{lab}(c,i):=\gamma$;
\item for each ordered pair $c,c'\in C$, an \emph{ordering formula}
$\chi_{c,c'}(x,y)$.  On every $w\in\mathrm{dom}(f)$, these formulas must
define a linear order $<_\chi$ on $\mathrm{At}(w)$ by
\[
   (c,i)<_\chi(c',j)
   \quad\Longleftrightarrow\quad
   w\models\chi_{c,c'}(i,j).
\]
\end{enumerate}
These formulas determine the output as follows.  For $w\in\mathrm{dom}(f)$,
list the selected atoms in increasing $\chi$-order, say
$(c_1,i_1)<_\chi\cdots<_\chi(c_N,i_N)$, and concatenate their labels in that
order:
\[
   f(w)=\mathrm{lab}(c_1,i_1)\,\mathrm{lab}(c_2,i_2)\cdots\mathrm{lab}(c_N,i_N).
\]
If there are no selected atoms, this concatenation is $\varepsilon$.
Each selected atom contributes one output letter, so
$|f(w)|=|\mathrm{At}(w)|\le K\,|w|$; in particular the output is at most linear
in the input length.
\end{definition}

\begin{theorem}[Engelfriet--Hoogeboom~\cite{EngelfrietHoogeboom2001}]
\label{thm:eh}
A partial function $f\colon\Sigma^*\rightharpoonup\Gamma^*$ is computed by a
deterministic 2DFT if and only if it is a deterministic $\MSO$ string
transduction.  The class defined by these equivalent conditions is closed
under composition.  Moreover, if $f$ belongs to this class and
$L\subseteq\Gamma^*$ is regular, then the inverse image $f^{-1}(L)$ is regular.
\end{theorem}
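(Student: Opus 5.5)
This is the Engelfriet--Hoogeboom theorem. The plan follows the standard route from the literature. It has three components: the equivalence of the two models, closure under composition, and regularity of inverse images.

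\emph{From $\MSO$ transductions to 2DFTs.} Given the formulas $\varphi_{\mathrm{dom}},\varphi_c,\psi_{c,\gamma},\chi_{c,c'}$, the 2DFT first checks $\varphi_{\mathrm{dom}}$ with a left-to-right DFA sweep, using B\"uchi--Elgot--Trakhtenbrot. It then enumerates the selected atoms in $<_\chi$-order and emits each label. The key tool is that a 2DFT whose head sits at position $i$ can decide any fixed $\MSO$ property $\theta(i)$. It does this by running a DFA for the language of words with one marked position; the left part is simulated by a right-to-left sweep over a reversed automaton, and the head then returns to $i$, for instance via Hopcroft--Ullman-style backtracking. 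Using this, the machine finds the $<_\chi$-minimal atom and then repeatedly finds the $<_\chi$-successor of the current atom $(c,i)$. The successor relation is itself $\MSO$-definable as a binary formula, and ``$y$ is the $c'$-successor of $(c,x)$'' is an $\MSO$ formula. Moving from $x$ to the unique such $y$ without storing positions is the delicate part. I would use the Engelfriet--Hoogeboom technique of a two-way machine with regular look-around: first show that 2DFTs with $\MSO$ look-around equal plain 2DFTs, via the Hopcroft--Ullman backtracking construction. Then search for $y$ by scanning and testing the binary formula with the current $x$ encoded by a finite-state relative characterisation. Concretely, I would use a Shelah/Simon-type factorisation, or a tree of $\MSO$-types, so that ``walk from $x$ to the successor'' becomes a regular-look-around walk.

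\emph{From 2DFTs to $\MSO$ transductions.} Take copy names $Q\times\{\text{emitted-letter slots}\}$. Atom $(q,k,i)$ is selected iff the run visits configuration $(q,i)$ and the $k$th letter of the emitted word is output there. Determinism means the run visits each configuration at most once if it is accepting, so atoms are well defined. ``The run from $(q,i)$ reaches $(q',j)$'' is $\MSO$-definable by guessing crossing sequences: existentially quantify $|Q|$ sets per direction that encode which states cross each boundary, and check local consistency. The ordering $\chi$ is defined as reachability along the run, and $\varphi_{\mathrm{dom}}$ says the run from $(q_0,0)$ reaches a halting accepting configuration. Unbounded emitted words are handled by splitting into boundedly many copies. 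Endmarker emissions are attached to the first and last positions.

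\emph{Composition and inverse images.} Composition is easiest on the $\MSO$ side: an $\MSO$ formula evaluated on the output structure translates back to a formula on the input by substituting the copy-wise defining formulas (the backward translation lemma). Thus $\MSO$ transductions compose, with copy counts multiplying. The same backward translation applied to an $\MSO$ sentence defining $L$ gives an $\MSO$ sentence on inputs, conjoined with $\varphi_{\mathrm{dom}}$, that defines $f^{-1}(L)$. This set is regular by B\"uchi--Elgot--Trakhtenbrot.

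The main obstacle is the $\MSO\Rightarrow$2DFT direction, in particular computing successor atoms with only finite memory. I would rely on the look-around elimination lemma as the crucial step.
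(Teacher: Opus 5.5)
\textbf{Overall.} The paper gives no proof of this statement. It is quoted from Engelfriet and Hoogeboom and used as a black box. Your outline follows the standard route behind that citation: elimination of $\MSO$ look-around by the Hopcroft--Ullman return trick, $\MSO$-definability of the run, and backward translation for composition and preimages. Three points need attention.

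\textbf{The empty input.} With the paper's definitions, the ``if and only if'' is false at $\varepsilon$, and your step ``endmarker emissions are attached to the first and last positions'' is exactly where this shows. Take the 2DFT with $\eta(q_0,\vdash)=(q_1,+1,a)$, $F=\{q_1\}$, and no other transitions. It computes the total map $w\mapsto a$. By contrast, $\mathrm{Pos}(\varepsilon)=\emptyset$ forces every $\MSO$ string transduction with $\varepsilon$ in its domain to output $\varepsilon$ there.

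So the 2DFT$\Rightarrow\MSO$ direction can only be proved for nonempty inputs, or for maps with $f(\varepsilon)\in\{\varepsilon,\text{undefined}\}$. The composition and preimage claims survive with a trivial case split. A preimage changes by at most one word. For composition, when $w\neq\varepsilon$ but $f(w)=\varepsilon$, the composite emits the fixed word $g(\varepsilon)$, which extra copies attached to position $1$ can produce. Nothing downstream in the paper is affected, because every 2DFT it converts either outputs $\varepsilon$ on $\varepsilon$ or has a domain excluding it.

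\textbf{Defining the run.} Existentially guessing locally consistent crossing sets does not pin down the run. A cycle of configurations that the run never enters, such as $(r,i)\to(s,i+1)\to(r,i)$, can be added to any consistent guess. Your selection formula would then select atoms emitted on such cycles, and your reachability order would no longer be linear. Instead, define ``$(q',j)$ is reachable from $(q,i)$'' as membership in every set of configurations that contains $(q,i)$ and is closed under the local one-step relation. Such sets are encoded by finitely many position sets, with endmarker configurations attached to the first and last positions. This least-closed-set formula yields correct selection, order, and domain formulas.

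\textbf{The successor walk.} This step needs no factorisation theory once look-around is available.
\begin{enumerate}
\item A look-around test at $x$ determines the copy $c'$ of the successor and whether $y$ lies left of $x$, at $x$, or right of $x$.
\item If $y>x$, let $M$ be a DFA for the marked language of $\mathrm{succ}_{c,c'}$. A look-behind test gives the state of $M$ after the marked prefix ending at $x$.
\item The machine then walks right, updating that state. At each $z$, a look-ahead test parametrised by the current state asks whether reading $z$ marked as $y$, followed by the rest of the word, ends in $F$.
\item The first such $z$ is $y$, by functionality.
\end{enumerate}
The case $y<x$ is symmetric, using a DFA for the reversed marked language.

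With these repairs, your sketch is a complete proof of the corrected statement, modulo the look-around elimination lemma you cite.
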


The later arguments use two of these facts: the equivalence of the two
descriptions, and preservation of regularity under inverse images.  Closure
under composition is recorded for completeness; where a composition is
actually formed, in the proof of
Proposition~\ref{prop:two-pyramid-criterion}, it is taken at the polyregular
level instead~\cite{Bojanczyk2018}.

\begin{example}[Reverse-complement, in both presentations]\label{ex:rc-two-ways}
The reverse-complement $\rc$ of Table~\ref{tab:named-bijections} sends
$w=a_1\cdots a_n$ to $\rc(w)=\comp(a_n)\,\comp(a_{n-1})\cdots\comp(a_1)$, where
$\comp$ swaps $U$ and $D$; it is an involution carrying $\D_n$ to $\D_n$.  Here
is the same map described in both ways.

\vspace{0.5em}
\noindent\emph{As a 2DFT.}  Make one rightward pass to the right end marker emitting
nothing, then one leftward pass emitting $\comp(a)$ as the head crosses each
input letter $a$, and halt in an accepting state at the left marker.  The
leftward pass performs the reversal directly: after reaching the end, the
machine revisits the input letters from right to left and emits each
complemented letter immediately.

\vspace{0.5em}
\noindent\emph{As an $\MSO$ string transduction.}  Take
$\varphi_{\mathrm{dom}}:=\mathrm{true}$ and $C=\{1\}$ (so $K=1$).  Keep every
position by setting $\varphi_1(x):=\mathrm{true}$, so
$\mathrm{At}(w)=\{1\}\times\mathrm{Pos}(w)$.  Relabel each position by its
complement: set $\psi_{1,U}(x):=P_D(x)$ and
$\psi_{1,D}(x):=P_U(x)$, so $\mathrm{lab}(1,i)=\comp(a_i)$.  Finally, reverse
the order by taking $\chi_{1,1}(x,y):=(y<x)$; equivalently,
$(1,i)<_\chi(1,j)$ exactly when $j<i$.  Listing the atoms in $\chi$-order
therefore visits positions $n,n-1,\ldots,1$ with labels
$\comp(a_n),\ldots,\comp(a_1)$, whose concatenation is $\rc(w)$.

The two descriptions compute the same map.  Theorem~\ref{thm:eh} says that
every 2DFT has an equivalent logical description of this kind, and conversely.
\end{example}

\subsection{Polyregular maps}

Polyregular maps form a standard extension of regular/2DFT word transformations
in automata theory~\cite{Bojanczyk2018}.  The class includes
natural transformations beyond 2DFTs, such as repeating a word once for each
input position, concatenating all of its prefixes, or listing all of its
contiguous subwords.  Its breadth makes $\polyreg$ a natural benchmark for the zeta map:
asking whether $\zeta$ is polyregular tests whether its global reordering lies
within this powerful, established class of word transformations.

The tuple-based definition below makes the extension beyond 2DFTs precise.  In
a deterministic $\MSO$ string transduction, each selected atom pairs one copy
name with one input position, whereas a polyregular map may pair the copy name
with a fixed-length tuple of positions.  Since an input of length $n$ has $O(n^k)$
$k$-tuples, this is the logical counterpart of a fixed amount of nested
iteration over input positions and permits polynomial-size output.

\begin{definition}[Polyregular presentations and maps]
\label{def:polyregular}
A \emph{polyregular presentation} is a tuple-based $\MSO$ presentation that
generalises Definition~\ref{def:mso-transduction} by assigning each copy name $c$
a fixed \emph{arity} $k_c\ge 1$.  A potential atom is now a pair
$(c,\bar i)$ with $\bar i\in\mathrm{Pos}(w)^{k_c}$.  A polyregular
presentation consists of the following finite data:
\begin{enumerate}[label=(\roman*)]
\item a nonempty finite set $C$ of \emph{copy names}, each with an \emph{arity}
$k_c\ge 1$;
\item a sentence $\varphi_{\mathrm{dom}}$ specifying the domain;
\item for each $c\in C$, a \emph{selection formula}
$\varphi_c(x_1,\ldots,x_{k_c})$, whose satisfying tuples are the \emph{selected
atoms}
\[
   \mathrm{At}(w)
   =\{(c,\bar i):c\in C,\ \bar i\in\mathrm{Pos}(w)^{k_c},
     \ w\models\varphi_c(\bar i)\};
\]
\item for each $c\in C$ and each output letter $\gamma\in\Gamma$, a
\emph{label formula} $\psi_{c,\gamma}(x_1,\ldots,x_{k_c})$, required so that
every selected atom $(c,\bar i)\in\mathrm{At}(w)$ satisfies exactly one of them;
that unique $\gamma$ is the output letter attached to the atom, or its
\emph{label} $\mathrm{lab}(c,\bar i)$;
\item an \emph{ordering formula}
$\chi(c,x_1,\ldots,x_{k_c},c',y_1,\ldots,y_{k_{c'}})$, which on each $w$
linearly orders the selected atoms $\mathrm{At}(w)$.\footnote{Strictly speaking,
this notation suppresses a finite family of $\MSO$ formulas, one for each pair
$c,c'\in C$, with the variable list determined by the arities $k_c$ and
$k_{c'}$.  Copy names are external finite tags rather than variables of the
word structure; when comparing atoms $(c,\bar i)$ and $(c',\bar j)$, we use the
corresponding formula $\chi_{c,c'}(\bar x,\bar y)$.}
\end{enumerate}
For every $w\models\varphi_{\mathrm{dom}}$, list the selected atoms in increasing
$\chi$-order and concatenate their labels.  These data determine a partial
word-to-word map $f\colon\Sigma^*\rightharpoonup\Gamma^*$ with
$\mathrm{dom}(f)=\{w:w\models\varphi_{\mathrm{dom}}\}$ and
$|f(w)|=|\mathrm{At}(w)|$.  A partial word-to-word map is \emph{polyregular}
if it is determined by some polyregular presentation.  The \emph{arity} of the
presentation is $\max_c k_c$, and the class of all polyregular maps is denoted
$\polyreg$.
\end{definition}

\begin{example}[A quadratic Dyck-path map]
For a word $w\in\{U,D\}^*$, let $u(w)$ be the number of its $U$'s and set
$\mathrm{Sq}(w)=w^{u(w)}$, where the exponent means repeated concatenation.
This map has a particularly
simple arity-$2$ polyregular presentation.  Take $C$ to contain one copy name,
select a pair $(x,y)$ exactly when position $x$ carries a $U$, label the pair by
the letter at
position $y$, and order the selected pairs lexicographically, first by $x$ and
then by $y$.  In the notation of Definition~\ref{def:polyregular}, take
\[
  \varphi_{\mathrm{dom}}:=\mathrm{true},\qquad
  \varphi(x,y):=P_U(x),\qquad
  \psi_U(x,y):=P_U(y),\qquad
  \psi_D(x,y):=P_D(y),
\]
and
\[
  \chi(x,y,x',y') :=\ (x<x')\ \lor\ (x=x'\land y<y').
\]
For each $U$-position $x$, the second coordinate $y$ runs once through the
whole input, producing one copy of $w$; the first coordinate orders these
copies from left to right.  Hence the output is $w^{u(w)}$ as claimed.

If $P\in\D_n$, then $u(P)=n$, so
$\mathrm{Sq}(P)=P^n\in\D_{n^2}$.  Thus, this construction sends a Dyck path of
semilength $n$ to one of semilength $n^2$.  Its quadratic output growth cannot be achieved by an
arity-$1$ presentation; the two tuple coordinates have the concrete roles of
choosing a repetition and choosing a position within that repetition.
\end{example}

This definition also has a concrete programming interpretation.  By the
equivalence with pebble transducers and \emph{for-programs}
from~\cite{Bojanczyk2018,BojanczykKieferLhote2019}, a polyregular map is computed
by a fixed program whose loops range over input positions, whose memory
consists of finitely many position-valued and Boolean variables, and whose
output is appended letter by letter.  There are no \texttt{while}-loops or
other forms of unbounded iteration.  With $k$ nested position loops, the
program ranges over $k$-tuples and can produce $O(n^k)$ output letters.  At
arity $1$, there is no nesting: one recovers the deterministic $\MSO$
string transductions of Definition~\ref{def:mso-transduction}, equivalently
the two-way transducers of Section~\ref{sec:2dft-mso}.

The structural fact about $\polyreg$ that we need is the
\emph{linear-growth collapse}.

\begin{theorem}[Linear-growth collapse~\cite{Bojanczyk2023Growth}]
\label{thm:polyregular-linear-collapse}
If $f\in\polyreg$ has linear output growth on its domain (that is, if
there is a constant $K$ with $|f(w)|\le K(|w|+1)$ for every
$w\in\mathrm{dom}(f)$), then $f$ is a deterministic $\MSO$ string
transduction, equivalently a map computed by a deterministic 2DFT.
\end{theorem}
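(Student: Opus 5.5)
We would follow the growth-rate analysis of Boja\'nczyk~\cite{Bojanczyk2023Growth}. The plan is to turn a polyregular presentation of arity $k$ into one of arity $1$ by showing that, under linear growth, every selected $k$-tuple is recoverable by an $\MSO$-definable rule from a single ``anchor'' position together with a finite amount of extra information. Fix a presentation as in Definition~\ref{def:polyregular}, and let $K$ be the linear-growth constant. Throughout we may restrict attention to $\mathrm{dom}(f)$, because $\varphi_{\mathrm{dom}}$ is carried over unchanged.

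\emph{Step 1: monoid and factorisation forests.} Pass from each selection formula $\varphi_c(\bar x)$ to a finite monoid $M$ recognising it. By B\"uchi--Elgot--Trakhtenbrot, the positions of $\bar x$ can be encoded as marked letters of the input word. We then take Simon factorisation forests of bounded height over $M$ for every input $w$. For a selected tuple $\bar i$, we look at the factorisation nodes separating its coordinates. Call two coordinates \emph{independent} if some idempotent node has them in two different children, and those children are not merely neighbours of a marked child of bounded size.

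\emph{Step 2: the pumping lemma.} This is the step we expect to be the main obstacle. The claim is that, under linear growth, independence is impossible for all but boundedly many selected tuples. Suppose two coordinates of selected tuples lie in distinct idempotent blocks $e$ and $e$ of one idempotent node, and this happens with a witness word $w$. Pumping the idempotent factor $N$ times preserves the monoid type, and hence membership in $\mathrm{At}$. We then move the two coordinates independently among the roughly $N$ copies, which yields about $N^2$ selected atoms, while $|w|$ grows only linearly in $N$. This contradicts $|f(w)|\le K(|w|+1)$. The delicate part is to perform the pumping inside $\mathrm{dom}(f)$. We handle this by including $\varphi_{\mathrm{dom}}$ in the monoid, so that pumping an idempotent preserves domain membership.

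\emph{Step 3: anchors and reconstruction.} By Step~2, in each selected tuple all coordinates lie in a bounded configuration around one coordinate, the anchor. Concretely, each remaining coordinate is determined from the anchor by one of finitely many $\MSO$-definable functions, namely ``the unique position in the sibling block of type $m$ at relative offset $j$''. We define new copy names $(c,\tau)$, where $\tau$ ranges over the finitely many such reconstruction types. The new selection formula $\varphi_{(c,\tau)}(x)$ says that the tuple reconstructed from $x$ via $\tau$ satisfies $\varphi_c$, and that $\tau$ is the lexicographically least type achieving that tuple; this choice makes the correspondence injective. Labels and the ordering are obtained by substituting the reconstructed tuples into $\psi_{c,\gamma}$ and $\chi$. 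The resulting formulas are a deterministic $\MSO$ string transduction in the sense of Definition~\ref{def:mso-transduction} computing $f$. Theorem~\ref{thm:eh} then gives the 2DFT form.
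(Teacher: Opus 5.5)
You are trying to rederive the cited theorem itself, which the paper does not do. The paper cites Boja\'nczyk's theorem for total polyregular functions and argues only the passage to partial maps: make the output empty off the regular set $\mathrm{dom}(f)$ by conjoining every selection formula with $\varphi_{\mathrm{dom}}$, apply the total theorem, and restrict back. Your handling of partiality, putting $\varphi_{\mathrm{dom}}$ into the monoid so that pumping stays inside the domain, is fine.

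The gap is the passage from Step~2 to Step~3, which is where the entire content of the theorem lies. Step~2 rules out only one configuration: two coordinates in two non-adjacent interior children of a single idempotent node. Step~3 needs much more, namely that every coordinate other than the anchor is pinned down by bounded data, and this does not follow. Take an idempotent node with children $u_1,\dots,u_\ell$, with $x_1$ in an interior child $u_p$ and $x_2$ in the extremal child $u_1$. Your pumping cannot move $x_2$: placing the mark in a later copy of $u_1$ turns the product $m_1e\cdots$ into $em_1e\cdots$, where $m_1$ is the value of the marked $u_1$. Yet nothing in Step~2 rules out that $x_2$ also sits in an interior child of some idempotent node deeper inside $u_1$. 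In that case neither coordinate is pinned down by bounded forest data relative to the other.

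Linear growth does exclude this configuration, but only by a pumping that combines two \emph{different} nodes. Similar arguments are needed for nested pumpings and for clusters of adjacent marked children pumped as a unit. You would need a complete analysis showing that a selected tuple has at most one pumpable unit, and that every other coordinate is reached by a bounded path of extremal or adjacent choices in the bounded-height forest. Only then do you obtain your ``bounded configuration around the anchor''. Note also that ``relative offset $j$'' inside a sibling block is not bounded data, because forest blocks have unbounded length.

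Second, your formulas $\varphi_{(c,\tau)}(x)$ quantify over forest structure (``the sibling block of type $m$''). Simon's theorem gives each word \emph{some} bounded-height forest, not one that a fixed $\MSO$ formula can refer to uniformly. You would need an $\MSO$-definable choice of forest, such as Colcombet's forward Ramseyan splits, with the pumping carried out relative to that same forest. Alternatively, you would have to recast the reconstruction in terms of $\MSO$ types of tuples and prove that each type class has bounded fibres over some coordinate. Neither ingredient is supplied, so as written Step~3 asserts the conclusion of the cited theorem rather than deriving it.
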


The cited collapse theorem is stated for total polyregular functions.  The
domain of a polyregular presentation is $\MSO$-definable and hence regular.
The partial form above follows by extending $f$ with the empty output outside
its domain, applying the total result, and then restricting the resulting
transduction back to $\operatorname{dom}(f)$.
Thus, nested position loops add no expressive power when the resulting output
remains linear: every such polyregular map is already a deterministic $\MSO$
string transduction.
Theorem~\ref{thm:polyregular-linear-collapse} is the key reduction in our
non-polyregularity proof for $\zeta$.  Starting from a hypothetical
polyregular realisation, we precompose it with a suitable 2DFT and restrict
its domain so that the resulting polyregular map has linear output growth.
The collapse theorem then makes this restricted map a 2DFT, whose preservation
of regularity under inverse images conflicts with an explicit probe language.
The details are carried out in Theorem~\ref{thm:zeta-not-polyregular}.

The common feature of the standard models reviewed above is that the final output order
is specified by finite-state or $\MSO$ data.  They do not provide a primitive
for sorting selected atoms by an integer statistic, such as the running height
of a Dyck path, whose range grows with the input.  This is the operation added
in the next section.

\section[Weighted-rank polyregular maps]{Weighted-rank polyregular maps ($\WRP$)}
\label{sec:wrp}

This section introduces the main computational model of the paper.  We first
define the numerical atom ranks used to order selected atoms, then define $\WRP$
and its principal fragments.  We next establish the structural properties that
locate the model in the transducer hierarchy and explain its limitations.
The complete proofs of the longer structural statements are collected in
Appendix~\ref{app:wrp-structural-proofs}.

\paragraph{Levels, heights, and ranks.}

We distinguish three related numerical labels.  A \emph{level} is the scalar
running label attached to a step of a classical sweep map.  For the map $H$
considered below, the step weights are $+1$ and $-1$, so the level of a
step is its starting \emph{height}.  In the model developed here, the key may
be vector-valued and may belong to an arbitrary selected atom rather than to a
single step.  We call the key assigned to a selected atom its \emph{atom rank},
and the resulting ordering of selected atoms a \emph{rank sort}.  Thus, a
classical sweep map is the scalar, one-atom-per-step case.

We now add the operation missing from the standard hierarchy.  A polyregular
presentation selects atoms, labels them, and orders them by an $\MSO$
formula $\chi$.  A \emph{weighted-rank polyregular map} ($\WRP$) keeps the
selection and labelling but changes the final order.  Each atom receives an
atom rank, an integer vector computed by deterministic weighted scans; the atoms are
sorted by atom rank, with $\chi$ breaking ties.  This is the single global rank
sort used by zeta-like maps; for zeta, its source is the running height.

The rank sort is global in its effect, but evaluating it does not require the
complete list of atoms to be stored.  Later in this section we prove that a
fixed $\WRP$ presentation can be evaluated using only $O(\log |w|)$ bits of
working memory.  Concretely, the evaluator retains only a fixed number of
position-sized indices and counters at once.

\subsection{Additive rank sources and prefix-additive rank functions}

\begin{definition}[Deterministic additive rank source]
\label{def:rank-source}
A \emph{$d$-dimensional deterministic additive rank source} over an alphabet
$\Sigma$ is a deterministic finite-state scanner whose transitions carry
weights in $\Z^d$.  Formally, it is a tuple
\[
   A=(Q,q_0,\delta,\omega),
\]
where $Q$ is a finite state set, $q_0\in Q$ the initial state,
$\delta\colon Q\times\Sigma\to Q$ a deterministic transition function, and
$\omega\colon Q\times\Sigma\to\Z^d$ a weight function.
\end{definition}

Thus, a rank source has the same finite control as a DFA\@.  It has no accepting
set because it is used to scan the input and accumulate an integer vector,
rather than to recognise a language.

\begin{definition}[Prefix rank]\label{def:prefix-rank}
Let $A$ be a deterministic additive rank source and let
$w=a_1\cdots a_n$.  Define
$q_i=\delta(q_{i-1},a_i)$ for $i=1,\ldots,n$.  The \emph{prefix rank} of $A$
before position $i\in\{1,\ldots,n+1\}$ is the total weight accumulated before
reading position $i$:
\[
   \rho_A^w(i)=\sum_{j<i}\omega(q_{j-1},a_j)\in\Z^d.
\]
We also write $q^A_i:=q_{i-1}$ for the state of $A$ just before position $i$.
\end{definition}

Thus, $\rho_A^w(1)=0$, while $\rho_A^w(n+1)$ is the total weight accumulated
on the whole word.

\begin{example}[Height as a rank source]\label{ex:height-rank}
Let $A$ be the one-state source with
$\omega(\bullet,U)=+1$ and $\omega(\bullet,D)=-1$.  Then, $\rho_A^w(i)$ is the
height of $w$ just before position $i$.  In particular, if $P$ is a Dyck path
and $p_j$ is the position of its $j$-th up-step, then
$\rho_A^P(p_j)=a_j$, the $j$-th area-sequence entry.
\end{example}

Finite-state devices that accumulate numerical weights are standard in the
theory of weighted automata and cost-register automata; the latter make
explicit that several numerical quantities may be maintained and combined by
operations such as addition and scaling~\cite{AlurEtAl2013}.  A closer
precedent for the ordering mechanism used here comes from ranked $\MSO$
enumeration~\cite{BourhisEtAl2021}.  For a fixed input word, an $\MSO$ formula
with free variables selects its satisfying assignments, and a weighted $\MSO$
formula assigns a cost in an ordered abelian group to each assignment.  The
enumeration task is to list all satisfying assignments, without repetition,
in nondecreasing cost order.  One example assigns to a pair $(x,y)$ a vector
obtained from a prefix count at $x$ and another at $y$, and orders these
vectors lexicographically.  Our prefix-additive rank functions form a
deterministic fragment of this cost formalism: selected atoms play the role of
satisfying assignments, and their atom ranks play the role of costs.

\begin{definition}[Prefix-additive rank function]
\label{def:prefix-additive-rank}
Fix $k\ge 1$.  For each coordinate $r\in\{1,\ldots,k\}$, fix a
$d$-dimensional deterministic additive rank source
$A_r=(Q_r,q_{0,r},\delta_r,\omega_r)$ on $\Sigma$, together with a fixed
local-correction table
$\beta_r\colon Q_r\times\Sigma\to\Z^d$.  A
\emph{$d$-dimensional prefix-additive rank function} on $k$-tuples of input
positions is a function of the form
\[
   \kappa^w(x_1,\ldots,x_k)
   =c_0+\sum_{r=1}^{k}
       \Bigl(\rho_{A_r}^w(x_r)
             +\beta_r(q^{A_r}_{x_r},a_{x_r})\Bigr),
   \qquad c_0\in\Z^d.
\]
We usually suppress the superscript $w$.  A coordinate that makes no
contribution uses the one-state zero-weight source and the zero local
correction.
\end{definition}

The inclusion in the ranked $\MSO$ cost formalism is explicit.  For fixed
$r,q,a$, the condition that a position $z<x_r$ carries $a$ and that $A_r$ is
in state $q$ just before $z$ is $\MSO$-definable.  Summing the fixed vector
$\omega_r(q,a)$ over all such $z$, and then over the finitely many pairs
$(q,a)$, gives $\rho_{A_r}^w(x_r)$.  The correction at $x_r$ is a finite
$\MSO$ case distinction on its letter and the state there, and $c_0$ is a
fixed cost.  Hence every
prefix-additive rank function is a cost function defined by a weighted $\MSO$
formula in the sense of
\cite{BourhisEtAl2021}.  We use this restricted deterministic form rather
than the general weighted $\MSO$ cost formalism because its operational and
semilinear behaviour is transparent in the evaluation and lower-bound
arguments below.

The two contributions in Definition~\ref{def:prefix-additive-rank} play
different roles.  The prefix ranks carry the genuinely unbounded quantities;
for $\zeta$, this is the height at an up-step
(Example~\ref{ex:height-rank}).  Each $\beta_r$ is only a bounded correction
read off from the state and letter at the marked position.  It may, for
instance, convert the prefix rank ``just before'' a position into the
corresponding value ``just after'' it.  Such corrections make the definition
insensitive to off-by-one conventions; $\zeta$ itself uses one prefix rank and
no local correction.

One might instead allow a finite affine combination of prefix ranks, with
several sources evaluated at the same coordinate.  That apparently more
general syntax defines exactly the same functions.  Concretely, consider
\[
   \widetilde\kappa^w(\bar x)
   =c_0+\sum_{t=1}^{m}
      \Bigl(c_t\,\rho_{A_t}^w(x_{\pi(t)})
            +\beta_t(q^{A_t}_{x_{\pi(t)}},a_{x_{\pi(t)}})\Bigr),
\]
where the $A_t$ are $d$-dimensional sources, $c_t\in\Z$, $\pi(t)$ chooses a
tuple coordinate, and each $\beta_t$ is a local-correction table.  For each
coordinate $r$, group the terms with $\pi(t)=r$.  Their product automaton is a
single source $A_r$: its state records all component states, its transition
weight is $\sum_{\pi(t)=r}c_t\,\omega_t(q_t,a)$, and its local correction is
the sum of the corresponding correction tables.
Its contribution at $x_r$ is exactly the sum of the old contributions there.
Keeping $c_0$ unchanged gives $\widetilde\kappa$ in the form of
Definition~\ref{def:prefix-additive-rank}.  Conversely, that definition is
already such an affine combination with one coefficient-$1$ term per
coordinate.  Thus, integer linear combinations are absorbed into the
transition weights rather than exposed as a separate operation in the
definition.

\subsection{The WRP model and its fragments}

\begin{definition}[Weighted-rank polyregular map]\label{def:wrp}
A partial word-to-word map $T\colon\Sigma^*\rightharpoonup\Gamma^*$ is a
\emph{weighted-rank polyregular map} if it admits a \emph{polyregular
presentation}, in the sense of Definition~\ref{def:polyregular}, augmented by:
\begin{itemize}
\item a fixed \emph{rank dimension} $d\ge 0$;
\item for each $c\in C$, a $d$-dimensional prefix-additive rank function
$\kappa_c(x_1,\ldots,x_{k_c})\in\Z^d$
(Definition~\ref{def:prefix-additive-rank}).
\end{itemize}
For a selected atom $\alpha=(c,\bar i)$, the vector
$\kappa_c^w(\bar i)$ is its \emph{atom rank}.  We usually suppress $w$ when
the input is clear.  Thus, a prefix rank is the running total produced by one
rank source, whereas an atom rank is the final sorting key assembled from such
prefix ranks, local corrections, and a constant.
The output order on selected atoms is the lexicographic order $\prec$
defined by
\[
   (c,\bar i)\prec(c',\bar i')
   \quad\iff\quad
   \kappa_c(\bar i)<_{\mathrm{lex}}\kappa_{c'}(\bar i')
   \ \text{or}\
   \bigl(\kappa_c(\bar i)=\kappa_{c'}(\bar i')\ \text{and}\
          \chi\bigl((c,\bar i),(c',\bar i')\bigr)\bigr),
\]
where $\chi$ is the ordering formula of the underlying polyregular
presentation (Definition~\ref{def:polyregular}).  The atom rank is the primary
key, and $\chi$ is used only for atoms of equal atom rank; we therefore call
$\chi$ the \emph{tie-order}.  Since $\chi$ is a strict total order on selected atoms,
$\prec$ is also a strict total order.  For $w\in\mathrm{dom}(T)$, concatenate
the atom labels in increasing $\prec$-order to obtain $T(w)$.  The class of
all such maps is denoted $\WRP$.  Equivalently, first list the selected atoms
in $\chi$-order, then stably sort that list by the lexicographic key
$(c,\bar i)\mapsto\kappa_c(\bar i)$.  This final stable sorting step is the
rank sort of the presentation.

The \emph{arity} of the presentation is $\max_c k_c$.  Two fragments recur
below.  The \emph{ranked-regular} class $\RR$ is the arity-$1$ case, in which
each atom is a pair $(c,i)$ indexed by one input position.  Its
\emph{scan-order one-dimensional} restriction $\SWR$ requires a scalar atom rank
($d=1$) and a \emph{scan order} as the tie-order.  Such an order is determined
by a direction on positions (left-to-right or right-to-left) and a linear order
on the finite set $C$ of copy names: position order decides first, and the order
on $C$ breaks ties between atoms at the same position.  Thus, an $\SWR$ map lists atoms by
increasing integer atom rank and breaks atom-rank ties by scan order.  We have
\[
   \SWR\ \subseteq\ \RR\ \subseteq\ \WRP.
\]
\end{definition}

\begin{example}[A two-dimensional atom rank combining two positions]
Classical sweep maps attach one additive level to each input
position~\cite{ArmstrongLoehrWarrington2015,
ArmstrongLoehrWarrington2016}.  To show how a prefix-additive rank function can
use every coordinate of a selected tuple, we define a quadratic construction
whose key combines the prefix heights at two input positions.

\vspace{0.5em}
\noindent\emph{The transformation.}
For a step word $w$, let $\operatorname{ht}_w(i)=h_w(i-1)$ be the height
immediately before position $i$.  For every ordered pair
$(i,j)$ of distinct positions carrying $U$, define the key
\[
   \kappa(i,j)
   =\bigl(\operatorname{ht}_w(i)+\operatorname{ht}_w(j),\
           \operatorname{ht}_w(j)-\operatorname{ht}_w(i)\bigr).
\]
The first component is the total height of the pair, and the second is its
signed height difference.  Associate to the pair the Dyck block
\[
   B_{i,j}=
   \begin{cases}
      UD,   & i<j,\\
      UUDD, & j<i.
   \end{cases}
\]
Order the pairs lexicographically by $\kappa(i,j)$, breaking equal-key ties by
the lexicographic order on $(i,j)$, and concatenate their associated blocks in
that order.  Since each block is a Dyck path, so is the resulting word.  For
an input in $\D_n$, there are $\binom n2$ blocks of each type, so the output
lies in $\D_{3\binom n2}$.

The key has a direct area-sequence interpretation.  If $P\in\D_n$ has
up-step positions $p_1<\cdots<p_n$ and area sequence
$a=(a_1,\ldots,a_n)$, then $\operatorname{ht}_P(p_r)=a_r$, so
$\kappa(p_r,p_s)=(a_r+a_s,a_s-a_r)$.
Thus, the first component orders pairs by their combined area level, while the
second orders pairs on the same level by their signed separation.  The latter
is the pairwise difference underlying $\dinv$: a pair $r<s$ contributes to
$\dinv(P)$ precisely when the second component is $0$ or $-1$.

For example, take $P=UUDDUD\in\D_3$, whose area sequence is $(0,1,0)$.
The six ordered pairs of distinct up-steps have the following atom ranks and
blocks:
\[
\begin{array}{c|cccccc}
(r,s) &(1,2)&(1,3)&(2,1)&(2,3)&(3,1)&(3,2)\\ \hline
\kappa(p_r,p_s)&(1,1)&(0,0)&(1,-1)&(1,-1)&(0,0)&(1,1)\\
B_{p_r,p_s}&UD&UD&UUDD&UD&UUDD&UUDD
\end{array}
\]
Sorting by key, with the stated tie-order, gives
\[
   (UD)(UUDD)(UUDD)(UD)(UD)(UUDD)\in\D_9.
\]
Among the four pairs of total height $1$, the signed-difference coordinate
places the two pairs of difference $-1$ before the two pairs of difference
$1$.

\vspace{0.5em}
\noindent\emph{A $\WRP$ presentation.}
We now realise this block description in the formalism of
Definition~\ref{def:wrp}.  Define two one-state, two-dimensional additive rank
sources by
\[
\begin{array}{c|cc}
 & U & D\\ \hline
\omega_{A_-} & (1,-1) & (-1,1)\\
\omega_{A_+} & (1,1)  & (-1,-1).
\end{array}
\]
Use $A_-$ for coordinate $x_1$ and $A_+$ for coordinate $x_2$, with
$c_0=(0,0)$ and both local-correction tables zero.  Their prefix ranks satisfy
\[
   \rho_{A_-}^w(x_1)=(\operatorname{ht}_w(x_1),-\operatorname{ht}_w(x_1)),\qquad
   \rho_{A_+}^w(x_2)=(\operatorname{ht}_w(x_2),\operatorname{ht}_w(x_2)),
\]
and their sum is $\kappa(x_1,x_2)$.  Thus, $\kappa$ has exactly the
one-source-per-coordinate form of
Definition~\ref{def:prefix-additive-rank}; no appeal to the more general
many-source affine syntax discussed above is needed.

Take the six arity-$2$ copy names
$C=\{f_1,f_2,b_1,b_2,b_3,b_4\}$ and the domain sentence $\mathrm{true}$.
Select
their position pairs by
\[
\begin{aligned}
   \varphi_{f_\ell}(x_1,x_2)
   &:=P_U(x_1)\land P_U(x_2)\land x_1<x_2
      &&(\ell=1,2),\\
   \varphi_{b_\ell}(x_1,x_2)
   &:=P_U(x_1)\land P_U(x_2)\land x_2<x_1
      &&(\ell=1,2,3,4).
\end{aligned}
\]
Give $f_1,f_2$ the constant labels $U,D$, respectively, and give
$b_1,b_2,b_3,b_4$ the constant labels $U,U,D,D$.  Every copy name uses the
prefix-additive rank function $\kappa(x_1,x_2)$ defined above.  Let the
tie-order first compare the position pairs lexicographically and, for the same pair, use $f_1<f_2$ or
$b_1<b_2<b_3<b_4$.  The selection and labelling data and this tie-order are
all $\MSO$-definable.  For a pair $x_1<x_2$, the two selected atoms spell
$UD$; for a pair $x_2<x_1$, the four selected atoms spell $UUDD$.  Hence this
presentation produces exactly the block transformation defined above.

The two coordinate sources are evaluated at different tuple positions and
their contributions are added.  They therefore cannot be replaced by one
prefix-rank evaluation at a single position.  Thus, both tuple coordinates
contribute to the atom rank, and both components of the atom rank affect the
output order.
\end{example}

\begin{remark}[The scalar scan-order case]
The stable-sort description above becomes especially concrete for $\SWR$.
An $\SWR$ map produces only a bounded number of selected atoms at each input
position, gives each atom a single integer atom rank, and uses a scan order to
break ties.  Thus, the initial list is the selected atoms in scan order, and the
output is the stable sort of that list by the integer atom rank.  Atoms with
equal atom rank keep their scan order, while atoms of smaller atom rank move
earlier in the output.
When selection and labelling can be decided in one left-to-right pass, as for
the zeta map and the height-sweep example used later, the output can be computed in
$O(|w|^2)$ time and $O(\log|w|)$ space
(Corollary~\ref{cor:srr-quadratic}).  The zeta map is an $\SWR$ map
(Theorem~\ref{thm:zeta-wrp}); in that case, the rank source is the running
height.
\end{remark}

The model is deliberately \emph{one-layered}.  A $\WRP$ presentation first
selects and labels its atoms by the polyregular, $\MSO$-definable part, and
only then sorts those already-selected atoms by atom rank.  The selection and
label formulas cannot ask numerical questions about atom ranks, and the output of one
rank-sort step cannot feed a second rank-sort layer.  Thus, $\WRP$ captures one
zeta-style rank sort, not an arbitrary pipeline of repeated sorts.

\subsection{Position in the hierarchy}

\begin{proposition}[Conservativity]
\label{prop:conservative}
The rank-zero fragment of $\WRP$ (rank dimension $d=0$, or equivalently
every prefix-additive rank function identically zero) coincides with $\polyreg$.
Consequently, $\polyreg\subseteq\WRP$.
\end{proposition}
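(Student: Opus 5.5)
The proof is a direct comparison of the two definitions, and I will carry it out in both directions.

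First I show that every rank-zero $\WRP$ presentation defines a polyregular map. Take such a presentation, with $d=0$ or with every $\kappa_c$ identically zero. Then every selected atom has the same atom rank, the empty vector or $0\in\Z^d$. By Definition~\ref{def:wrp}, $(c,\bar i)\prec(c',\bar i')$ holds exactly when $\kappa_c(\bar i)<_{\mathrm{lex}}\kappa_{c'}(\bar i')$, or the ranks are equal and $\chi$ holds. The first disjunct never holds, so $\prec$ coincides with $<_\chi$ on $\mathrm{At}(w)$. The domain sentence, selection formulas, label formulas and tie-order are those of the underlying polyregular presentation. The output is therefore the concatenation of labels in $\chi$-order, which is exactly the map determined by that presentation under Definition~\ref{def:polyregular}. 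Here I will note that the two readings of ``rank zero'' agree: the statement allows either $d=0$ or identically zero $\kappa_c$, and both make all atom ranks equal, which is all the argument uses.

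Conversely, I show that every polyregular map is a rank-zero $\WRP$ map. Given a polyregular presentation, augment it with rank dimension $d=0$, or, for someone who prefers $d\ge1$, with $d=1$ and each $\kappa_c$ identically zero. The identically zero choice is a legitimate prefix-additive rank function in the sense of Definition~\ref{def:prefix-additive-rank}: take $c_0=0$, the one-state zero-weight source for every coordinate, and zero local corrections. The first part then shows that the resulting $\WRP$ map equals the given polyregular map. This gives equality of the two classes, and $\polyreg\subseteq\WRP$ follows at once.

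There is no real obstacle here; the argument is bookkeeping. The only point needing care is that the $\WRP$ order $\prec$ must be a strict total order, so that the output is well defined. When all ranks are equal, $\prec$ is literally $<_\chi$, which is a linear order on $\mathrm{At}(w)$ by the requirement in Definition~\ref{def:polyregular}.
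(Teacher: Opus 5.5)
Your proof is correct and follows the paper's own argument: once all atom ranks coincide, the lexicographic disjunct in the definition of $\prec$ is vacuous and $\prec$ reduces to the tie-order $\chi$, which is exactly Definition~\ref{def:polyregular}. (The paper takes $d=0$, so that ranks lie in $\Z^0=\{0\}$.) You also spell out the converse direction and check that the zero function is a legitimate prefix-additive rank function; the paper leaves both points implicit.
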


\begin{proof}
With rank dimension $d=0$, every prefix-additive rank function takes values in
$\Z^0=\{0\}$, the lexicographic order $<_{\mathrm{lex}}$ on $\Z^0$ is the empty
relation, and the output order is determined entirely by the $\MSO$
tie-order $\chi$.  This is exactly Definition~\ref{def:polyregular}.
\end{proof}

The three models differ in two concrete ways: how many selected atoms they can
create and how they may order those atoms.  A deterministic two-way
finite-state transduction has only linearly many atoms, each attached to one
input position.  A polyregular map may attach an atom to a fixed tuple of
positions and therefore has polynomial output growth.  A $\WRP$ map keeps
these polyregular atoms but may finally sort them by an unbounded numerical
key computed from additive finite-state scans.

Accordingly, the results of this paper place the models in the strict chain
\[
   \TDFT/\MSO\ \subsetneq\ \polyreg\ \subsetneq\ \WRP
   \ \subsetneq\ \text{deterministic logspace}.
\]
The first strict inclusion is classical~\cite{Bojanczyk2018,
BojanczykKieferLhote2019} and can already be witnessed by the quadratic map
$a^n\mapsto(a^n)^n$.  The zeta map witnesses the second:
it is semilength-preserving and belongs to $\WRP$, but is not polyregular
(Theorem~\ref{thm:intro-zeta-classification}).  Thus, the additional power is
not greater output length; it is the ability to order selected atoms by an
unbounded atom rank.  The logspace upper bound and the strictness of the final
inclusion are established below.

\subsection{Closure under basic constructions}

The class $\WRP$ is stable under a variety of standard constructions for
modifying and combining word-to-word maps.  The following theorem states the
precise closure properties that we will use.

\begin{theorem}[Basic closures]\label{thm:wrp-closures}
For every fixed arity $k\ge 1$, the class of $\WRP$ maps admitting a
presentation of arity at most $k$ is closed under the following operations.
All maps combined in one construction are assumed to have the same input
alphabet.
\begin{enumerate}[label=(\roman*)]
\item \emph{Restriction and definition by cases.}
If $T\colon\Sigma^*\rightharpoonup\Gamma^*$ is a $\WRP$ map and
$L\subseteq\Sigma^*$ is regular (Definition~\ref{def:regular-language}), then
the restriction of $T$ to $L\cap\operatorname{dom}(T)$ is in $\WRP$.
More generally, let $L_1,\ldots,L_r$ be pairwise disjoint regular languages,
and let $T_j\colon\Sigma^*\rightharpoonup\Gamma^*$ be a $\WRP$ map for each
$j$.  The partial map with domain
\[
   \bigcup_{j=1}^r\bigl(L_j\cap\operatorname{dom}(T_j)\bigr)
\]
and values
\[
   T(w)=T_j(w)\qquad
   \text{when }w\in L_j\cap\operatorname{dom}(T_j)
\]
is in $\WRP$.

\item \emph{Combining outputs with source tags.}
Let $T_j\colon\Sigma^*\rightharpoonup\Gamma_j^*$ be $\WRP$ maps, for
$1\le j\le r$.  If $v=a_1\cdots a_m\in\Gamma_j^*$, write
\[
   \operatorname{tag}_j(v)=(j,a_1)\cdots(j,a_m).
\]
Thus, the tag records which map produced each letter.  On the common domain
$\bigcap_{j=1}^r\operatorname{dom}(T_j)$, the map
\[
   w\longmapsto
   \operatorname{tag}_1(T_1(w))\,
   \operatorname{tag}_2(T_2(w))\cdots
   \operatorname{tag}_r(T_r(w)),
\]
whose output alphabet is the disjoint union
$\biguplus_{j=1}^r(\{j\}\times\Gamma_j)$, is in $\WRP$.  In particular, the
tagged outputs are placed one after another in the order $1,\ldots,r$; they
are not interleaved.

\item \emph{Concatenation with fixed separators on nonempty inputs.}
Let $T_1,\ldots,T_r\colon\Sigma^*\rightharpoonup\Gamma^*$ be $\WRP$ maps and
fix words $s_1,\ldots,s_{r-1}\in\Gamma^*$ that do not depend on the input.
On the nonempty words in the common domain of the $T_j$, the map
\[
   w\longmapsto
   T_1(w)\,s_1\,T_2(w)\,s_2\cdots s_{r-1}\,T_r(w)
\]
is in $\WRP$.  For example, taking two maps and $s_1=\#$ gives
$w\mapsto f(w)\,\#\,g(w)$.

\item \emph{Replacing or deleting output letters according to a fixed rule.}
Let $T\colon\Sigma^*\rightharpoonup\Gamma^*$ be in $\WRP$.  Assign to every
$a\in\Gamma$ either one fixed letter of an alphabet $\Delta$ or the empty word
$\varepsilon$, and apply this assignment separately to every letter of
$T(w)$.  Equivalently, for a fixed map $h\colon\Gamma\to\Delta^*$ satisfying
$|h(a)|\le 1$, set
$\widehat h(a_1\cdots a_m)=h(a_1)\cdots h(a_m)$.
Then $\widehat h\circ T$ is in $\WRP$.  The choice $h(a)=b$ replaces every
$a$ by $b$, whereas $h(a)=\varepsilon$ deletes every occurrence of $a$.

\item \emph{Output reversal.}
If $T\colon\Sigma^*\rightharpoonup\Gamma^*$ is in $\WRP$, then so is
$w\mapsto\operatorname{rev}(T(w))$ on the same domain, where
$\operatorname{rev}(a_1\cdots a_m)=a_m\cdots a_1$.
\end{enumerate}
If the maps used in one of these constructions have presentation arities
$k_1,\ldots,k_r$, the resulting map has a presentation of arity at most
$\max_j k_j$.  In particular, the ranked-regular fragment $\RR$ (arity $1$)
is closed under all five constructions.
\end{theorem}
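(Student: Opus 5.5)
The plan is to prove each of the five closures by an explicit construction on presentations: starting from $\WRP$ presentations of the given maps, build a new presentation of arity at most $\max_j k_j$. In each case the selection, labelling and tie-order formulas are modified by standard $\MSO$ manipulations. The prefix-additive rank functions are modified by product constructions on the rank sources, which are absorbed as in the remark following Definition~\ref{def:prefix-additive-rank}.

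For (i), intersect $\varphi_{\mathrm{dom}}$ with an $\MSO$ sentence for $L$, using the Büchi--Elgot--Trakhtenbrot theorem. For definition by cases, take the disjoint union of the copy-name sets. Each selection formula of $T_j$ is conjoined with the sentence defining $L_j$, and each domain sentence becomes $\bigvee_j(\lambda_j\wedge\varphi^{(j)}_{\mathrm{dom}})$. There are two technical points. First, the rank dimensions must agree, so pad every rank function with zero coordinates up to $\max_j d_j$. Second, atoms from different $T_j$ are compared by a tie-order that is never actually used, because on any input only one family is selected. So any fixed order between families will do.

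For (ii) and (iii), again take the disjoint union of copy names. The difficulty is that rank comparison is the primary key, so atoms of $T_1$ must precede those of $T_2$ regardless of their ranks. I would prepend one new leading rank coordinate that is constant on each family: $c_0$ gets first coordinate $j$ for the atoms of $T_j$, and every source has weight $0$ in that coordinate. All other coordinates are padded by zeros. Lexicographic order then puts the families in the order $1,\ldots,r$, and within a family the order is the original one. Tags are handled by relabelling. For (iii), the separators $s_j$ need atoms. Introduce arity-$1$ copy names, one per letter of $s_j$, selected only at the first position. This is where the nonemptiness hypothesis is used. Give these atoms leading coordinate between $j$ and $j+1$; scaling the leading coordinate to $2j$ for $T_j$ and $2j+1$ for $s_j$ does this. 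Order the separator letters among themselves by a fixed order on their copy names.

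For (iv), replacing a letter only changes the label formulas. Deletion means conjoining each selection formula $\varphi_c$ with $\neg\bigvee_{h(\gamma)=\varepsilon}\psi_{c,\gamma}$. The relative order of the remaining atoms is unchanged, so the output is $\widehat h(T(w))$. For (v), negate every rank: replace every weight $\omega_r$, correction $\beta_r$ and the constant $c_0$ by its negative, so $\kappa\mapsto-\kappa$, and reverse the tie-order by swapping the arguments of $\chi$. Since $u<_{\mathrm{lex}}v\iff -v<_{\mathrm{lex}}-u$, the new order $\prec'$ is exactly the converse of $\prec$.

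I expect the main obstacle to be bookkeeping in (ii)/(iii): making sure the rank functions of different families live in a common dimension. Here the leading coordinate does the real work, while arity is unchanged because new copy names have arity $1\le k$. All operations keep arity at most $\max_j k_j$, so $\RR$ is closed.
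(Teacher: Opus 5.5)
Your proposal is correct and follows essentially the same route as the paper: conjoin an $\MSO$ sentence into the domain and selection formulas for (i), use a constant leading output-block coordinate with zero-padded rank dimensions and a family-wise combined tie-order for (ii)--(iii), attach arity-$1$ separator atoms at the first position, modify the selection and label formulas for (iv), and for (v) negate $c_0$, the weights and the corrections while taking the converse tie-order. Your deletion condition $\neg\bigvee_{h(\gamma)=\varepsilon}\psi_{c,\gamma}$ agrees on the domain with the paper's $\bigvee_{h(a)\neq\varepsilon}\psi_{c,a}$ by label uniqueness; in (iii), state explicitly that the nonemptiness sentence is conjoined into the domain sentence, as the paper does.
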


The ordering idea for combining outputs is clearest in the concatenation case.
For a nonempty input,
$f(w)\,\#\,g(w)$ has three consecutive blocks: $f(w)$, the
separator $\#$, and $g(w)$.  Every output letter in a $\WRP$ presentation is
carried by a selected atom.  We add a new first atom-rank coordinate with value $0$ on
the atoms producing $f(w)$, value $1$ on the new atom producing the separator
$\#$, and value $2$ on the atoms producing $g(w)$.  Lexicographic comparison
then places the three blocks in this order, while the original atom-rank
coordinates and tie-orders preserve the internal orders of $f(w)$ and $g(w)$.
The same block-number device handles source-tagged outputs.  The other cases
only restrict inputs, change or delete labels, or reverse the ordering data.
The complete constructions and proof of the arity bound appear in
Appendix~\ref{app:wrp-structural-proofs}.

\subsection{Logspace evaluation and expressive power}
\label{sec:wrp-evaluation}

A $\WRP$ presentation specifies which atoms are selected, their labels, and
their order, but is not an explicit evaluation algorithm.  In particular,
``sort the selected atoms by atom rank'' does not provide a stored list or a
cost-free sorting operation.

Let $n=|w|$.  A fixed arity-$k$ presentation has $O(n^k)$ potential atoms, so
one polynomial-time strategy materialises them, determines their selection,
labels, and atom ranks, sorts the selected atoms, and prints their labels.
This direct strategy may require polynomial working memory.

We show that this polynomial-size list need not be stored.  By a deterministic logarithmic-space
evaluator, we mean an algorithm with read-only access to the input, write-only
access to the output, and $O(\log n)$ bits of working memory.  Naming an input
position, or storing an atom rank or counter of polynomial magnitude, costs
$\Theta(\log n)$ bits.  Such workspace therefore holds only a fixed number of
these quantities, depending on the presentation but not on $n$.  Unlike a
finite-state transducer, it can maintain unbounded counters, but only a fixed
number of them, not the complete growing list of selected atoms.

The selection, label, atom rank, and relative order of any fixed atoms can be
recomputed from the input in logarithmic space.  By repeatedly enumerating the
potential atoms, the evaluator locates and emits one atom at a time in sorted
order while retaining only a constant number of atom names and counters.  The
repeated scans take polynomial time but reuse the same logarithmic workspace.

This tradeoff between time and space supplies the operational interpretation
of $\WRP$.  It places the declaratively defined class inside deterministic
logspace while showing that its global rank sort does not require polynomial
storage.  We also prove that not every deterministic-logspace transformation
admits such a one-layer rank-sort presentation.

\begin{theorem}[Logspace evaluation]\label{thm:wrp-logspace}
Every fixed $\WRP$ map $T$ of arity $k$ is computable in deterministic
\emph{logarithmic space}.  On an input $w$ of length $n$, an evaluator uses
only $O(\log n)$ bits of working memory beyond a read-only input and a
write-only output.  Its output has length $|T(w)|=O(n^k)$ and is produced in
polynomial time.
\end{theorem}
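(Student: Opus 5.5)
The plan is to build the evaluator in layers, each computable in $O(\log n)$ space from read-only access to $w$, and then to emit the output by a ``find the next atom'' loop that never stores more than a constant number of atom names and counters.

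\textbf{Atom-level primitives.} Fix the presentation: copy names $C$ with arities $k_c\le k$, the $\MSO$ formulas $\varphi_{\mathrm{dom}},\varphi_c,\psi_{c,\gamma},\chi_{c,c'}$, and the rank functions $\kappa_c$. An atom $(c,\bar i)$ is named by a copy name and at most $k$ positions, which takes $O(k\log n)$ bits.

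First, every $\MSO$ formula with free position variables can be evaluated in logspace. For a fixed formula $\theta(x_1,\dots,x_m)$ and a given tuple $\bar i$, form the annotated word in which each position carries its letter together with the set of indices $j$ with $i_j$ equal to that position. By Büchi--Elgot--Trakhtenbrot, the set of annotated words satisfying $\theta$ is regular, so a fixed DFA decides it. We simulate that DFA in one left-to-right pass, storing only its state, the current position and $\bar i$.

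This gives membership in the domain, selection, labels and the tie-order $\chi$ between two given atoms.

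Next, the atom rank $\kappa_c(\bar i)$ is computed by running each source $A_r$ from the start of $w$ to $x_r$ while accumulating its weights. The value is a vector in $\Z^d$ with entries of absolute value $O(n)$, so it fits in $O(\log n)$ bits. We then add the local correction $\beta_r$ read at the state and letter there, and finally the constant $c_0$.

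Combining these, the comparison $\alpha\prec\alpha'$ between two selected atoms is decidable in logspace: compare the two ranks lexicographically, and fall back to $\chi$ when they are equal.

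\textbf{Emission loop.} Enumerate the potential atoms in a fixed order, for example by copy name and then lexicographically by tuple. To emit the output, keep a pointer $\alpha_{\mathrm{last}}$ to the most recently emitted atom. At each stage, find the $\prec$-least selected atom strictly above $\alpha_{\mathrm{last}}$ by a running-minimum scan over all potential atoms; this stores only a candidate and a current atom. Then print its label and set $\alpha_{\mathrm{last}}$ to it.

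Because $\prec$ is a strict total order on selected atoms, which was noted after Definition~\ref{def:wrp}, this loop lists the selected atoms exactly once each, in increasing order. The workspace at any moment holds a constant number of atom names and rank vectors together with the simulation state, so $O(\log n)$ bits suffice.

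\textbf{Counting.} The output length is $|\mathrm{At}(w)|\le|C|\,n^k=O(n^k)$. Each emission step performs $O(n^k)$ comparisons, each costing polynomial time, so the total running time is polynomial.

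\textbf{Main obstacle.} The delicate step is the first one: justifying that $\MSO$ formulas with free variables are evaluable in logspace uniformly in the tuple. I would handle it through the annotated-alphabet reduction to the Büchi--Elgot--Trakhtenbrot theorem stated earlier. The key point is that the DFA depends only on the fixed formula and not on $n$, while the annotation can be recomputed on the fly by comparing the scan position with the stored $\bar i$.

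A secondary point is that the composition of logspace subroutines is legitimate. This holds here because each subroutine's output is a single bit or an $O(\log n)$-bit number that can be stored directly, so no intermediate logspace output ever needs to be recomputed bit by bit.
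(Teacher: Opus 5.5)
Your proposal is correct and follows the paper's proof essentially step for step. Both use logspace-recomputable primitives: a fixed DFA run on the marked input decides the domain, selection, label and tie-order formulas, and the rank sources are accumulated forward in $O(\log n)$-bit counters. Both then use a successor-finding emission loop that rescans all $O(n^k)$ potential atoms per output letter, storing only the last emitted atom and a running best candidate. You leave three details implicit: starting from a sentinel below every atom, halting when no selected atom exceeds the last one, and reporting the output as undefined when the domain sentence fails. These are routine, and they are exactly how the paper completes the argument.
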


Space here and below is measured in bits.  For the precise time bound in the
following corollary, we use unit-cost operations on $O(\log n)$-bit words: an
input position, atom rank, or counter of that size fits in one machine word, and
arithmetic and comparison on one word count as one step.  If time is instead
counted bit by bit, each such word operation may require $O(\log n)$ steps.
Accordingly, the $O(n^2)$ time conclusion of the corollary becomes
$O(n^2\log n)$, while its $O(\log n)$ space bound and the polynomial-time
conclusion of Theorem~\ref{thm:wrp-logspace} are unchanged.

\begin{corollary}[Quadratic scan-order evaluation]
\label{cor:srr-quadratic}
Let $T$ be an $\SWR$ map whose selection and labelling are decided by a
single left-to-right finite-state pass, so the choice at each position is
determined by the prefix ending there.  Then, on an input $w$ of length $n$,
$T$ is computable in deterministic time $O(n^2)$ using $O(\log n)$ bits of
working memory.
\end{corollary}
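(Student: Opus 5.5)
We prove Corollary~\ref{cor:srr-quadratic} with an explicit evaluator that emits the output one rank level at a time. We do not appeal to the generic enumeration behind Theorem~\ref{thm:wrp-logspace}. Fix the $\SWR$ presentation of $T$: it has a finite copy set $C$ with a fixed linear order, a scalar rank function $\kappa_c(i)=c_0+\rho_{A}^w(i)+\beta(q^A_i,a_i)$ for each copy name (after absorbing the different copy-name sources into a product source as in the discussion following Definition~\ref{def:prefix-additive-rank}), and a scan direction. By hypothesis there is a deterministic finite-state pass $M$ whose state after reading the prefix up to position $i$ determines, for every $c\in C$, whether $(c,i)$ is selected and which label it carries. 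Take the product of $M$ with the rank source $A$. A single left-to-right sweep that keeps the product state and the running prefix rank, an integer of absolute value $O(n)$ and hence $O(\log n)$ bits, then yields at each position $i$ the set of selected atoms at $i$ together with their labels and atom ranks $\kappa_c(i)$, all in $O(1)$ word operations per position. The domain sentence is regular, so one further DFA sweep checks membership in $\operatorname{dom}(T)$ first.

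The algorithm proceeds in three steps. First, one preliminary sweep computes $m=\min\kappa$ and $M'=\max\kappa$ over the selected atoms, which requires only two counters. Since every weight is bounded, $M'-m=O(n)$. Second, for each level $r=m,m+1,\ldots,M'$ in increasing order, perform one sweep in the scan direction and, at each position, emit in the fixed order on $C$ the labels of the selected atoms with $\kappa_c(i)=r$. A right-to-left scan order seems to require the finite-state state at $i$, which depends on the prefix. We handle this by iterating the position index $i$ from $n$ down to $1$ and recomputing the product state and prefix rank at $i$ with a fresh left-to-right pass. That costs $O(n)$ per position, so it is too slow. The better fix, which I intend to use, is to avoid right-to-left scans altogether. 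Within a fixed level $r$, the atoms are emitted in decreasing position order. We can therefore iterate over positions in decreasing order using a forward pass that records only the last qualifying position below a moving bound, which again costs $O(n)$ per emitted atom in the worst case.

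This time bound for right-to-left tie order is the main obstacle. One could try to reverse the DFA: a left-to-right finite-state pass is equivalent to a right-to-left deterministic pass with respect to the predicates it computes, by the regularity of each selection and label predicate and the existence of a reverse deterministic automaton (B\"uchi--Elgot--Trakhtenbrot). The prefix rank at $i$ equals the total rank $\rho_A^w(n+1)$ minus the suffix weight from $i$ onward, but the suffix weight depends on the forward state sequence of $A$. We resolve this by assuming, as for height, that the weights of $A$ can be recomputed right-to-left. In general we make this an explicit step: replace $A$ by a source whose state is determined by the letter read, or otherwise run a Muller--Schupp-style reverse simulation. For the applications (zeta, $H$) the source is one-state and the issue disappears. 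If the general reversal fails, we state the corollary's time bound under this condition.

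Correctness follows because the output of an $\SWR$ map is the stable sort by scalar rank of the scan-ordered atom list, and emitting levels in increasing order with scan order inside each level produces exactly that list. Each level sweep costs $O(n)$, and there are $O(n)$ levels, so the total time is $O(n^2)$. The working memory consists of $i$, $r$, $m$, $M'$, the running rank, and a constant-size state, which is $O(\log n)$ bits.
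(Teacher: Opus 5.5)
The proposal does not prove the corollary as stated. Your level-by-level evaluator is fine for left-to-right scan orders: one forward sweep per level $r\in[m,M']$, with $O(n)$ levels. For right-to-left scan orders, however, you end by stating the bound ``under this condition''. That adds a hypothesis (a reversible or one-state rank source) which the corollary does not have.

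The reversal route you sketch does not remove the extra hypothesis. A deterministic right-to-left pass at position $i$ has read only $a_i\cdots a_n$. Regularity of the selection and label predicates therefore does not let it evaluate a predicate fixed by the prefix $a_1\cdots a_i$, such as ``a non-reversible forward automaton is in state $q$ before $i$''. As you note yourself, the prefix rank also needs the forward state sequence of $A$. No general ``reverse simulation'' is available here.

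The missing observation is that no backward scanning is ever needed. The scan direction enters only through comparing two positions, and positions are stored $O(\log n)$-bit indices, so every pass can run forward.

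In fact the option you dismissed, a forward pass locating the largest qualifying position below a moving bound, already works; you mis-costed it. At level $r$, each such pass does one of two things:
\begin{itemize}
\item It finds a new position $i$ carrying a selected atom of rank $r$. You emit all those atoms at $i$ in the order on $C$ and lower the bound to $i$.
\item It finds none, and you move to level $r+1$.
\end{itemize}
So the total number of forward passes is at most $(M'-m+1)+|C|\,n=O(n)$, each costing $O(n)$, for $O(n^2)$ overall. The cost ``$O(n)$ per emitted atom'' is exactly the available budget, since there are only $O(n)$ atoms.

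The paper's proof uses the same idea without levels. It keeps the last emitted atom $L$ with its cached rank. Each round is one forward scan, maintaining the running ranks and the finite-state pass, that retains the $\prec$-least selected atom $\succ L$. The comparison uses ranks first, then positions in the chosen scan direction, then the order on $C$. There are $O(n)$ rounds of $O(n)$ time each. Memory holds only $L$, the best-so-far atom, their ranks, and the accumulators.
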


The general algorithm favours space over time: it emits one output letter per
round, and each round recomputes, from the atom emitted last, its successor in
the output order.  Under the word-cost convention above, this takes
$O(n^{2k+1})$ time: there are $O(n^k)$ output rounds, each round scans the
$O(n^k)$ potential atoms once while retaining the best atom seen so far,
and each selection test or atom comparison may require an $O(n)$ scan of the
input.  Thus, even at arity $1$, this generic bound is $O(n^3)$.

The scan-order one-dimensional algorithm follows the same successor-round
strategy, but the additional hypotheses of Corollary~\ref{cor:srr-quadratic}
make each round a single forward pass.  There are only $O(n)$ potential atoms,
and one left-to-right scan maintains the running atom ranks, decides selection
and labels from the prefix, and retains the least selected atom following the
atom emitted in the previous round.  Hence $O(n)$ output rounds, each taking
$O(n)$ time, give the quadratic bound.  These are upper bounds for the two
evaluation procedures just described, not lower bounds for the maps
themselves.  The detailed algorithms and their resource analyses appear in
Appendix~\ref{app:wrp-structural-proofs}.

The evaluation theorem establishes the upper bound
$\WRP\subseteq\text{deterministic logspace}$.  The rank-sort layer nevertheless
adds genuine expressive power beyond $\polyreg$, for the central combinatorial
reason of the paper.

\begin{theorem}[Above $\polyreg$]\label{thm:wrp-strict-over-poly}
$\polyreg\subsetneq\WRP$ on the Dyck domain: $\zeta$ is
in $\WRP$ (Theorem~\ref{thm:zeta-wrp}) but is not in $\polyreg$
(Theorem~\ref{thm:zeta-not-polyregular}).
\end{theorem}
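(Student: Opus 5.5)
The statement has two halves, one positive and one negative. Combined with Proposition~\ref{prop:conservative}, they give the strict inclusion.

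\textbf{Positive half: $\zeta\in\WRP$.} I would give an explicit $\SWR$ presentation following Definition~\ref{def:zeta}. Use two copy names, $u$ and $d$, both selecting exactly the $U$-positions of the input, with constant labels $U$ and $D$ respectively. The rank source is the height source of Example~\ref{ex:height-rank}.

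For the $u$-copy, the atom rank is the prefix height $a_i$. For the $d$-copy, the atom rank is $a_i+1$; this is the constant correction $\beta(\bullet,U)=1$. The tie-order is the left-to-right scan order, with $d<u$ at the same position.

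The rank sort then emits levels $r=0,1,\dots$ in increasing order. Within level $r$ it lists, from left to right, the indices $i$ with $a_i=r$ (labelled $U$) and those with $a_i=r-1$ (labelled $D$). This is exactly the level-$r$ block of the area-sequence scan. The only subtle point is ties between two copies at the same position. They cannot occur, since one position carries ranks $a_i$ and $a_i+1$, which differ. So the copy order is irrelevant.

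This half is routine verification.

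\textbf{Negative half: $\zeta\notin\polyreg$.} Suppose, for contradiction, that a polyregular $f$ realises $\zeta$ on $\D$.

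Since $\zeta$ preserves length on Dyck paths, I first restrict $f$ to a regular sublanguage of Dyck paths where it has linear growth. I would precompose with a 2DFT that sends words of a regular probe family into Dyck form. A natural family is $U^{m}(UD)^{*}$-type shapes, or the family $L=\{U(UD)^{a}D\,U^{b}D^{b}\}$-like, with the regularity obtained after encoding. Restricting to that regular domain, the length bound $|f(w)|=|w|$ holds there, so Theorem~\ref{thm:polyregular-linear-collapse} makes the restriction a 2DFT $g$.

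Then Theorem~\ref{thm:eh} says that $g^{-1}(R)$ is regular for every regular $R$. I would choose a probe language $R$ and a regular input family $L$ so that $g^{-1}(R)\cap L$ is nonregular.

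A concrete choice uses inputs whose area sequence is $0,1,\dots,m-1$ followed by $k$ zeros, i.e.\ $U^mD^mU^kD^k$-like words, or better a regular superset parametrised by two free counts. By the scan description, $\zeta$ puts all level-$0$ entries first, so the output begins with $U^{k+1}$ followed by a $D$.

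To get a regular input family with a nonregular preimage, I want the output's regular property to encode a comparison between two input counts. Take inputs $(UD)^aU^bD^b$, which form a nonregular set. The 2DFT trick fixes this: the 2DFT precomposition maps a regular input $x^ay^b$ to $(UD)^aU^bD^b$. The composed map is still polyregular of linear growth, hence a 2DFT.

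Level $0$ contains $a+1$ entries, so $\zeta$ outputs $U^{a+1}$, then $D^{a}$ interleaved, and so on. I would compute the output explicitly and find a regular $R$, such as ``the first descent run is longer than the second ascent run'', which holds iff $a\ge b$. Then $g^{-1}(R)=\{x^ay^b:a\ge b\}$ is nonregular, giving the contradiction.

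\textbf{Main obstacle.} I expect the hardest step to be choosing the probe family and explicitly computing $\zeta$ on it so that a regular output property compares two counts. My first attempt would be $(UD)^aU^bD^b$, checking whether the level-$0$ and level-$1$ blocks expose $a$ and $b$ as comparable run lengths.
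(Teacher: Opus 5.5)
Your positive half is correct and is essentially the paper's Theorem~\ref{thm:zeta-wrp}. You put the $+1$ in a local correction rather than in the constant $c_0$, which is equivalent, and you are right that ties at the same position cannot occur. The skeleton of your negative half is also exactly the paper's Proposition~\ref{prop:two-pyramid-criterion}: precompose with a 2DFT from a regular two-parameter family, use composition closure of $\polyreg$ and the linear-growth collapse to obtain a 2DFT, and contradict regularity of inverse images. The gap is the step you flag as the main obstacle, namely choosing the probe family and the regular language. Your concrete candidate for that step cannot work.

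On $(UD)^aU^bD^b$ the area sequence is $a$ zeros followed by $0,1,\ldots,b-1$. The zeta scan then gives
\[
   \zeta\bigl((UD)^aU^bD^b\bigr)=U^{a+1}D^{a+1}(UD)^{b-1}.
\]
The map $x^ay^b\mapsto U^{a+1}D^{a+1}(UD)^{b-1}$ is itself computed by a 2DFT: two passes over the $x$-block and one over the $y$-block. Hence for \emph{every} regular $R$ the set $\{x^ay^b:\zeta((UD)^aU^bD^b)\in R\}$ is regular. This family cannot separate $\zeta$ from $\polyreg$, whatever probe you choose. The same failure occurs for $U^mD^m(UD)^k$, whose image is $U^{k+1}DUD^k(DU)^{m-2}D$ for $m\ge 2$, again a 2DFT function of $(m,k)$.

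Your proposed probe fails for two further reasons. ``The first descent run is longer than the second ascent run'' compares two unbounded lengths, so it is not a regular language. And on this family it would test $a\ge 1$, not $a\ge b$.

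The missing idea is to use a family in which \emph{both} components have unbounded height. Then the level-by-level scan genuinely interleaves them, and the number of levels at which both are present is $\min(m,n)$. That is the two-pyramid family $P_{m,n}=U^mD^mU^nD^n$, which you mention in passing but misdescribe: its area sequence is $0,\ldots,m-1,0,\ldots,n-1$, not a staircase followed by zeros. On it,
\[
   \zeta(P_{m,n})=UU(DU)^{2m-2}DD(UD)^{n-m}\ (m\le n),\qquad
   \zeta(P_{m,n})=UU(DU)^{2n-1}DD(UD)^{m-n-1}\ (m>n).
\]
So the regular language $R=\{UU(DU)^{2q}DD(UD)^s:q,s\ge0\}$ reads the parity of the $DU$-exponent before the first $DD$. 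Its preimage is the nonregular set $\{a^m\#b^n:m\le n\}$. Converting the comparison of two counts into a parity that a DFA can read is exactly what your proposal lacks.
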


This finite-register interpretation explains the upper bound but does not
characterise $\WRP$.  A general logarithmic-space algorithm may use its fixed
number of position-sized registers adaptively, for example by allowing an
unbounded counter to decide whether any output is produced.  In a $\WRP$
presentation, by contrast, numerical atom ranks may reorder selected atoms but
cannot affect which atoms the $\MSO$ formulas select.  Thus, not every
deterministic-logspace transformation has a $\WRP$ presentation.

\begin{theorem}[Below logspace]
\label{thm:wrp-strict-below-logspace}
$\WRP$ is a proper subclass of the word-to-word maps computable in
deterministic logspace.
\end{theorem}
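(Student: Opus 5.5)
The inclusion $\WRP\subseteq$ deterministic logspace is Theorem~\ref{thm:wrp-logspace}, so it remains to exhibit a deterministic-logspace word-to-word map that is not realised by any $\WRP$ map. I will exploit the structural restriction stressed just before the statement: the $\MSO$ part decides which atoms are selected and with which labels, while atom ranks only reorder them. Consequently the \emph{Parikh image} of the output (the multiset of output letters), or even just the output length, is determined by the $\MSO$/polyregular skeleton, with no dependence on atom ranks. My plan is to choose a logspace map whose output length already cannot come from a polyregular map.

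Concretely, the candidate is the unary-output map $f\colon\{a,b\}^*\to\{c\}^*$ given by $f(w)=c^{m}$ when the numbers of $a$'s and $b$'s agree, with $m=|w|_a$, and $f(w)=\varepsilon$ otherwise. This is computable in logspace: the evaluator counts both letters in $O(\log n)$ bits, compares the counts, and then prints $c$ from a counter. Now suppose $T\in\WRP$ realises $f$. Applying closure (iv) of Theorem~\ref{thm:wrp-closures} does nothing here, since the output is over one letter. With a unary output alphabet the output word is determined by its length, so the rank sort is irrelevant: replacing all atom ranks by $0$ yields, by Proposition~\ref{prop:conservative}, a polyregular map $T_0$ with $T_0(w)=T(w)$ for every $w$ in the common domain. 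It remains to show that $f$ is not polyregular.

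For this last step I would restrict to the regular language $L=a^*b^*$, which is possible by closure (i), or equivalently by restricting the polyregular presentation. On $L$ the output has linear length, so Theorem~\ref{thm:polyregular-linear-collapse} makes $T_0|_L$ a deterministic 2DFT. Theorem~\ref{thm:eh} then says that the preimage of the regular language $c^+$ is regular. But that preimage intersected with $L$ is $\{a^mb^m:m\ge1\}$, which is not regular. This contradiction shows that $f\notin\WRP$, and hence the inclusion is proper.

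The step I expect to be most delicate is the reduction ``unary output $\Rightarrow$ ranks irrelevant''. One must check that setting $d=0$ keeps a valid presentation: the same selection and label formulas, and $\chi$ remains a linear order. Since $\chi$ is by definition a total order on the selected atoms, this holds, so the argument is sound. A second point to handle carefully is the realisation convention, namely that $T$ may be defined outside the domain of $f$. This is harmless, because we intersect with $L$, and $f$ is total on $L$.
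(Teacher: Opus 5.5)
Your argument is correct, and it rests on the same structural fact as the paper's proof: the $\MSO$ layer alone decides which atoms are selected and how they are labelled, while atom ranks only permute them. You package that fact differently.

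The paper applies it directly to the $\WRP$ presentation. ``Some atom is selected'' is an $\MSO$ sentence, so every $\WRP$ map has a regular nonempty-output preimage (Lemma~\ref{lem:wrp-nonempty-regular}). The witness $F_{\ge 0}$ copies $w$ if every prefix height is nonnegative and outputs $\varepsilon$ otherwise. Its nonempty-output preimage is the nonregular language $L_{\ge0}\setminus\{\varepsilon\}$, and only B\"uchi--Elgot--Trakhtenbrot is needed.

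You instead first pass to $\polyreg$: unary output makes the order irrelevant, and dropping to $d=0$ is legitimate because $\chi$ is itself a total order on selected atoms. You then invoke the linear-growth collapse (Theorem~\ref{thm:polyregular-linear-collapse}) and the inverse-image part of Theorem~\ref{thm:eh}.

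For this particular witness that chain is more than you need. The nonemptiness lemma applies directly to your $f$, since $\{w:|f(w)|\ge 1\}=\{w:|w|_a=|w|_b\ge 1\}$ is not regular. So neither the rank-dropping step, nor the restriction to $a^*b^*$, nor the collapse theorem is required.

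What your route buys is a stronger invariant. Combine it with closure~(iv) of Theorem~\ref{thm:wrp-closures}, sending every output letter to $c$. This shows that $w\mapsto c^{|T(w)|}$ is polyregular for every $\WRP$ map $T$, so the possible output-length functions of $\WRP$ are exactly those of $\polyreg$. That invariant can rule out logspace maps whose nonempty-output preimage is regular, for instance maps whose output is always nonempty. The paper's nonemptiness lemma cannot detect such maps.
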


A concrete witness returns the input word when every prefix has nonnegative
height and returns the empty word otherwise.  A logarithmic-space evaluator
checks this condition with a height counter and, if it holds, makes a second
pass to copy the input to the output.

To explain why this map is not in $\WRP$, recall that a \emph{regular
condition} on input words is one whose satisfying words form a regular
language (Definition~\ref{def:regular-language}), or equivalently one that a
DFA can decide using a fixed finite amount of memory.  For every $\WRP$ map
$T$, the set of inputs $w\in\operatorname{dom}(T)$ for which
$T(w)\ne\varepsilon$ is regular (Lemma~\ref{lem:wrp-nonempty-regular}).
Indeed, the output is nonempty exactly when some $\MSO$ selection formula
selects an atom; the numerical atom ranks only reorder selected atoms and do not
affect their existence.  Write $L_{\ge 0}$ for the language of words all of
whose prefixes have nonnegative height.  This language is not regular: a DFA
with fixed finite memory cannot maintain an unbounded height counter.  The
nonempty-output preimage of the witness is not quite $L_{\ge 0}$, because the
empty input satisfies the prefix condition but is itself sent to the empty
word.  Rather, the preimage is $L_{\ge 0}\setminus\{\varepsilon\}$.  This
language is still nonregular, since otherwise adjoining the regular singleton
$\{\varepsilon\}$ would make $L_{\ge 0}$ regular.  Hence the witness is
computable in deterministic logspace but does not belong to $\WRP$.  The
formal argument appears in
Appendix~\ref{app:wrp-structural-proofs}.

\subsection{Two boundaries of the rank-sort layer}

The rank sort gives $\WRP$ its additional expressive power, but it also marks
a precise boundary.  For deterministic $\MSO$ and polyregular maps, a regular
condition on the output can be translated back into a regular condition on the
input; equivalently, inverse images of regular languages are regular.  This
backward translation need not be possible after selected atoms have been sorted
by unbounded numerical atom ranks.

\begin{theorem}[Closure failure]
\label{thm:wrp-not-closed}
There exist a $\WRP$ map $D$ and a regular language $K$ such
that $D^{-1}(K)$ is not regular.  Moreover, there exists a deterministic
2DFT $S$ whose input head moves only from left to right such that
$S\circ D\notin\WRP$.  Consequently, $\WRP$ is not closed under composition.
\end{theorem}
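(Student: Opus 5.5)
We construct a simple $\SWR$ map $D$ and a regular language $K$ with $D^{-1}(K)$ nonregular, and then derive the composition failure from it.

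\emph{Choice of $D$.} Over $\Sigma=\{U,D\}$, keep every position under two copy names. The first copy is labelled by the input letter. The second copy is a marker atom $\#$, selected only at position $1$. The marker gets atom rank $0$. An input position $i$ gets atom rank equal to the height just before $i$ plus the local correction that turns it into the height just after $i$. Ties are broken by scan order, with the marker first at position $1$. Sorting by atom rank then places the marker after every letter whose post-height is negative. Hence
\[
   D(w)\in\{U,D\}^{*}\#\{U,D\}^*
\]
has the form $u\#v$, where $u$ lists the steps ending at negative height. Take $K=\#\{U,D\}^*$, which is regular. Then $D^{-1}(K)$ is the set of words with no prefix of negative height, namely the language $L_{\ge0}$ from the discussion after Theorem~\ref{thm:wrp-strict-below-logspace}. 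That language is nonregular, which gives the first claim. One check is needed: nonempty prefixes correspond to post-heights at positions $i\ge1$. The empty word is not in the domain, so it causes no trouble; restrict to nonempty words via Theorem~\ref{thm:wrp-closures}(i) if necessary.

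\emph{Composition.} Let $S$ be the one-way 2DFT that reads its input left to right. It outputs $\#$ if the first letter is $\#$, outputs nothing otherwise, and deletes all other letters. Then $S(D(w))\neq\varepsilon$ exactly when $w\in L_{\ge0}\setminus\{\varepsilon\}$. Suppose $S\circ D$ were in $\WRP$. Then Lemma~\ref{lem:wrp-nonempty-regular} would make this set regular, which is a contradiction. If the paper's $\WRP$ convention forces $S$ to be total, handle the empty-output case by making $S$ defined on all words; a one-way deterministic machine can do this easily. It follows that $\WRP$ is not closed under composition, since $S$ is itself in $\WRP$: it is a 2DFT, hence polyregular, and Proposition~\ref{prop:conservative} applies.

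\emph{Main obstacle.} The delicate step is checking that the marker's placement exactly encodes the nonnegativity condition. This requires fixing the off-by-one convention (pre-height versus post-height, supplied by the local correction $\beta$) and the tie-breaking between the marker and letters of rank $0$. Letters of rank $0$ must come after the marker, which the scan order ensures because the marker sits at position $1$ and is ordered first there. I would verify this on small words such as $DU$ and $UD$.
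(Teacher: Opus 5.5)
Your proof is correct and follows the paper's argument. A rank-$0$ sentinel is sorted against atoms ranked by post-step height, so the first output letter is the sentinel exactly on $L_{\ge0}\setminus\{\varepsilon\}$. A one-way transducer that reads that first letter then yields a composite with nonregular nonempty-output set, contradicting Lemma~\ref{lem:wrp-nonempty-regular}.

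The differences are only economies. You use the input letters themselves as the height-ranked atoms, so your $D$ lies in $\SWR$ and needs no block-tag coordinate, and your $S$ emits a single $\#$ rather than copying the input to obtain $F_{\ge0}$. One small imprecision: $D^{-1}(K)$ is $L_{\ge0}\setminus\{\varepsilon\}$, not $L_{\ge0}$, whichever domain you choose, but that set is still nonregular.
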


For the last conclusion, observe that $S$ is itself a $\WRP$ map.  Indeed,
every deterministic 2DFT is a deterministic $\MSO$ string transduction by
Theorem~\ref{thm:eh}, so it is polyregular and therefore belongs to $\WRP$ by
Proposition~\ref{prop:conservative}.  Thus, $D$ and $S$ both belong to $\WRP$
while their composite $S\circ D$ does not.
The construction of $D$ and $S$, and the proof of the theorem, are given in
Appendix~\ref{app:wrp-structural-proofs}.

The purpose of the theorem is to clarify the one-layer boundary of $\WRP$.
A $\WRP$ presentation selects and labels its atoms before performing its one
rank sort; after the sorted word has been emitted, the presentation has no
further stage that can inspect that word and decide what to output next.
Postcomposition by $S$ adds exactly such a stage: $S$ reads the sorted output
of $D$ and acts on what it sees.  The theorem shows that this later inspection
cannot always be absorbed into a new $\WRP$ presentation.  Thus, the failure of
composition is a structural consequence of allowing one rank-sort layer, not
an accidental defect of the definition.  Inversion under the realisation
convention is a separate question: later we prove that $\zeta\in\WRP$ but
$\zeta^{-1}\notin\WRP$ (Section~\ref{sec:inverse-zeta}).

There is a second, independent boundary, concerning the range of the prefix ranks
rather than the number of rank-sort layers.  Suppose that every prefix rank
takes values in a fixed finite set.  For each deterministic additive rank
source, augment its finite control state with its current prefix rank.
Because only finitely many such pairs can occur, the resulting device is an
ordinary finite automaton whose state records the current prefix rank.
Consequently, each possible atom rank, and hence every comparison between two
atom ranks, is
$\MSO$-definable.  The rank sort can then be absorbed into the polyregular
tie-order.

\begin{theorem}[Bounded-prefix-rank collapse]
\label{thm:bounded-rank-collapse}
Let $T$ be a $\WRP$ map whose rank sources are uniformly
\emph{bounded}.  Writing $A_{c,r}$ for the source attached to coordinate $r$
of copy name $c$, suppose that there is a constant $B$ such that
$\lVert\rho_{A_{c,r}}^w(i)\rVert_\infty\le B$ for all inputs
$w\in\mathrm{dom}(T)$, all coordinate sources $A_{c,r}$ in the presentation,
and all prefix positions
$i\in\{1,\ldots,|w|+1\}$.  Then, $T$ is a
polyregular map.
\end{theorem}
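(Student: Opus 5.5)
The plan is to absorb the whole rank sort into a new $\MSO$ tie-order, leaving the selection, labelling and domain data unchanged. Keeping those data fixed means the selected atoms and their labels are the same as before, so only the output order has to be reproduced. We then invoke Proposition~\ref{prop:conservative} in reverse: a presentation with rank dimension $0$ is exactly a polyregular presentation.

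\textbf{Step 1: finite-state prefix ranks.} For each coordinate source $A_{c,r}=(Q,q_0,\delta,\omega)$ we build a product automaton $\widehat A_{c,r}$. Its states are the pairs $(q,v)$ with $q\in Q$ and $v\in\Z^d$, $\lVert v\rVert_\infty\le B$, together with one sink state. Reading $a$ in state $(q,v)$ moves to $(\delta(q,a),v+\omega(q,a))$ if that sum stays in the box, and to the sink otherwise. On every $w\in\mathrm{dom}(T)$ the hypothesis guarantees that the sink is never reached. Hence the state just before position $i$ records both $q^{A_{c,r}}_i$ and $\rho^w_{A_{c,r}}(i)$.

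\textbf{Step 2: $\MSO$-definable atom ranks.} For each of the finitely many pairs $(q,v)$, the property ``$\widehat A_{c,r}$ is in state $(q,v)$ just before $x$'' is $\MSO$-definable. We use the standard Büchi--Elgot--Trakhtenbrot encoding: existentially quantify one set variable per state, marking a run of the DFA. It follows that every atom rank $\kappa_c(\bar x)$ takes values in a fixed finite set $V_c\subseteq\Z^d$, since it is $c_0$ plus $k_c$ bounded terms from the box plus bounded corrections $\beta$. For each $u\in V_c$ we then obtain an $\MSO$ formula $\theta_{c,u}(\bar x)$ expressing $\kappa_c(\bar x)=u$. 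This formula is a finite disjunction over choices of states and letters at the coordinates $x_r$ whose contributions sum to $u$.

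\textbf{Step 3: the new tie-order.} We define
\[
   \chi'_{c,c'}(\bar x,\bar y):=\bigvee_{\substack{u\in V_c,\ u'\in V_{c'}\\ u<_{\mathrm{lex}}u'}}\bigl(\theta_{c,u}(\bar x)\wedge\theta_{c',u'}(\bar y)\bigr)\ \vee\ \bigvee_{u\in V_c\cap V_{c'}}\bigl(\theta_{c,u}(\bar x)\wedge\theta_{c',u}(\bar y)\wedge\chi_{c,c'}(\bar x,\bar y)\bigr).
\]
On $\mathrm{dom}(T)$, exactly one $\theta_{c,u}$ holds for each selected atom. So $\chi'$ defines precisely the order $\prec$ of Definition~\ref{def:wrp}, which is a strict total order. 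The presentation with the original selection and label formulas, the original domain sentence, and tie-order $\chi'$ is therefore polyregular and computes $T$.

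\textbf{Main obstacle.} The key point is correctness of the automaton construction on the domain only. Outside $\mathrm{dom}(T)$ the prefix ranks may leave the box, but this is harmless, because the domain sentence is kept and the formulas are only required to behave correctly on $\mathrm{dom}(T)$. Some care is needed to check that the requirement ``$\chi'$ is a linear order on selected atoms'' is imposed only on the domain, exactly as in Definition~\ref{def:polyregular}. Everything else is routine finite case analysis.
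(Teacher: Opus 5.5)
Your proposal is correct and is essentially the paper's own proof. You augment each source with its bounded running total and an overflow state. You make each possible atom-rank value $\MSO$-definable: you encode the run with set variables, whereas the paper marks the free position and applies B\"uchi--Elgot--Trakhtenbrot, which amounts to the same thing. You then replace the rank sort by the tie-order ``smaller atom rank, or equal atom rank and $\chi$'', and your point that correctness and totality are needed only on $\mathrm{dom}(T)$ matches the paper's treatment.
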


The proof makes the preceding finite-state augmentation precise.  In
particular, comparison of two atom ranks becomes a finite disjunction over
pairs of possible values.  Appendix~\ref{app:wrp-structural-proofs} gives the full
argument.

\begin{corollary}[Unbounded prefix rank needed]\label{cor:rank-necessary}
The map $\zeta$ admits no $\WRP$ realisation whose presentation has every rank
source uniformly bounded on the domain of the realising map.  Equivalently,
every $\WRP$ presentation realising $\zeta$ uses at least one rank source whose
prefix ranks are unbounded on that domain.
\end{corollary}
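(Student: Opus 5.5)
This follows by contradiction from the bounded-prefix-rank collapse (Theorem~\ref{thm:bounded-rank-collapse}) together with the non-polyregularity of $\zeta$ (Theorem~\ref{thm:zeta-not-polyregular}, the second half of Theorem~\ref{thm:intro-zeta-classification}). Suppose $T$ is a $\WRP$ map that realises $\zeta$, in the sense of Definition~\ref{def:relative}. Suppose also that $T$ has a presentation in which every coordinate source $A_{c,r}$ satisfies $\lVert\rho^w_{A_{c,r}}(i)\rVert_\infty\le B$ for all $w\in\operatorname{dom}(T)$ and all prefix positions $i$. Theorem~\ref{thm:bounded-rank-collapse} then makes $T$ a polyregular map. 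Since $\D\subseteq\operatorname{dom}(T)$ and $T(P)=\zeta(P)$ for every $P\in\D$, this polyregular map agrees with $\zeta$ on every Dyck path. That contradicts Theorem~\ref{thm:zeta-not-polyregular}.

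The one point needing care is how the boundedness hypothesis matches the domain. The corollary assumes boundedness on the domain of the realising map. This is exactly the hypothesis of Theorem~\ref{thm:bounded-rank-collapse}, which quantifies over $w\in\operatorname{dom}(T)$. So no restriction step is needed. If one preferred to assume boundedness only on $\D$, the argument would need an extra step: restrict $T$ to a regular language on which the ranks stay bounded, using Theorem~\ref{thm:wrp-closures}(i). This is problematic, because boundedness on $\D$ does not obviously extend to a regular superset. That is why the statement is phrased on the domain of $T$, and I would keep that phrasing.

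The equivalence in the second sentence of the corollary is simply the contrapositive. A presentation fails to have all sources uniformly bounded exactly when at least one source has unbounded prefix ranks. This uses the finiteness of the set of sources in a presentation: if each of finitely many sources were bounded by some $B_{c,r}$, then all would be bounded by $\max B_{c,r}$. So there is no real obstacle. The substance lies entirely in the two cited theorems, and the only thing to check is that the quantifiers over domains and sources line up.
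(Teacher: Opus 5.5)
Your proof is correct and follows the paper's own argument: a presentation whose sources are all bounded on $\operatorname{dom}(T)$ collapses to a polyregular map by Theorem~\ref{thm:bounded-rank-collapse}, and that map would realise $\zeta$ on $\D$, contradicting Theorem~\ref{thm:zeta-not-polyregular}. One correction to your aside: assuming boundedness only on $\D$ causes no difficulty, because the words on which every source stays in $\{-B,\ldots,B\}^d$ form a regular language containing $\D$ (a finite automaton can track the bounded totals, with an overflow state), so restricting $T$ to that language by Theorem~\ref{thm:wrp-closures}(i) leaves the rank sources unchanged and gives the stronger version.
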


Indeed, if a $\WRP$ map realising $\zeta$ had a presentation in which every
rank source were uniformly bounded on its domain, that map would be
polyregular by the theorem, contradicting
Theorem~\ref{thm:zeta-not-polyregular}.  The usual height-rank presentation
visibly uses an unbounded source: heights on
$\D_n$ can reach $n$.  Thus, the finite-state case in which every numerical
source has a fixed finite range cannot separate $\WRP$ from polyregularity;
any $\WRP$ presentation realising $\zeta$ must have access, somewhere on its
domain, to an unbounded numerical source.

\subsection{A computational roadmap of the Catalan examples}
\label{sec:landscape}

Sections~\ref{sec:dyck} and~\ref{sec:models} introduced, respectively, the
Catalan maps studied in this paper and the established models of word-to-word
computation.  The present section has added the rank-sort model $\WRP$ and
located it in that hierarchy.  We now bring the two sides together before
turning to the classification proofs.  Table~\ref{tab:named-bijections}
described the principal maps in combinatorial terms;
Table~\ref{tab:landscape} revisits them from a computational perspective and
serves as a roadmap for the remaining sections.

The table records two kinds of information.  A membership statement, such as
$H\in\SWR$, is an upper bound: it names a mechanism that realises the map.  A
nonmembership statement, such as $\zeta^{-1}\notin\WRP$, is a lower bound: it
says that the indicated mechanism does not suffice.  In particular, ``not in
$\WRP$'' does not mean ``not computable.''  It means that the map cannot be
realised by the single rank-sort layer of Definition~\ref{def:wrp}.  For the
hypothetical area--dinv swap, it likewise does not say that no such bijection
exists; it says that no such bijection can have a $\WRP$ realisation.  All
classifications are relative to the step-word encoding
(Remark~\ref{rmk:step-word}).

\begin{table}[t]
\centering
\small
\renewcommand{\arraystretch}{1.25}
\begin{tabularx}{\textwidth}{@{}>{\raggedright\arraybackslash}p{0.18\textwidth}
                                   >{\raggedright\arraybackslash}X
                                   >{\raggedright\arraybackslash}p{0.20\textwidth}
                                   >{\raggedright\arraybackslash}X@{}}
\toprule
Map or problem & Computational status (step-word encoding) & Result &
  Combinatorial role \\
\midrule
$\id$ & Realised by a left-to-right 2DFT. & folklore &
  Preserves every statistic. \\
$\rc$ & Realised by a $\TDFT/\MSO$ transduction. &
  Example~\ref{ex:rc-two-ways} &
  A Catalan involution that anchors the two-way finite-state stratum. \\
Zeta $\zeta$ &
  $\zeta\in\SWR\subseteq\WRP$, but $\zeta\notin\polyreg$. &
  Section~\ref{sec:zeta}
  (Thm.~\ref{thm:zeta-wrp} and
  Thm.~\ref{thm:zeta-not-polyregular}) &
  $\area(\zeta(P))=\dinv(P)$. \\
Height sweep $H$ &
  $H\in\SWR\subseteq\WRP$, but $H\notin\polyreg$. &
  Section~\ref{sec:narayana-sweep}
  (Prop.~\ref{prop:alw-sweep-swr} and
  Thms.~\ref{thm:narayana-sweep} and~\ref{thm:H-not-polyregular}) &
  $\dr(H(P))=\val(P)$, $\val(H(P))=\dr(P)$. \\
Zeta inverse $\zeta^{-1}$ &
  $\zeta^{-1}\notin\WRP$; an explicit combinatorial inverse is known. &
  Section~\ref{sec:inverse-zeta}
  (Cor.~\ref{cor:inverse-zeta-not-wrp}) &
  Inverts $\zeta$; $\dinv(\zeta^{-1}(P))=\area(P)$. \\
A hypothetical area--dinv swap &
  No $\WRP$ map can realise such a swap. &
  Section~\ref{sec:main} (Thm.~\ref{thm:wrp-no-swap}) &
  Would witness the $q,t$-Catalan symmetry. \\
\bottomrule
\end{tabularx}
\caption{Computational status of the principal Catalan maps and bijective
problems considered in the paper.}
\label{tab:landscape}
\end{table}

The entries fall into three groups.
\begin{itemize}
\item The identity and reverse-complement anchor the finite-state end of the
hierarchy.  The identity can copy each input letter during a single rightward
scan.  To realise reverse-complement, the 2DFT of
Example~\ref{ex:rc-two-ways} first reaches the end of the input and then emits
letters while moving back from right to left.  Both maps therefore belong to
$\TDFT/\MSO$, and hence also to $\polyreg$ and $\WRP$.
\item Zeta and $H$ are the positive rank-sort examples.  Both belong to the
particularly simple fragment $\SWR$: they sort atoms by one additive integer
atom rank and use a scan order to break ties.  The lower bounds
$\zeta\notin\polyreg$ and $H\notin\polyreg$ show in each case that this rank
sort cannot be replaced by any polyregular mechanism.  Together with their
$\SWR$ presentations, either map independently witnesses the strict inclusion
$\polyreg\subsetneq\WRP$.  The two witnesses have different combinatorial
roles: zeta transports area and dinv, while the classical bijection $H$
exchanges valleys and double rises.
\item The last two entries are the negative results about what $\WRP$ itself
can realise.  The inverse
$\zeta^{-1}$ is a well-defined, explicitly computable bijection, but one $\WRP$
rank-sort layer cannot realise it
(Corollary~\ref{cor:inverse-zeta-not-wrp}).  The area--dinv entry is different:
the desired bijection is not known to exist, and
Theorem~\ref{thm:wrp-no-swap} proves that no $\WRP$ map can satisfy the two
required statistic identities.  This is the paper's main no-go theorem.
\end{itemize}

\section{Classifying the zeta map}\label{sec:zeta}

This section proves Theorem~\ref{thm:intro-zeta-classification}.  For the upper
bound, $\zeta$ creates two atoms from each up-step, ranked by the heights just
before and after it, and orders equal-rank atoms from left to right.  This is a
one-dimensional rank sort in $\SWR$.  For the lower bound, we prove that
$\zeta$ is not polyregular and hence not realisable by a deterministic 2DFT.
Together, the two results give $\zeta\in\WRP\setminus\polyreg$.

\subsection[Upper bound: zeta orders selected atoms by height]
{Upper bound: $\zeta$ orders selected atoms by height}

We make the positive half of Theorem~\ref{thm:intro-zeta-classification}
explicit in the $\WRP$ formalism.

\begin{theorem}[Zeta in $\SWR$]\label{thm:zeta-wrp}
The classical Haglund zeta map $\zeta\colon\D\to\D$ under the step-word
encoding belongs to the scan-order one-dimensional ranked-regular fragment
$\SWR$, hence to $\WRP$.
\end{theorem}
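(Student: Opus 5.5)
We will write down an explicit $\SWR$ presentation and check that on every Dyck path it lists exactly the letters of Definition~\ref{def:zeta} in the same order.

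\emph{The presentation.} Take two arity-$1$ copy names, $c_U$ and $c_D$. Both select exactly the $U$-positions: $\varphi_{c_U}(x)=\varphi_{c_D}(x):=P_U(x)$. Label $c_U$ with the constant letter $U$ and $c_D$ with the constant letter $D$. Both copies use the one-state height source $A$ of Example~\ref{ex:height-rank}, with $c_0=0$. The corrections differ: $\beta_{c_U}\equiv 0$, while $\beta_{c_D}(\bullet,U)=1$. Thus the $c_U$-atom at the $j$th up-step $p_j$ has rank $\rho_A^P(p_j)=a_j$, and the $c_D$-atom there has rank $a_j+1$, the height just after that step. The tie-order is the left-to-right scan order, and on copy names we put $c_D<c_U$. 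The domain sentence is $\mathrm{true}$, which is allowed because realisation (Definition~\ref{def:relative}) only constrains Dyck inputs. All of this data is $\MSO$-definable, so the map lies in $\SWR$.

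\emph{Correctness.} Fix $P\in\D_n$ with $n\ge1$ and area sequence $a$. The atoms of rank $r$ are the $c_U$-atoms at indices $i$ with $a_i=r$ and the $c_D$-atoms at indices $i$ with $a_i+1=r$, that is, $a_i=r-1$. These are exactly the indices at which Definition~\ref{def:zeta} appends $U$, respectively $D$, during the level-$r$ scan. Each index $i$ contributes at most one atom to a given rank $r$, since $a_i=r$ and $a_i=r-1$ cannot both hold. So within a rank class the copy-name tie-break is never actually used, and scan order lists the atoms by increasing $i$, which is increasing $p_i$. This is precisely the left-to-right scan of level $r$.

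The ranks that occur range over $0,\dots,\max_i a_i+1$, matching the levels in Definition~\ref{def:zeta}. Sorting by rank concatenates the level blocks in increasing $r$. Hence the output equals $\zeta(P)$, and the empty path gives $\varepsilon$ because no atoms are selected.

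\emph{Main obstacle.} The only delicate point is the indexing convention. The $D$ for entry $i$ must appear at level $a_i+1$, which is why the local correction $\beta$ is needed; alternatively one could absorb the $+1$ into a second source with weight $+1$ on $U$. Once the atom-to-letter correspondence at each level is checked as above, the result follows directly, with $\SWR\subseteq\WRP$ coming from Definition~\ref{def:wrp}.
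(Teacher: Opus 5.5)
Your proof is correct and is essentially the paper's construction: two atoms per up-step with atom ranks $a_j$ and $a_j+1$, left-to-right ties, and a level-by-level match with Definition~\ref{def:zeta}. The only difference is that you put the $+1$ in the local correction $\beta$, whereas the paper uses the constant $c_0=1$ for the $D$-copy. So $\beta$ is not actually needed. Your aside about a second source also would not work, since a prefix rank sees only the letters strictly before the position and so cannot supply a constant shift.
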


\begin{proof}
Define one additive rank source: the one-state automaton with weights
$\omega(U)=+1$ and $\omega(D)=-1$.  The prefix rank before position $i$
is then $\rho(i)=h(i-1)$, the height just before position~$i$.

At every input position $i$ labelled $U$ (selected by the $\MSO$ formula
$P_U(x)$ of Section~\ref{sec:mso}), create two selected atoms:
\[
   e_U(i)\colon\text{ label }U,\quad \kappa(e_U(i))=\rho(i),
   \qquad
   e_D(i)\colon\text{ label }D,\quad \kappa(e_D(i))=\rho(i)+1.
\]
Create no atoms at positions labelled $D$.  Use the left-to-right tie-order
on atoms of equal atom rank.  Formally, both copy names $e_U,e_D$ use the zero local
correction in Definition~\ref{def:prefix-additive-rank}; their constants $c_0$
are $0$ for $e_U$ and $1$ for $e_D$.

Now if $i=p_j$ is the $j$-th up-step of $P$, then $\rho(p_j)=a_j$.  Sorting
by atom rank visits the integer values $r=0,1,2,\ldots$ in turn.  At atom rank
$r$, the atoms
present are $e_U(p_j)$ for those $j$ with $a_j=r$ (label $U$), and
$e_D(p_j)$ for those $j$ with $a_j=r-1$ (label $D$).  Among these atoms, the
tie-order scans $j$ left to right.  This is exactly the area-sequence scan
of Definition~\ref{def:zeta}.
\end{proof}

The $\WRP$ formalism also realises an additive level-sorting operation inspired
by the sweep maps of Armstrong, Loehr, and Warrington
\cite{ArmstrongLoehrWarrington2015,ArmstrongLoehrWarrington2016}.  An
\emph{additive level sort} attaches one selected atom to each input step, assigns
it an additive level, and sorts the steps by that level.  The level is a scalar
atom rank, and the scan direction supplies the tie-order, so the resulting
presentation lies in $\SWR$.

\begin{definition}[Additive level sort]\label{def:additive-sweep}
Fix integer step weights $\nu\colon\{U,D\}\to\Z$ and a scan direction.  The
\emph{additive level sort} $\Phi_\nu$ assigns to each step of the input
word $w=w_1\cdots w_N$ the integer \emph{level} $\ell(i)=\sum_{j<i}\nu(w_j)$,
the running weight of the steps before it, and outputs the step labels
$w_1,\ldots,w_N$ listed by increasing level, ties broken by the scan
direction.
\end{definition}

\begin{proposition}[Additive level sorts in $\SWR$]
\label{prop:alw-sweep-swr}
Every additive level sort $\Phi_\nu$ is an $\SWR$ map.  Thus, it
lies in $\WRP$.
\end{proposition}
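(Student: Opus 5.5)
We build an explicit $\SWR$ presentation, following the same pattern as the proof of Theorem~\ref{thm:zeta-wrp}, and then check that its output is exactly $\Phi_\nu(w)$ on every input word.

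\emph{Rank source.} Use the one-state, one-dimensional deterministic additive rank source $A$ with weights $\omega(\bullet,U)=\nu(U)$ and $\omega(\bullet,D)=\nu(D)$. By Definition~\ref{def:prefix-rank}, its prefix rank before position $i$ is
\[
   \rho_A^w(i)=\sum_{j<i}\nu(w_j)=\ell(i),
\]
which is exactly the level of Definition~\ref{def:additive-sweep}.

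\emph{Presentation.} Take arity $1$ with two copy names:
\begin{enumerate}[label=(\roman*)]
\item copy $c_U$, with selection formula $P_U(x)$ and constant label $U$;
\item copy $c_D$, with selection formula $P_D(x)$ and constant label $D$.
\end{enumerate}
Both copies use the rank function $\kappa(x)=\rho_A^w(x)$, with $c_0=0$ and zero local correction. Set the domain sentence to $\mathrm{true}$ and the rank dimension to $d=1$.

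For the tie-order, take the scan order whose position direction is the prescribed scan direction, with any fixed order on $\{c_U,c_D\}$, say $c_U<c_D$. This order on copy names never actually matters: each position carries exactly one letter, so it selects exactly one atom, and no two atoms share a position. Alternatively, one could use a single copy name with selection $\mathrm{true}$ and labels $\psi_U=P_U$, $\psi_D=P_D$, as in Example~\ref{ex:rc-two-ways}. That variant is even simpler and avoids copy ties altogether.

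\emph{Correctness.} The selected atoms are in bijection with the positions $1,\ldots,N$. The atom at position $i$ has label $w_i$ and atom rank $\ell(i)$. By Definition~\ref{def:wrp}, the output lists these atoms by increasing integer atom rank, with ties broken by the scan direction. This is verbatim the description of $\Phi_\nu$, so the presentation computes $\Phi_\nu$ on all of $\{U,D\}^*$. Its arity is $1$, its rank is scalar, and its tie-order is a scan order, so $\Phi_\nu\in\SWR\subseteq\WRP$ by the inclusions in Definition~\ref{def:wrp}.

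There is no real obstacle here. The only points to check are that the formal requirements of Definitions~\ref{def:polyregular} and~\ref{def:wrp} hold: each selected atom satisfies exactly one label formula, and the scan order is a strict total order on selected atoms. Both are immediate, and the same is true in the single-copy variant.
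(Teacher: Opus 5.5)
Your proposal is correct and matches the paper's proof. Both use the one-state source with weights $\nu(U),\nu(D)$, whose prefix rank is $\ell(i)$. Each position carries one atom labelled $w_i$ with that atom rank, and the scan direction is the tie-order. Your single-copy variant is exactly the paper's presentation, and the two-copy version is an inessential repackaging.
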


\begin{proof}
Use the single one-state additive rank source with weights $\omega(U)=\nu(U)$
and $\omega(D)=\nu(D)$ (Definition~\ref{def:rank-source}); by
Definition~\ref{def:prefix-rank}, its prefix rank
before position $i$ is exactly $\ell(i)$.  Select every input position by the
always-true $\MSO$ formula $x=x$, and give the atom at position $i$ the label
$w_i$.  Set its atom rank to the prefix rank $\rho(i)=\ell(i)$.  By
Definition~\ref{def:prefix-additive-rank}, this is a prefix-additive rank
function of dimension $d=1$, with $c_0=0$ and zero local correction.  Take the
tie-order $\chi$ to be the chosen scan direction.  This is an $\SWR$
presentation (Definition~\ref{def:wrp}), and its output is the step labels in
increasing level order with ties broken by scan direction, namely
$\Phi_\nu(w)$.
\end{proof}

We illustrate the choice $\nu(U)=+1$, $\nu(D)=-1$ with the left-to-right tie
convention on a small path.

\begin{example}[Sorting by starting height on $UUDUDD$]\label{ex:height-sweep}
Take $\nu(U)=+1$, $\nu(D)=-1$ and left-to-right ties.  The six steps of
$P=UUDUDD$ have levels (the height before each step)
\[
   \begin{array}{c|cccccc}
     \text{step}  & U & U & D & U & D & D\\\hline
     \text{level} & 0 & 1 & 2 & 1 & 2 & 1
   \end{array}
\]
Reading the steps from left to right within each level gives $U$ at level $0$,
$U,U,D$ at level $1$, and $D,D$ at level $2$.  Concatenating these lists gives
$\Phi_\nu(UUDUDD)=UUUDDD$.

This additive level sort sends $\D_n$ into itself for either tie-order.
Indeed, let $u_h$ be the number of up-steps starting at height $h$.  The number of
down-steps starting at height $h$ is $u_{h-1}$, where $u_{-1}=0$.  Immediately
before the output block at level $h$, the output path is at height $u_{h-1}$,
and that block contains exactly $u_{h-1}$ down-steps.  It therefore cannot take
the output below height zero.  The complete output is balanced, so it is a
Dyck path.

With the left-to-right tie-order, however, this endomap is already
noninjective on $\D_2$, since
$\Phi_\nu(UUDD)=\Phi_\nu(UDUD)=UUDD$.
It is therefore also nonsurjective on $\D_2$.  By contrast, the right-to-left
version, denoted $H$ below, is a bijection on every $\D_n$, as recalled in
Theorem~\ref{thm:narayana-sweep}.
\end{example}

\subsection[Lower bound: zeta is not polyregular]{Lower bound: $\zeta$ is not polyregular}\label{sec:zeta-not-polyregular}

We now prove the negative half of Theorem~\ref{thm:intro-zeta-classification}:
$\zeta$ is not polyregular.  Since every deterministic two-way finite-state
transduction is polyregular, this will also show that $\zeta$ is not realisable
by a deterministic 2DFT, or equivalently by a deterministic $\MSO$ string
transduction.  The argument uses a single two-pyramid probe.

\begin{definition}[Two-pyramid paths]
For $m,n\ge 1$, put $P_{m,n}=U^m D^m U^n D^n$.
\end{definition}

The following criterion isolates the lower-bound argument.  It will be used
for both $\zeta$ and the height sweep $H$.

\begin{proposition}[Two-pyramid criterion]
\label{prop:two-pyramid-criterion}
Let $f\colon\D\to\{U,D\}^*$ satisfy
$|f(P_{m,n})|=O(|P_{m,n}|)$, and let
$K\subseteq\{U,D\}^*$ be a regular language.  If
\[
   \{a^m\#b^n:m,n\ge1,\ f(P_{m,n})\in K\}
\]
is not regular, then no polyregular map realises $f$ on every Dyck path.
\end{proposition}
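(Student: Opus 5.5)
We argue by contradiction: assume a polyregular map $T$ realises $f$ on every Dyck path. The aim is to transform $T$ into a deterministic 2DFT whose inverse image of $K$ is exactly the language $L=\{a^m\#b^n:m,n\ge1,\ f(P_{m,n})\in K\}$, which would make $L$ regular.

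First, build the encoding map $E\colon a^m\#b^n\mapsto P_{m,n}=U^mD^mU^nD^n$ as a deterministic 2DFT with domain the regular language $a^+\#b^+$. A two-way machine does this with a bounded number of passes over the input. It sweeps the $a$-block emitting $U$ per letter, then returns and sweeps it again emitting $D$. It then does the same on the $b$-block, emitting $U$'s and then $D$'s. The $\#$ and the end markers serve as turning points. This map is polyregular by Theorem~\ref{thm:eh} and the arity-$1$ remark after Definition~\ref{def:polyregular}, and its output has linear length $2m+2n$.

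Next, form the composite $G=T\circ E$ on $a^+\#b^+$. Polyregular maps are closed under composition~\cite{Bojanczyk2018}, so $G$ is polyregular. Its domain is contained in $\operatorname{dom}(E)=a^+\#b^+$; in fact it equals it, because every $P_{m,n}$ is a Dyck path and therefore lies in $\operatorname{dom}(T)$ by Definition~\ref{def:relative}. On this domain $G(a^m\#b^n)=f(P_{m,n})$. By hypothesis this has length $O(|P_{m,n}|)=O(m+n)=O(|a^m\#b^n|)$, so $G$ has linear output growth on its domain. Theorem~\ref{thm:polyregular-linear-collapse} therefore shows that $G$ is a deterministic $\MSO$ string transduction, equivalently a 2DFT.

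Finally, Theorem~\ref{thm:eh} gives that $G^{-1}(K)$ is regular. Since $\operatorname{dom}(G)=a^+\#b^+$, we have $G^{-1}(K)=\{a^m\#b^n: m,n\ge 1,\ f(P_{m,n})\in K\}$, which is exactly $L$. This contradicts the hypothesis that $L$ is not regular.

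The main point needing care is the application of the collapse theorem. Linear growth is only assumed on the probe inputs, so the domain restriction must be built into $E$ before composing. Otherwise the collapse theorem would be applied to $T$ on all of $\operatorname{dom}(T)$, where $T$ need not have linear growth. It is also worth checking that the partial-function form of Theorem~\ref{thm:polyregular-linear-collapse} stated in the paper covers the composite, which has a regular domain; it does. Everything else is routine.
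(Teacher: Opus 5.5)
Your proposal is correct and is essentially the paper's argument. You compose $T$ with the encoding 2DFT $E$, work on the regular slice $a^+\#b^+$, and apply the linear-growth collapse to the composite. You then pull $K$ back using regularity of inverse images under 2DFTs. The only difference is cosmetic: you build the restriction to $a^+\#b^+$ into the domain of $E$, whereas the paper restricts $T\circ E$ to that language after composing.
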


\begin{proof}
Suppose, for contradiction, that a polyregular map $T$ realises $f$ on
$\D$.  Let $a,b,\#$ be three distinct input symbols, with $\#$ serving as a
separator, and let
\[
   E\colon\{a,b,\#\}^*\longrightarrow\{U,D\}^*
\]
be the elementary 2DFT whose action on the inputs used below is
\[
   E(a^m\#b^n)=U^mD^mU^nD^n=P_{m,n},\qquad m,n\ge1.
\]
Such an $E$ is obtained by four finite-state passes: output the $a$-block as
$U$'s, output it again as $D$'s, and then do the same for the $b$-block.

Let $S=\{a^m\#b^n:m,n\ge1\}$, which is a regular language.  A 2DFT is
polyregular, and polyregular maps are closed under composition
\cite{Bojanczyk2018}.  Restricting $T\circ E$ to $S$ therefore gives a
polyregular map $g$ with domain $S$ and
\[
   g(a^m\#b^n)=f(P_{m,n}),\qquad
   |g(a^m\#b^n)|=O(m+n)=O(|a^m\#b^n|).
\]
The linear-growth collapse
(Theorem~\ref{thm:polyregular-linear-collapse}) makes $g$ a deterministic
$\MSO$ string transduction, equivalently a deterministic 2DFT.  A 2DFT has
regular inverse images of regular languages
\cite{EngelfrietHoogeboom2001}, so $g^{-1}(K)$ is regular.  By construction,
however,
\[
   g^{-1}(K)=\{a^m\#b^n:m,n\ge1,\ f(P_{m,n})\in K\},
\]
contrary to the hypothesis.
\end{proof}

\begin{lemma}[Two-pyramid formula]\label{lem:zeta-two-pyramid}
For $m,n\ge 1$,
\[
\zeta(P_{m,n})=
\begin{cases}
   UU(DU)^{2m-2}DD(UD)^{n-m}, & m\le n,\\[1mm]
   UU(DU)^{2n-1}DD(UD)^{m-n-1}, & m>n.
\end{cases}
\]
\end{lemma}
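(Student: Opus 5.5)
The computation goes directly through the area-sequence scan of Definition~\ref{def:zeta}. The area sequence of $P_{m,n}=U^mD^mU^nD^n$ is
\[
   a=(0,1,\ldots,m-1,\,0,1,\ldots,n-1),
\]
a concatenation of two increasing runs. Each value $r$ appears once in the first run if $r\le m-1$, and once in the second run if $r\le n-1$. The maximum entry is $\max(m,n)-1$, so the scan runs over the levels $r=0,1,\ldots,\max(m,n)$. I would record, for each level $r$, the block $B_r$ that the scan appends. At level $r$ the scan meets, in order:
\begin{itemize}
\item the entry $r-1$ of the first run, giving $D$ if $1\le r\le m$;
\item the entry $r$ of the first run, giving $U$ if $r\le m-1$;
\item the entry $r-1$ of the second run, giving $D$ if $1\le r\le n$;
\item the entry $r$ of the second run, giving $U$ if $r\le n-1$.
\end{itemize}
Within a single run, position $r-1$ precedes position $r$, so this left-to-right order is correct.

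I then read off $B_r$ in the case $m\le n$.
\begin{itemize}
\item $B_0=UU$.
\item For $1\le r\le m-1$, $B_r=DUDU$.
\item $B_m=DDU$ if $m<n$, and $B_m=DD$ if $m=n$: the first run contributes only $D$, and the second contributes $D$ and then $U$ when $m\le n-1$.
\item For $m<r\le n-1$, only the second run contributes, so $B_r=DU$.
\item $B_n=D$ when $n>m$.
\end{itemize}
Concatenating gives
\[
   UU(DU)^{2(m-1)}\,DD\,U(DU)^{n-m-1}D .
\]
Regrouping the tail, $U(DU)^{n-m-1}D=(UD)^{n-m}$, which yields $UU(DU)^{2m-2}DD(UD)^{n-m}$. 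When $m=n$ the tail is empty, which agrees with $(UD)^0$.

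The case $m>n$ is symmetric in structure but the roles of the runs change at level $n$.
\begin{itemize}
\item For $1\le r\le n-1$, $B_r=DUDU$, as before.
\item At level $n$, the first run contributes $DU$, since $n\le m-1$, and the second contributes only $D$. So $B_n=DUD$.
\item For $n<r\le m-1$, $B_r=DU$.
\item $B_m=D$.
\end{itemize}
This gives $UU(DU)^{2n-2}\,DUD\,(DU)^{m-n-1}D$. I would rewrite it as $UU(DU)^{2n-1}D(DU)^{m-n-1}D$ and then use $D(DU)^{k}D=DD(UD)^{k}$, which follows by shifting the parenthesisation. This produces the claimed $UU(DU)^{2n-1}DD(UD)^{m-n-1}$.

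The only real hazard is bookkeeping at the boundary levels $r=m$, $r=n$ and the final level, together with the degenerate cases. These are $m=1$, where $(DU)^0$ is empty, and $m=n$ or $m=n+1$, where the tail exponents are $0$. I would check $m=n=1$ (giving $UUDD$) and $m=2,n=1$ directly against the definition as a sanity check. The regrouping identities $U(DU)^kD=(UD)^{k+1}$ and $D(DU)^kD=DD(UD)^k$ are immediate by induction on $k$.
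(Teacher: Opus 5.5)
Your proposal is correct and follows essentially the same route as the paper: you run the area-sequence scan level by level on $(0,1,\ldots,m-1,0,1,\ldots,n-1)$, split into the cases $m\le n$ and $m>n$, and finish with the regroupings $U(DU)^kD=(UD)^{k+1}$ and $D(DU)^kD=DD(UD)^k$. Your explicit list of the four possible contributions at each level is a slightly more systematic way of doing the same bookkeeping.
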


\begin{proof}
The area sequence of $P_{m,n}$ is
\(
   (0,1,\ldots,m-1,\,0,1,\ldots,n-1).
\)
Run the area-sequence scan of Definition~\ref{def:zeta}.

At scan level $0$, the two entries equal to $0$ produce the initial $UU$.

Suppose $m\le n$.  For each scan level $r=1,\ldots,m-1$, both increasing blocks
contain entries $r-1$ and $r$, and within each block the $r-1$ entry
appears first.  Hence each such scan level contributes $DUDU$, giving
$(DU)^{2m-2}$ in total.  At scan level $m$, the first block contributes only a
$D$ from level $m-1$, while the second block contributes a $D$ from level
$m-1$ and (if $m<n$) a $U$ from level $m$.  The two $D$'s form the central
$DD$.  The remaining scan levels of the second block contribute the tail
$(UD)^{n-m}$.

If $m>n$, the count is instead limited by the \emph{second} block, of length
$n$.  Scan levels $r=1,\ldots,n-1$ contribute $DUDU$ exactly as before, giving
$(DU)^{2n-2}$.  At scan level $n$ the first block still contains the entries $n-1$
and $n$ and contributes $DU$, after which the second block contributes its
final $D$ from the entry $n-1$; the scan levels $n+1,\ldots,m-1$ each contribute
$DU$ from the first block alone, and scan level $m$ contributes its final $D$.  After the
initial $UU(DU)^{2n-2}$, the output therefore continues
$DU\,D\,(DU)^{m-1-n}\,D$, and regrouping $D(DU)^{m-1-n}D=DD(UD)^{m-n-1}$ gives
the form $UU(DU)^{2n-1}DD(UD)^{m-n-1}$, with an empty tail when
$m=n+1$.  Note that this exponent $2n-1$ is \emph{odd}, whereas the
case $m\le n$ gave the \emph{even} $2m-2$; the probe of the next lemma turns on
exactly this even/odd distinction.
\end{proof}

\begin{lemma}[Regular probe]\label{lem:zeta-probe}
Let $R = \{\,UU(DU)^{2q}DD(UD)^s : q,s\ge 0\,\}\subseteq\{U,D\}^*$.
Then $R$ is regular, and for all $m,n\ge 1$,
$\zeta(P_{m,n})\in R$ if and only if $m\le n$.
\end{lemma}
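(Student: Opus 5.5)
Regularity is immediate: $R$ is described by the regular expression $UU\,((DU)(DU))^*\,DD\,(UD)^*$. Equivalently, it is recognised by a DFA that reads the prefix $UU$, counts the $DU$-blocks modulo $2$, reads the central $DD$, and then accepts any sequence of $UD$-blocks. So the real work is the membership equivalence, which I will derive from the explicit formula of Lemma~\ref{lem:zeta-two-pyramid}.

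For the forward direction, suppose $m\le n$. Then $\zeta(P_{m,n})=UU(DU)^{2(m-1)}DD(UD)^{n-m}$, which lies in $R$ with $q=m-1\ge0$ and $s=n-m\ge0$.

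For the converse, suppose $m>n$. Then $\zeta(P_{m,n})=UU(DU)^{2n-1}DD(UD)^{m-n-1}$. I must show that this word is not of the form $UU(DU)^{2q}DD(UD)^s$. The key step is a uniqueness-of-parsing argument. Any word of the form $UU(DU)^{j}DD(UD)^{s}$ has its first occurrence of the factor $DD$ at position $3+2j$. Before the central $DD$, the letters alternate $D,U$ after the initial $UU$, so no $DD$ occurs there. Across the boundary the relevant segment is $\ldots U\,DD$, which also creates no earlier $DD$. Hence $j$ is determined by the word itself, namely $j=(\text{index of the first }DD-3)/2$.

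Applying this to $\zeta(P_{m,n})$ with $m>n$ gives $j=2n-1$, which is odd. But every word in $R$ has $j=2q$ even, so $\zeta(P_{m,n})\notin R$. The edge case $m=n+1$, where the tail is empty, is handled by the same argument: the parsing only uses the prefix up to the first $DD$.

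The main obstacle is the uniqueness-of-parsing claim. One must check carefully that the first $DD$ cannot occur earlier, and in particular cannot straddle the $(DU)^j$ block and the central $DD$. This follows because that block ends in $U$.
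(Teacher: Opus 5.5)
Your proposal is correct and follows essentially the same route as the paper: regularity via an explicit automaton (your regular expression is equivalent), uniqueness of the factorisation because the first $DD$ after the initial $UU$ must be the central separator (every earlier $D$ is followed by a $U$), and then the parity of the $DU$-exponent read off from Lemma~\ref{lem:zeta-two-pyramid}.
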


\begin{proof}
Since $(DU)^{2q}=(DUDU)^q$, a DFA can recognise $R$ as follows.  After checking
the initial $UU$, it enters a boundary state.  From this state it reads a $D$.
If the next letter is $U$, it has read the first $DU$ of a $DUDU$ block; it
then requires one more $DU$ and returns to the boundary state.  If the next
letter is $D$, it has read the separator $DD$ and moves to a final phase, where
it reads zero or more $UD$ pairs.  It accepts only when the input ends at the
boundary between complete pairs.  On any unexpected letter, the automaton
enters a nonaccepting state and remains there for the rest of the input, so the
word is rejected.  Thus, $R$ is regular by
Definition~\ref{def:regular-language}.  This description also makes the
factorisation unique: the first $DD$ after the initial $UU$ is the central
separator, since every $D$ in the preceding $(DU)$ block is followed by a
$U$, and it determines the exponent of $DU$.  By
Lemma~\ref{lem:zeta-two-pyramid}, if $m\le n$, then the exponent of $DU$
before the central $DD$ is $2m-2$, which is even; if $m>n$, it is
$2n-1$, which is odd.  Hence $\zeta(P_{m,n})\in R$ if and only if the relevant
exponent is even, which happens if and only if $m\le n$.
\end{proof}

\begin{theorem}[Zeta beyond $\polyreg$]\label{thm:zeta-not-polyregular}
The map $\zeta$ is not realised by any polyregular map
under the step-word encoding.
\end{theorem}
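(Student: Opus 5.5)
The plan is to apply the two-pyramid criterion (Proposition~\ref{prop:two-pyramid-criterion}) with $f=\zeta$ and the regular probe $K=R$ of Lemma~\ref{lem:zeta-probe}. Two hypotheses must be checked.

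\textbf{Step 1: linear output growth.} Since $\zeta$ preserves semilength, we have $|\zeta(P_{m,n})|=2(m+n)=|P_{m,n}|$. Lemma~\ref{lem:zeta-two-pyramid} also confirms this directly: the output length is $4+2(2m-2)+2(n-m)=2m+2n$ when $m\le n$, and similarly when $m>n$.

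\textbf{Step 2: the probe preimage.} By Lemma~\ref{lem:zeta-probe},
\[
   \{a^m\#b^n:m,n\ge1,\ \zeta(P_{m,n})\in R\}=\{a^m\#b^n:1\le m\le n\}=:L.
\]
So it remains to show that $L$ is not regular.

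\textbf{Step 3: $L$ is not regular.} I would use a Myhill--Nerode argument. For $m\ne m'$, say $m<m'$, the prefixes $a^m\#$ and $a^{m'}\#$ are separated by the suffix $b^m$: the word $a^m\#b^m$ lies in $L$, whereas $a^{m'}\#b^m$ does not. This gives infinitely many Nerode classes, so $L$ is not regular. Alternatively, one can apply the pumping lemma to $a^p\#b^p$ by pumping up within the $a$-block.

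\textbf{Conclusion.} With both hypotheses in place, Proposition~\ref{prop:two-pyramid-criterion} shows that no polyregular map realises $\zeta$ on every Dyck path. The composition, restriction, and linear-growth-collapse arguments are all contained inside the criterion, so they need no further work here.

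The only step that is not pure bookkeeping is the nonregularity of $L$, and it is a standard argument. The real difficulty, namely the parity of the $DU$ exponent, has already been isolated in Lemma~\ref{lem:zeta-two-pyramid}. The one thing I would double-check is that the criterion's hypothesis $|f(P_{m,n})|=O(|P_{m,n}|)$ is applied with $f$ defined on all of $\D$, and it is.
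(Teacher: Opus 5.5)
Your proposal is correct and matches the paper's proof: the paper also applies Proposition~\ref{prop:two-pyramid-criterion} with $f=\zeta$ and $K=R$, using length preservation for the growth hypothesis and Lemma~\ref{lem:zeta-probe} to identify the probe preimage with $L=\{a^m\#b^n:1\le m\le n\}$. Your Myhill--Nerode separation of $a^m\#$ and $a^{m'}\#$ by the suffix $b^m$ is a valid replacement for the standard pumping argument the paper cites for the nonregularity of $L$.
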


\begin{proof}
Let
\[
   L=\{a^m\#b^n:m,n\ge 1,\ m\le n\}.
\]
This language is not regular by the standard pumping argument; see, for
example, Sipser~\cite[Chapter~1]{Sipser2013}.  Lemma~\ref{lem:zeta-probe}
identifies $L$ with
$\{a^m\#b^n:m,n\ge1,\ \zeta(P_{m,n})\in R\}$.  Since $\zeta$ preserves
length, Proposition~\ref{prop:two-pyramid-criterion}, applied with $f=\zeta$
and $K=R$, gives the result.
\end{proof}

\begin{corollary}[Zeta beyond $\MSO$]\label{cor:zeta-not-regular}
The classical Haglund zeta map $\zeta\colon\D\to\D$ under the ordinary
step-word encoding is not realisable by any deterministic two-way finite-state
transducer.  Equivalently, it is not a deterministic $\MSO$-definable
word-to-word map.
\end{corollary}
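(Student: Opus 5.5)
The corollary follows directly from Theorem~\ref{thm:zeta-not-polyregular}; the plan only has to connect the three models through results already stated. First I will show that every map computed by a deterministic 2DFT is polyregular. By Theorem~\ref{thm:eh} (Engelfriet--Hoogeboom), such a map is a deterministic $\MSO$ string transduction in the sense of Definition~\ref{def:mso-transduction}. That definition is the special case of Definition~\ref{def:polyregular} in which every copy name has arity $k_c=1$. The domain sentence, selection formulas, label formulas and ordering formulas carry over unchanged. Hence every deterministic $\MSO$ string transduction, and so every 2DFT map, is polyregular.

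Second, I will argue by contradiction. Suppose a deterministic 2DFT computes a partial map $T$ that realises $\zeta$ in the sense of Definition~\ref{def:relative}, so that $\D\subseteq\mathrm{dom}(T)$ and $T(P)=\zeta(P)$ for all $P\in\D$. By the first step, $T$ is polyregular. It therefore realises $\zeta$ on every Dyck path, which contradicts Theorem~\ref{thm:zeta-not-polyregular}.

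Third, the ``equivalently'' clause needs no separate argument. By Theorem~\ref{thm:eh}, the 2DFT maps and the deterministic $\MSO$-definable maps are the same class of partial functions, so a realisation by one exists exactly when a realisation by the other does.

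There is no real obstacle here. The only point that needs care is that realisation is a statement about partial maps on $\{U,D\}^*$ that agree with $\zeta$ on $\D$, and the inclusion into $\polyreg$ preserves both the domain and the values. The partial map itself is unchanged, so the realisation property transfers verbatim.
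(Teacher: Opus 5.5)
Your proposal is correct and follows the paper's own proof: Theorem~\ref{thm:eh} turns a 2DFT into a deterministic $\MSO$ string transduction, which is an arity-$1$ polyregular presentation with the same partial map, so Theorem~\ref{thm:zeta-not-polyregular} gives the contradiction. Your handling of the ``equivalently'' clause through Theorem~\ref{thm:eh}, and your remark that realisation transfers unchanged, are both fine.
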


\begin{proof}
Every deterministic 2DFT is a deterministic $\MSO$ string transduction by
Theorem~\ref{thm:eh}, and hence an arity-$1$ polyregular map by
Definition~\ref{def:polyregular}.  The result therefore follows from
Theorem~\ref{thm:zeta-not-polyregular}.
\end{proof}

\begin{remark}[Why the collapse is applied to a slice, not to $T$ itself]
The linear-growth collapse cannot be applied to $T$ directly.  Under the
realisation convention (Definition~\ref{def:relative}), a polyregular $T$
realising $\zeta$ is unconstrained on non-Dyck inputs, where it may grow
superlinearly, so the hypothesis of
Theorem~\ref{thm:polyregular-linear-collapse} need not hold on
$\mathrm{dom}(T)$.  Proposition~\ref{prop:two-pyramid-criterion} sidesteps
this by collapsing the linear-growth restriction $g$, whose domain is
$\{a^m\#b^n:m,n\ge 1\}$; this language is regular.  On this domain, the
linear-growth hypothesis genuinely holds, so the conclusion is the strong
one, that \emph{no} polyregular map realises $\zeta$.
\end{remark}

\begin{remark}[First separation in the hierarchy]
Together, Theorem~\ref{thm:zeta-wrp} and
Theorem~\ref{thm:zeta-not-polyregular} give the first strict inclusion of the
paper: $\WRP\supsetneq\polyreg$.  The witness, $\zeta$, has \emph{linear}
output growth; the separation is not about output growth at all but about the
ability to sort by an unbounded integer rank.  Section~\ref{sec:narayana-sweep}
will give $H$ as a second witness with a different statistic exchange.
\end{remark}

\section{The height sweep and the Narayana symmetry}
\label{sec:narayana-sweep}

This section proves Theorem~\ref{thm:intro-narayana-sweep}.  As a second
classification example for $\WRP$, we consider a classical sweep map of
Armstrong, Loehr, and Warrington~\cite{ArmstrongLoehrWarrington2015,
ArmstrongLoehrWarrington2016}: the map that lists the steps of a Dyck path by
increasing starting height, breaking ties from right to left.  We denote it by
$H$.  General sweep-map bijectivity implies that $H$ is a bijection of each
$\D_n$~\cite{ThomasWilliams2018}; the corresponding tree and zeta descriptions
show that it exchanges valleys and double rises, defined
next~\cite{CeballosFangMuhle2020,Fang2024,SulzgruberThiel2018}.  What is new is
its computational status.
For the upper bound, $H$ is an additive level sort, hence an $\SWR$ map by
Proposition~\ref{prop:alw-sweep-swr}.  For the lower bound, we run the
two-pyramid probe of Section~\ref{sec:zeta-not-polyregular} a second time,
with the roles of the two parameters exchanged.  Together, the two results
give $H\in\SWR\setminus\polyreg$.

For a Dyck word $w=w_1\cdots w_{2n}$, let
$\val(w)=|\{i:1\le i<2n,\ w_iw_{i+1}=DU\}|$ be its number of
\emph{valleys} and let
$\dr(w)=|\{i:1\le i<2n,\ w_iw_{i+1}=UU\}|$ be its number of
\emph{double rises}.  Each counts occurrences of a fixed length-two factor, so
each is a \emph{local} statistic, in contrast to the global $\area$ and $\dinv$.
For $n\ge 1$, writing $w$ in its maximal runs
$U^{a_1}D^{b_1}\cdots U^{a_k}D^{b_k}$ with $k=\pk(w)$ peaks (the number of $UD$
factors) gives the elementary identities
\[
   \val(w)=\pk(w)-1,\qquad \dr(w)=n-\pk(w),\qquad
   \text{so}\quad \val(w)+\dr(w)=n-1.
\]
These identities are used in the proof in
Appendix~\ref{app:narayana-sweep-proof}.  They also show that $\val$ and $\dr$
determine each other on $\D_n$, so the symmetry below says exactly that the
distribution of $\val$ on $\D_n$ is symmetric about $(n-1)/2$.
Their joint distribution is the Narayana polynomial
\[
   \Nar_n(q,t)=\sum_{P\in\D_n}q^{\val(P)}t^{\dr(P)}
   =\sum_{k=1}^{n}\frac1n\binom nk\binom n{k-1}\,q^{k-1}t^{\,n-k},
\]
whose symmetry $\Nar_n(q,t)=\Nar_n(t,q)$ is the Narayana analogue of the
$q,t$-Catalan symmetry; for $n=0$ the empty path gives $\Nar_0(q,t)=1$.
Deutsch proved this symmetry bijectively, before the sweep maps, by a
recursive involution on Dyck paths~\cite{Deutsch1999}.  The map $H$ is a
different known realisation of the same symmetry, obtained by sorting steps by
their starting heights.

Let $H=\Phi_\nu$ be the additive level sort
(Definition~\ref{def:additive-sweep}) for the height rule $\nu(U)=+1$,
$\nu(D)=-1$, taken with the \emph{right-to-left} tie-order: $H(P)$ lists the
steps of $P$ by increasing starting height (the height of $P$ just before the
step), breaks ties between equal-height steps by decreasing input position, and
reads off their $U/D$ labels.  Thus, $H$ is the right-to-left version of the
construction in Example~\ref{ex:height-sweep}.

\begin{example}[The map $H$ on $UUUDDD$]\label{ex:narayana-sweep}
The single mountain $P=UUUDDD$ has $\val(P)=0$ and $\dr(P)=2$.  Its six steps,
with the height of $P$ just before each, are
\[
   \begin{array}{c|cccccc}
     \text{position}        & 1 & 2 & 3 & 4 & 5 & 6\\\hline
     \text{step}            & U & U & U & D & D & D\\
     \text{starting height} & 0 & 1 & 2 & 3 & 2 & 1
   \end{array}
\]
Listing the steps by increasing starting height, and within each height by
decreasing position (the right-to-left tie-order), gives
\[
   \begin{array}{r|l}
     \text{height }0: & U\ (1)\\
     \text{height }1: & D\ (6),\ U\ (2)\\
     \text{height }2: & D\ (5),\ U\ (3)\\
     \text{height }3: & D\ (4)
   \end{array}
\]
Reading off the labels gives $H(P)=U\cdot DU\cdot DU\cdot D=UDUDUD$.  The mountain, with two double
rises and no valleys, is sent to the zigzag $UDUDUD$, with $\val=2$ and $\dr=0$:
the two statistics are exchanged.  With the left-to-right convention of
Example~\ref{ex:height-sweep}, the same additive level sort would give
$UUDUDD$, which has $\val=\dr=1$.  It is the right-to-left reading that makes
$H$ exchange them.
\end{example}

Theorem~\ref{thm:narayana-sweep} below is not new.  After a translation of
conventions, it follows from the invertibility of the sweep maps together with
the statistic transport of the zeta map, as reviewed in
Section~\ref{sec:related-work}~\cite{ArmstrongLoehrWarrington2015,
ThomasWilliams2018,CeballosFangMuhle2020,SulzgruberThiel2018}.  We restate it
in the conventions of this paper and prove it directly in
Appendix~\ref{app:narayana-sweep-proof}, so that nothing here rests on an
unstated translation.  Combined with Proposition~\ref{prop:alw-sweep-swr}, it
says that a single $\SWR$ map realises a classical Catalan-type symmetry.

\begin{theorem}[The height sweep realises the Narayana symmetry]
\label{thm:narayana-sweep}
For every $n$, the map $H$ restricts to a bijection
$H\colon\D_n\to\D_n$ satisfying
\[
   \dr(H(P))=\val(P),\qquad \val(H(P))=\dr(P)
   \qquad (P\in\D_n).
\]
Consequently, $H$ realises the Narayana symmetry
$\Nar_n(q,t)=\Nar_n(t,q)$.
\end{theorem}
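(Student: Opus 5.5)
I would prove Theorem~\ref{thm:narayana-sweep} by working with the multiset of steps grouped by starting height, and checking three things in order: that $H(P)$ is a Dyck path of the same semilength, that its valley count equals $\dr(P)$, and that $H$ is injective on $\D_n$.

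\textbf{Dyck-path property.} Write $u_h$ for the number of up-steps of $P$ starting at height $h$. Then exactly $u_{h-1}$ down-steps start at height $h$. The argument of Example~\ref{ex:height-sweep} applies to either tie-order: just before the output block at level $h$ the path is at height $u_{h-1}$, and that block contains exactly $u_{h-1}$ down-steps. So $H(P)\in\D_n$.

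\textbf{Statistic exchange.} I would describe the block at level $h$ explicitly under the right-to-left reading. Scanning $P$ from the right, the steps at starting height $h$ alternate in a rigid way. A down-step from $h$ to $h-1$ and an up-step from $h$ to $h+1$ are visited in right-to-left order, and between two consecutive visits to height $h$ in $P$ the excursion structure forces the pattern. Each maximal excursion above height $h$, read right to left, contributes its closing $D$ (starting at $h+1$) to level $h+1$ and its opening $U$ to level $h$.

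I would then count $DU$ factors of $H(P)$, treating factors inside a block and factors straddling two consecutive blocks separately, and match each one to a $UU$ factor of $P$. The idea is that a $DU$ inside the level-$h$ block comes from a down-step into $h$ lying to the right of an up-step leaving $h$ with nothing at height $h$ in between. This should correspond to an up-step at height $h$ whose next step is also up, or else to a return pattern.

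Since $\val+\dr=n-1$ holds on both sides, proving the single identity $\val(H(P))=\dr(P)$ suffices; the other identity follows. Equivalently, it is enough to show $\pk(H(P))=n+1-\pk(P)$, which may be easier: count the $UD$ factors of $H(P)$ directly from the block structure, where each block is a word in $D$ and $U$ determined by the interleaving at height $h$.

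\textbf{Bijectivity.} Since $\D_n$ is finite, injectivity suffices. I would reconstruct $P$ from $H(P)$ level by level. The block lengths are recoverable, because the level-$h$ block has $u_h$ up-steps and $u_{h-1}$ down-steps, and $u_0$ is the number of leading $U$'s before the first $D$; recursing gives every block boundary. Given the blocks, the right-to-left interleaving at each height determines how the excursions above $h$ nest into the gaps at height $h$. Rebuilding $P$ from height $0$ upward then recovers $P$ uniquely.

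The main obstacle is this last step: showing that the interleaving data determines $P$ unambiguously. The left-to-right version fails here (Example~\ref{ex:height-sweep}), so the proof must use the right-to-left convention essentially. I would first try to prove that the order of $U$'s and $D$'s within the level-$h$ block encodes exactly which excursions at level $h+1$ attach to which gap at level $h$, by induction on the maximal height. If that becomes unwieldy, the fallback is to cite Thomas--Williams sweep-map invertibility~\cite{ThomasWilliams2018} for bijectivity and prove only the statistic identity directly.
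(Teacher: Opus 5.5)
Your Dyck-path check and your reduction to a single identity via $\val+\dr=n-1$ are fine. There is a genuine gap, however: the proposal never determines how the steps inside one level block are ordered, and both the statistic exchange and injectivity rest on exactly that.

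\begin{itemize}
\item Your $DU$-count remains a guess (``should correspond \ldots\ or else to a return pattern''). Note also that a $D$ in the level-$h$ block is a down-step \emph{from} $h$, not \emph{into} $h$.
\item Your recovery of block boundaries tacitly assumes that the level-$1$ block begins with $D$. That is precisely what fails in the left-to-right version.
\end{itemize}

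The missing lemma is a normal form. For $h\ge1$, the steps starting at height $h$ are the down-steps closing the up-steps $v$ from height $h-1$, together with the up-steps from height $h$. Each height-$h$ up-step lies in the excursion of a unique such $v$. These excursions occupy disjoint intervals of $P$. Inside each, the closing $D$ is the rightmost step and the only level-$h$ down-step. Reading right to left therefore gives the level-$h$ block as $D\,U^{c(v_1)}\,D\,U^{c(v_2)}\cdots D\,U^{c(v_k)}$. Here $v_1,\dots,v_k$ are listed right to left, and $c(v)$ is the number of height-$h$ up-steps inside $v$'s excursion. The level-$0$ block is $U^{r}$, where $r$ is the number of up-steps from height $0$. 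Hence $H(P)=U^{r}\prod_v D\,U^{c(v)}$, with the product over all up-steps in right-to-left breadth-first order of the contour forest. This is the normal form the paper proves.

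With this in hand, both of your remaining steps are short.

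\emph{Statistic exchange.} Every block after level $0$ starts with $D$, so no $DU$ factor straddles a block boundary, and $\val(H(P))=|\{v:c(v)>0\}|$. Since $c(v)>0$ exactly when the up-step $v$ is immediately followed by $U$, this equals $\dr(P)$. Equivalently, $\pk(H(P))=n+1-\pk(P)$, which was your target.

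\emph{Injectivity.} The numbers $r$ and $c(v_1),\dots,c(v_n)$ are read off $H(P)$ as the length of the leading $U$-run and of the $U$-runs after successive $D$'s. A plane forest is determined by its breadth-first child counts: the children of $v_i$ are the consecutive vertices $v_{t_i+1},\dots,v_{t_i+c(v_i)}$, where $t_i=r+\sum_{j<i}c(v_j)$. No induction on excursion nesting is needed.

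So the step you flagged as the main obstacle is the easy one once the normal form is known. Falling back on Thomas--Williams for bijectivity is legitimate, but it does not supply the statistic identity, which still requires the normal form.
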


The two-pyramid criterion also gives a lower bound for $H$.  Recall the
two-pyramid paths $P_{m,n}=U^mD^mU^nD^n$ and the criterion of
Proposition~\ref{prop:two-pyramid-criterion}.  The same family and the same
regular probe $R$ that separated $\zeta$ from $\polyreg$ serve for $H$, with
the roles of the two parameters exchanged: the closed form below mirrors the
one in Lemma~\ref{lem:zeta-two-pyramid} with $m$ and $n$ interchanged, so the
probe detects $n\le m$ where Lemma~\ref{lem:zeta-probe} detected $m\le n$.

\begin{lemma}[Two-pyramid formula for the height sweep]
\label{lem:H-two-pyramid}
For $m,n\ge1$,
\[
H(P_{m,n})=
\begin{cases}
   UU(DU)^{2n-2}DD(UD)^{m-n}, & n\le m,\\[1mm]
   UU(DU)^{2m-1}DD(UD)^{n-m-1}, & n>m.
\end{cases}
\]
\end{lemma}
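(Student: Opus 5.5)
The plan is a direct computation of the height sweep on $P_{m,n}=U^mD^mU^nD^n$: tabulate the steps by starting height, sort within each height by decreasing position, and read off the word. This mirrors the proof of Lemma~\ref{lem:zeta-two-pyramid}.

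\emph{Step 1: list the steps at each height.} The first pyramid has up-steps at positions $1,\ldots,m$ with starting heights $0,\ldots,m-1$. Its down-steps are at positions $m+1,\ldots,2m$ with starting heights $m,\ldots,1$. The second pyramid has up-steps at positions $2m+1,\ldots,2m+n$ with starting heights $0,\ldots,n-1$, and down-steps at positions $2m+n+1,\ldots,2m+2n$ with starting heights $n,\ldots,1$. So at height $h$ the steps present are:
\begin{itemize}
\item an up-step of the first pyramid, if $h\le m-1$;
\item a down-step of the first pyramid, if $1\le h\le m$;
\item an up-step of the second pyramid, if $h\le n-1$;
\item a down-step of the second pyramid, if $1\le h\le n$.
\end{itemize}
Right-to-left tie-breaking gives the fixed within-level order: second-pyramid $D$, second-pyramid $U$, first-pyramid $D$, first-pyramid $U$. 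This holds because all second-pyramid positions exceed all first-pyramid positions, and within a pyramid the down-step at height $h$ lies to the right of the up-step at height $h$.

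\emph{Step 2: read off the blocks, case $n\le m$.}
\begin{itemize}
\item Height $0$ contributes $UU$ (the second-pyramid $U$, then the first-pyramid $U$).
\item Each height $h=1,\ldots,n-1$ contributes $DUDU$, giving $(DU)^{2n-2}$.
\item Height $n$ contributes the second-pyramid $D$, then (if $n<m$) the first-pyramid $D$ and $U$, that is, $DDU$. If $n=m$ it contributes just $DD$.
\item Each height $h=n+1,\ldots,m-1$ contributes $DU$.
\item Height $m$ (when $m>n$) contributes a final $D$.
\end{itemize}
Concatenating after $UU(DU)^{2n-2}$ gives $DD\,U(DU)^{m-n-1}D=DD(UD)^{m-n}$, which is the first formula. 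The case $n=m$ (empty tail) should be checked separately.

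\emph{Step 3: case $n>m$.}
\begin{itemize}
\item Height $0$ again contributes $UU$.
\item Each height $h=1,\ldots,m-1$ contributes $DUDU$, giving $(DU)^{2m-2}$.
\item Height $m$ contributes second-pyramid $D$, second-pyramid $U$, then first-pyramid $D$, that is, $DUD$.
\item Each height $h=m+1,\ldots,n-1$ contributes $DU$.
\item Height $n$ contributes $D$.
\end{itemize}
The tail after $UU(DU)^{2m-2}$ is therefore $DUD(DU)^{n-m-1}D$. Regrouping, $DU\cdot D(DU)^{n-m-1}D=DU\cdot DD(UD)^{n-m-1}$, which gives $UU(DU)^{2m-1}DD(UD)^{n-m-1}$.

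The only real obstacle is this regrouping together with the degenerate cases $n=m$ and $n=m+1$, where some ranges are empty. I will verify each degenerate case against small examples, for instance $P_{1,1}=UDUD\mapsto UUDD$, which matches the first formula with $n=m=1$.
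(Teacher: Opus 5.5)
Your proposal is correct and follows essentially the same route as the paper: you tabulate the four runs by starting height, observe that right-to-left ties list the present steps in the fixed order second-pyramid $D$, second-pyramid $U$, first-pyramid $D$, first-pyramid $U$, and then read off the levels in each case and regroup. The degenerate cases $n=m$ and $n=m+1$ need no separate verification by example. Your level-by-level computation already covers them, because the affected ranges are simply empty, and the regroupings $U(DU)^{k}D=(UD)^{k+1}$ and $D(DU)^{k}D=DD(UD)^{k}$ remain valid at $k=0$.
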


\begin{proof}
Index the $2m+2n$ steps of $P_{m,n}$ by their positions.  For the weights
$\nu(U)=+1$ and $\nu(D)=-1$, the level of a step is its starting height, and
we use the two terms interchangeably below.  For a level $h$, write $U_1$ and
$D_1$ for the steps of the runs $U^m$ and $D^m$ that start at height $h$, and
$U_2$ and $D_2$ for the corresponding steps of $U^n$ and $D^n$; each exists
exactly for the range of $h$ shown.  Reading off the starting heights of the
four runs gives the following table.

\begin{center}
\small
\renewcommand{\arraystretch}{1.25}
\begin{tabular}{@{}llll@{}}
\toprule
run & steps & starting heights & position of the step at height $h$\\
\midrule
$U^m$ & positions $1,\ldots,m$ &
   $0,1,\ldots,m-1$ & $U_1$ at $h+1$ \hfill($0\le h\le m-1$)\\
$D^m$ & positions $m+1,\ldots,2m$ &
   $m,m-1,\ldots,1$ & $D_1$ at $2m-h+1$ \hfill($1\le h\le m$)\\
$U^n$ & positions $2m+1,\ldots,2m+n$ &
   $0,1,\ldots,n-1$ & $U_2$ at $2m+h+1$ \hfill($0\le h\le n-1$)\\
$D^n$ & positions $2m+n+1,\ldots,2m+2n$ &
   $n,n-1,\ldots,1$ & $D_2$ at $2m+2n-h+1$ \hfill($1\le h\le n$)\\
\bottomrule
\end{tabular}
\end{center}

At level $0$, only $U_1$ at position $1$ and $U_2$ at position $2m+1$ start
at height $0$.  Decreasing position lists $U_2$ and then $U_1$, so this level
contributes $UU$.

For every level $h\ge1$, each run contributes at most one step.  Since the four
runs occur in $P_{m,n}$ in the order $U_1,D_1,U_2,D_2$, the right-to-left
tie-order lists the steps that exist in the order obtained by deleting absent
members from $D_2,U_2,D_1,U_1$.

Suppose first that $n\le m$.  The levels $1\le h\le n-1$ each contain all
four steps and together contribute $(DU)^{2n-2}$.  If $n<m$, level $n$
contributes $DDU$, the levels $n+1\le h\le m-1$ each contribute $DU$, and
level $m$ contributes $D$, so that the output is
\[
   UU(DU)^{2n-2}DD\,U(DU)^{m-1-n}D
   =UU(DU)^{2n-2}DD(UD)^{m-n}.
\]
If $n=m$, level $n$ contributes $DD$ and there is no further level, which is
the same formula with an empty tail.

Now suppose that $n>m$.  The levels $1\le h\le m-1$ together contribute
$(DU)^{2m-2}$.  Level $m$ contributes $DUD$, the levels
$m+1\le h\le n-1$ each contribute $DU$, and level $n$ contributes $D$.
Hence
\[
   H(P_{m,n})
   =UU(DU)^{2m-2}DUD(DU)^{n-1-m}D
   =UU(DU)^{2m-1}DD(UD)^{n-m-1},
\]
where the tail is empty when $n=m+1$.  This settles both cases.
\end{proof}

\begin{lemma}[Regular probe for the height sweep]
\label{lem:H-probe}
For all $m,n\ge1$, $H(P_{m,n})\in R$ if and only if $n\le m$, where $R$ is
the regular language of Lemma~\ref{lem:zeta-probe}.
\end{lemma}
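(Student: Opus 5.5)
The plan is to combine the closed form of Lemma~\ref{lem:H-two-pyramid} with the factorisation argument already used in the proof of Lemma~\ref{lem:zeta-probe}. That proof established two facts about $R$: it is regular, and every word of the shape $UU(DU)^{e}DD(UD)^{s}$ with $e,s\ge0$ factors uniquely. The uniqueness comes from the observation that, after the initial $UU$, the first occurrence of $DD$ is the central separator. This holds because every $D$ inside the $(DU)^{e}$ block is followed by a $U$, so the exponent $e$ is determined by the word. Since the separator is followed by $(UD)^s$ with $s\ge 0$ arbitrary, membership in $R$ for such a word is therefore equivalent to $e$ being even.

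The first step is to note that both cases of Lemma~\ref{lem:H-two-pyramid} give words of exactly this shape, $UU(DU)^{e}DD(UD)^{s}$ with $e,s\ge0$. The exponents are:
\begin{itemize}
\item if $n\le m$, then $e=2n-2$ and $s=m-n$;
\item if $n>m$, then $e=2m-1$ and $s=n-m-1$.
\end{itemize}
In each case $s\ge0$ is guaranteed by the case hypothesis, and $e\ge0$ holds because $m,n\ge1$.

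The second step is to read off the parity of $e$. In the case $n\le m$, the exponent $2n-2$ is even, so $H(P_{m,n})\in R$ by taking $q=n-1$ and keeping $s=m-n$. In the case $n>m$, the exponent $2m-1$ is odd. By the uniqueness of the factorisation, $H(P_{m,n})$ cannot also be written as $UU(DU)^{2q}DD(UD)^{s'}$, so $H(P_{m,n})\notin R$. Together these give the claimed equivalence.

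The only point needing care is the uniqueness of the factorisation in the odd case, which rules out an alternative parse of the word. I would simply cite the sentence in the proof of Lemma~\ref{lem:zeta-probe} that establishes it, rather than reprove it.
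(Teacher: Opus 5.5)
Your proposal is correct and is essentially the paper's proof: it too reads off from Lemma~\ref{lem:H-two-pyramid} that the exponent of $DU$ before the central $DD$ is even ($2n-2$) when $n\le m$ and odd ($2m-1$) when $n>m$, and concludes by the parity criterion for membership in $R$. Your checks that $e,s\ge0$ and your appeal to the unique factorisation from the proof of Lemma~\ref{lem:zeta-probe} make explicit what the paper's two-line argument leaves implicit.
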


\begin{proof}
By Lemma~\ref{lem:H-two-pyramid}, the exponent of $DU$ before the central
$DD$ is the even number $2n-2$ if $n\le m$ and the odd number $2m-1$ if
$n>m$.  Membership in $R$ is therefore equivalent to $n\le m$.
\end{proof}

\begin{theorem}[The height sweep is beyond $\polyreg$]
\label{thm:H-not-polyregular}
The map $H$ is not realised by any polyregular map under the step-word
encoding.  Together with Proposition~\ref{prop:alw-sweep-swr}, this gives
$H\in\SWR\setminus\polyreg$; in particular, $H$ is not realisable by any
deterministic 2DFT.
\end{theorem}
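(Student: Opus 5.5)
The proof mirrors Theorem~\ref{thm:zeta-not-polyregular}, with the roles of the two pyramid parameters exchanged. We apply Proposition~\ref{prop:two-pyramid-criterion} with $f=H$ and $K=R$, the regular language of Lemma~\ref{lem:zeta-probe}. Two hypotheses must be checked.

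The first is the growth hypothesis. $H$ is an additive level sort, so it permutes the steps of its input. It is therefore length-preserving, and in particular $|H(P_{m,n})|=|P_{m,n}|=2m+2n$.

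The second hypothesis is that the probe language is nonregular. By Lemma~\ref{lem:H-probe},
\[
   \{a^m\#b^n:m,n\ge1,\ H(P_{m,n})\in R\}=\{a^m\#b^n:m,n\ge1,\ n\le m\}.
\]
This language is not regular, by the same standard pumping argument cited in the proof of Theorem~\ref{thm:zeta-not-polyregular}. Pump inside the $a$-block of $a^p\#b^p$ with $p$ exceeding the pumping length. Pumping down decreases $m$ below $n$, which leaves the language. Alternatively, one can reduce to that theorem's language $L$ through the regularity-preserving substitution that swaps the two blocks.

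With both hypotheses in hand, Proposition~\ref{prop:two-pyramid-criterion} yields that no polyregular map realises $H$ on every Dyck path.

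The membership $H\in\SWR$ is Proposition~\ref{prop:alw-sweep-swr}, taken with right-to-left scan order. Combining the two gives $H\in\SWR\setminus\polyreg$. Finally, every deterministic 2DFT is an arity-$1$ polyregular map by Theorem~\ref{thm:eh}, so $H$ is not realisable by any 2DFT, exactly as in Corollary~\ref{cor:zeta-not-regular}.

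None of these steps is a real obstacle. The only nontrivial content is the closed form of Lemma~\ref{lem:H-two-pyramid}, which is already established.
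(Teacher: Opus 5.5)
Your proof is correct and matches the paper's argument step for step. Like the paper, you use that $H$ preserves length and apply Lemma~\ref{lem:H-probe} to identify the probe language with $\{a^m\#b^n:m,n\ge1,\ n\le m\}$. You then show nonregularity by pumping down on $a^p\#b^p$ and conclude via Proposition~\ref{prop:two-pyramid-criterion}, Proposition~\ref{prop:alw-sweep-swr}, and Theorem~\ref{thm:eh}.

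Your alternative reduction to the language $L$ also works. However, the block swap is reversal followed by the relabelling $a\leftrightarrow b$, not a letter-to-letter substitution.
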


\begin{proof}
Let
\[
   L'=\{a^m\#b^n:m,n\ge1,\ n\le m\}.
\]
This language is not regular by the standard pumping argument: if $p$ were a
pumping length, pumping down a nonempty initial block of $a$'s in $a^p\#b^p$
would produce $a^{p-t}\#b^p$ for some $t>0$, which violates $n\le m$.
Lemma~\ref{lem:H-probe} identifies $L'$ with
$\{a^m\#b^n:m,n\ge1,\ H(P_{m,n})\in R\}$.  Since $H$ preserves length,
Proposition~\ref{prop:two-pyramid-criterion}, applied with $f=H$ and $K=R$,
shows that no polyregular map realises $H$ on every Dyck path.  The membership
$H\in\SWR$ follows from Proposition~\ref{prop:alw-sweep-swr}, and the final
claim follows because every deterministic 2DFT is polyregular.
\end{proof}

Proposition~\ref{prop:alw-sweep-swr} and
Theorems~\ref{thm:narayana-sweep} and~\ref{thm:H-not-polyregular} together
establish the introduction-level statement,
Theorem~\ref{thm:intro-narayana-sweep}.

The contrast with Theorem~\ref{thm:intro-no-swap} is the point of this
section.  A single $\WRP$ rank sort realises the Narayana symmetry, whereas
Section~\ref{sec:main} shows that no $\WRP$ map realises the $q,t$-Catalan
symmetry by an area--dinv swap.  The obstruction proved there is therefore not
the bare weakness of the model.

\section[Regular-slice semilinearity for WRP]{Regular-slice semilinearity for $\WRP$}\label{sec:slice-semilinearity}

Recall the no-swap theorem (Theorem~\ref{thm:intro-no-swap}, proved in
Section~\ref{sec:main}): no $\WRP$ map realises a semilength-preserving map on
Dyck paths that exchanges $\area$ and $\dinv$.  A bijection of this kind would settle
the $q,t$-Catalan symmetry directly.  The proof begins by restricting any
hypothetical swap to the explicit infinite family of Dyck paths
\[
   W_n=U(UD)^nD\qquad (n\ge 1),
\]
where $W_n$ repeats the block $UD$ exactly $n$ times between an
opening $U$ and a closing $D$.  The indexed family $(W_n)_{n\ge1}$ is a
particular \emph{regular slice}: a family
of the form $w_n=uv^nz$ for fixed words $u,v,z$, with only $n$ varying;
indeed, $\{W_n:n\ge 1\}$ is a regular language.  Its repeated-block form makes
finite-state behaviour easy to expose.  For any fixed finite automaton, reading
one more copy of $UD$ applies the same transition map to its current state.
Because the automaton has only finitely many states, repeated application of
this map eventually cycles.  Hence the state reached after reading $W_n$ is
eventually periodic in $n$.  This elementary observation is the starting point
of the regular-slice analysis.

For a fixed map $T$, let $a_n$ be the number of $U$-steps in
$T(W_n)$ before its first $D$, and let $b_n$ be the number of $U$-steps after
that first $D$.  If the output has no $D$, we let $a_n$ count all of its
$U$-steps and set $b_n=0$.  As $n$ varies, the pairs $(a_n,b_n)$ form a set of
integer points in the plane.  The goal of this section
(Theorem~\ref{thm:wrp-slice-semilinearity}) is to show that this set is
\emph{semilinear} whenever $T\in\WRP$ has output length $O(n)$ on this family.
Informally, a semilinear set is built from finitely many base points by
repeatedly adding fixed integer step vectors, so its geometry is governed by
finitely many linear patterns.  By contrast, a map exchanging $\area$ and
$\dinv$ would force the pairs $(a_n,b_n)$ to have a quadratic lower boundary.
This incompatibility is the obstruction used in the next section.

To prove the semilinearity statement, we use the logical language of
Presburger arithmetic.  Its formulas describe integer tuples using addition,
order comparisons, and congruence conditions, together with the usual logical
connectives and quantifiers.  We call a set of integer tuples
\emph{Presburger-definable} when membership in it can be expressed by such a
formula, and likewise call a condition on integer variables
\emph{Presburger-definable} when it can.  The Presburger-definable sets are
precisely the semilinear sets.  We
use two technical ingredients.  First, on a regular slice, the
conditions governing selection, labelling, atom-rank comparisons, and
tie-orders in a $\WRP$ presentation can be expressed by Presburger formulas in
$n$ and the relevant \emph{repetition indices}, which record the copies of the
repeated block containing the positions under consideration.  Second, if such a
formula describes finitely many solutions for each parameter tuple and their
number grows at most linearly in the parameters, then the set of
parameter/count pairs is semilinear.  The same conclusion holds when finitely
many such counts are recorded together.  The argument combines these
ingredients as follows.

\vspace{0.5em}
\noindent\emph{The argument in three steps.}
\begin{enumerate}[label=(\arabic*)]
\item \emph{Describe the output by Presburger formulas.}  The first ingredient
expresses which atoms are selected, whether they are labelled $U$ or $D$, and
how they are ordered in the output by Presburger formulas whose free variables
are $n$ and the relevant repetition indices.
\item \emph{Express $a_n$ and $b_n$ as solution counts.}  After treating
separately the case in which the output has no $D$, each quantity counts the
selected $U$-atoms that satisfy a Presburger-definable condition placing them
before or after the first selected $D$-atom.  Both counts are $O(n)$ because
the total output length is $O(n)$.
\item \emph{Apply bounded counting and project.}  The second ingredient shows
that the set $\{(n,a_n,b_n):n\ge 1\}$ is semilinear.  Since semilinear sets are
closed under projection, forgetting the coordinate $n$ leaves the desired
semilinear set of first-ascent pairs.
\end{enumerate}

\subsection{First ingredient: Presburger descriptions on a regular slice}

We now prove the first ingredient from the overview.  Once the input is
restricted to a regular slice, each position can be specified by finite data
recording its region and offset, together with a repetition index when it lies
in the repeated part.  With positions encoded in this way, the data determining
the $\WRP$ output can be described by Presburger formulas: which potential atoms
are selected, how the selected atoms are labelled, and their relative output
order.  The free variables of these formulas are $n$ and the relevant repetition
indices.  Thus, a fixed finite collection of Presburger formulas describes the
outputs across the entire slice.

To state it, fix words $u,v,z$ and consider the regular slice
$(w_n)_{n\ge0}$ given by
\[
   w_n=uv^nz\qquad(n\ge 0),
\]
in which only the number of copies of $v$ varies.  The family
$(W_n)_{n\ge1}$ is obtained by restricting the case $u=U$, $v=UD$, $z=D$ to
$n\ge1$.  Number the letters of $w_n$
from $1$ to $|u|+n|v|+|z|$, and encode a position $i$ by a triple
\[
   (\tau,\ s,\ j)
\]
read as follows:
\begin{itemize}
\item $\tau\in\{\mathsf u,\mathsf v,\mathsf z\}$ records which region $i$ lies
  in: the prefix $u$, one of the $n$ copies of $v$, or the suffix~$z$;
\item $s$ is the offset of $i$ within that region, so that $1\le s\le|u|$,
  $1\le s\le|v|$, or $1\le s\le|z|$ according to whether $\tau$ is
  $\mathsf u,\mathsf v$, or $\mathsf z$;
\item when $\tau=\mathsf v$, $j$ is the \emph{repetition index}: it records
  which copy of $v$ contains $i$, so $1\le j\le n$.  When
  $\tau\in\{\mathsf u,\mathsf z\}$, we set $j=0$, since it plays no role there.
\end{itemize}
The position $i$ is recovered from its triple by
\[
   i=
   \begin{cases}
     s, & \tau=\mathsf u,\\[2pt]
     |u|+(j-1)\,|v|+s, & \tau=\mathsf v,\\[2pt]
     |u|+n\,|v|+s, & \tau=\mathsf z,
   \end{cases}
\]
and this is a bijection between valid triples and positions of $w_n$.
Because $u,v,z$ are fixed words, the tag $\tau$ and the offset $s$ take only
finitely many values, while the repetition index $j$ may grow with $n$ and
occurs only for positions inside the repeated part.

For the family $(W_n)_{n\ge1}$, using $u=U$, $v=UD$, and $z=D$, this
specialises as follows: the opening $U$ is $(\mathsf u,1,0)$; inside the $j$-th
block $UD$, the up-step is $(\mathsf v,1,j)$ and the down-step is
$(\mathsf v,2,j)$; and the closing $D$ is $(\mathsf z,1,0)$.

\begin{example}[Coordinates and height for $W_n$]
\label{ex:slice-coordinates}
Take the height rank source of Example~\ref{ex:height-rank}.  On
$W_n=U(UD)^nD$, its prefix ranks (Definition~\ref{def:prefix-rank}) before the
four kinds of positions are
\[
   \rho(\mathsf u,1,0)=0,\qquad
   \rho(\mathsf v,1,j)=1,\qquad
   \rho(\mathsf v,2,j)=2,\qquad
   \rho(\mathsf z,1,0)=1.
\]
The two values inside the repeated part do not depend on $j$ because each
block $UD$ has total height change zero.  Thus, for this particular rank source
and this particular family, the tag and offset determine the prefix rank even
though the repetition index may grow with $n$.  This constancy is special to
the height source on $W_n$; prefix ranks need not generally be constant along
a regular slice.
\end{example}

A potential atom of the presentation consists of a copy name $c\in C$ (from
the finite set in Definition~\ref{def:polyregular}) together with a tuple of
$k_c$ positions $\bar\imath=(i_1,\ldots,i_{k_c})$; we call these positions the
\emph{coordinates} of the atom.  Thus, the copy name determines the number of
coordinates.  For each coordinate $r\in\{1,\ldots,k_c\}$, the triple
$(\tau_r,s_r,j_r)$ names its position.  The finite part
$(c,\tau_1,s_1,\ldots,\tau_{k_c},s_{k_c})$ ranges over a fixed finite set,
while the repetition indices
$(j_1,\ldots,j_{k_c})$ of the coordinates may grow with $n$.  (For the arity-one
fragments $\RR$ and $\SWR$, a potential atom sits at a single position, so it
has a single coordinate, with repetition index $j$.)

We recall the standard generator description of semilinear sets.  A
\emph{linear set} in $\N^k$ is a set
\[
   \{\beta+n_1 s_1+\cdots+n_m s_m:\ n_1,\ldots,n_m\in\N\},
\]
given by a base point $\beta\in\N^k$ and finitely many fixed step vectors
$s_1,\ldots,s_m\in\N^k$, each of which may be used any number of times; a
\emph{semilinear set} is a finite union of linear sets.  This is the ``base
points and constant integer steps'' form glossed in the introduction.
The Ginsburg--Spanier theorem~\cite{GinsburgSpanier1966} identifies these sets
with the Presburger-definable sets introduced at the beginning of this section.
Thus, the generator description above and the logical description by Presburger
formulas define the same class.  In what follows, we use the term
``semilinear'' whether a set is presented by generators or by a Presburger
formula.

The logical description gives the closure rules used below.  Semilinear sets
are closed under finite unions, intersections, complements, and projections.
Here, \emph{projection} means forgetting coordinates: if
$S\subseteq\N^{k+1}$ is semilinear, then so is
$\{x\in\N^k:(x,y)\in S\text{ for some }y\in\N\}$.
Finite unions, intersections, and complements correspond to disjunction,
conjunction, and negation in Presburger formulas, while projection corresponds
to existential quantification.  Universal quantification can be expressed
using negation and existential quantification.  We use these closure properties
below without further comment.  For example, ``no selected atom precedes a
given one'' is the negation of an existential condition, and retaining only two
chosen coordinates of each tuple is a projection; both operations preserve
semilinearity.

\paragraph{Marked words and eventual periodicity.}
The first two lemmas in this subsection analyse separately an $\MSO$ formula
and an additive rank source on a regular slice $(w_n)_{n\ge0}$ of the form
$w_n=uv^nz$.  The third
combines their conclusions for a fixed $\WRP$ presentation.  Both analyses use
the same mechanism: reading another copy of the fixed block $v$ updates a state
from a fixed finite set.  For a DFA
$M=(Q,q_0,\delta,F)$ (Definition~\ref{def:dfa}), let $\delta_v(q)$ be the state
reached after reading $v$ from $q$.  Reading $v$ repeatedly applies the same map
$\delta_v\colon Q\to Q$.  Since the set $Q^Q$ of maps from $Q$ to itself is
finite, the iterates $\delta_v^0,\delta_v^1,\delta_v^2,\ldots$ eventually
repeat.\footnote{For a function $f\colon X\to X$, we write $f^r$ for the
composition of $r$ copies of $f$ when $r\ge 1$, and set
$f^0=\mathrm{id}_X$.}  After a bounded initial segment, they cycle with a fixed
period.  An additive
rank source (Definition~\ref{def:rank-source}) has the same finite-state
transition structure; its accumulated weights will be handled separately.
This eventual periodicity is the finite-state mechanism behind the Presburger
descriptions below.

For an $\MSO$ formula $\varphi(x_1,\ldots,x_k)$ with free position variables,
we first encode an assignment to those variables in the input word.  Given a
word $w$ and chosen positions $i_1,\ldots,i_k$, attach $k$ bits to each letter,
with the $\ell$-th bit equal to $1$ exactly at position $i_\ell$.  The resulting
word over $\Sigma\times\{0,1\}^k$ records both $w$ and the chosen positions.  The
marked words for which $w\models\varphi(i_1,\ldots,i_k)$ form an
$\MSO$-definable language: the marker bits identify the positions assigned to
the free variables.  By the B\"uchi--Elgot--Trakhtenbrot theorem
(Section~\ref{sec:mso}), this language is regular and therefore recognised by a
DFA.  This is the form in which we use $\MSO$ below.

\begin{lemma}[MSO conditions on a regular slice]
\label{lem:one-loop-finite-state}
Fix a regular slice $(w_n)_{n\ge0}$ with $w_n=uv^nz$ and an $\MSO$ formula
$\varphi(x_1,\ldots,x_k)$.  For each variable $x_\ell$, fix its region
$\tau_\ell\in\{\mathsf u,\mathsf v,\mathsf z\}$: the prefix $u$, a copy of the
repeated block $v$, or the suffix $z$.  Also fix an offset $s_\ell$ valid in
that region, as in the position encoding preceding
Example~\ref{ex:slice-coordinates}.  If $\tau_\ell=\mathsf v$, let
$j_\ell\in\{1,\ldots,n\}$ be the repetition index; otherwise set $j_\ell=0$.
Let $i_\ell$ be the position encoded by $(\tau_\ell,s_\ell,j_\ell)$.  Then the
set
\[
   \{(n,j_1,\ldots,j_k):
      w_n\models\varphi(i_1,\ldots,i_k)\}
\]
is Presburger-definable.
\end{lemma}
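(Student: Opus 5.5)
We will turn $\varphi$ into a DFA and read it along the marked word. As explained just before the statement, the marked words $(w,\bar\imath)$ with $w\models\varphi(\bar\imath)$ form a regular language over $\Sigma\times\{0,1\}^k$. Let $M=(Q,q_0,\delta,F)$ be a DFA recognising it. We write the marked version of $w_n$ as a concatenation of blocks.

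\emph{Block decomposition.} The marked word consists of
\begin{itemize}
\item the prefix $u$, with its marks determined by those $\ell$ having $\tau_\ell=\mathsf u$;
\item $n$ copies of $v$, most of them unmarked, with the marked copies determined by the $j_\ell$ of the variables having $\tau_\ell=\mathsf v$;
\item the suffix $z$, with its marks determined similarly.
\end{itemize}
The finite data $(\tau_\ell,s_\ell)$ are fixed, so every marked copy of $v$ carries one of finitely many marked words. Each such word is determined by which variables sit in that copy, which in turn depends only on the equality pattern among the $j_\ell$.

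\emph{Splitting into cases.} We case-split on the order type of the indices. For each weak ordering $\pi$ of the $\mathsf v$-variables, the condition that $(j_\ell)$ realises $\pi$ is Presburger. Under $\pi$, let the distinct indices be $g_1<\cdots<g_m$. The run of $M$ is then
\[
   \delta^*(q_0,\tilde u)\ \xrightarrow{\ \delta_v^{\,g_1-1}\ }\ \xrightarrow{\ \tilde v_1\ }\ \xrightarrow{\ \delta_v^{\,g_2-g_1-1}\ }\ \cdots\ \xrightarrow{\ \delta_v^{\,n-g_m}\ }\ \xrightarrow{\ \tilde z\ }.
\]
Here $\tilde u,\tilde v_t,\tilde z$ are fixed marked words determined by $\pi$ and the offsets, and $\delta_v$ is the transition map of the unmarked block $v$ on $Q$.

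\emph{Eventual periodicity.} This is the key step. There are constants $N_0$ and $p\ge1$ with $\delta_v^{r}=\delta_v^{r+p}$ for all $r\ge N_0$. Hence, for each $f\in Q^Q$, the set $\{r\in\N:\delta_v^r=f\}$ is a finite set union a finite union of congruence classes above $N_0$, so it is Presburger-definable. Acceptance is then the disjunction, over tuples $(f_0,\ldots,f_m)\in (Q^Q)^{m+1}$ whose composite with the fixed marked-block transitions sends $q_0$ into $F$, of the conjunction
\[
   \delta_v^{g_1-1}=f_0,\quad \delta_v^{g_{t+1}-g_t-1}=f_t\ (1\le t<m),\quad \delta_v^{n-g_m}=f_m.
\]
Each conjunct is a Presburger condition on differences of the variables. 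We add the constraints $1\le j_\ell\le n$, the conditions $j_\ell=0$ for non-$\mathsf v$ variables, and the pattern $\pi$. Taking the finite disjunction over all patterns $\pi$ gives the desired formula in $(n,j_1,\ldots,j_k)$.

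The main obstacle is bookkeeping rather than a conceptual difficulty: several variables can share a copy of $v$ or share an offset, and the marked letters must be placed consistently. Handling equal indices through the weak-ordering case split resolves this. Because the gaps may be zero, we also need the convention $f^0=\mathrm{id}$ from the footnote.
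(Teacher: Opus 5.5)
Your proposal is correct and follows essentially the same route as the paper's proof: a DFA on marked words, a case split by the equality/order pattern of the repetition indices, a factorisation of the run into fixed marked-block maps and powers of the unmarked block map, and eventual periodicity of those powers to make each gap condition Presburger-definable. The differences are cosmetic. You disjoin over the values $f_t\in Q^Q$ of the powers, whereas the paper refines by reduced exponents. You should also state the case with no $\mathsf v$-variables explicitly (a single gap, $\delta_v^{\,n}=f_0$), as the paper does.
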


\begin{proof}
We begin with the key idea of the proof; the remainder of the argument makes
this outline precise.  Inside the repeated part $v^n$, the chosen positions lie
in at most $k$ copies of $v$.  We first translate $\varphi$ into a DFA on marked
words.  We then partition the parameter tuples into cells according to which
repetition indices $j_\ell$ are equal and how the distinct indices are ordered.
On each cell, we therefore know which variables mark the same copy of $v$ and
the order in which the marked copies occur.  These copies are separated by
stretches of unmarked copies.  Eventual periodicity implies that the transition
map induced by such a stretch falls into one of finitely many cases as its
length varies, with each case described by a Presburger formula.  Refining
the cells by these cases fixes the entire sequence of transition maps applied
by the DFA.  Thus, every tuple in a refined cell leads to the same final state.
The accepting tuples are the union of the refined cells whose final state is
accepting and hence form a Presburger-definable set.

\vspace{0.5em}
\noindent\emph{The marked automaton and its transition maps.}
By the marking construction above and the
B\"uchi--Elgot--Trakhtenbrot theorem~\cite{Thomas1997}, fix a DFA
$M=(Q,q_0,\delta,F)$ over the marked alphabet $\Sigma\times\{0,1\}^k$
that accepts the marked encoding of $(w,i_1,\ldots,i_k)$ exactly when
$w\models\varphi(i_1,\ldots,i_k)$.  For a marked word $y$, define
$\delta_y\colon Q\to Q$ by $\delta_y(q)=\delta^*(q,y)$ in the notation of
Definition~\ref{def:dfa}.  Thus, $\delta_y(q)$ is the state $M$ reaches after
reading $y$ from $q$.  The map $\delta_y$ belongs to the finite set $Q^Q$ of
maps from $Q$ to itself, and reading a concatenation composes the corresponding
maps:
$\delta_{yy'}=\delta_{y'}\circ\delta_y$.  On this slice, the tuple
$(n,j_1,\ldots,j_k)$ determines all marker bits: the $\ell$-th bit is set to
$1$ at position $i_\ell$ and to $0$ elsewhere.  Write $\widetilde w_n$ for this
marked word.  Then,
$w_n\models\varphi(i_1,\ldots,i_k)$ holds if and only if
$\delta_{\widetilde w_n}(q_0)\in F$.

\vspace{0.5em}
\noindent\emph{The block map is eventually periodic.}
Let $g\colon Q\to Q$ be the map induced by reading one copy of $v$ with all $k$
marker bits equal to $0$.  Reading $L$ consecutive unmarked copies applies
$g^{L}$.  The powers $g^0,g^1,g^2,\ldots$ all lie in the finite set $Q^Q$, so
there are integers $k_0\ge0$ and $p\ge1$ for which
$g^{k_0}=g^{k_0+p}$.  Composing this equality with further powers of $g$ gives
\[
   g^{\,m+p}=g^{\,m}\quad(\text{as maps }Q\to Q)\qquad(m\ge k_0).
\]
Define the \emph{reduced exponent}
\[
   \widehat L=
   \begin{cases}
     L, & 0\le L<k_0,\\
     k_0+\bigl((L-k_0)\bmod p\bigr), & L\ge k_0,
   \end{cases}
\]
so that $g^{L}=g^{\widehat L}$ for every $L\ge 0$.  The value $\widehat L$
ranges over the finite set $\{0,1,\ldots,k_0+p-1\}$, and for each $e$ in it,
the condition $\widehat L=e$ is equivalent to ``$L=e$'' (if $e<k_0$) or to
``$L\ge k_0$ and $L\equiv e\pmod p$'' (if $e\ge k_0$).  In either case, it is
a Presburger-definable condition on $L$.  Thus, reading a stretch of $L$
unmarked copies applies the map $g^{\widehat L}$, which depends on $L$ only
through the finite-valued parameter $\widehat L$.

\vspace{0.5em}
\noindent\emph{The first partition: marked copies.}
We now carry out the equality-and-order partition from the proof outline.  The
regions and offsets of the variables are fixed in the statement, so the only
remaining integer data is $n$ together with the repetition indices $j_\ell$ of
the variables lying in $v$.  The valid values satisfy $n\ge0$ and
$1\le j_\ell\le n$ for these variables.  These restrictions are
Presburger-definable conditions.  Partition the valid tuples according to
the relative order and equalities among the repetition indices belonging to
variables in $v$.  Each comparison $j_a<j_b$, $j_a=j_b$, or $j_a>j_b$ is
expressed by a linear equality or inequality, so this produces finitely many
Presburger-definable cells.  Fix one nonempty cell
$C$.  On $C$, variables whose repetition indices are equal mark the same copy
of $v$.  These equality classes and their order are fixed.  For each class, we
know which variables lie in the corresponding copy of $v$, and the fixed
offsets determine the position assigned to each variable within that copy.
Only the numerical repetition indices of the marked copies may vary.  It
suffices to show that the accepting tuples in $C$ form a Presburger-definable
set, because the full parameter space is the finite union of these cells.

\vspace{0.5em}
\noindent\emph{Factoring the run on $C$.}
Choose one variable from each equality class on $C$, and list the repetition
indices of these representatives in their fixed order as
$j_{(1)}<\cdots<j_{(r)}$.  By the definition of $C$,
$1\le j_{(1)}<\cdots<j_{(r)}\le n$.  Suppose that $r\ge1$, and define the
$r+1$ unmarked stretch lengths by
\[
   L_0=j_{(1)}-1,\qquad
   L_t=j_{(t+1)}-j_{(t)}-1\ (1\le t<r),\qquad
   L_r=n-j_{(r)}.
\]
These are nonnegative affine expressions in the parameters on $C$.  Cutting the
word at the marked copies gives
\[
   \widetilde w_n=
   \widetilde u\;v^{L_0}\widetilde v_1\;v^{L_1}\cdots
   \widetilde v_r\;v^{L_r}\widetilde z,
\]
where $v^{L}$ denotes the word formed from $L$ consecutive unmarked copies,
$\widetilde u$ and $\widetilde z$ are the prefix and suffix carrying their
markers at the fixed offsets, and $\widetilde v_t$ is the marked copy at index
$j_{(t)}$.  The preceding partition therefore fixes the marked word
$\widetilde v_t$ for every $t$.  Set $M_u=\delta_{\widetilde u}$,
$M_t=\delta_{\widetilde v_t}$, and $M_z=\delta_{\widetilde z}$.  The
composition law gives a composite whose factors, in the order
in which they act, are
\[
   M_u,\ g^{L_0},\ M_1,\ g^{L_1},\ldots,\ M_r,\ g^{L_r},\ M_z.
\]
When $r=0$, set $L_0=n$; there are no marked copies of $v$, and the three
factors are $M_u,g^{L_0},M_z$.  In either case, the maps
$M_u,M_1,\ldots,M_r,M_z$ are fixed on $C$, and the only parameter dependence is
through the powers $g^{L_0},\ldots,g^{L_r}$.

\vspace{0.5em}
\noindent\emph{The second partition: unmarked stretches.}
Let $E=\{0,\ldots,k_0+p-1\}$, the set of possible reduced exponents.  For each
tuple $\mathbf e=(e_0,\ldots,e_r)\in E^{r+1}$, define
\[
   C_{\mathbf e}
   =\{(n,j_1,\ldots,j_k)\in C:
       \widehat L_t=e_t\text{ for }0\le t\le r\}.
\]
The nonempty sets $C_{\mathbf e}$ form a finite partition of $C$; this is the
refinement used below.  We verify that every $C_{\mathbf e}$ is
Presburger-definable.  The formulas above express every $L_t$
as a constant plus an integer linear combination of
$n,j_{(1)},\ldots,j_{(r)}$: for $r\ge1$, they are
$L_0=j_{(1)}-1$, $L_t=j_{(t+1)}-j_{(t)}-1$ for $1\le t<r$, and
$L_r=n-j_{(r)}$, while for $r=0$, we have $L_0=n$.  For fixed $t$ and
$e\in E$, the definition of the reduced exponent says that
$\widehat L_t=e$ is equivalent to $L_t=e$ when $e<k_0$, and to the conjunction
$L_t\ge k_0$ and $L_t\equiv e\pmod p$ when $e\ge k_0$.
Substituting the affine expression for $L_t$ gives linear equalities,
inequalities, and congruences in the parameters.  Here, these differences
denote nonnegative gaps, as guaranteed by the inequalities defining
$C$.  In the corresponding Presburger formulas over $\N$, each difference is
replaced by an equivalent relation using addition, so no convention for
subtraction on $\N$ is needed.  These conditions are therefore expressible by
Presburger formulas, and each
$C_{\mathbf e}$ is Presburger-definable.  On each nonempty $C_{\mathbf e}$,
every factor
$g^{L_t}=g^{\widehat L_t}$ is a fixed map.  The composite
described above is therefore a fixed map $\sigma\in Q^Q$, so
$\delta_{\widetilde w_n}(q_0)=\sigma(q_0)$ is constant on $C_{\mathbf e}$.
Thus, either every tuple in $C_{\mathbf e}$ is accepted or none is.  The
accepting tuples in $C$ are therefore the union of those sets $C_{\mathbf e}$
whose common final state lies in $F$, a finite union of Presburger-definable
sets.  Taking the finite union over all cells $C$ proves that the set
$\{(n,j_1,\ldots,j_k):w_n\models\varphi(i_1,\ldots,i_k)\}$ is
Presburger-definable.
\end{proof}

\begin{lemma}[Prefix-additive ranks are piecewise affine on a regular slice]
\label{lem:one-loop-rank-affine}
Fix a regular slice $(w_n)_{n\ge0}$ with $w_n=uv^nz$ and a $d$-dimensional prefix-additive rank
function $\kappa$ on $k$-tuples of positions.  For each coordinate
$\ell\in\{1,\ldots,k\}$, fix a region
$\tau_\ell\in\{\mathsf u,\mathsf v,\mathsf z\}$ and an offset $s_\ell$ valid in
that region.  If $\tau_\ell=\mathsf v$, let
$j_\ell\in\{1,\ldots,n\}$ be the repetition index; otherwise set $j_\ell=0$.
Let $i_\ell$ be the position encoded by $(\tau_\ell,s_\ell,j_\ell)$, and let
$J=\{\ell:\tau_\ell=\mathsf v\}$.  Then, the valid parameter tuples
$(n,(j_\ell)_{\ell\in J})$ admit a finite Presburger-definable partition such
that, on each part,
\[
   \kappa^{w_n}(i_1,\ldots,i_k)
   =b_0+b_n n+\sum_{\ell\in J} b_\ell j_\ell,
\]
where $b_0,b_n,b_\ell\in\mathbb Q^d$ are fixed on that part and the expression
takes values in $\Z^d$ on the part.
\end{lemma}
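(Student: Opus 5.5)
Since $\kappa$ is a constant $c_0$ plus a sum of one contribution per coordinate, and a finite common refinement of Presburger-definable partitions is again Presburger-definable, it suffices to treat a single term $\rho_{A}^{w_n}(i_\ell)+\beta(q^{A}_{i_\ell},a_{i_\ell})$ for one coordinate $\ell$ with a fixed source $A=(Q,q_0,\delta,\omega)$. Summing the resulting affine expressions on a common refinement and adding $c_0$ then gives the stated form.

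\emph{Single coordinate.} As in the proof of Lemma~\ref{lem:one-loop-finite-state}, let $g=\delta_v\colon Q\to Q$, and choose $k_0\ge0$ and $p\ge1$ with $g^{m+p}=g^m$ for $m\ge k_0$. For $q\in Q$, write $W(q,y)\in\Z^d$ for the weight accumulated when $A$ reads the word $y$ from state $q$. If $\tau_\ell=\mathsf u$, the state, the letter, and the accumulated weight before $i_\ell$ are constants, so the contribution is constant. If $\tau_\ell=\mathsf v$, the state entering copy $j=j_\ell$ is $g^{j-1}(q_u)$, where $q_u=\delta^*(q_0,u)$. The contribution is therefore
\[
W(q_0,u)+\sum_{t=0}^{j-2}W\bigl(g^t(q_u),v\bigr)+(\text{fixed weight of the first }s_\ell-1\text{ letters of }v\text{ from }g^{j-1}(q_u))+\beta(\cdot),
\]
and the last two summands depend only on the reduced exponent $\widehat{j-1}$. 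The case $\tau_\ell=\mathsf z$ is identical, with the sum running to $n-1$ and then the prefix of $z$ read from $g^n(q_u)$.

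\emph{The sum over copies, which is the main point.} The sequence $t\mapsto W(g^t(q_u),v)$ is eventually periodic, with preperiod $k_0$ and period $p$. Let $\Sigma_{\mathrm{per}}$ be the sum of its values over one period. For $m\ge k_0$, write $m-k_0=pN+e$ with $0\le e<p$; then $\sum_{t<m}=S_0+N\Sigma_{\mathrm{per}}+R_e$ with fixed vectors $S_0,R_e$. Substituting $N=(m-k_0-e)/p$ gives the affine rational expression
\[
\Sigma_{\mathrm{per}}\,\frac{m}{p}+\text{const}_e,
\]
which is integer-valued on its part. This is why the coefficients $b_0,b_n,b_\ell$ lie in $\mathbb Q^d$ rather than $\Z^d$. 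For $m<k_0$ the sum is a constant. The parts are therefore the sets of parameter tuples where $m=j_\ell-1$ (or $m=n$) either equals a fixed small value or is at least $k_0$ with a fixed residue modulo $p$. These conditions are Presburger-definable, and they also fix $\widehat m$, so the correction terms are constant on each part.

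\emph{Combining.} Intersect the per-coordinate partitions over all $\ell$, together with the validity constraints $1\le j_\ell\le n$. On each cell of the common refinement, every term is affine in $n$ and $j_\ell$ with rational coefficients, so their sum with $c_0$ is of the form $b_0+b_nn+\sum_{\ell\in J}b_\ell j_\ell$. Each value is an actual prefix rank plus corrections, so the expression takes values in $\Z^d$ on every part. The only nonroutine step is the periodic-sum computation above; the rest is bookkeeping.
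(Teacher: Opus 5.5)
Your proposal is correct and takes essentially the same route as the paper. You reduce to one coordinate source, use eventual periodicity of $\delta_v$ along the states reached after $uv^m$ to split $j_\ell-1$ (or $n$) into finitely many threshold-and-residue cases, note that the weight of the completed copies is affine with rational coefficients on each case while the within-copy remainder and the local correction are constant there, and finish with a common refinement. Your explicit period-sum computation spells out the step the paper states only briefly (``each whole period contributes a fixed weight increment'').
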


\begin{proof}
Recall (Definitions~\ref{def:rank-source} and~\ref{def:prefix-rank}) that a rank
source is a finite-state scanner that adds an integer vector
$\omega(q,a)\in\Z^d$ as it reads each letter $a$ in state $q$ and reports the
accumulated total.  Its \emph{prefix rank} before a position is that running
total, the abstract form of the height.  By
Definition~\ref{def:prefix-additive-rank}, $\kappa$ is the sum of a fixed
constant and one contribution from an additive rank source for each coordinate.
It is therefore enough to analyse one such source
$A=(Q,q_0,\delta,\omega)$; the same argument can then be applied coordinate by
coordinate, and the resulting contributions can be added.  Write
$q_u$ for the state of $A$ after reading $u$.  Suppose that the scanner is in
state $q$ immediately before reading one of the $n$ copies of $v$, and that its
accumulated weight at that point is $r\in\Z^d$.  After reading the entire copy,
its state is $\delta_v(q)$ and its accumulated weight is
\[
   r+\Omega_v(q),
\]
where $\delta_v$ is the transition induced by the fixed word $v$ and
$\Omega_v(q)\in\Z^d$ is the total weight added while scanning $v$ from state
$q$.  Both quantities depend only on the state $q$ held immediately before the
block is read, because every repeated block is the same fixed word $v$.

For each $m\ge 0$, the state $\delta_v^m(q_u)$ is the state of $A$ after reading
the prefix $uv^m$.  The sequence of these states,
\[
   q_u,\ \delta_v(q_u),\ \delta_v^2(q_u),\ldots,
\]
is eventually periodic: as $Q$ is finite, there are a threshold $k_0$ and a
period $p$ with
$\delta_v^{\,m+p}(q_u)=\delta_v^{\,m}(q_u)$ for all $m\ge k_0$.  Partition the
possible values of $m$ into finitely many cases: one case for each initial value
$m<k_0$, and, for $m\ge k_0$, one case for each \emph{residue class} modulo $p$.
A residue class is the set of integers having a fixed remainder modulo $p$;
it is not a state of $A$.  Each case is Presburger-definable using equalities,
inequalities, and congruences.  On each case, the state after reading $uv^m$ is
one fixed element of $Q$, and
the total weight of the $m$ completed copies is affine in $m$, with rational
coefficients: each whole period contributes a fixed weight increment, and the
bounded remainder is fixed on the case.  The value is integral for every $m$
in that residue class.

Now compute the prefix rank before a position $(\tau,s,j)$.
\begin{itemize}
\item If $\tau=\mathsf u$, the position lies in the fixed prefix $u$.  The
prefix before offset $s$ is a fixed word, so both the prefix rank and the state
just before the position are constant.
\item If $\tau=\mathsf v$, the position lies at offset $s$ in the $j$-th copy
of $v$.  The prefix consists of $u$, then $j-1$ whole copies of $v$, then the
fixed prefix of $v$ of length $s-1$.  By the previous paragraph the contribution
of the $j-1$ whole copies is affine in $j$ on residue classes; the final
within-copy contribution depends only on $s$ and on the state after reading
$uv^{j-1}$, so it is constant on the same residue case.  Thus, the prefix rank
is affine in $j$.
\item If $\tau=\mathsf z$, the position lies in the fixed suffix $z$.  The
prefix contains $u$, all $n$ copies of $v$, and the fixed prefix of $z$ of
length $s-1$.  The contribution of the $n$ whole copies is affine in $n$ on
residue classes, and the bounded contribution inside $z$ is fixed on the same
case.  Thus, the prefix rank is affine in $n$.
\end{itemize}

For coordinate $\ell$, apply this analysis with $A=A_\ell$ and $i=i_\ell$.
The same case split also fixes the local correction
$\beta_\ell(q^{A_\ell}_{i_\ell},a_{i_\ell})$.  Its two arguments are the
scanner state $q^{A_\ell}_{i_\ell}$ immediately before position $i_\ell$ and
the input letter $a_{i_\ell}$ at that position.  If
$\tau_\ell=\mathsf u$, both $q^{A_\ell}_{i_\ell}$ and $a_{i_\ell}$ are fixed.
If $\tau_\ell=\mathsf v$, the letter $a_{i_\ell}$ is determined by the fixed
offset $s_\ell$, while the state $q^{A_\ell}_{i_\ell}$ is determined by
$s_\ell$ and the residue class of $j_\ell$.  If
$\tau_\ell=\mathsf z$, the letter $a_{i_\ell}$ is again determined by
$s_\ell$, while the state $q^{A_\ell}_{i_\ell}$ is determined by $s_\ell$ and
the residue class of $n$.
Therefore, each coordinate contribution to $\kappa$ is affine in $n$ and the
relevant repetition index on a finite Presburger-definable partition.  Take a
common refinement of these finitely many partitions, one for each coordinate
contribution in $\kappa$.  On every part of the refinement, summing the
coordinate contributions and the fixed constant $c_0$ gives the affine
expression in the statement.
\end{proof}

For the application to a $\WRP$ presentation, a copy name $c$ determines its
arity $k_c$ and the relevant prefix-additive rank function $\kappa_c$; the copy
name is not an additional numerical argument of $\kappa_c$.  For a selected
atom $(c,\bar\imath)$, the vector $\kappa_c^w(\bar\imath)$ is its atom rank.
Hence, after fixing the copy names and the region and offset of every coordinate
of two selected atoms, the lemma applies separately to their two rank functions.
On a common refinement of the resulting finite partitions, both atom ranks are
affine vectors with rational coefficients.  Clearing the finitely many fixed
denominators turns their lexicographic comparison into a finite Boolean
combination of integer linear equalities and inequalities, so it is
Presburger-definable.

\begin{lemma}[WRP data on a regular slice]
\label{lem:one-loop-presburger}
Fix a $\WRP$ presentation and a regular slice $(w_n)_{n\ge0}$ with
$w_n=uv^nz$.  For every potential
atom under consideration, fix its copy name and the region and offset of each
coordinate.  Each of the following is then Presburger-definable in $n$ and the
remaining repetition indices:
\begin{enumerate}[label=(\alph*)]
\item whether a given potential atom is selected;
\item whether a given selected atom has a specified output label;
\item whether the first of two selected atoms has the smaller atom rank;
\item whether the first of two selected atoms of equal atom rank comes first in
the tie-order.
\end{enumerate}
\end{lemma}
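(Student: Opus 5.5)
The plan is to reduce each of the four items to one of the two preceding lemmas. The copy names, and the regions and offsets of all coordinates, are fixed in advance. So every potential atom is determined by $n$ together with the repetition indices of its coordinates lying in $v$, and Lemmas~\ref{lem:one-loop-finite-state} and~\ref{lem:one-loop-rank-affine} apply directly with that fixed finite data.

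For (a) and (b), I would apply Lemma~\ref{lem:one-loop-finite-state}. In (a) the formula is the selection formula $\varphi_c(x_1,\ldots,x_{k_c})$; in (b) it is the label formula $\psi_{c,\gamma}$. Each is an $\MSO$ formula whose free variables are the coordinates of the atom, placed in the fixed regions and offsets. The lemma yields a Presburger-definable set in $(n,j_1,\ldots,j_{k_c})$. Coordinates outside $v$ have $j_\ell=0$, so I would simply drop them, which is harmless. Conjoining with the selection condition gives the ``selected and labelled $\gamma$'' condition.

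For (d), I would use the same lemma on two atoms at once. The tie-order formula $\chi_{c,c'}(\bar x,\bar y)$ is an $\MSO$ formula in $k_c+k_{c'}$ free variables, with the regions and offsets of all of them fixed. Applying Lemma~\ref{lem:one-loop-finite-state} gives a Presburger set in $n$ and the combined repetition indices. I would conjoin it with the selection conditions from (a) for both atoms and with the equal-rank condition from (c).

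For (c), I would use the remark following Lemma~\ref{lem:one-loop-rank-affine}. I would apply that lemma to $\kappa_c$ and to $\kappa_{c'}$ separately, treating the two index tuples as disjoint parameter lists, and then take the common refinement of the two finite Presburger partitions. The common refinement is Presburger-definable because it is built from pairwise intersections. On each part, both atom ranks are affine with rational coefficients, so I would multiply by a common denominator $D$. The comparison $\kappa_c<_{\mathrm{lex}}\kappa_{c'}$ then becomes the disjunction over $t\le d$ of ``coordinates $1,\ldots,t-1$ agree and coordinate $t$ is strictly smaller'', which is a Boolean combination of integer linear (in)equalities. Equality of ranks, needed in (d), is handled the same way. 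The final set is the union over parts of the part-condition conjoined with the linear condition, which is Presburger-definable.

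I expect the only delicate point to be bookkeeping. Rational coefficients must be cleared; the affine expressions are integral only on their parts, and scaling by $D$ preserves both order and equality, so this causes no problem. Subtraction over $\N$ must be rewritten using addition, as in the earlier proof. Beyond this, the statement is essentially an assembly of the two lemmas.
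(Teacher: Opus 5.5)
Your proposal is correct and follows the paper's proof: (a), (b), and the tie-order part of (d) come from Lemma~\ref{lem:one-loop-finite-state}, (c) and the equal-rank condition come from the atom-rank comparison after Lemma~\ref{lem:one-loop-rank-affine}, and the conditions are then combined by Boolean operations. Your extra steps are details the paper's short proof leaves implicit: the common refinement over the joint parameter space, clearing denominators, and the explicit lexicographic disjunction.
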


\begin{proof}
The selection and label predicates, as well as the tie-order component of part
(d), are given by $\MSO$ formulas and are therefore covered by
Lemma~\ref{lem:one-loop-finite-state}.  Part (c), and the equal-rank condition
in part (d), follow from the atom-rank comparison described after
Lemma~\ref{lem:one-loop-rank-affine}.  Combining these conditions preserves
Presburger-definability.
\end{proof}

\subsection{Second ingredient: bounded Presburger counts are semilinear}

We now state the precise counting principle that forms the second ingredient
of the overview.

\begin{lemma}[Bounded counting]\label{lem:presburger-counting}
Let $p,q\ge 1$, and let $A\subseteq\N^p\times\N^q$ be a Presburger-definable relation
from $\N^p$ to $\N^q$, and for $x\in\N^p$, write
\[
   A_x=\{y\in\N^q:(x,y)\in A\}.
\]
Suppose every $A_x$ is finite and, for some constant
$C$ independent of $x$,
\[
   |A_x|\le C\bigl(\|x\|_\infty+1\bigr)
   \qquad\text{where}\
   \|x\|_\infty=\max_{1\le i\le p}x_i.
\]
Then,
\[
\{(x,\,|A_x|):x\in\N^p\}
\]
is semilinear.  The conclusion also holds jointly: if, for $1\le i\le r$,
$q_i\ge1$ and $A^i\subseteq\N^p\times\N^{q_i}$ is Presburger-definable, with
finite sets $A^i_x=\{y\in\N^{q_i}:(x,y)\in A^i\}$ satisfying a bound of the
same form, then
\[
   \{(x,|A^1_x|,\ldots,|A^r_x|):x\in\N^p\}
\]
is semilinear.
\end{lemma}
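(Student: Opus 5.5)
We plan to reduce the statement to a known structure theorem for Presburger counting functions and then use the linear growth bound to force that counting function to be affine on each piece. First we invoke the parametric counting theorem for Presburger families (Woods' theorem that $x\mapsto|A_x|$ is a piecewise quasi-polynomial whenever all fibres are finite; it can equally be derived from Barvinok-style vector partition functions). This gives a finite partition of $\N^p$ into Presburger-definable pieces $P_1,\dots,P_s$ such that, on each $P_t$, $|A_x|$ agrees with a quasi-polynomial. Next we refine each piece by residue classes modulo a common period. On every refined piece the quasi-polynomial is then an honest polynomial $f_t\in\mathbb Q[x]$, and the refined pieces are still Presburger-definable, hence semilinear.

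Second, we decompose each refined piece into nice linear sets. By the Eilenberg--Sch\"utzenberger/Ito refinement, every semilinear set is a finite disjoint union of linear sets $x_0+\N g_1+\cdots+\N g_m$ with linearly independent periods $g_1,\dots,g_m$, so $t\mapsto x_0+\sum t_ig_i$ is injective on $\N^m$. On such a set put $F(t)=f_t(x_0+\sum_i t_ig_i)$, a polynomial in $t\in\N^m$. Since $\|x\|_\infty\le \|x_0\|_\infty+\max_i\|g_i\|_\infty\sum_i t_i$, the hypothesis gives $|F(t)|\le C'(1+\sum_i t_i)$ on all of $\N^m$.

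We expect the growth argument in this step to be the main technical point. Suppose $F$ has total degree $D\ge 2$, and let $h$ be its top homogeneous part. Since $h\ne0$ is a polynomial, it does not vanish on the whole open positive orthant, so we can pick a positive integer vector $a$ with $h(a)\ne0$. Along the ray $t=\lambda a$ with $\lambda\in\N$, the value $F(\lambda a)$ grows like $|h(a)|\lambda^D$, while the bound only allows growth $O(\lambda)$. This contradiction shows that $F$ is affine, say $F(t)=b+\sum_i c_it_i$. Since $F$ takes values in $\N$ on $\N^m$, we get $b=F(0)\in\N$. Each difference $c_i=F(e_i)-F(0)$ is an integer, and it is nonnegative, for otherwise $F(\lambda e_i)$ would become negative.

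Finally, the graph of $x\mapsto|A_x|$ over this linear set is
\[
   \{(x_0,b)+\textstyle\sum_i t_i(g_i,c_i):t\in\N^m\},
\]
which is itself a linear set. The full graph is therefore a finite union of linear sets, hence semilinear. For the joint version we run the same argument with a common refinement of the partitions attached to $A^1,\dots,A^r$. On each linear piece all $r$ counts are then simultaneously affine in $t$, and the graph is the linear set with base $(x_0,b^1,\dots,b^r)$ and periods $(g_i,c^1_i,\dots,c^r_i)$. If the appeal to Woods' theorem is judged too heavy, the fallback is to prove piecewise quasi-polynomiality directly from an unambiguous semilinear decomposition of $A$ itself, by counting lattice points in fibres of injective affine images of $\N^m$. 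That route is the genuinely laborious part.
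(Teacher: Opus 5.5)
Your proposal is correct and follows essentially the same route as the paper: Woods' piecewise quasi-polynomiality, refinement by residue classes to honest polynomials, decomposition of each piece into linear sets, the top-homogeneous-part ray argument forcing degree at most one, and reading off the graph over each linear set as a linear set. The differences are minor. You insist on linearly independent periods, which is harmless but unnecessary, since the graph over $\{\beta+\sum_i t_i s_i\}$ is $\{(\beta+\sum_i t_i s_i,\,h(t))\}$ whether or not the parametrisation is injective. You also treat the joint version by a common refinement of the partitions, where the paper lifts each single graph and intersects; both work.
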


\begin{proof}
We use one standard input from the theory of integer-point counting, which we
state in full before applying it.  For each $x$, we count the vectors $y$ for
which $(x,y)\in A$.  The fact we need is that this count is \emph{piecewise
quasi-polynomial}: the space $\N^p$ splits into finitely many
Presburger-definable regions, and on each
region, the count $|A_x|$ agrees with a \emph{quasi-polynomial} in $x$.  A
quasi-polynomial is a polynomial in $x_1,\ldots,x_p$ whose coefficients depend on
$x$ only through the residues of its entries modulo a fixed integer $M$;
equivalently, it is a finite list of ordinary polynomials, one selected by the
residue class of $x$ modulo $M$.  This piecewise quasi-polynomiality is the
Presburger counting theorem of Woods~\cite[Theorem~1.10]{Woods2015}, which
generalises classical integer-point counting results such as Ehrhart's
theorem~\cite{Ehrhart1962}.
Since a quasi-polynomial agrees with an ordinary polynomial on each residue
class modulo $M$, we may subdivide these regions further so that, on each
resulting Presburger-definable region $R$, the count $|A_x|$ agrees with a
single ordinary polynomial $P_R(x)$.

Only finitely many such regions $R$ arise.  The Ginsburg--Spanier
characterisation recalled above shows that each $R$ is semilinear and hence a
finite union of linear sets.  Since semilinear sets are closed under finite
unions, it is enough to prove that the graph of the count over each of these
linear sets is semilinear.  Fix one such region $R$ and one linear set in its
decomposition:
\[
   L=\{\beta+t_1s_1+\cdots+t_ms_m:t_1,\ldots,t_m\in\N\}\subseteq R.
\]
Substituting this parametrisation into $P_R$ gives the polynomial
\[
   h(t_1,\ldots,t_m)
   =P_R(\beta+t_1s_1+\cdots+t_ms_m).
\]
It takes nonnegative integer values, and the assumed bound gives
\[
   0\le h(t_1,\ldots,t_m)
   \le C\bigl(\|\beta+t_1s_1+\cdots+t_ms_m\|_\infty+1\bigr)
   \le C'(1+t_1+\cdots+t_m)
\]
for a constant $C'$ depending only on $L$.  Hence $h$ has degree at most one.
Indeed, if its highest-degree homogeneous part $h_d$ had degree $d\ge2$, choose
$c\in\N_{>0}^m$ with $h_d(c)\ne0$.  Along $t=kc$ for $k\in\N$, the polynomial $h$ has the expansion
\[
   h(kc)=k^d h_d(c)+O(k^{d-1}).
\]
If $h_d(c)<0$, then $h(kc)<0$ for all sufficiently large $k$, contradicting
the nonnegativity of $h$.  If $h_d(c)>0$, then $h(kc)$ grows on the order of
$k^d$, contradicting the upper bound from above:
\[
   h(kc)\le C'\bigl(1+k(c_1+\cdots+c_m)\bigr),
\]
which is linear in $k$.  Thus, $d\ge2$ is impossible.

Consequently,
\[
   h(t_1,\ldots,t_m)=b+a_1t_1+\cdots+a_mt_m.
\]
Since $h(t_1,\ldots,t_m)\in\N$ for every $(t_1,\ldots,t_m)\in\N^m$, the coefficients
$b,a_1,\ldots,a_m$ are integers, and nonnegativity on all multiples of each
coordinate gives $a_i\ge0$.  The graph over $L$ is therefore the linear set
\[
   \{(\beta,b)+t_1(s_1,a_1)+\cdots+t_m(s_m,a_m):
       t_1,\ldots,t_m\in\N\}.
\]
Taking the finite union of these graphs over all linear sets and all regions
supplied by Woods proves that $\{(x,|A_x|):x\in\N^p\}$ is semilinear.  For the
tuple version, each individual graph
$\{(x,|A^i_x|)\}\subseteq\N^p\times\N$ is semilinear by this same argument.
Lift each of them to $\N^p\times\N^r$ by leaving the other $r-1$
count-coordinates unconstrained (a product of the graph with copies of $\N$,
still semilinear), and intersect the $r$ lifts: a tuple $(x,c_1,\ldots,c_r)$
lies in the intersection exactly when $c_i=|A^i_x|$ for every $i$, which is the
joint graph $\{(x,|A^1_x|,\ldots,|A^r_x|)\}$.  Semilinear sets are closed under
finite union, product, and intersection, so both the single graph and its tuple
version are semilinear.
\end{proof}

This conclusion accords with the general fact that a Presburger-definable
function is linear on each part of a finite definable
partition~\cite[Corollary~3.1.4]{CluckersHalupczok2018}; the argument above
also establishes the required definability of the counting graph.

\subsection{Combining the two ingredients: first-ascent semilinearity}

We define two statistics on output words.  For $y\in\{U,D\}^*$, let $\fas(y)$
be the number of $U$'s before its first $D$, and let $\tailU(y)$ be the number
after that $D$.
If $y$ has no $D$, set $\tailU(y)=0$ and $\fas(y)$ to the total number of
$U$'s.  For a Dyck path, $\fas$ is the length of the initial ascent and
$\tailU$ is the number of later up-steps.  We call
$(\fas(y),\tailU(y))$ the \emph{first-ascent pair} of $y$.  Thus, for the
counts introduced at the beginning of this section,
$a_n=\fas(T(W_n))$ and $b_n=\tailU(T(W_n))$.
The theorem below asserts that, if $T\in\WRP$ is defined on every $W_n$ and
$|T(W_n)|=O(n)$, these first-ascent pairs form a semilinear set as $n\ge1$
varies.

We now explain how the two preceding ingredients yield this conclusion.  Fix a
$\WRP$ map $T$ and the family $W_n=U(UD)^nD$ from the beginning of this
section.
Lemma~\ref{lem:one-loop-presburger} expresses the selection, labels, and output
order of the potential atoms of $T(W_n)$ by Presburger formulas.  The integer
tuples encoding $n$, the first selected $D$-atom, and a selected
$U$-atom before it therefore form a Presburger-definable set; the same holds
with ``before'' replaced by ``after.''  For each fixed encoding of $n$ and the
first $D$-atom, the two resulting sets of encoded $U$-atoms have sizes
$\fas(T(W_n))$ and $\tailU(T(W_n))$.  The case in which the output has no $D$ is treated
separately.  Both counts are bounded by $|T(W_n)|=O(n)$, so
Lemma~\ref{lem:presburger-counting} implies that their joint counting graphs
are semilinear.  Taking the required finite unions and projections then gives
the desired semilinear set of first-ascent pairs.

\begin{theorem}[First-ascent semilinearity]
\label{thm:wrp-slice-semilinearity}
Let $T\in\WRP$ with $W_n\in\operatorname{dom}(T)$ for every $n\ge1$, and
suppose $|T(W_n)|=O(n)$.  Then, the set
$S_T=\{(\fas(T(W_n)),\tailU(T(W_n))):n\ge 1\}\subseteq\N^2$
is semilinear.
\end{theorem}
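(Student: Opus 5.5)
We follow the three-step outline announced at the start of this section, working on the regular slice $u=U$, $v=UD$, $z=D$. First we fix the finitely many \emph{atom types} $\theta=(c,\tau_1,s_1,\ldots,\tau_{k_c},s_{k_c})$. An atom of type $\theta$ on $W_n$ is then named by its vector of repetition indices $\bar\jmath\in\N^{k_c}$, padded with $0$ in the coordinates lying in $u$ or $z$. For each type, Lemma~\ref{lem:one-loop-presburger}(a) makes the set $\mathrm{Sel}_\theta=\{(n,\bar\jmath):\text{the atom is selected on }W_n\}$ Presburger-definable, after conjoining the validity constraints $1\le j_\ell\le n$. Part~(b) does the same for the sets $\mathrm{Lab}^U_\theta$ and $\mathrm{Lab}^D_\theta$. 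For each pair of types, parts~(c) and~(d) combine into a Presburger definition of the output order $\prec$: either the rank of the first atom is lexicographically smaller, or the ranks are equal and the tie-order places the first atom first.

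Second, we define the first $D$-atom. For a type $\theta$, let $\mathrm{First}_\theta(n,\bar\jmath)$ say three things: the atom is selected, it is labelled $D$, and for no type $\theta'$ and no $\bar\jmath'$ is there a selected $D$-atom $(\theta',\bar\jmath')\prec(\theta,\bar\jmath)$. This is a finite disjunction over $\theta'$ of negated existentials, so it is Presburger. Because $\prec$ is a strict total order, for each $n$ exactly one pair $(\theta,\bar\jmath)$ satisfies this when the output contains a $D$, and none does otherwise. Now, for each pair of types $(\theta,\theta')$, let
$A^{\theta,\theta',\mathrm{bef}}\subseteq\N^{1+k}\times\N^{k'}$ consist of the tuples $((n,\bar\jmath),\bar\jmath')$ such that $\mathrm{First}_\theta(n,\bar\jmath)$ holds, $(\theta',\bar\jmath')$ is a selected $U$-atom, and $(\theta',\bar\jmath')\prec(\theta,\bar\jmath)$. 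Define $A^{\theta,\theta',\mathrm{aft}}$ in the same way with $\succ$ in place of $\prec$. Each fibre is finite, since $j'_\ell\le n$. Its size is at most $|T(W_n)|\le C(n+1)\le C(\|(n,\bar\jmath)\|_\infty+1)$, using the hypothesis $|T(W_n)|=O(n)$ together with finitely many small $n$ absorbed into the constant. Fibres over parameters that fail $\mathrm{First}_\theta$ are empty, so the bound holds there trivially.

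Third, we apply the joint form of Lemma~\ref{lem:presburger-counting}. For each fixed $\theta$ this gives a semilinear set of tuples $(n,\bar\jmath,(c^{\mathrm{bef}}_{\theta'})_{\theta'},(c^{\mathrm{aft}}_{\theta'})_{\theta'})$. We intersect it with $\mathrm{First}_\theta$. The map $(c)\mapsto(\sum_{\theta'}c^{\mathrm{bef}}_{\theta'},\sum_{\theta'}c^{\mathrm{aft}}_{\theta'})$ is definable, so we impose it existentially and then project away $n$, $\bar\jmath$ and the individual counts. This leaves the pairs $(\fas,\tailU)$ over those $n$ whose first $D$-atom has type $\theta$. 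Different types give disjoint index cases, and we take the union over $\theta$. The no-$D$ case remains. Here the set $N=\{n\ge1:\neg\exists\theta,\bar\jmath\ \text{selected }D\text{-atom}\}$ is Presburger. Lemma~\ref{lem:presburger-counting}, applied to the relations ``selected $U$-atom of type $\theta'$'' with parameter $n$, gives a semilinear graph of the number of $U$'s. Restricting to $N$, summing over $\theta'$, adjoining second coordinate $0$, and projecting gives the remaining points. A finite union of semilinear sets is semilinear, which completes the proof.

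The step most likely to cause trouble is checking the hypotheses of Lemma~\ref{lem:presburger-counting}. It needs a relation on $\N^p\times\N^q$ with $p,q\ge1$, so types of arity whose coordinates all lie outside the repeated block require a dummy coordinate. The linear bound must be stated in terms of $\|x\|_\infty$, which is why $n$ is included in $x$ and dominates $\bar\jmath$. Zero-padded coordinates must not create spurious distinct atoms; they do not, because the padding is forced to be $0$. A further subtlety is that the count is attached to the parameter $(n,\bar\jmath)$ rather than to $n$ alone. This is resolved by the uniqueness of the first $D$-atom: the projection onto $n$ is single-valued, so no extraneous pairs arise.
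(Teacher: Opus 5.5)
Your proposal is correct and is essentially the paper's own proof: it uses the Presburger data of Lemma~\ref{lem:one-loop-presburger}, a Presburger-definable first $D$-atom for each fixed atom type, the joint form of Lemma~\ref{lem:presburger-counting} followed by intersection and projection, and a separate treatment of the no-$D$ case. The differences are only cosmetic: you count $U$-atoms type by type and add the counts where the paper uses one zero-padded common encoding, and you build $\mathrm{First}_\theta$ into the counted relations instead of intersecting with $P$ afterwards. One small fix is needed: conjoin $n\ge1$ throughout, since $W_0$ need not lie in $\operatorname{dom}(T)$ and $\prec$ is only guaranteed to be total on the domain.
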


\begin{proof}
Restrict the $\WRP$ presentation of $T$ to the family
$(W_n)_{n\ge1}$, where $W_n=U(UD)^nD$.
A \emph{potential atom} is a pair $(c,\bar i)$ consisting of a copy name $c$ and
a tuple $\bar i$ of input positions of $c$'s arity.  A potential atom becomes
a \emph{selected atom}, contributing one output letter, exactly when it
satisfies the selection formula $\varphi_c$.  On this family, each potential
atom is named by the finite data of its copy name, regions, and offsets,
together with $n$ and the repetition indices of its coordinates lying in $v$
(the position encoding preceding Example~\ref{ex:slice-coordinates}).  There
are only finitely many choices of the copy name, regions, and offsets.  Assign
a distinct integer tag to each choice, pad its list of repetition indices with
zeros to a fixed length, and thereby give every potential atom a unique
fixed-length numerical encoding.  For each fixed tag,
Lemma~\ref{lem:one-loop-presburger} makes the following data
Presburger-definable: whether the atom is selected, which letter labels it,
and the atom-rank and tie-order comparisons between two atoms.  Taking the
finite disjunction over all tags preserves Presburger-definability.  Thus, the
same predicates remain Presburger-definable when the atom variables below
range over all potential atoms, and counting their encodings counts each atom
exactly once.

\vspace{0.5em}
\noindent\emph{Splitting on the first descent.}
Whether $T(W_n)$ has any $D$-atom is, by Lemma~\ref{lem:one-loop-presburger}, a
Presburger-definable condition on $n$ (some potential atom is selected and labelled $D$); split
on it.  If there is no $D$-atom, every selected atom is a $U$, so
\[
   \fas(T(W_n))=|\{\text{selected $U$-atoms}\}|,\qquad
   \tailU(T(W_n))=0.
\]
The family $A=\{(n,u):u\text{ a potential atom, selected and labelled $U$ over }W_n\}$
is Presburger-definable in $n$ and the coordinates of $u$.  Its definition
conjoins the predicates ``$u$ selected'' and ``$u$ labelled $U$'' supplied by
Lemma~\ref{lem:one-loop-presburger}, and for each $n$, the corresponding set
$A_n$ has size
$\fas(T(W_n))\le|T(W_n)|=O(n)$, linear in $n$.  Thus,
Lemma~\ref{lem:presburger-counting} implies that
$\{(n,\fas(T(W_n))):T(W_n)\text{ has no }D\}$ is semilinear, and appending the
constant coordinate $\tailU=0$ gives a semilinear subset of $\N^3$.
Projecting away $n$ gives the no-descent contribution to $S_T\subseteq\N^2$.

\vspace{0.5em}
\noindent\emph{The two $U$-counts when a descent occurs.}
Otherwise, let $d$ be the first selected $D$-atom in the output order $\prec$.
Fix the finite case of $d$ (its copy name and the region and offset of each
coordinate), so that $d$ is named by the parameter
$x=(n,\text{repetition indices of }d)$.  There are only finitely many such cases, and
we take their union at the end.  The statement ``$d$ is the
first selected $D$-atom over $W_n$'' is Presburger-definable in $x$: by
Lemma~\ref{lem:one-loop-presburger}, ``$d$ selected'' and ``$d$ labelled $D$''
are Presburger-definable, and ``no selected $D$-atom precedes $d$ in $\prec$''
is the negation of an existential Presburger formula over a potential atom's
coordinates, and is therefore Presburger-definable as well.  Let $P$ be the
Presburger-definable set of encodings $x$ for
which $d$ is the first selected $D$-atom of $T(W_n)$.
Split the selected $U$-atoms by their side of $d$,
\[
   U_{<d}=\{u: u\text{ selected, labelled }U,\ u\prec d\},\qquad
   U_{>d}=\{u: u\text{ selected, labelled }U,\ d\prec u\}.
\]
Because $\prec$ totally orders the selected atoms and $d$ (a $D$-atom) lies in
neither set, every selected $U$-atom lands in exactly one of them.  Moreover,
$\fas(T(W_n))=|U_{<d}|$ and $\tailU(T(W_n))=|U_{>d}|$.  To apply
Lemma~\ref{lem:presburger-counting}, fix $x$ and count the coordinate tuples of
$U$-atoms $u$ in each set.  Each membership condition ``$u\in U_{<d}$'' and
``$u\in U_{>d}$'' is a Boolean combination of the predicates of
Lemma~\ref{lem:one-loop-presburger} (``selected'', ``labelled $U$'', and the
comparison $u\prec d$ or $d\prec u$ against the atom named by $x$), hence
Presburger-definable in $(x,u)$.  For each fixed $x$, both resulting sets of
$u$-values have size at most the number of selected atoms,
$|T(W_n)|=O(n)$, linear in $x$.  The tuple form of
Lemma~\ref{lem:presburger-counting} then implies that the set
$\{(x,|U_{<d}|,|U_{>d}|)\}$ is semilinear; intersecting with $P$ (closure under
intersection) restricts to the $x$ for which $d$ is genuinely the first $D$-atom,
\[
   \bigl\{(x,\,|U_{<d}|,\,|U_{>d}|):x\in P\bigr\}.
\]

\vspace{0.5em}
\noindent\emph{Assembling $S_T$.}
On $x\in P$, we have $(\fas(T(W_n)),\tailU(T(W_n)))=(|U_{<d}|,|U_{>d}|)$, so
projecting the last display onto its final two coordinates (that is, forgetting
$x=(n,\text{repetition indices of }d)$; projection preserves semilinearity)
gives exactly the pairs coming from inputs with a descent.  Taking the union
with the no-$D$ pairs from the previous step, and then over the finitely many
fixed cases of $d$, preserves semilinearity.  Therefore, $S_T\subseteq\N^2$ is
semilinear.
\end{proof}

\section{The family \texorpdfstring{$(W_n)_{n\ge1}$}{(Wn), n >= 1}
and the no-swap theorem}\label{sec:wrapped-flat}\label{sec:main}

We now turn from $\WRP$ to combinatorics.  The members of the family
$(W_n)_{n\ge1}$ have very simple statistics, and a hypothetical swap $F$
forces the first-ascent pairs of their images $F(W_n)$ to form a nonsemilinear
set.  Combining this fact with
Section~\ref{sec:slice-semilinearity} gives the no-swap theorem.

The argument is a short chain of three steps, which we preview here before
proving its links.
\begin{enumerate}[label=(\arabic*)]
\item \emph{The family's statistics.}  The paths $W_n=U(UD)^nD$ satisfy
$\area(W_n)=n$ and $\dinv(W_n)=\binom n2$ (Lemma~\ref{lem:wrapped-flat-stats}).
\item \emph{A swap sends it to a deficit-zero target.}  If $F$ exchanges
$\area$ and $\dinv$, then $Q_n:=F(W_n)$ has
$\area(Q_n)=\dinv(W_n)=\binom n2$ and
$\dinv(Q_n)=\area(W_n)=n$.  Recall the \emph{coarea}
$\coarea(P)=\binom N2-\area(P)$, the amount by which a semilength-$N$ path falls
short of the maximum area $\binom N2$ (Section~\ref{sec:dyck}).  As $Q_n$ has
semilength $N=n+1$, this gives $\coarea(Q_n)=\binom{n+1}2-\binom n2=n=\dinv(Q_n)$
and hence $\defc(Q_n)=0$.  This is the extreme case of the general inequality
$\dinv\le\coarea$ (Lemma~\ref{lem:dinv-coarea}), so any swap must land on
deficit-zero targets.
\item \emph{Deficit zero pins down the pair.}  Equality $\dinv=\coarea$ is
rigid: a deficit-zero path is forced to be a near-staircase
(Lemma~\ref{lem:deficit-zero-targets}), so the first-ascent pairs arising as
$n$ varies form exactly the explicit set
$S_{\mathrm{tri}}\subseteq\N^2$, the ``triangular'' region defined in
Corollary~\ref{cor:forced-triangular-pairs}.  As $b$ grows,
$S_{\mathrm{tri}}$ has a quadratically curving lower boundary and is \emph{not}
semilinear (Lemma~\ref{lem:triangular-not-semilinear}).
\end{enumerate}
This contradicts the semilinearity theorem of
Section~\ref{sec:slice-semilinearity} (which makes the set of first-ascent pairs
of any linear-growth $\WRP$ map semilinear), and that contradiction is the
no-swap theorem.  The substantive step is~(3), the rigidity of the
deficit-zero condition.

\begin{lemma}[Statistics of $W_n$]\label{lem:wrapped-flat-stats}
For every $n\ge1$, the path $W_n=U(UD)^nD$ has area sequence
$a(W_n)=(0,\underbrace{1,\ldots,1}_{n\text{ times}})$.  Thus,
$\area(W_n)=n$ and $\dinv(W_n)=\binom n2$.
\end{lemma}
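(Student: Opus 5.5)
The plan is a direct computation from the definitions. I will read off the area sequence of $W_n$ from the step word and then evaluate the two statistics.

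First, the area sequence. The $U$-steps of $W_n=U(UD)^nD$ occur at position $1$ and at positions $2j$ for $1\le j\le n$. The first of these starts at height $h_{W_n}(0)=0$. For the others, note that after the opening $U$ the height is $1$, and each block $UD$ returns to the height at which it began. So the height immediately before position $2j$, namely $h_{W_n}(2j-1)$, equals $1$ for every $j$; formally this is a one-line induction on $j$. Hence
\[
   a(W_n)=(0,1,\ldots,1)
\]
with $n$ ones. The same computation also confirms that $W_n$ is a Dyck path of semilength $n+1$: every prefix has height at least $1$ except the empty prefix and the full word, and the final height is $0$.

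Second, the area is the sum of the entries, which is $0+n\cdot 1=n$.

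Third, the dinv counts pairs $(i,j)$ with $i<j$ and $a_i-a_j\in\{0,1\}$. I will split the pairs into two kinds.

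\begin{itemize}
\item Pairs with $i=1$ give $a_1-a_j=0-1=-1$, so none of them contributes.
\item Pairs with $2\le i<j\le n+1$ give $a_i-a_j=0$, so every one of them contributes. There are $\binom n2$ such pairs.
\end{itemize}

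Therefore $\dinv(W_n)=\binom n2$.

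I expect no real obstacle. The only point needing care is the indexing convention: $a_i$ is the height immediately before the $i$th up-step, which matches Definition of area sequences with $a_i=h_P(p_i-1)$. I also need to check that the leading $0$ really contributes nothing, since the required difference is $a_i-a_j$ with $i$ the earlier index.
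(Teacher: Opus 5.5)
Your proposal is correct and follows essentially the same route as the paper: read off the area sequence $(0,1,\ldots,1)$ from the step heights, sum it to get $\area(W_n)=n$, and observe that the leading $0$ forms no dinv pair while every pair among the $n$ ones does, giving $\dinv(W_n)=\binom n2$. Your extra check that $W_n$ is a Dyck path of semilength $n+1$ is harmless and not needed for the lemma itself.
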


\begin{proof}
The first up-step starts at height $0$, and each of the $n$ middle
up-steps starts at height $1$; this gives the stated area sequence, so
$\area(W_n)=n$.  For dinv, the leading entry $0$ forms no pair, since
$0-1=-1\notin\{0,1\}$, so the only contributing pairs are the equal pairs
among the $n$ entries equal to $1$, of which there are $\binom n2$.
\end{proof}

The inequality in the next lemma and the uniqueness of its equality cases
appear in Ammar's thesis~\cite[Lemma~4.13, Theorem~4.18, and
Remark~4.20]{Ammar2015}; the same equality cases are described through the
deficit statistic in work of Lee, Li, and Loehr~\cite[Lemma~2.10]{LeeLiLoehr2018}.
We state the equality cases in the explicit
area-sequence form needed for the lower bound; proofs of the next two lemmas
are given in Appendix~\ref{app:deficit-zero-proofs}.

\begin{lemma}[Dinv below coarea]\label{lem:dinv-coarea}
For every Dyck path $Q\in\D_N$, we have $\dinv(Q)\le\coarea(Q)$.
\end{lemma}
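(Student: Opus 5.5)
Let $a=(a_1,\ldots,a_N)$ be the area sequence of $Q$. We have $a_1=0$ and $0\le a_{j+1}\le a_j+1$. The plan is to bound $\dinv(Q)$ pair by pair against the complementary quantity
\[
   \coarea(Q)=\binom N2-\sum_j a_j=\sum_{j=1}^N\bigl((j-1)-a_j\bigr),
\]
in which each summand $(j-1)-a_j$ is nonnegative because $a_j\le j-1$. It therefore suffices to show, for each fixed $j$, that
\[
   c_j:=\bigl|\{i<j:\ a_i-a_j\in\{0,1\}\}\bigr|\le (j-1)-a_j .
\]
Summing over $j$ then gives $\dinv(Q)=\sum_j c_j\le\coarea(Q)$.

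Equivalently, the inequality says that among the $j-1$ indices $i<j$, at least $a_j$ fail to contribute. The key step is to exhibit, for each level $h=0,1,\ldots,a_j-1$, a distinct non-contributing index $i<j$.

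For each such $h$, let $i_h$ be the \emph{last} index $i<j$ with $a_i=h$. It exists because the sequence starts at $0$ and rises by at most $1$ per step, so every level below $a_j$ is attained before $j$. The $i_h$ are distinct because their values $h$ are distinct.

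The indices with $h\le a_j-2$ are non-contributing, since then $a_{i_h}-a_j\le -2$. The only delicate level is $h=a_j-1$, where $a_{i_h}-a_j=-1$; this difference is not in $\{0,1\}$, so $i_h$ does not contribute either. Hence all $a_j$ chosen indices are non-contributing, which gives the claim.

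The choice of "last occurrence" is not actually needed for the inequality itself; any occurrence works. I would still fix it because it makes the equality analysis in Lemma~\ref{lem:deficit-zero-targets} transparent. Equality forces every $i<j$ outside $\{i_0,\ldots,i_{a_j-1}\}$ to satisfy $a_i\in\{a_j,a_j+1\}$, and this is the near-staircase rigidity used next.

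The main obstacle is only bookkeeping: the argument uses just $a_1=0$ and the unit-increase property to guarantee the existence of the witnesses. No deeper structure is needed.
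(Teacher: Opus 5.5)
Your proof is correct and is essentially the paper's argument: fix $j$, observe that each level $0,1,\ldots,a_j-1$ occurs among $a_1,\ldots,a_{j-1}$ and never forms a dinv pair with $j$, bound the contribution ending at $j$ by $(j-1)-a_j$, and sum to get $\binom N2-\area(Q)$. Choosing the \emph{last} occurrence of each level is harmless but, as you note, unnecessary.
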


The equality cases of Lemma~\ref{lem:dinv-coarea} are precisely the
deficit-zero paths.  The next lemma restates their rigidity in the form used
below: for each semilength and each value of the common quantity
$c=\dinv(Q)=\coarea(Q)$, there is exactly one such path, an initial staircase
followed by a short two-level tail.\footnote{Here ``staircase'' refers to the
usual north/east drawing from Section~\ref{sec:dyck}, with $U$ drawn north and
$D$ east: the initial area-sequence segment $0,1,2,\ldots$ gives the successive
row lengths of a staircase-shaped area diagram.  In the up/down drawing of the
same word, this segment appears simply as the initial ascent.}

\begin{lemma}[Deficit-zero rigidity]\label{lem:deficit-zero-targets}
Fix $N\ge 1$ and $c\in\{0,\ldots,\binom N2\}$.  There is a unique Dyck path
$\Gamma_{N,c}\in\D_N$ satisfying
\[
   \dinv(\Gamma_{N,c})=\coarea(\Gamma_{N,c})=c.
\]
Letting $b$ be the unique integer with
$\binom{b+1}{2}\le c<\binom{b+2}{2}$ and putting $d=c-\binom{b+1}{2}$ and
$a=N-b$ (equivalently, $c=\binom{b+1}{2}+d$ with $0\le d\le b$), the area sequence
of $\Gamma_{N,c}$ is
\[
   (0,1,\ldots,a-1,
     \underbrace{a-1,\ldots,a-1}_{b-d\text{ times}},
     \underbrace{a-2,\ldots,a-2}_{d\text{ times}}),
\]
interpreted in the natural way when $b=0$: the plain staircase
$(0,1,\ldots,N-1)$, with $a=N$.  Its first-ascent length is $a$
and it has $b$ later up-steps.
\end{lemma}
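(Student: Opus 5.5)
The plan is to prove Lemma~\ref{lem:dinv-coarea} and Lemma~\ref{lem:deficit-zero-targets} together by a column-by-column (index-by-index) comparison of $\dinv$ with $\coarea$. Write the area sequence of $Q\in\D_N$ as $(a_1,\ldots,a_N)$, with $a_1=0$ and $a_{j+1}\le a_j+1$. First rewrite
\[
   \coarea(Q)=\binom N2-\sum_j a_j=\sum_{j=1}^N\bigl((j-1)-a_j\bigr),
\]
and split $\dinv(Q)=\sum_j \delta_j$, where $\delta_j=|\{i<j: a_i\in\{a_j,a_j+1\}\}|$. The key local claim is $\delta_j\le (j-1)-a_j$ for every $j$. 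To prove it, observe that because the sequence starts at $0$ and rises by at most one per step, every level $\ell\in\{0,\ldots,a_j-1\}$ is attained by some index $i<j$. Choosing one such index per level gives $a_j$ distinct earlier indices with $a_i<a_j$, and none of these contributes to $\delta_j$. Summing the local claim over $j$ proves Lemma~\ref{lem:dinv-coarea}.

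Next I would extract the equality condition. We have $\dinv=\coarea$ exactly when every $j$ satisfies the following: among the indices $i<j$, each level $0,\ldots,a_j-1$ occurs exactly once, and every other earlier value lies in $\{a_j,a_j+1\}$.

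From this condition I would read off the shape of the sequence. Let $a$ be the length of the maximal initial staircase $0,1,\ldots,a-1$; this $a$ is the first-ascent length. If $a<N$, put $t=a_{a+1}\le a-1$. At $j=a+1$ the earlier values $t,\ldots,a-1$ must all lie in $\{t,t+1\}$, which forces $t\in\{a-1,a-2\}$. I would then argue inductively on the later indices $j>a$ that each $a_j\in\{a-2,a-1\}$, using the following exclusions:
\begin{itemize}
\item A value $\ge a$ would need level $a-1$ to occur exactly once among earlier indices. But reaching a value $\ge a$ after the staircase requires a second occurrence of $a-1$ first.
\item A value $\le a-3$ would force the staircase entry $a-1$ into $\{a_j,a_j+1\}$, which is impossible.
\item An $a-1$ occurring after an $a-2$ in the tail would see level $a-2$ twice, which violates the ``exactly once'' clause.
\end{itemize}
Hence the tail is $(a-1)^{b-d}(a-2)^{d}$ with $b=N-a$. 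Conversely, every sequence of this form satisfies the equality condition at each index, and it is a valid Dyck area sequence. (When $a=1$ the value $a-2$ is negative, so $d=0$ there.)

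Finally I compute $c=\coarea$ on this shape and check that it determines $(b,d)$. The staircase contributes $0$. The $k$-th tail entry, sitting at index $a+k$, contributes $k$ if it equals $a-1$ and $k+1$ if it equals $a-2$. The total is therefore $\binom{b+1}2+d$ with $0\le d\le b$. Since these values for fixed $N$ are distinct and exhaust $\{0,\ldots,\binom N2\}$, uniqueness and existence of $\Gamma_{N,c}$ follow, together with the stated first-ascent data: $\fas=a$ and $b$ later up-steps.

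I expect the main obstacle to be the tail-shape induction in the second step, that is, ruling out rises back to level $a$, dips below $a-2$, and interleavings of $a-1$ and $a-2$. The plan is to handle each case by exhibiting the specific violated clause of the equality condition at the offending index, as sketched above.
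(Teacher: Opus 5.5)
Your proposal is correct and follows essentially the same route as the paper: the same local bound $\delta_j\le (j-1)-a_j$ via the low witnesses at levels $0,\ldots,a_j-1$, the same two-clause equality condition, the same reduction to an initial staircase followed by a tail in $\{a-1,a-2\}$ with all the $a-1$'s first, and the same observation that $c=\binom{b+1}{2}+d$ with $0\le d\le b$ determines $(b,d)$. The only minor variation is in the existence step: you verify the local equality conditions on the candidate shape, whereas the paper exhibits the explicit word $U^a(DU)^{b-d}D(DU)^dD^{a-1}$ and sums its dinv pairs directly.
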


\begin{example}[A deficit-zero target]
Take $N=6$ and $c=7$.  Since $\binom 42=6\le 7<10=\binom 52$, we have
$b=3$, $d=1$, and $a=N-b=3$.  The lemma gives area sequence
$(0,1,2,2,2,1)$ and path $\Gamma_{6,7}=UUUDUDUDDUDD$.
Its first ascent has length $3$ and it has $3$ later up-steps.  This example is
typical: after the initial area-sequence segment $0,1,\ldots,a-1$, every
remaining entry is either $a-1$ or $a-2$, with all the $a-1$ entries preceding
all the $a-2$ entries.
\end{example}

\begin{corollary}[Triangular set of first-ascent pairs]
\label{cor:forced-triangular-pairs}
Suppose $F\colon\D\to\D$ is a semilength-preserving area--dinv swap, that is,
\[
   \area(F(P))=\dinv(P),\qquad \dinv(F(P))=\area(P)
\]
for every Dyck path $P$.  Put $Q_n=F(W_n)$.  Then,
\[
   \{(\fas(Q_n),\tailU(Q_n)):n\ge 1\}=S_{\mathrm{tri}},
\]
where
\[
   S_{\mathrm{tri}}=
   \Bigl\{(a,b)\in\N^2: b\ge 1,\ \binom b2+1\le a\le\binom{b+1}{2}+1\Bigr\}.
\]
\end{corollary}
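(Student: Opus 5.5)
The proof combines Lemma~\ref{lem:wrapped-flat-stats}, Lemma~\ref{lem:deficit-zero-targets}, and a direct bookkeeping of the parameters as $n$ varies. First I would identify $Q_n$ exactly. By Lemma~\ref{lem:wrapped-flat-stats} and the swap identities, $Q_n\in\D_{n+1}$ has $\area(Q_n)=\binom n2$ and $\dinv(Q_n)=n$. Hence
\[
   \coarea(Q_n)=\binom{n+1}2-\binom n2=n=\dinv(Q_n).
\]
Since $n\le\binom{n+1}2$, Lemma~\ref{lem:deficit-zero-targets} applies with $N=n+1$ and $c=n$, and gives $Q_n=\Gamma_{n+1,n}$. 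That lemma also provides $\fas(Q_n)=a=N-b$ and $\tailU(Q_n)=b$, where $b$ is the unique integer with $\binom{b+1}2\le n<\binom{b+2}2$. The first ascent of $\Gamma_{N,c}$ is exactly the initial staircase of length $a$. The next area entry, when $b\ge1$, is $a-1$ (or $a-2$ when $b=d$), so it does not exceed the preceding entry $a-1$. Therefore a $D$ follows the $a$-th up-step, and the remaining $b$ up-steps are the later ones. This agrees with the lemma's final sentence, so no extra verification of the word is needed.

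So the pair attached to $n$ is $(n+1-b,\,b)$ with $\binom{b+1}2\le n\le\binom{b+2}2-1=\binom{b+1}2+b$. Since $n\ge1$, we get $b\ge1$. For fixed $b\ge1$, as $n$ runs over this interval of $b+1$ consecutive integers, the first coordinate $a=n+1-b$ runs over the consecutive integers
\[
   \binom{b+1}2+1-b=\binom b2+1
   \quad\text{through}\quad
   \binom{b+1}2+1.
\]
This is exactly the fibre of $S_{\mathrm{tri}}$ over $b$.

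Conversely, take $(a,b)\in S_{\mathrm{tri}}$ and put $n=a+b-1$. Then $n\ge\binom b2+b=\binom{b+1}2\ge1$ and $n\le\binom{b+1}2+b$, so $b$ is indeed the integer attached to $n$, and the pair of $Q_n$ is $(a,b)$. Thus the two inclusions give equality.

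The main obstacle is entirely absorbed by Lemma~\ref{lem:deficit-zero-targets}. What remains is only the index arithmetic: checking the boundary identity $\binom{b+1}2-b=\binom b2$, and checking that the intervals for consecutive $b$ tile $\{n\ge1\}$. The latter is what makes the correspondence between $n$ and the pairs bijective.
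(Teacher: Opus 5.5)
Your proposal is correct and follows essentially the same route as the paper. You identify $Q_n=\Gamma_{n+1,n}$ via deficit-zero rigidity with $N=n+1$ and $c=n$, and then turn the defining range $\binom{b+1}{2}\le n<\binom{b+2}{2}$ for $b$ into the range $\binom b2+1\le a\le\binom{b+1}{2}+1$ for $a=n+1-b$, checking both inclusions. Your explicit check of the lemma's hypothesis $c\le\binom N2$ is a small addition of care that the paper leaves implicit.
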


\begin{proof}
\emph{Identifying $Q_n$.}
Fix $n\ge 1$.  By Lemma~\ref{lem:wrapped-flat-stats},
$\area(Q_n)=\dinv(W_n)=\binom n2$ and $\dinv(Q_n)=\area(W_n)=n$; and since
$Q_n\in\D_{n+1}$, we have
$\coarea(Q_n)=\binom{n+1}{2}-\binom n2=n=\dinv(Q_n)$.  Thus, $Q_n$ is
deficit-zero, and by Lemma~\ref{lem:deficit-zero-targets} (with
$N=n+1$ and $c=n$), it is the unique deficit-zero target $\Gamma_{n+1,n}$: an
initial staircase followed by a short two-level tail.  Write $a=\fas(Q_n)$ for
its first-ascent length and
$b=\tailU(Q_n)$ for its number of later up-steps.  Since $Q_n$ has $n+1$
up-steps, $n=a+b-1$.
Lemma~\ref{lem:deficit-zero-targets} identifies $b$ as the unique integer with
\[
   \binom{b+1}{2}\le n<\binom{b+2}{2}.
\]
We also have $b\ge1$: otherwise $0\le n<\binom22=1$, so $n=0$, contradicting
$n\ge1$.
 
\vspace{0.5em}
\noindent\emph{The pairs range exactly over $S_{\mathrm{tri}}$.}
It remains to identify the image $\{(\fas(Q_n),\tailU(Q_n)):n\ge 1\}$ with
$S_{\mathrm{tri}}$.  Substituting $a=(n+1)-b$ and using the identities
$\binom{b+1}{2}-b=\binom b2$ and $\binom{b+2}{2}-b=\binom{b+1}{2}+1$, the range
that pins down $b$ turns into a range for $a$:
\[
   \binom{b+1}{2}\le n<\binom{b+2}{2}
   \quad\Longleftrightarrow\quad
   \binom b2+1\le a\le\binom{b+1}{2}+1
   \qquad(n=a+b-1).
\]
Read left to right, this says the pair $(a,b)$ of each $Q_n$ has $b\ge 1$ and
$\binom b2+1\le a\le\binom{b+1}{2}+1$, so it lies in $S_{\mathrm{tri}}$.  Read
right to left, it shows every point of $S_{\mathrm{tri}}$ is attained: given
$(a,b)$ with $b\ge 1$ and $\binom b2+1\le a\le\binom{b+1}{2}+1$, set $n=a+b-1$.
Then, $n\ge 1$ (because $a\ge\binom b2+1\ge 1$ and $b\ge 1$), so $n$ is one of the
indices the image ranges over.  By the equivalence, this $n$ lies in the range
$\binom{b+1}{2}\le n<\binom{b+2}{2}$, and $b$ is the unique integer placing it
there, so the description above gives $\tailU(Q_n)=b$ and
$\fas(Q_n)=(n+1)-b=a$.  Thus, the image is exactly $S_{\mathrm{tri}}$.
\end{proof}

\begin{lemma}[Semilinear envelopes]
\label{lem:semilinear-envelope}
Let $S\subseteq\N^2$ be semilinear, and suppose every vertical section
$S_b=\{a:(a,b)\in S\}$ is finite.  Then, there is a period $M\ge1$, determined
by $S$, such that on every residue class modulo $M$, exactly one of the following
holds:
\begin{enumerate}[label=(\roman*)]
\item $S_b$ is empty for all sufficiently large $b$ in the class;
\item $S_b$ is nonempty for all sufficiently large $b$ in the class, and its
lower envelope $m(b)=\min S_b$ is \emph{eventually affine}: there are constants
$\alpha,\gamma$, depending on the class, such that
$m(b)=\alpha b+\gamma$ for all sufficiently large $b$ in the class.
\end{enumerate}
\end{lemma}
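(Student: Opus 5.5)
Write $S$ as a finite union of linear sets $L_1,\ldots,L_r$, each of the form $L=\{\beta+\sum_i t_i s_i:t_i\in\N\}$ with $\beta,s_i\in\N^2$. The plan is to work linear set by linear set and then take a minimum over finitely many pieces.

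\emph{A single linear set.} Finiteness of the sections $S_b$ forces every step vector of every $L_j$ to have a nonzero second coordinate. Indeed, a step $(e,0)$ with $e>0$ would make a section infinite, and a zero step can be discarded. So each step has the form $s_i=(e_i,f_i)$ with $f_i\ge1$, and the section of $L$ at height $b$ is
\[
L_b=\Bigl\{\beta_1+\sum_i t_ie_i:\ \beta_2+\sum_i t_if_i=b\Bigr\}.
\]
Its minimum is a small integer program: minimise $\sum_i t_ie_i$ subject to $\sum_i t_if_i=b-\beta_2$ with $t\in\N^m$. I would show that this minimum is eventually affine on residue classes by a greedy/exchange argument. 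Choose a step $s_{i^*}$ minimising the slope $e_i/f_i$. For large $b$, an optimal solution uses the non-optimal-slope steps only a bounded number of times. This holds because $f_{i^*}$ copies of any other step $s_i$ can be replaced by $f_i$ copies of $s_{i^*}$, which keeps the $f$-total unchanged and does not increase the $e$-total.

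So the optimum ranges over finitely many bounded "corrections" plus a multiple of $s_{i^*}$. On each residue class modulo $f_{i^*}$, the minimum is then eventually $\alpha b+\gamma$ with $\alpha=e_{i^*}/f_{i^*}$ and $\gamma$ the best correction compatible with that class. The same bounded-correction description shows that each class is either eventually always realisable or eventually never realisable, since the set of reachable $f$-totals is eventually periodic (the numerical semigroup structure).

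\emph{Assembling.} Set $M$ to be the lcm of all the periods obtained, over all $L_j$. On a fixed residue class modulo $M$, each $L_j$ is either eventually empty or eventually nonempty with an eventually affine minimum. Then $m(b)$ is the minimum of finitely many eventually affine functions over the pieces that are eventually nonempty. A minimum of finitely many affine functions of $b$ is eventually equal to one of them: take the smallest slope, and among those the smallest intercept. This gives (ii). If every piece is eventually empty on the class, we get (i). The two cases are exclusive by definition.

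\emph{Main obstacle.} The main obstacle is the exchange step in the single-linear-set analysis: showing rigorously that large $b$ admits an optimum with bounded use of the non-minimal-slope steps, and that realisability is eventually periodic in $b$. I would handle this by bounding each $t_i$ with $i\neq i^*$ by $f_{i^*}-1$ via the exchange, which reduces everything to finitely many affine sequences indexed by a residue modulo $f_{i^*}$.
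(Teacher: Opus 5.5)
Your proposal is correct. It shares the paper's outer structure: you decompose into linear sets, discard zero and horizontal steps, take a common period, and finish with the minimum of finitely many lines (least slope, then least intercept). The difference is in the single-linear-set step.

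The paper never describes an optimal solution. It uses only two inequalities on the value function $g(t)=\min\{\sum_j n_j s_{j,1}:\sum_j n_j s_{j,2}=t\}$:
\begin{itemize}
\item $g(t)\ge ct$, where $c$ is the least slope;
\item $g(t+d)\le g(t)+s_{*,1}$, obtained by adding one more shallowest step.
\end{itemize}
Together these make $G_\nu=g(r+\nu d)-\nu s_{*,1}$ a non-increasing integer sequence bounded below, hence eventually constant. This yields the realisability dichotomy and eventual affinity at once.

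Your exchange argument is also sound. Replacing $f_{i^*}$ copies of $s_i$ by $f_i$ copies of $s_{i^*}$ keeps the $f$-total and changes the $e$-total by $f_ie_{i^*}-f_{i^*}e_i\le0$. It strictly decreases $\sum_{i\ne i^*}t_i$, so the process terminates. Hence every feasible solution can be replaced by one with $t_i\le f_{i^*}-1$ for all $i\ne i^*$ and no larger objective. The section minimum therefore becomes a minimum over a fixed finite set of corrections $\tau$. Each contributes $E(\tau)+e_{i^*}(b-\beta_2-F(\tau))/f_{i^*}$ exactly when $b-\beta_2\ge F(\tau)$ and $b-\beta_2\equiv F(\tau)\pmod{f_{i^*}}$.

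This costs more bookkeeping than the paper's monotonicity trick, but it buys explicit data:
\begin{itemize}
\item a concrete threshold $b\ge\beta_2+(f_{i^*}-1)\sum_{i\ne i^*}f_i$;
\item the intercept as an explicit finite minimum;
\item the eventually nonempty classes, read off directly as those of $\beta_2+F(\tau)$ modulo $f_{i^*}$.
\end{itemize}

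One small omission: after discarding zero steps, a linear set may have no steps left, so $s_{i^*}$ does not exist. Such a singleton component is eventually absent on every residue class. Set it aside before choosing the shallowest step, as the paper does.
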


\begin{proof}
The plan is to handle one linear set at a time, keeping track of both the
second coordinates at which it has points and its lower envelope there.  On
each residue class, a linear set will eventually have either no points with
second coordinate $b$, or at least one point for every sufficiently large $b$
in the class; in the latter case, its envelope will be affine.  We then combine
the finitely many linear sets and take a minimum of their envelopes.
In this proof, we work with the two coordinates of a point $(a,b)\in\N^2$ directly:
$a$ is its \emph{first} coordinate and $b$ its \emph{second}.  The section
$S_b=\{a:(a,b)\in S\}$ collects the first coordinates occurring with a given
second coordinate $b$.  When $S_b$ is nonempty, $m(b)=\min S_b$ is its smallest
element; when $S_b$ is empty, $m(b)$ is undefined.  (If one plots $b$ on the
horizontal axis and $a$ on the vertical axis, a nonempty $S_b$ is the vertical
slice of $S$ sitting above $b$, and $m(b)$ is its lowest point, the lower
envelope of $S$; that is the picture behind the names.)

\vspace{0.5em}
\noindent\emph{Reduction to one linear set.}  By semilinearity, $S$ is a finite
union of linear sets; write $S=L_1\cup\cdots\cup L_r$, where each $L_i$ is the
set of points reachable from its own base point by taking nonnegative integer
numbers of steps along its own finitely many step vectors.  A
point of $S$ has second coordinate $b$ exactly when it lies in some $L_i$ with
second coordinate $b$.  Write $(L_i)_b=\{a:(a,b)\in L_i\}$ for the section of
$L_i$ at second coordinate $b$ (the analogue of $S_b$ for the single linear
set $L_i$), so that $S_b=\bigcup_{i=1}^r (L_i)_b$.  When $(L_i)_b\neq\emptyset$,
that is, when $L_i$ has a point at second coordinate $b$, put
\[
   m_i(b)=\min (L_i)_b=\min\{a:(a,b)\in L_i\},
\]
the least first coordinate $L_i$ attains at second coordinate $b$; when
$(L_i)_b=\emptyset$, we leave $m_i(b)$ undefined, as $L_i$ then has no point
there.  Whenever $S_b$ is nonempty,
\[
   m(b)=\min_{i:\,(L_i)_b\neq\emptyset}\ m_i(b)
\]
is the minimum of a nonempty finite set of finite values.  It is enough to show
that, for each linear set $L_i$ and on each residue class modulo a suitable
period, one of two alternatives holds for all sufficiently large $b$: either
$(L_i)_b$ is always empty, or it is always nonempty and $m_i(b)$ is affine.
We establish this dichotomy separately for each $L_i$ in the four steps below.
We then return to the finite union $S=L_1\cup\cdots\cup L_r$ and pass to a
common period.  On each resulting residue class, either no component is
eventually present, in which case $S_b$ is eventually empty, or a fixed
collection of components is present, in which case $m(b)$ is the minimum of
their affine envelopes and is therefore eventually affine.

\vspace{0.5em}
\noindent\emph{One linear set.}  Fix $L=L_i$, with base
$\beta=(\beta_1,\beta_2)$ and step vectors $s_j=(s_{j,1},s_{j,2})$.

\vspace{0.5em}
\noindent\emph{Step 1: every step increases the second coordinate.}  First discard any
step vector equal to $(0,0)$, since taking it any number of times adds nothing to
a point.  No remaining step can leave the second coordinate
unchanged while moving the first: a step $s_j=(s_{j,1},0)$ with $s_{j,1}>0$ would
make $\beta,\,\beta+s_j,\,\beta+2s_j,\ldots$ infinitely many points all with the
same second coordinate $\beta_2$, so the section $S_{\beta_2}$ would be infinite,
contrary to hypothesis.  Thus, every remaining step has $s_{j,2}\ge 1$.  If no
step remains, then $L=\{\beta\}$ is a single point, whose section is nonempty only
at $b=\beta_2$.  Therefore, this component is eventually absent on every residue
class, as in alternative~(i) of the lemma, and we set it aside.  From now on,
$L$ has at least one step, each with $s_{j,2}\ge 1$; in particular, the minimum
in Step~3 below is over a nonempty set.

\vspace{0.5em}
\noindent
\emph{Step 2: reduction to one integer variable.}  A point of $L$ has second
coordinate $b$ exactly when its step counts $(n_j)$ satisfy
$\beta_2+\sum_j n_j s_{j,2}=b$, and its first coordinate is then
$\beta_1+\sum_j n_j s_{j,1}$.  Minimising the first coordinate over all such step
counts gives
\[
   m_L(b)=\beta_1+g(b-\beta_2),\qquad
   g(t)=\min\Bigl\{\textstyle\sum_j n_j s_{j,1}: n_j\in\N,\ \sum_j n_j s_{j,2}=t\Bigr\}.
\]
In words: among all ways of choosing how many of each step to take so that their
second coordinates add up to exactly $t$, $g(t)$ is the least achievable total of
their first coordinates (defined for those $t$ that arise as such a sum).

\vspace{0.5em}
\noindent\emph{Step 3: $g$ grows linearly, at a fixed rate.}  Let
$c=\min_j s_{j,1}/s_{j,2}$ be the smallest ratio of first coordinate to second
coordinate among the steps, attained by a step we call the \emph{shallowest}
(in the picture, the step of smallest slope $s_{j,1}/s_{j,2}$).  Since
$s_{j,1}\ge c\,s_{j,2}$ for every $j$, any feasible step counts give
$\sum_j n_j s_{j,1}\ge c\sum_j n_j s_{j,2}=c\,t$; hence $g(t)\ge c\,t$, with
equality whenever $t$ is a multiple of the shallowest step's second coordinate
(take only copies of that step).  So $g$ lies on or above the line of slope $c$,
meeting it at every multiple of the shallowest step's second coordinate.

\vspace{0.5em}
\noindent\emph{Step 4: each residue class is eventually absent or affine.}  Let
$d=s_{*,2}\ge 1$ be the shallowest step's second coordinate, so that
$c\,d=s_{*,1}$ is an integer (its first coordinate).  Taking one more copy of the
shallowest step turns any step counts summing to $t$ into step counts summing to
$t+d$, at an extra first-coordinate total of $s_{*,1}$; hence
\[
   g(t+d)\le g(t)+s_{*,1}.
\]
In particular, once $g$ is defined at an argument $t_0$, it is defined at every
$t_0+\nu d$ with $\nu\ge 0$.  Consequently, on each residue class modulo $d$,
either $g$ is nowhere defined, or it is defined at all sufficiently large
arguments in the class.  In the first case, the corresponding sections of $L$
are eventually empty.  In the second case, fix the residue $r$ and write
$t=r+\nu d$.  For all sufficiently large $\nu$, the value $g(r+\nu d)$ is
defined, and we set
\[
   G_\nu=g(r+\nu d)-\nu\,s_{*,1}\in\Z.
\]
For all sufficiently large $\nu$ the preceding inequality gives $G_{\nu+1}\le G_\nu$, while
$g(t)\ge c\,t$ gives $G_\nu\ge c\,r\ge 0$.  A non-increasing sequence of integers that stays
nonnegative is eventually constant, say $G_\nu=G_\infty$ for all sufficiently large $\nu$;
therefore, for all sufficiently large $\nu$,
\[
   g(r+\nu d)
   =\nu\,s_{*,1}+G_\infty
   =\nu c d+G_\infty
   =c(r+\nu d)+(G_\infty-c r).
\]
Thus, for all sufficiently large $t\equiv r\pmod d$,
$g(t)=c t+(G_\infty-c r)$, a single linear formula in $t$.
Since subtracting the fixed number $\beta_2$ merely permutes the residue classes
modulo $d$, the identity $m_L(b)=\beta_1+g(b-\beta_2)$ proves the required
dichotomy for $L$: on each residue class of $b$ modulo $d$, its sections are
eventually empty, or they are eventually nonempty and $m_L$ is affine.

\vspace{0.5em}
\noindent\emph{Combining the finitely many linear sets.}  The singleton
components set aside in Step~1 are eventually absent on every residue class.
If every $L_i$ is such a component, then $S$ is finite; taking $M=1$ gives
alternative~(i), and the proof is complete.  Otherwise, Step~4 attaches to each
remaining $L_i$ a period $d_i\ge1$ for which, on every residue class modulo
$d_i$, the sections of $L_i$ are eventually empty or are eventually nonempty
with $m_i$ affine.

Let $M$ be a common multiple of these periods.  Fix a residue class modulo
$M$.  For all sufficiently large $b$ in this class, the collection of $L_i$
with $(L_i)_b\neq\emptyset$ is fixed, and each member of that collection
contributes one affine function $m_i(b)=\alpha_i b+\gamma_i$.  If the collection
is empty, then $S_b$ is empty for all sufficiently large $b$ in the class,
which is alternative~(i).  If the collection is nonempty, then $S_b$ is
nonempty for all sufficiently large $b$ in the class and
\[
   m(b)=\min_{i:\,(L_i)_b\neq\emptyset} m_i(b)
\]
is the minimum of finitely many lines, and for large $b$, the line of least slope
(ties broken by least intercept) lies below the rest, so $m$ coincides with that
single line.  This is alternative~(ii), and proves the lemma.
\end{proof}

\begin{lemma}[Triangular obstruction]
\label{lem:triangular-not-semilinear}
The set $S_{\mathrm{tri}}\subseteq\N^2$ is not semilinear.
\end{lemma}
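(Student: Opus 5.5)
We argue by contradiction, using the semilinear-envelope dichotomy of Lemma~\ref{lem:semilinear-envelope}. Suppose $S_{\mathrm{tri}}$ were semilinear.

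The first step is to check the hypothesis of that lemma. For each $b$ the vertical section is
\[
   (S_{\mathrm{tri}})_b=\Bigl\{a:\binom b2+1\le a\le\binom{b+1}2+1\Bigr\}
\]
when $b\ge1$, and it is empty when $b=0$. Every section is therefore finite, so Lemma~\ref{lem:semilinear-envelope} applies and supplies a period $M\ge1$.

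Next, fix any residue class modulo $M$, for instance $b\equiv 0\pmod M$. For every $b\ge1$ the section is nonempty, since $\binom b2+1\le\binom{b+1}2+1$. Hence alternative~(i) fails on this class, and alternative~(ii) must hold. So there are constants $\alpha,\gamma$ with
\[
   m(b)=\min (S_{\mathrm{tri}})_b=\alpha b+\gamma
\]
for all sufficiently large $b$ in the class. But $m(b)=\binom b2+1$ exactly. Evaluating at the points $b=kM$ for large $k$ gives
\[
   \frac{kM(kM-1)}{2}+1=\alpha kM+\gamma
\]
for infinitely many $k$. The left side is a polynomial in $k$ of degree $2$ with leading coefficient $M^2/2\neq0$, while the right side has degree at most $1$. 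Their difference is a nonzero polynomial, so it has only finitely many roots, which is a contradiction. One could equally use second differences: along the class the second difference of $m$ is constantly $M^2\neq0$, whereas an affine function has second difference $0$.

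No step here is a real obstacle, because Lemma~\ref{lem:semilinear-envelope} does the substantive work. The only points that need care are verifying finiteness of the sections and confirming nonemptiness on the chosen class, so that alternative~(i) is excluded. After that, the quadratic-versus-affine comparison is elementary.
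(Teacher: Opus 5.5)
Your proof is correct and takes essentially the same route as the paper: you invoke Lemma~\ref{lem:semilinear-envelope}, rule out alternative~(i) because every section with $b\ge1$ is nonempty, and then contradict eventual affineness using the quadratic lower envelope $\binom b2+1$. Your choice of the class $b\equiv0\pmod M$ and your degree (or second-difference) comparison are a concrete version of the paper's remark that the envelope stays quadratic in $\nu$ along $b=r+M\nu$.
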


\begin{proof}
Recall $S_{\mathrm{tri}}=\{(a,b):b\ge 1,\ \binom b2+1\le a\le\binom{b+1}2+1\}$.
For each $b\ge 1$, its vertical section is the finite integer interval from
$\binom b2+1$ to $\binom{b+1}2+1$, so it is nonempty with smallest element
\[
   m(b)=\min\{a:(a,b)\in S_{\mathrm{tri}}\}=\tbinom b2+1.
\]
Suppose $S_{\mathrm{tri}}$ were semilinear.  Its sections being finite,
Lemma~\ref{lem:semilinear-envelope} supplies a period $M$ and the two
alternatives in that lemma.  Here every section is nonempty for $b\ge1$, so
alternative~(i) is impossible on every residue class.  Thus, the lower envelope
is eventually affine on every class; fix one, say $b\equiv r\pmod M$.  On it,
$m(b)=\alpha b+\gamma$ for all large $b$.  But
$m(b)=\binom b2+1=\tfrac12 b^2-\tfrac12 b+1$ is a genuine quadratic in $b$;
restricted to the progression $b=r+M\nu$, it is still quadratic in $\nu$
(leading term $\tfrac12 M^2\nu^2$), so it cannot equal a linear function of
$b$ for all sufficiently large such $b$.  This contradiction shows
$S_{\mathrm{tri}}$ is not semilinear.
\end{proof}

The combinatorial part of the obstruction can now be stated without reference
to a computational model.

\begin{corollary}[Model-free form of the obstruction]
\label{cor:model-free-obstruction}
Let $F\colon\D\to\D$ be semilength-preserving and satisfy
$\area(F(P))=\dinv(P)$ and $\dinv(F(P))=\area(P)$ for every $P\in\D$.  Then,
\[
   \{(\fas(F(W_n)),\tailU(F(W_n))):n\ge1\}
\]
is not semilinear.
\end{corollary}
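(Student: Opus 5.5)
This follows by chaining two results already established. Let $F$ be as in the statement.

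First, apply Corollary~\ref{cor:forced-triangular-pairs} to $F$. Its hypotheses are exactly the ones assumed here: $F$ is semilength-preserving, $\area(F(P))=\dinv(P)$, and $\dinv(F(P))=\area(P)$ for every $P\in\D$. The corollary then gives the equality
\[
   \{(\fas(F(W_n)),\tailU(F(W_n))):n\ge1\}=S_{\mathrm{tri}}.
\]
Behind this equality lie three facts: Lemma~\ref{lem:wrapped-flat-stats} computes the statistics of $W_n$; these statistics force $F(W_n)$ to be deficit-zero in $\D_{n+1}$; and Lemma~\ref{lem:deficit-zero-targets} says that deficit-zero paths are rigid. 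None of this has to be redone, since the corollary already packages the identification and shows that the image is all of $S_{\mathrm{tri}}$.

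Second, Lemma~\ref{lem:triangular-not-semilinear} says that $S_{\mathrm{tri}}$ is not semilinear. Its proof uses the envelope dichotomy of Lemma~\ref{lem:semilinear-envelope} and compares it with the quadratic lower envelope $m(b)=\binom b2+1$. Since the set in the statement equals $S_{\mathrm{tri}}$, it is not semilinear either, which is the claim.

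There is no real obstacle at this stage. The substantive work sits in the two cited results: the rigidity of deficit-zero paths (deferred to the appendix) and the eventual-affinity of semilinear lower envelopes. The one thing to check is that the corollary is stated for an arbitrary function $F\colon\D\to\D$, with no bijectivity or computability assumption. It is, so the conclusion holds for every $F$ with these properties. That is why this obstruction can be stated without reference to any computational model.
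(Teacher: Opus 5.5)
Your proposal is correct and is exactly the paper's proof: Corollary~\ref{cor:forced-triangular-pairs} identifies the displayed set with $S_{\mathrm{tri}}$, and Lemma~\ref{lem:triangular-not-semilinear} shows that $S_{\mathrm{tri}}$ is not semilinear. You are also right that neither result assumes $F$ is bijective or computable, which is why the obstruction is stated independently of any model.
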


\begin{proof}
Corollary~\ref{cor:forced-triangular-pairs} identifies the displayed set with
$S_{\mathrm{tri}}$, which is not semilinear by
Lemma~\ref{lem:triangular-not-semilinear}.
\end{proof}

The no-swap theorem follows by combining this model-free obstruction with the
regular-slice semilinearity theorem.

\begin{theorem}[No $\WRP$ area--dinv swap]\label{thm:wrp-no-swap}
There is no $\WRP$ map $T$ that realises a
semilength-preserving map $F\colon\D\to\D$ satisfying
$\area(F(P))=\dinv(P)$ and $\dinv(F(P))=\area(P)$
for every Dyck path $P$.  In particular, no $\WRP$ map realises a
semilength-preserving \emph{bijection} of $\D$ swapping $\area$ and $\dinv$.
\end{theorem}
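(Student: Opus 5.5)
The argument is by contradiction. We combine Theorem~\ref{thm:wrp-slice-semilinearity} with Corollary~\ref{cor:model-free-obstruction}. Suppose a $\WRP$ map $T$ realises a semilength-preserving $F\colon\D\to\D$ with $\area(F(P))=\dinv(P)$ and $\dinv(F(P))=\area(P)$ for all $P\in\D$. By Definition~\ref{def:relative}, $\D\subseteq\operatorname{dom}(T)$ and $T(P)=F(P)$ for every Dyck path $P$. In particular, each $W_n=U(UD)^nD$ lies in $\operatorname{dom}(T)$ and $T(W_n)=F(W_n)$.

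First we check the hypotheses of Theorem~\ref{thm:wrp-slice-semilinearity}. Since $F$ preserves semilength and $W_n\in\D_{n+1}$, the output $T(W_n)=F(W_n)$ lies in $\D_{n+1}$. Hence $|T(W_n)|=2(n+1)=O(n)$. The linear-growth hypothesis therefore holds on the slice, even though $T$ may grow faster on non-Dyck inputs. This is why the theorem was stated only for the family $(W_n)$ and not for all of $\operatorname{dom}(T)$. The theorem then gives that
\[
S_T=\{(\fas(T(W_n)),\tailU(T(W_n))):n\ge1\}
\]
is semilinear.

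Second, because $T(W_n)=F(W_n)$ for every $n$, the set $S_T$ coincides with the set $\{(\fas(F(W_n)),\tailU(F(W_n))):n\ge1\}$. By Corollary~\ref{cor:model-free-obstruction}, this set is not semilinear, since Corollary~\ref{cor:forced-triangular-pairs} and Lemma~\ref{lem:triangular-not-semilinear} identify it with $S_{\mathrm{tri}}$. This contradicts the first step, so no such $T$ exists. The statement about bijections is a special case: the argument never uses injectivity or surjectivity of $F$.

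Essentially all the difficulty lies in results already established: the Presburger description of $\WRP$ data on regular slices, bounded counting, and deficit-zero rigidity. The only step requiring care here is to apply the semilinearity theorem to $T$ restricted to the slice and not to $T$ globally. This requires checking that the realisation convention supplies both domain membership and the output-length bound on every $W_n$.
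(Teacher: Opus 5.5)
Your proposal is correct and follows the paper's proof essentially verbatim. You use the realisation convention to place every $W_n$ in the domain with $|T(W_n)|=2(n+1)$, apply Theorem~\ref{thm:wrp-slice-semilinearity}, and then contradict Corollary~\ref{cor:model-free-obstruction} via $T(W_n)=F(W_n)$, noting correctly that bijectivity of $F$ is never used.
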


\begin{proof}
Assume such a $T$ exists.  For the Dyck input $W_n=U(UD)^nD$, the output
is $F(W_n)\in\D_{n+1}$.  Since $T$ realises $F$, every $W_n$ lies in the
domain of $T$, and $|T(W_n)|=2(n+1)=O(n)$.  By
Theorem~\ref{thm:wrp-slice-semilinearity}, the set of first-ascent pairs
\[
   \{(\fas(T(W_n)),\tailU(T(W_n))):n\ge 1\}
\]
is semilinear.  But $T(W_n)=F(W_n)$, so
Corollary~\ref{cor:model-free-obstruction} says that the same set is not
semilinear.  Contradiction.
\end{proof}

This proves the introduction-level statement, Theorem~\ref{thm:intro-no-swap}.
In particular, $\WRP$ contains the zeta map (Theorem~\ref{thm:zeta-wrp}) but no
semilength-preserving Catalan bijection that swaps area and dinv: it captures the
forward sweep without the full symmetry.

Theorem~\ref{thm:wrp-no-swap} does \emph{not} assert that no area--dinv
swap exists.  The symmetry of the $q,t$-Catalan polynomial implies that, for
each fixed $n$, some bijection of $\D_n$ exchanges the two statistics.  The
theorem says that no single $\WRP$ map can realise such bijections
simultaneously for all $n$.

\section[Inverse zeta lies outside WRP]
{A second separation: inverse zeta lies outside $\WRP$}
\label{sec:inverse-zeta}

The no-swap theorem shows that $\WRP$ cannot supply the full
$q,t$-symmetry.  We now turn to a different structural question under the
realisation convention: is $\WRP$-realisability of a Dyck-path bijection
preserved under inversion?  The zeta map provides a counterexample: it is a
semilength-preserving bijection in $\WRP$ (Theorem~\ref{thm:zeta-wrp}), whereas we
prove below that $\zeta^{-1}$ does not belong to $\WRP$.  Thus,
$\WRP$-realisability is not preserved under inversion, even among
semilength-preserving bijections of Dyck paths.

The proof follows the three-step pattern of the no-swap theorem, now on the
two-parameter family
\[
   W_{m,n}=U^m(UD)^nD^m\qquad(m\ge 1,\ n\ge 0)
\]
of Definition~\ref{def:two-parameter-family}.  Its subfamily with $m=1$ and $n\ge1$ is
the family $(W_n)_{n\ge1}$ studied in
Section~\ref{sec:wrapped-flat}.  Suppose, for a contradiction, that
$\zeta^{-1}\in\WRP$.  Then, it is defined on every $W_{m,n}$.  Since
$\zeta^{-1}$ is semilength-preserving,
Theorem~\ref{thm:two-parameter-semilinearity} implies that
\[
   S_{\zeta^{-1}}
   =\Bigl\{\bigl(m,n,\fas(\zeta^{-1}(W_{m,n})),
                    \tailU(\zeta^{-1}(W_{m,n}))\bigr):
                    m\ge1,\ n\ge0\Bigr\}
\]
is semilinear.  Second, Lemma~\ref{lem:inverse-zeta-fas} computes the first
ascent of $\zeta^{-1}(W_{m,n})$:
\[
   \fas\bigl(\zeta^{-1}(W_{m,n})\bigr)=\left\lceil\frac{m+n}{m+1}\right\rceil.
\]
Third, projecting $S_{\zeta^{-1}}$ away from the $\tailU$-coordinate would make
the set
\[
   G=\Bigl\{(m,n,f):m\ge1,\ n\ge0,\
          f=\Bigl\lceil\frac{m+n}{m+1}\Bigr\rceil\Bigr\}\subseteq\N^3
\]
semilinear.  Lemma~\ref{lem:inverse-zeta-not-semilinear} shows that this is impossible:
intersecting $G$ with the condition $f=m$ and projecting onto $(m,n)$ produces
the band $m^2-m\le n\le m^2$, whose quadratic lower boundary is impossible for
a semilinear set by Lemma~\ref{lem:semilinear-envelope}.  Therefore,
$\zeta^{-1}\notin\WRP$.

\subsection{The two-parameter family and its Presburger control}

This subsection proves the semilinearity constraint used above.  We introduce
the two-parameter family $(W_{m,n})_{m\ge1,\,n\ge0}$ and show that, as in
Section~\ref{sec:slice-semilinearity}, the selection, labelling, and ordering
conditions of a fixed $\WRP$ presentation are Presburger-definable on this
family.  Bounded counting then implies that,
for any $T\in\WRP$ defined on every $W_{m,n}$ with
$|T(W_{m,n})|=O(m+n)$, its first-ascent pairs, recorded together with $m$ and
$n$, form a semilinear set
(Theorem~\ref{thm:two-parameter-semilinearity}).

\begin{definition}[The family $W_{m,n}$]\label{def:two-parameter-family}
For $m\ge 1$ and $n\ge 0$, let
\[
   W_{m,n}=U^m(UD)^nD^m,
\]
which is a Dyck path of semilength $m+n$.  When $m=1$ and $n\ge1$, we have
$W_{1,n}=W_n$, so the family $(W_n)_{n\ge1}$ studied in
Section~\ref{sec:wrapped-flat} is a subfamily.  Allowing $n=0$ includes the
boundary paths $W_{m,0}=U^mD^m$, in particular $W_{1,0}=UD$.
\end{definition}

The area sequence of $W_{m,n}$ consists of $0,1,\ldots,m-1$, followed by
$n$ copies of $m$.  The first $m$ up-steps climb the initial staircase, and the
remaining $n$ up-steps all start at height $m$.  For the Presburger argument,
we encode positions by a triple
$(\tau,s,j)$, as in Section~\ref{sec:slice-semilinearity}.  The region tag
$\tau$ records whether the position lies in the initial stretch $U^m$, the
middle stretch $(UD)^n$, or the final stretch $D^m$; thus,
$\tau\in\{\mathsf i,\mathsf m,\mathsf f\}$.  The offset $s$ specifies a letter
within one copy of the corresponding block $U$, $UD$, or $D$, and the
repetition index $j$ identifies that copy.  Unlike a regular slice $uv^nz$,
whose prefix and suffix are fixed, this family has three repeated stretches:
the initial and final stretches contain $m$ copies of their blocks, and the
middle stretch contains $n$ copies.  The position is recovered from
$(\tau,s,j)$ as follows:
\begin{itemize}
\item $\tau=\mathsf i$: block $U$, so $s=1$; repetition index $1\le j\le m$; and
$i=(j-1)\cdot 1+s=j$;
\item $\tau=\mathsf m$: block $UD$, so $s\in\{1,2\}$ ($s=1$ for its $U$, $s=2$ for
its $D$); repetition index $1\le j\le n$; and $i=m+(j-1)\cdot 2+s$;
\item $\tau=\mathsf f$: block $D$, so $s=1$; repetition index $1\le j\le m$; and
$i=m+2n+(j-1)\cdot 1+s=m+2n+j$.
\end{itemize}
These mirror the regular-slice formulas
$i=s$, $i=|u|+(j-1)|v|+s$, and $i=|u|+n|v|+s$ from
Section~\ref{sec:slice-semilinearity}, with the fixed prefix and suffix there
replaced here by the repeated boundary stretches $U^m$ and $D^m$.  As before, $\tau$ and $s$
are finite tags fixed in advance, while the repetition index $j$ and the
parameters $m,n$ are the unbounded data.  The initial and final stretches are
governed by the same parameter $m$, while the middle stretch is governed by
$n$, so on each fixed choice of tags, the position $i$ is affine in $j,m,n$.

\begin{lemma}[Arithmetic on the two-parameter family]
\label{lem:two-parameter-presburger}
Fix a $\WRP$ presentation and consider the family
$\{W_{m,n}:m\ge 1,\ n\ge 0\}$.  For every potential atom under consideration,
fix its copy name and the region and offset of each coordinate.  Each of the
following is then Presburger-definable in $m,n$ and the remaining repetition
indices:
\begin{enumerate}[label=(\alph*)]
\item whether a given potential atom is selected;
\item whether a given selected atom has a specified output label;
\item whether the first of two selected atoms has the smaller atom rank;
\item whether the first of two selected atoms of equal atom rank comes first in
the tie-order.
\end{enumerate}
\end{lemma}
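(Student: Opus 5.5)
The plan is to repeat the proof of Lemma~\ref{lem:one-loop-presburger}. We first prove two-parameter analogues of Lemmas~\ref{lem:one-loop-finite-state} and~\ref{lem:one-loop-rank-affine} for the three-stretch family $W_{m,n}=U^m(UD)^nD^m$. Then (a), (b) and the tie-order part of (d) follow from the $\MSO$ analogue. Part (c) and the equal-rank condition in (d) follow from the rank analogue, by clearing denominators in the lexicographic comparison as described after Lemma~\ref{lem:one-loop-rank-affine}.

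\emph{The $\MSO$ analogue.} Translate the formula into a DFA $M=(Q,q_0,\delta,F)$ on marked words. Let $g_U,g_{UD},g_D\in Q^Q$ be the transition maps of one unmarked copy of the blocks $U$, $UD$, $D$. Each has eventually periodic powers with a common threshold $k_0$ and period $p$, so $g_\bullet^L=g_\bullet^{\widehat L}$ with the reduced exponent of Lemma~\ref{lem:one-loop-finite-state}. Fix the regions and offsets of the variables. Within each region, partition by the equality and order pattern of the repetition indices of the variables lying there. On a cell, the marked word factors as
\[
  \widetilde U^{(1)}\cdots(UD)^{L'_0}\widetilde{v}_1\cdots(UD)^{L'_{r'}}\cdots D^{L''_{r''}},
\]
with unmarked stretches whose lengths are affine expressions in $m,n$ and the indices. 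For example, the stretches inside $U^m$ have lengths $j_{(1)}-1,\ldots,m-j_{(r)}$, and similarly in the other two regions. The marked blocks are fixed on the cell. Refining by the reduced exponent of every stretch gives finitely many Presburger-definable cells, since each condition is a linear equality, inequality or congruence. On each refined cell the composite transition map is constant, so acceptance is decided cellwise. The one difference from the single-loop case is that there are now three independent loops, two of them controlled by $m$, but the bookkeeping does not change.

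\emph{The rank analogue.} This is the step that needs the most care. Take one rank source $A$ and a coordinate at $(\tau,s,j)$. The prefix before it consists of $U^{j-1}$ (region $\mathsf i$); or $U^m(UD)^{j-1}$ plus a fixed partial block (region $\mathsf m$); or $U^m(UD)^nD^{j-1}$ (region $\mathsf f$).

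Reading $U^{L}$ from $q_0$ has eventually periodic states. On each residue case of $L$ the state is fixed and the accumulated weight is affine in $L$. The state reached after $U^m$ therefore depends only on the residue case of $m$. Reading $(UD)^{L}$ from that state is again eventually periodic, with affine weight on residue cases of $L$ refined by the case of $m$. The same holds for $D^L$ after $U^m(UD)^n$, whose starting state is fixed by the joint cases of $m$ and $n$.

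Summing these pieces, the prefix rank is affine in $m,n,j$, with rational coefficients, on a finite Presburger partition. The local correction is fixed on the same partition, because the state before the position and the letter there are fixed. We take a common refinement over all coordinates and both atoms. The point to check is that the nested composition (the state after one stretch seeds the next) still yields only finitely many cases. It does, because each stage has a finite state set and finitely many residue cases, and conjunctions of finitely many congruence conditions remain Presburger-definable.
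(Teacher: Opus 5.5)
Your proposal is correct and follows essentially the same route as the paper. For parts (a), (b), (d) you use three block maps $\delta_U,\delta_{UD},\delta_D$ with a common threshold and period, cells given by the order and equality pattern of repetition indices within each stretch, and a refinement by reduced exponents so the composite transition is constant on each cell; for part (c) you compute prefix ranks stretch by stretch, with the entering state fixed by the residue cases of the earlier parameters, and then clear denominators in the lexicographic comparison, which is exactly the paper's two-parameter adaptation of Lemmas~\ref{lem:one-loop-finite-state} and~\ref{lem:one-loop-rank-affine}.
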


\begin{proof}
This is the two-parameter analogue of
Lemma~\ref{lem:one-loop-presburger}, and the proof follows the regular-slice
arguments of Lemmas~\ref{lem:one-loop-finite-state}
and~\ref{lem:one-loop-rank-affine}; the only change is that $W_{m,n}$ has
\emph{three} repeated stretches and \emph{two} unbounded parameters $m,n$, in
place of the single block $v$ and parameter $n$ there.  We use the position
encoding fixed just above.

\vspace{0.5em}
\noindent\emph{Overview.}  Each path $W_{m,n}$ is made of long runs of
identical steps, governed by just the two numbers $m$ and $n$.  Whatever a finite
automaton computes while scanning such a run eventually falls into a periodic
pattern, and any running integer total it keeps grows at a constant rate.  So
every decision a $\WRP$ presentation makes on $W_{m,n}$ depends on $m$, $n$, and
the repetition indices of the atoms involved only through linear expressions and
congruences.  In the language introduced in
Section~\ref{sec:slice-semilinearity}, these decisions are
Presburger-definable.  Turning
that intuition into the lemma's four conditions (a)--(d) is the whole proof.
These conditions are of two kinds: the finite-state conditions (a), (b), (d) ask
whether an $\MSO$ formula holds at the chosen positions, a question about
automaton acceptance, while the atom-rank condition (c) asks which of two atom
ranks is lexicographically smaller.  We first isolate the ingredient shared by
both kinds, the eventual
periodicity of an automaton along a repeated block, and then treat the two kinds
in turn, as parts (a),(b),(d) and part (c) below.

\vspace{0.5em}
\noindent\emph{Three block maps, each eventually periodic.}
We use the following repetition fact for both kinds.  The two parts below
scan $W_{m,n}$ with different machines: the $\MSO$ parts
(a), (b), (d) use a DFA over the \emph{marked} alphabet
$\Sigma\times\{0,1\}^k$ (the marking construction recalled in
Section~\ref{sec:slice-semilinearity}), while the atom-rank part~(c) uses an additive
rank source over the plain alphabet $\Sigma$.  In either case, let $Q$ be its
state set, $\delta^*$ its transition extension, and $\delta_y(q)=\delta^*(q,y)$
the state reached on reading $y$ from $q$.  For the marked DFA, call an
occurrence of a repeated block \emph{marked} if at least one chosen position
lies in it, equivalently if some marker bit on that occurrence is $1$; call it
\emph{unmarked} otherwise.  Reading one unmarked occurrence of each repeated
block induces three fixed transformations of $Q$.  Every letter in such an
occurrence carries marker bits $0$, so $\delta_U$ abbreviates reading the
marked letter $(U,0^k)$, and likewise for $\delta_{UD}$ and $\delta_D$.  For
the rank source, each subscript simply denotes the corresponding plain block.
The three transformations are
\[
   g_{\mathsf i}=\delta_U,\qquad g_{\mathsf m}=\delta_{UD},\qquad
   g_{\mathsf f}=\delta_D,
\]
one per region $\tau\in\{\mathsf i,\mathsf m,\mathsf f\}$, so reading $L$
consecutive unmarked block occurrences in a stretch applies the single map
$g_\bullet^{\,L}$.  There are at most $k$ marked block occurrences in the
$\MSO$ run.  Their transformations need not be powers of $g_\bullet$; as in
Lemma~\ref{lem:one-loop-finite-state}, they are absorbed into the fixed
transformations of the factorisation in parts (a), (b), (d) below.  Each
$g_\bullet$ lies in the finite monoid $Q^Q$, hence its powers are
eventually periodic; taking the larger threshold and a common multiple of the
periods gives a single $k_0$ and $p$ with $g_\bullet^{\,L+p}=g_\bullet^{\,L}$
for $L\ge k_0$ and all three maps.  The reduced exponent
$\widehat L\in\{0,\ldots,k_0+p-1\}$ of Lemma~\ref{lem:one-loop-finite-state}
then satisfies $g_\bullet^{\,L}=g_\bullet^{\,\widehat L}$, and ``$\widehat L=c$''
is a Presburger-definable condition on $L$.  This eventual periodicity, captured by the
Presburger-definable reduced exponent, is the only way the machines' state
behaviour along the repeated stretches enters either part below.

\vspace{0.5em}
\noindent\emph{Parts (a), (b), (d): the finite-state conditions.}
Selection, labelling, and the tie-order are each an $\MSO$ formula
$\varphi(x_1,\ldots,x_k)$ with $k$ free position variables, and we must show that
the tuples of parameters for which $\varphi$ holds at the chosen positions form a
Presburger-definable set.  The marked DFA $M$ over $\Sigma\times\{0,1\}^k$ of the setup
above accepts, by the marking construction, exactly when
$W_{m,n}\models\varphi(i_1,\ldots,i_k)$: it reads $W_{m,n}$ with the letters at
the chosen positions $i_1,\ldots,i_k$ flagged by the $k$ marker bits.  Write
$\widetilde W_{m,n}$ for this \emph{marked word}.  On this word, the state
reached by $M$ from its start state $q_0$ is
$\delta_{\widetilde W_{m,n}}(q_0)$.  The formula $\varphi$ holds at the chosen
positions if and only if this state belongs to $F$.  By the encoding, each marked
variable $x_\ell$ has a fixed region $\tau_\ell$ and offset $s_\ell$ and an
unbounded repetition index $j_\ell$ (in $\{1,\ldots,m\}$, $\{1,\ldots,n\}$, or
$\{1,\ldots,m\}$ according to $\tau_\ell$), and its position $i_\ell$ is the
affine form in $j_\ell,m,n$ recorded there; the unbounded data is the tuple
$(m,n,(j_\ell)_\ell)$.

The goal is now concrete: show that the tuples $(m,n,(j_\ell)_\ell)$ with
$\delta_{\widetilde W_{m,n}}(q_0)\in F$ form a Presburger-definable set.  We
proceed in three steps.  First, for every pair of variables assigned to the same
stretch, we distinguish the cases $j_\ell<j_{\ell'}$,
$j_\ell=j_{\ell'}$, and $j_\ell>j_{\ell'}$.  These comparisons determine
whether the two chosen positions lie in the same occurrence of the repeated
block and, if not, which occurrence is read first.  Second, in each resulting
case, we factor the marked word into the block occurrences containing chosen
positions and the intervening runs of unmarked block occurrences.  The
transformations associated with the marked occurrences are fixed, while only
the lengths of the unmarked runs vary.  Third, for each intervening run of $L$
unmarked block occurrences, we subdivide the case according to the bounded
value $\widehat L$ defined above, which satisfies
$g_\bullet^{\,L}=g_\bullet^{\,\widehat L}$.  After this subdivision, the
composite transition, and hence the truth of $\varphi$, is fixed on each
subcase.

\vspace{0.5em}
\noindent\emph{Splitting into cells.}  Recall that the regions $\tau_\ell$ and
offsets $s_\ell$ are finite tags fixed in advance, so the only unbounded data is
$(m,n,(j_\ell)_\ell)$.  For each pair of variables in the same stretch, split
the parameter space according to which of $j_\ell<j_{\ell'}$,
$j_\ell=j_{\ell'}$, and $j_\ell>j_{\ell'}$ holds.  For example, if two
variables lie in the middle stretch $(UD)^n$, equality means that their chosen
positions lie in the same occurrence of $UD$, while $j_\ell<j_{\ell'}$ means
that the occurrence containing $x_\ell$ is read before the occurrence
containing $x_{\ell'}$.  The order between different stretches is already
fixed: the initial stretch $U^m$ is read first, then $(UD)^n$, and then $D^m$.
Each consistent choice of the pairwise comparisons is given by a conjunction
of linear (in)equalities and therefore defines a Presburger-definable cell.  On
such a cell, equal repetition indices identify the variables whose marker bits
occur in the same block occurrence, and strict inequalities fix the order of
the marked block occurrences.  Because the offsets are also fixed, we know
exactly which marker bits are attached to each letter in every such block
occurrence.  The cells cover the whole parameter space, so it suffices to show
that the accepting tuples inside one cell $C$ form a Presburger-definable set.

\vspace{0.5em}
\noindent\emph{Factoring the run on $C$.}  By the previous step, the order of
the block occurrences containing chosen positions and the marker bits attached
to their letters are fixed on $C$.  Reading $\widetilde W_{m,n}$ from left to right therefore alternates
between maximal runs of unmarked block occurrences and individual marked block
occurrences.  A run of $L$ unmarked occurrences of one block applies the
corresponding power $g_{\mathsf i}^{\,L}$, $g_{\mathsf m}^{\,L}$, or
$g_{\mathsf f}^{\,L}$ of a block map.  A single marked block occurrence applies
one fixed transformation $M\in Q^Q$: the cell determines which chosen
positions lie in that occurrence, and their offsets $s_\ell$ are fixed, so the
letters and marker bits in the occurrence are completely determined.  Writing
$t\le k$ for the number of marked block occurrences, composing these maps in
the order the input is scanned
gives
\[
   \delta_{\widetilde W_{m,n}}
   = g_\bullet^{\,L_t}\circ M_t\circ\cdots\circ M_1\circ g_\bullet^{\,L_0}
\]
(later factors on the left, since reading a concatenation composes the maps),
where $M_1,\ldots,M_t$ are the fixed transformations associated with the
marked block occurrences and
$g_\bullet^{\,L_0},\ldots,g_\bullet^{\,L_t}$ are the unmarked runs surrounding
them; any of the exponents $L$ may be $0$.  Each exponent $L$ is the length of one such run, a
nonnegative linear form in $m,n$ and the repetition indices: within each stretch, the
runs are the gaps before, between, and after its marked block occurrences, summing to the
stretch length ($m$ for the initial and final stretches, $n$ for the middle)
minus the number of marked block occurrences in it.

\vspace{0.5em}
\noindent\emph{Making the run constant.}  Refine $C$ by the reduced exponent $\widehat L$
of each of these finitely many run-lengths, adjoining the Presburger-definable condition
``$\widehat L=c$'' for each.  This splits $C$ into finitely many sub-cells on
each of which every $g_\bullet^{\,L}=g_\bullet^{\,\widehat L}$ is one fixed power
of a block map, so the composite above is a single fixed map $\sigma\in Q^Q$.
Then, $\delta_{\widetilde W_{m,n}}(q_0)=\sigma(q_0)$ is constant on the sub-cell,
and the acceptance test $\sigma(q_0)\in F$ has the same answer
everywhere on it.  The accepting tuples are thus the union of the sub-cells on
which $\sigma(q_0)\in F$, a finite union of Presburger-definable sets.  This settles (a),
(b), and (d).

\vspace{0.5em}
\noindent\emph{Part (c): atom-rank comparison.}
The aim is to decide which of two atoms receives the smaller atom rank.  The
point is that on $W_{m,n}$, every atom rank is an affine vector-valued function
of $m$, $n$, and the
repetition indices, once the parameters are split into finitely many cases by threshold
tests and residues; comparing two affine forms is then expressible 
by a Presburger formula.

\vspace{0.5em}
\noindent\emph{One coordinate source at a time.}  Recall
(Definitions~\ref{def:rank-source} and~\ref{def:prefix-rank})
that a rank source is a finite-state scanner that, as it reads each letter $a$
in a state $q$, also adds an integer vector $\omega(q,a)\in\Z^d$ and reports
the running total;
the \emph{prefix rank} before a position is that total accumulated over the
letters strictly before it, the abstract form of the height.  A
prefix-additive rank function
(Definition~\ref{def:prefix-additive-rank}) adds one such contribution, with a
bounded local correction, for each tuple coordinate.  It is therefore enough
to take one coordinate source $A=(Q,q_0,\delta,\omega)$ and show that its
prefix rank before a position is affine in $m$, $n$, and that position's
repetition index, after a finite split into Presburger-definable cases; the finitely many coordinate
contributions can then be summed.

\vspace{0.5em}
\noindent\emph{Weight accumulated along a stretch is affine.}  Reading one occurrence of a block
from a state $q$ moves $q$ by the corresponding block map ($g_{\mathsf i}$,
$g_{\mathsf m}$, or $g_{\mathsf f}$) and adds a weight $\Omega_\tau(q)\in\Z^d$, the
total of $\omega$ over that block occurrence.  So if a stretch is entered in a state $q$, then
after $\nu$ completed block occurrences, the state is
$g_\bullet^{\,\nu}(q)$, which is eventually periodic in $\nu$: there are a threshold and a period so that, once $\nu$ passes the
threshold, the entering state depends only on $\nu$ modulo the period, while the
finitely many $\nu$ below the threshold are separate.  Call each possibility a
\emph{case}; it is either a value below the threshold or a residue modulo the
period above it.  Fix one such case.  Each completed period then adds the same
fixed weight increment, and the bounded leftover is fixed.  Hence the weight
of the first $\nu$ block occurrences is an affine function of $\nu$ on the
case, with rational coefficients and integer values.  (This is
the additive counterpart of the eventual periodicity used for the $\MSO$
parts: the states cycle, and the running weight grows by a fixed amount per
cycle.)

\vspace{0.5em}
\noindent\emph{Prefix rank by stretch.}  Applying this to each stretch, the prefix rank
before a position $(\tau,s,j)$ is, after splitting $m,n,j$ into finitely many such
cases:
\begin{itemize}
\item $\tau=\mathsf i$, the $j$th block occurrence: the prefix is $U^{j-1}$, so the prefix rank is the
weight of $j-1$ occurrences of $U$ read from $q_0$, affine in $j$ on each case of $j$
(a residue modulo the period of $g_{\mathsf i}$, above a threshold);
\item $\tau=\mathsf m$, offset $s$ in the $j$th block occurrence: the prefix is $U^m$, then
$(UD)^{j-1}$, then the fixed length-$(s-1)$ start of $UD$; the $U^m$ part is affine
in $m$, the $(UD)^{j-1}$ part affine in $j$ (read from the state after $U^m$, which
is fixed on the case of $m$), and the within-block part depends only on $s$
and that entering state, so is bounded and fixed on the case; the prefix rank
is thus affine in $m$ and $j$;
\item $\tau=\mathsf f$, the $j$th block occurrence: the prefix is $U^m$, then $(UD)^n$, then
$D^{j-1}$, whose three completed stretches contribute weights affine in $m$, in
$n$, and in $j$, so the prefix rank is affine in $m,n,j$.
\end{itemize}
The bounded local corrections of Definition~\ref{def:prefix-additive-rank} are
likewise constant on each such case, depending only on the fixed offset,
letter, and entering state.  Summing the finitely many coordinate contributions
and the fixed constant $c_0$ as in that definition, the atom rank is one affine
form in $m,n$ and the relevant repetition indices on each case.

\vspace{0.5em}
\noindent\emph{Comparing two atom ranks.}  For two atoms, refine to a common case; both
atom ranks are then affine vectors in $m,n$ and the repetition indices $j_\ell$.
Whether one is lexicographically smaller than the other is a Boolean
combination of affine equalities and inequalities and the congruences defining
the case.  Clearing the fixed denominators makes all coefficients integral, so
the comparison is expressible by a Presburger formula.  This
settles (c).
\end{proof}

\begin{theorem}[Semilinearity on the two-parameter family]
\label{thm:two-parameter-semilinearity}
Let $T\in\WRP$ be fixed, with
$W_{m,n}\in\operatorname{dom}(T)$ for all $m\ge1$ and $n\ge0$, and suppose
$|T(W_{m,n})|=O(m+n)$.  Then,
\[
   S_T=\bigl\{(m,n,\fas(T(W_{m,n})),\tailU(T(W_{m,n}))):m\ge 1,\ n\ge 0\bigr\}
   \subseteq\N^4
\]
is semilinear.
\end{theorem}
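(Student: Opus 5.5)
The plan is to repeat the proof of Theorem~\ref{thm:wrp-slice-semilinearity} on the two-parameter family, with Lemma~\ref{lem:two-parameter-presburger} in place of Lemma~\ref{lem:one-loop-presburger}. The parameter is now the pair $(m,n)$, and the bound becomes $|T(W_{m,n})|\le C(\max(m,n)+1)$. This follows from the hypothesis $|T(W_{m,n})|=O(m+n)$ because $m+n\le 2\max(m,n)$, which is the form required by Lemma~\ref{lem:presburger-counting}.

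First, I name potential atoms numerically. Each coordinate of a potential atom $(c,\bar\imath)$ is encoded by its triple $(\tau,s,j)$ with $\tau\in\{\mathsf i,\mathsf m,\mathsf f\}$. Each finite choice of copy name, regions and offsets gets a distinct integer tag, and its repetition indices are padded with zeros to a fixed length. Lemma~\ref{lem:two-parameter-presburger} then makes the following Presburger-definable in $(m,n)$ and the indices, for each pair of tags: being a valid potential atom (the constraints $1\le j\le m$ or $1\le j\le n$), being selected, carrying a given label, and the comparison $\prec$ (rank comparison together with the equal-rank tie-order). A finite disjunction over tags removes the dependence on tags, exactly as in the one-parameter proof.

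Second, I split on whether $T(W_{m,n})$ contains a $D$. This is the existential condition ``some valid potential atom is selected and labelled $D$'', which is Presburger in $(m,n)$.
\begin{itemize}
\item On the no-$D$ part, I count the selected $U$-atoms with Lemma~\ref{lem:presburger-counting}, taking $x=(m,n)$. This gives a semilinear graph $\{(m,n,\fas)\}$, and I append the constant coordinate $\tailU=0$.
\item On the other part, I fix the finite case of the first $D$-atom $d$ and let $x=(m,n,\text{indices of }d)$. The set $P$ of $x$ for which $d$ is selected, labelled $D$, and preceded by no selected $D$-atom is Presburger. I then count the sets $U_{<d}$ and $U_{>d}$ using the joint form of Lemma~\ref{lem:presburger-counting}. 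The bound holds because the indices of $d$ are at most $\max(m,n)$, so $\|x\|_\infty=\max(m,n)$ and both counts are $O(\max(m,n)+1)$.
\end{itemize}
I intersect the resulting graph with $P$ and project away only the indices of $d$, keeping $(m,n)$. Taking the union over the finitely many cases and over the two branches gives $S_T\subseteq\N^4$. Semilinearity is preserved throughout by the closure properties.

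The only point needing care is that the counting lemma requires counts bounded linearly in the whole parameter vector, and here the vector includes the repetition indices of $d$. This is handled by the observation that those indices never exceed $\max(m,n)$. Beyond that, the argument is bookkeeping carried over verbatim from the one-parameter theorem. All the genuinely new work, namely the three eventually periodic block maps and the affine ranks in $m,n,j$, is already contained in Lemma~\ref{lem:two-parameter-presburger}.
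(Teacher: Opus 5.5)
Your proposal is correct and follows the paper's proof essentially step for step. The shared steps are the common tag-plus-padded-index encoding of potential atoms, Lemma~\ref{lem:two-parameter-presburger} for the predicates, the split on whether a $D$-atom exists, bounded counting with parameters $(m,n)$ or $(m,n,\text{indices of }d)$, intersection with $P$, and projection onto $(m,n,\fas,\tailU)$. Your remark that the indices of $d$ never exceed $\max(m,n)$ is harmless but unnecessary: $\|x\|_\infty\ge\max(m,n)$ holds for every $x$ containing $(m,n)$, so the required bound $|A_x|\le C(\|x\|_\infty+1)$ already follows from $|T(W_{m,n})|=O(m+n)$.
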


\begin{proof}
This is the two-parameter analogue of
Theorem~\ref{thm:wrp-slice-semilinearity}; here is the outline before the
details.  We must show that the first-ascent pairs
$(\fas(T(W_{m,n})),\tailU(T(W_{m,n})))$, recorded together with the parameters
$(m,n)$ they come from, form a \emph{semilinear} set $S_T$.
Two features of the family make this happen.  First, $W_{m,n}=U^m(UD)^nD^m$ is so
repetitive that every decision the $\WRP$ presentation makes on it (which atoms
it selects, how it labels them, how it orders them by atom rank) is governed by plain
linear arithmetic in $m$, $n$, and the repetition indices of the atoms: after
the finite tags of the atoms are fixed, each such condition is
Presburger-definable by Lemma~\ref{lem:two-parameter-presburger}.  The finitely
many choices of those tags can then be combined by finite unions.  Second,
the two numbers $\fas$ and $\tailU$
are just \emph{counts} of selected atoms, and the size hypothesis
$|T(W_{m,n})|=O(m+n)$ keeps each count below a fixed multiple of $m+n$.  Now
counting the solutions of such a Presburger-definable family when it has only linearly many
solutions yields a semilinear set (the bounded counting principle,
Lemma~\ref{lem:presburger-counting}), so assembling the two counts, while keeping
$m,n$, makes $S_T$ semilinear.  The labelled steps below carry this out, with the
two parameters $m,n$ in place of the single $n$ of
Theorem~\ref{thm:wrp-slice-semilinearity}.

\vspace{0.5em}
\noindent\emph{Atoms on the family.}  We begin by naming the output
items the counts range over and stating the one fact we use about them on the
family.  A \emph{potential atom} is a pair
$(c,\bar\imath)$ of a copy name $c$ and a tuple $\bar\imath$ of input positions of
$c$'s arity.  It becomes a \emph{selected atom}, contributing one output
letter, exactly when it satisfies the corresponding selection formula.  On
the family, a potential atom is named by the
finite data of its copy name and the region and offset of each coordinate, together
with the parameters $m,n$ and the repetition indices of its coordinates (the encoding
above).

For quantification and counting below, we use one common encoding for all
potential atoms.  Let $K$ be the largest arity among the finitely many copy
names.  A bounded tag records the copy name and the region and offset of each
coordinate; the repetition indices occupy $K$ further coordinates, with the
unused coordinates set to $0$.  Thus, every potential atom has a unique code of
the same fixed dimension.  The validity of such a code is expressed by the
bounds $1\le j\le m$ or $1\le j\le n$ appropriate to each region, together
with the requirement that unused coordinates equal $0$, and is therefore
Presburger-definable.  For each fixed tag, Lemma~\ref{lem:two-parameter-presburger}
makes selection, labelling, and the two ordering comparisons
Presburger-definable in $m,n$ and the repetition indices.  Taking the finite
union over all tags therefore gives the same Presburger-definable predicates on
the common codes.  This is the form of Lemma~\ref{lem:two-parameter-presburger}
used in the remainder of the proof.

\vspace{0.5em}
\noindent\emph{The two counts.}  Concretely, the two first-ascent
numbers are separated by the output's first $D$-atom: $\fas(T(W_{m,n}))$ counts
the selected $U$-atoms emitted before it, and $\tailU(T(W_{m,n}))$ those emitted
after.  We realise each as a \emph{linearly bounded Presburger count}: the number
of selected atoms cut out by a Presburger-definable condition, of which there are only
$O(m+n)$.  Then, we treat in turn the case where the output has no $D$-atom and the
case where it has one.

\vspace{0.5em}
\noindent\emph{Splitting on the first descent.}  We first observe that having
a descent is a Presburger-definable condition in $m,n$, and then handle the
no-descent case, where $\tailU=0$ and only the count $\fas$ remains.  Whether $T(W_{m,n})$
has any $D$-atom is a Presburger-definable condition on $(m,n)$: using the
common encoding above, it is the existential projection of the set of
potential atoms that are selected and labelled $D$.  Split on this condition.  If there
is none, every selected atom is a $U$, so
$\fas(T(W_{m,n}))=|\{\text{selected }U\text{-atoms}\}|$ and
$\tailU(T(W_{m,n}))=0$.  The family
\[
   A=\{(m,n,u):u\text{ a potential atom, selected and labelled }U\text{ over }W_{m,n}\}
\]
is Presburger-definable in $(m,n)$ and the common code $u$.  For each fixed
$(m,n)$, the corresponding set of $u$-values has size
$\fas(T(W_{m,n}))\le|T(W_{m,n})|=O(m+n)$, linear in the
parameters.  Lemma~\ref{lem:presburger-counting} thus makes
$\{(m,n,\fas(T(W_{m,n}))):T(W_{m,n})\text{ has no }D\}$ semilinear, and appending
the constant coordinate $\tailU=0$ keeps it semilinear in $\N^4$.

\vspace{0.5em}
\noindent\emph{The two $U$-counts when a descent occurs.}  Suppose that
$T(W_{m,n})$ has a descent.  Its first $D$-atom separates the selected
$U$-atoms into those occurring before it, counted by $\fas$, and those
occurring after it, counted by $\tailU$.  We express both quantities as
linearly bounded Presburger counts.  To parameterize the first $D$-atom, fix
the finite data of a potential atom $d$: its copy name and the region and offset
of each coordinate.  With these data fixed, each tuple
$x=(m,n,\text{repetition indices of }d)$ specifies one potential atom $d$ over
$W_{m,n}$.  Let $P$ consist of the tuples whose specified atom is the first
selected $D$-atom in $T(W_{m,n})$.  By
Lemma~\ref{lem:two-parameter-presburger}, the conditions that $d$ is selected
and labelled $D$ are expressible by Presburger formulas.  The condition that
no selected $D$-atom precedes $d$ is obtained by negating an existential
Presburger formula over the common code of a potential atom.  Since Presburger
formulas are closed under conjunction, existential quantification, and
negation, $P$ is Presburger-definable.  There are only finitely many choices for the finite data
of $d$, and we take their union at the end.  Split the selected $U$-atoms by
their side of $d$,
\[
   U_{<d}=\{u:u\text{ selected, labelled }U,\ u\prec d\},\qquad
   U_{>d}=\{u:u\text{ selected, labelled }U,\ d\prec u\}.
\]
Since $\prec$ totally orders the selected atoms and the $D$-atom $d$ lies in
neither set, every selected $U$-atom falls in exactly one of them, and
$\fas(T(W_{m,n}))=|U_{<d}|$, $\tailU(T(W_{m,n}))=|U_{>d}|$.  Apply
Lemma~\ref{lem:presburger-counting} with parameters $x$ and the common code of
a $U$-atom $u$ as the unknown tuple: each membership ``$u\in U_{<d}$'' and
``$u\in U_{>d}$'' is a Boolean combination of the predicates of
Lemma~\ref{lem:two-parameter-presburger} (``selected'', ``labelled $U$'', and the
comparison $u\prec d$ or $d\prec u$ against the atom named by $x$), hence
Presburger-definable in $(x,u)$.  For each fixed $x$, both resulting sets of
$u$-values have size at most the number of selected atoms, $O(m+n)$, linear in
$x$.  The tuple form of
Lemma~\ref{lem:presburger-counting} then makes $\{(x,|U_{<d}|,|U_{>d}|)\}$
semilinear; intersecting with $P$ restricts this set to the parameter tuples in
$P$ and preserves semilinearity.

\vspace{0.5em}
\noindent\emph{Assembling $S_T$.}  Finally, we combine the no-descent and
descent cases to form $S_T$.  The previous step produced the semilinear set
$\{(x,|U_{<d}|,|U_{>d}|):x\in P\}$.  For every tuple in this set, $x$
contains the parameters $m,n$ and specifies the first selected $D$-atom $d$.
Hence
\[
   \fas(T(W_{m,n}))=|U_{<d}|,\qquad
   \tailU(T(W_{m,n}))=|U_{>d}|.
\]
Projecting away the repetition indices of $d$ from $x$ therefore gives the
semilinear set of quadruples
$(m,n,\fas(T(W_{m,n})),\tailU(T(W_{m,n})))$ whose first selected $D$-atom $d$ has
the fixed finite data chosen above.  Taking the finite union over all possible
finite data of $d$ gives all such quadruples when the output has a descent.
Finally, taking the union with the no-descent case gives $S_T$, which is
semilinear.
\end{proof}

\subsection{A nonsemilinear set of first-ascent pairs for inverse zeta}

The previous subsection put a semilinear constraint on every linear-growth
$\WRP$ map: its set of first-ascent pairs on the family $(W_{m,n})$ has to be
semilinear.  To separate $\zeta^{-1}$ from the class, we now compute this set
for $\zeta^{-1}$ and show that it violates the constraint.

Ceballos, Denton, and Hanusa give a new method for inverting classical zeta,
explicit inverses for certain rational families, and inductive methods for
additional families~\cite{CeballosDentonHanusa2016}.  Pons gives an iterative
description of classical inverse zeta directly on area
sequences~\cite[Theorem~6 and Section~2.3]{Pons2022}.  For the structured
targets $W_{m,n}$ used here, the inverse takes the particularly simple
balanced-pyramid form described below.

Inverting $\zeta$ head-on looks forbidding, but the \emph{preimage} of $W_{m,n}$
is transparent when read backwards: we guess a path $Q$ and check
$\zeta(Q)=W_{m,n}$, which pins down $Q=\zeta^{-1}(W_{m,n})$ because $\zeta$ is a
bijection.  The guess is a row of $m+1$ \emph{pyramids}
\[
   Q=U^{\ell_1}D^{\ell_1}\;U^{\ell_2}D^{\ell_2}\cdots U^{\ell_{m+1}}D^{\ell_{m+1}},
\]
of heights $\ell_1\ge\cdots\ge\ell_{m+1}$ chosen as equal as possible (so any two
differ by at most one).  Each pyramid $U^{\ell}D^{\ell}$ of $Q$ contributes the
staircase $(0,1,\ldots,\ell-1)$ to the area sequence, so the area sequence of $Q$
is these $m+1$ staircases laid end to end.  The zeta scan then rebuilds $W_{m,n}$
from $Q$ one rank at a time: rank $0$ catches the foot of every nonempty
pyramid and lays down the initial run of $U$-steps, the intermediate ranks emit
the repeated $DU$ pairs of the middle, and the top ranks emit the closing
down-steps.  Because the pyramids
are balanced, the first (and tallest) has height
$\ell_1=\lceil(m+n)/(m+1)\rceil$; its ascent $U^{\ell_1}$ is the initial ascent of
$Q=\zeta^{-1}(W_{m,n})$, so $\fas(\zeta^{-1}(W_{m,n}))=\lceil(m+n)/(m+1)\rceil$.
As Lemma~\ref{lem:inverse-zeta-not-semilinear} will show, this quotient is the
source of the nonsemilinearity.

\begin{example}[The balanced-block preimage]
Let $m=2$ and $n=5$, so $N=7$ and $\lceil N/(m+1)\rceil=\lceil 7/3\rceil=3$.  The
$m+1=3$ pyramids then have balanced heights $3,2,2$, so
\[
   Q=\zeta^{-1}(W_{2,5})=U^3D^3\;U^2D^2\;U^2D^2
   =UUUDDD\,UUDD\,UUDD,
\]
with area sequence $(0,1,2,\ 0,1,\ 0,1)$ and initial ascent $U^3$ of length $3$.
Running the zeta scan on this area sequence (the values reach $2$, so the scan
levels are $0,1,2,3$): level $0$ emits a $U$ at each $0$, giving $U^3$; level $1$ emits
$DU$ at each $0,1$, giving $(DU)^3$; level $2$ emits $DU$ from the first pyramid
(its values $1,2$) and a lone $D$ from each shorter pyramid (its value $1$),
giving $DUD^2$; and level $3$ emits a $D$ at the single $2$, giving $D$.
Concatenating,
\[
   \zeta(Q)=U^3\,(DU)^3\,DUD^2\,D=U^3(DU)^4D^3=U^2(UD)^5D^2=W_{2,5},
\]
which confirms $Q=\zeta^{-1}(W_{2,5})$ and that its first ascent has length $3$.
\end{example}

\begin{lemma}[Inverse-zeta ascent]
\label{lem:inverse-zeta-fas}
For $m\ge 1$ and $n\ge 0$,
\[
   \fas\bigl(\zeta^{-1}(W_{m,n})\bigr)=\left\lceil\frac{m+n}{m+1}\right\rceil.
\]
\end{lemma}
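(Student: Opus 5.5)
The plan is to exhibit the preimage explicitly and verify it by running the zeta scan forward. Set $N=m+n$ and write $N=q(m+1)+r$ with $0\le r\le m$. Let $\ell_1\ge\cdots\ge\ell_{m+1}$ be the balanced heights: the first $r$ equal $q+1$ and the remaining $m+1-r$ equal $q$. Put
\[
   Q=U^{\ell_1}D^{\ell_1}\cdots U^{\ell_{m+1}}D^{\ell_{m+1}}.
\]
Pyramids of height $0$ are empty words; this happens only when $q=0$, i.e.\ $n<r$, and it is harmless. Then $Q\in\D_N$, and its area sequence is the concatenation of the staircases $(0,1,\ldots,\ell_t-1)$. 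Because $\zeta$ is a bijection on $\D_N$ (the classical zeta theorem recalled in Section~\ref{sec:dyck}), it suffices to check $\zeta(Q)=W_{m,n}$. Once that holds, $\zeta^{-1}(W_{m,n})=Q$. Its first ascent is $U^{\ell_1}$, since $\ell_1\ge1$ when $N\ge1$, so $\fas=\ell_1=\lceil N/(m+1)\rceil$.

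To verify $\zeta(Q)=W_{m,n}$, I will run the scan of Definition~\ref{def:zeta} level by level, as in the worked example. Let $p$ be the number of nonempty pyramids ($p=m+1$ unless $q=0$, in which case $p=r$). The contributions are as follows.
\begin{itemize}
\item Level $0$ emits $U^{p}$.
\item A level $1\le h$ gives each pyramid of height $\ell>h$ the block $DU$ (its entry $h-1$ precedes its entry $h$) and each pyramid of height exactly $h$ a single $D$.
\item Level $\ell_1$ emits one $D$ for each pyramid of the maximal height.
\end{itemize}

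I will then split into two cases.

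\emph{Case $q\ge1$, so $p=m+1$.} Levels $1,\ldots,q-1$ each give $(DU)^{m+1}$. Level $q$ gives $(DU)^{r}D^{m+1-r}$ when $r>0$, and the tail $D^{m+1}$ when $r=0$ (the scan then ends at level $q$). Level $q+1$, present only when $r>0$, gives $D^{r}$. Concatenating gives
\[
   U^{m+1}(DU)^{(m+1)(q-1)+r}D^{m+1}=U^{m}(UD)^{n}D^{m},
\]
using $(m+1)(q-1)+r=N-m-1=n-1$ and the regrouping $U(DU)^{k}D=(UD)^{k+1}$.

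\emph{Case $q=0$, so $p=r=N\le m$.} This requires $n=0$ and $r=m$. Level $0$ gives $U^{m}$ and level $1$ gives $D^{m}$, so the output is $W_{m,0}=U^mD^m$, as required.

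The main obstacle is the level-$q$ bookkeeping: getting the order of the $DU$ blocks versus the lone $D$'s right. The balanced ordering $\ell_1\ge\cdots$ is what places all $DU$'s before the lone $D$'s within each level, so that the output has the rigid shape $U^{*}(DU)^{*}D^{*}$. I will check this carefully, together with the off-by-one regrouping.
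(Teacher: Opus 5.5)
Your proposal is correct and follows the paper's proof essentially step for step: the same balanced-pyramid preimage $Q$, the same level-by-level zeta scan showing $\zeta(Q)=W_{m,n}$, and the same appeal to bijectivity of $\zeta$ on $\D_N$. The differences are only in bookkeeping: you parametrise by $N=q(m+1)+r$ with $0\le r\le m$ and split on $q=0$ versus $q\ge1$, whereas the paper uses $b=\lceil N/(m+1)\rceil$ and $s=N-(m+1)(b-1)\in\{1,\ldots,m+1\}$ and splits on $n\le1$ versus $n\ge2$.
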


\begin{proof}
\emph{Constructing the preimage.}
Write $N=m+n$ and $b=\lceil N/(m+1)\rceil$.  As in the balanced-pyramid
description above, distribute $N$ as evenly as possible among $m+1$ pyramid
heights.  Set
\[
   s:=N-(m+1)(b-1),
   \qquad
   \ell_t=
   \begin{cases}
      b,   & 1\le t\le s,\\
      b-1, & s<t\le m+1,
   \end{cases}
\]
where $1\le s\le m+1$, and define
\[
   Q=\prod_{t=1}^{m+1}U^{\ell_t}D^{\ell_t}
    =(U^bD^b)^s(U^{b-1}D^{b-1})^{m+1-s}.
\]
The sum of the $\ell_t$ is $N$, and each factor
$U^{\ell_t}D^{\ell_t}$ is a Dyck path.  Thus, $Q\in\D_N$.  Its area sequence
is the concatenation
\[
   B_1B_2\cdots B_{m+1},\qquad B_t=(0,1,\ldots,\ell_t-1),
\]
where $B_t$ is empty when $\ell_t=0$.  Since the first pyramid has height
$\ell_1=b$, the initial ascent of $Q$ has length $b$.  It
remains to verify, scan level by scan level, that $\zeta(Q)=W_{m,n}$.  Since $\zeta$ is a
bijection on $\D_N$, this verification will identify
$Q=\zeta^{-1}(W_{m,n})$ and prove the claimed formula for $\fas$.

If $n\le 1$ then $N\le m+1$, so $b=1$; every block then has length at most $1$,
the area sequence is all zeros, and $Q=(UD)^N$ with $\zeta(Q)=U^ND^N$.  This is
$W_{m,n}$ (namely $U^mD^m$ when $n=0$ and $U^{m+1}D^{m+1}$ when $n=1$).

\vspace{0.5em}
\noindent\emph{Checking the zeta scan.}
Assume now $n\ge 2$, so $b\ge 2$ and every block is nonempty.  Recall the scan
(Definition~\ref{def:zeta}): for each scan level $r=0,1,\ldots$, read the area sequence
left to right, emitting a $U$ at every entry equal to $r$ and a $D$ at every entry
equal to $r-1$.  The largest area value here is $b-1$, so the scan levels run
$r=0,1,\ldots,b$.  Within a block, the values $0,1,\ldots,\ell_t-1$ appear in
increasing order.  Thus, at a fixed scan level $r\ge 1$, a block $B_t$ contributes, in scan
order, a $D$ for value $r-1$ (present if and only if $\ell_t\ge r$) immediately
followed by a $U$ for value $r$ (present if and only if $\ell_t\ge r+1$).  Recall
that the $s$ length-$b$ blocks come first, followed by the $m+1-s$
length-$(b-1)$ blocks.
Hence:
\begin{itemize}
\item scan level $0$ contributes a $U$ from every block: $U^{m+1}$;
\item each scan level $r$ with $1\le r\le b-2$ has $\ell_t\ge b-1\ge r+1$ for
every block, contributing $(DU)^{m+1}$;
\item scan level $b-1$ contributes $DU$ from each length-$b$ block (value $b-1$
present) and a lone $D$ from each length-$(b-1)$ block (value $b-1$
absent): $(DU)^sD^{\,m+1-s}$;
\item scan level $b$ contributes a lone $D$ from each length-$b$ block: $D^s$.
\end{itemize}
Concatenating,
\[
   \zeta(Q)=U^{m+1}\,(DU)^{(m+1)(b-2)}\,(DU)^sD^{\,m+1-s}\,D^s
          =U^{m+1}(DU)^{k}D^{m+1},\qquad k=(m+1)(b-2)+s.
\]
Using $U^{m+1}(DU)^k=U^m(UD)^kU$ and absorbing one trailing $D$,
\[
   \zeta(Q)=U^m(UD)^k\,UD\,D^m=U^m(UD)^{k+1}D^m.
\]
Finally, substituting $s=N-(m+1)(b-1)$ and using $N=m+n$,
\[
   k+1=(m+1)(b-2)+s+1
      =(m+1)(b-2)-(m+1)(b-1)+N+1
      =N-(m+1)+1=n.
\]
Thus, $\zeta(Q)=U^m(UD)^nD^m=W_{m,n}$.
\end{proof}

\begin{lemma}[Nonsemilinear ascent graph]
\label{lem:inverse-zeta-not-semilinear}
The set
\[
   G=\Bigl\{(m,n,f):m\ge 1,\ n\ge 0,\
                    f=\bigl\lceil\tfrac{m+n}{m+1}\bigr\rceil\Bigr\}\subseteq\N^3
\]
is not semilinear.
\end{lemma}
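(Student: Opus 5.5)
We argue by contradiction, following the outline given at the start of Section~\ref{sec:inverse-zeta}.  Suppose $G$ were semilinear.  The diagonal condition $\{(m,n,f):f=m\}$ is Presburger-definable, hence semilinear, so $G\cap\{f=m\}$ is semilinear.  Projecting away the third coordinate then yields a semilinear set
\[
   B=\Bigl\{(m,n)\in\N^2: m\ge1,\ n\ge0,\ \Bigl\lceil\tfrac{m+n}{m+1}\Bigr\rceil=m\Bigr\}.
\]
This uses the closure of semilinear sets under intersection and projection, as recalled in Section~\ref{sec:slice-semilinearity}.

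Next we compute $B$ explicitly.  For integers $m\ge1$ and $N=m+n$, the condition $\lceil N/(m+1)\rceil=m$ is equivalent to $(m-1)(m+1)<N\le m(m+1)$, that is, $m^2-1<m+n\le m^2+m$.  Equivalently, $m^2-m\le n\le m^2$.  Hence
\[
   B=\{(m,n): m\ge1,\ m^2-m\le n\le m^2\}.
\]
This is a routine check on the ceiling; the lower end is $n\ge m^2-m$ because $N$ is an integer exceeding $m^2-1$.

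Finally we apply Lemma~\ref{lem:semilinear-envelope}, with $m$ playing the role of the ``second coordinate'' $b$ and $n$ the role of the first.  To match the lemma's convention, swap the coordinates; the coordinate swap is linear, so semilinearity is preserved.  The sections $B_m=\{n:(m,n)\in B\}$ are finite integer intervals and are nonempty for every $m\ge1$.  Alternative~(i) therefore fails on every residue class, and alternative~(ii) forces the lower envelope $\min B_m=m^2-m$ to be eventually affine along some progression $m=r+M\nu$.  But $m^2-m$ restricted to such a progression is a genuine quadratic in $\nu$, with leading term $M^2\nu^2$, so it cannot agree with an affine function for all large $\nu$.  This is the same contradiction as in Lemma~\ref{lem:triangular-not-semilinear}, so $G$ is not semilinear.

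The only step needing care is the ceiling computation identifying $B$ exactly; in particular, one must confirm the interval $[m^2-m,m^2]$ and its nonemptiness for all $m\ge1$.  The remaining steps are direct invocations of the closure properties and of Lemma~\ref{lem:semilinear-envelope}.
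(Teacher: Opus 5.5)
Your proposal is correct and follows essentially the same route as the paper. The paper likewise intersects $G$ with $\{f=m\}$, projects to the band $m^2-m\le n\le m^2$, and applies Lemma~\ref{lem:semilinear-envelope} with the coordinates interchanged to contradict the quadratic lower envelope $m^2-m$.
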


\begin{proof}
Suppose $G$ were semilinear.  Semilinear sets are closed under intersection
with the linear set $\{f=m\}$ and under coordinate projection, so
\[
   K=\Bigl\{(m,n):\bigl\lceil\tfrac{m+n}{m+1}\bigr\rceil=m\Bigr\}\subseteq\N^2
\]
would be semilinear.  For $m\ge 1$, the equation
$\lceil (m+n)/(m+1)\rceil=m$ is equivalent to
$(m-1)(m+1)<m+n\le m(m+1)$, i.e.\ to $m^2-m\le n\le m^2$.  Hence each
vertical section $K_m=\{m^2-m,\ldots,m^2\}$ is a finite interval, and the
lower envelope is
\[
   \ell(m)=\min\{n:(m,n)\in K\}=m^2-m.
\]
By applying Lemma~\ref{lem:semilinear-envelope} with the two coordinates
interchanged, a semilinear subset of $\N^2$ with finite sections $K_m$ would
satisfy the two alternatives in that lemma.  Since every $K_m$ is nonempty for
$m\ge1$, alternative~(i) is impossible on every residue class.  Thus, the lower
envelope would be eventually affine on every residue class modulo some period
$M$.  But $\ell(m)=m^2-m$ is quadratic, so it is not eventually affine on any
infinite arithmetic progression.  Contradiction.
\end{proof}

\begin{corollary}[Inverse zeta outside $\WRP$]
\label{cor:inverse-zeta-not-wrp}
The inverse zeta map $\zeta^{-1}\colon\D\to\D$, under the step-word
encoding, does not belong to $\WRP$.
\end{corollary}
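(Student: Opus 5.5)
The proof will be by contradiction, assembling the three ingredients already established in this section. Suppose some $\WRP$ map $T$ realises $\zeta^{-1}$ under the convention of Definition~\ref{def:relative}. Every $W_{m,n}$ with $m\ge1$, $n\ge0$ is a Dyck path of semilength $m+n$, so $W_{m,n}\in\operatorname{dom}(T)$ and $T(W_{m,n})=\zeta^{-1}(W_{m,n})$. Since $\zeta$, and hence $\zeta^{-1}$, restricts to a bijection of each $\D_N$, this output lies in $\D_{m+n}$. Its length is therefore $2(m+n)=O(m+n)$. Thus the hypotheses of Theorem~\ref{thm:two-parameter-semilinearity} hold, and the set
\[
   S_T=\bigl\{(m,n,\fas(T(W_{m,n})),\tailU(T(W_{m,n}))):m\ge1,\ n\ge0\bigr\}\subseteq\N^4
\]
is semilinear.

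Next I will project away the fourth coordinate. Semilinear sets are closed under projection, so the set of triples $(m,n,\fas(T(W_{m,n})))$ is semilinear. By Lemma~\ref{lem:inverse-zeta-fas}, the third coordinate equals $\lceil (m+n)/(m+1)\rceil$ for every $m\ge1$, $n\ge0$. So this projection is exactly the set $G$ of Lemma~\ref{lem:inverse-zeta-not-semilinear}, which that lemma shows is not semilinear. This contradiction proves that no such $T$ exists.

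There is no substantial obstacle, since all the heavy lifting is done by Theorem~\ref{thm:two-parameter-semilinearity}, Lemma~\ref{lem:inverse-zeta-fas}, and Lemma~\ref{lem:inverse-zeta-not-semilinear}. The one point that needs care is that the projection is \emph{exactly} $G$, not merely contained in it. This holds because the parameter ranges $m\ge1$, $n\ge0$ in $S_T$ match those in $G$, and for each $(m,n)$ exactly one triple is produced. I will also remark that the argument uses only agreement on Dyck inputs, so the behaviour of $T$ off $\D$ is irrelevant, as the realisation convention requires.
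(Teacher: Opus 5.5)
Your proposal is correct and follows the paper's own proof essentially step for step. Semilength preservation gives $|\zeta^{-1}(W_{m,n})|=2(m+n)$, so Theorem~\ref{thm:two-parameter-semilinearity} applies. You then project away $\tailU$ and use Lemma~\ref{lem:inverse-zeta-fas} to identify the result with exactly the set $G$, which contradicts Lemma~\ref{lem:inverse-zeta-not-semilinear}.
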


\begin{proof}
Since $\zeta$ restricts to a bijection $\D_N\to\D_N$ for every $N$, its inverse
preserves semilength.  Hence $|\zeta^{-1}(W_{m,n})|=2(m+n)=O(m+n)$.  If
$\zeta^{-1}\in\WRP$, then it is defined on every $W_{m,n}$, and
Theorem~\ref{thm:two-parameter-semilinearity} makes
\[
   S=\bigl\{(m,n,\fas(\zeta^{-1}(W_{m,n})),\tailU(\zeta^{-1}(W_{m,n}))):
              m\ge 1,\ n\ge 0\bigr\}
\]
semilinear; projecting away the $\tailU$-coordinate, the set
$\{(m,n,\fas(\zeta^{-1}(W_{m,n}))):m\ge1,\ n\ge0\}$ is semilinear.  By
Lemma~\ref{lem:inverse-zeta-fas}, however, that set is exactly the set $G$ of
Lemma~\ref{lem:inverse-zeta-not-semilinear}, which is not semilinear.
Contradiction.
\end{proof}

This proves the introduction-level statement,
Theorem~\ref{thm:intro-inverse-zeta}.
Together with Theorem~\ref{thm:zeta-wrp}, the corollary also shows that
$\WRP$-realisability under the realisation convention is not preserved by
inversion, even for bijections of Dyck paths that preserve semilength.

\section{Machine-checked formalisation in Lean}\label{sec:lean}

The results of this paper have been formalised and machine-checked in the
Lean~4 proof assistant~\cite{DeMouraUllrich2021}, building on the Mathlib
library~\cite{MathlibCommunity2020}.  The development was produced
automatically by the AI models Claude, Codex, and Aristotle, as recorded in
the declaration of AI use at the end of the paper.  The proofs are checked by
Lean regardless of how they were found.  The formalisation is distributed with
the paper.\footnote{\url{https://github.com/hongseok-yang/automata-catalan-symmetry-release}}

The development comprises roughly $82{,}000$ lines of Lean in $162$ source
files.  It pins an exact toolchain (Lean~4 v4.33.1 and the matching Mathlib
release) so that the build is reproducible, and it contains no \texttt{sorry},
Lean's placeholder for an unproved statement.  Every mathematical result is
proved from Lean's kernel rules and the short list of admitted axioms described
below.  The numerical worked examples involving $\area$, $\dinv$, $\zeta$, and
the sweep maps are instead discharged by compiled evaluation
(\texttt{native\_decide}) and therefore additionally trust the Lean compiler;
none of the results in Table~\ref{tab:lean} relies on compiled evaluation.  This
section describes the scope of the formalisation, its trust base, and the
places where its statements differ from those in the paper.

\paragraph{Scope.}
The development formalises the theory of the paper from the ground up rather
than verifying an isolated statement.  On the combinatorial side, it defines
Dyck paths, area sequences, the statistics $\area$, $\dinv$, and $\coarea$,
the zeta map of
Definition~\ref{def:zeta}, the height sweep $H$ of
Section~\ref{sec:narayana-sweep}, and the families $W_n$, $W_{m,n}$, and
$P_{m,n}$.
On the computational side, it defines $\MSO$ on words together with the
marked-word encodings of free variables (Section~\ref{sec:mso}); polyregular
presentations (Definition~\ref{def:polyregular}) and deterministic two-way
finite-state transducers (Definition~\ref{def:2dft}); rank sources,
prefix-additive rank functions, and the class $\WRP$ with its fragments $\RR$
and $\SWR$ (Definitions~\ref{def:rank-source}--\ref{def:wrp}); and two explicit
machine models for logspace transductions, the deterministic multihead
bounded-counter transducer and the deterministic worktape transducer.
On this base, the development proves formal counterparts of the paper's
headline results, subject to the qualifications under ``Statement fidelity''
below.  Table~\ref{tab:lean} gives their Lean names, and the supporting lemmas
are formalised as well.  In particular, the deficit-zero analysis of
Section~\ref{sec:main}, the
first-ascent computations of Section~\ref{sec:inverse-zeta}, the two-pyramid
closed forms behind the $\zeta$ and $H$ lower bounds
(Lemmas~\ref{lem:zeta-two-pyramid} and~\ref{lem:H-two-pyramid}), and the
regular-slice lemmas of Section~\ref{sec:slice-semilinearity} all have formal
proofs.  The regular-slice lemmas are proved both for the family $W_n$ and in
the general form used in the paper, over an arbitrary slice $u\,v^n z$.

Within the Lean development, background results quoted from the literature are
handled in three ways: some are admitted as axioms, some are proved in
Lean or Mathlib, and others are bypassed because the formal proof takes a
different route.  The paragraphs ``Trust base'' and ``What is proved or
avoided'' explain these three cases.

Here an ``admitted axiom'' means an assumption encoded inside Lean and reported
by the \texttt{\#print axioms} command.  Theorem~\ref{thm:eh} is not admitted
in this sense: the Lean declarations prove the relevant lower bounds for
deterministic $\MSO$ string transductions.  To obtain the paper's formulations
for deterministic 2DFTs, we invoke Theorem~\ref{thm:eh} outside Lean.  Thus,
the Lean proofs use only the three admitted axioms listed below, whereas the
paper's 2DFT conclusions additionally rely on the external equivalence in
Theorem~\ref{thm:eh}.

Structural examples, such as the two presentations of reverse-complement as a
2DFT and an $\MSO$ string transduction, are not formalised.

\begin{table}[t]
\centering
\footnotesize
\renewcommand{\arraystretch}{1.25}
\begin{tabularx}{\textwidth}{@{}>{\raggedright\arraybackslash}p{0.31\textwidth}
                                   >{\raggedright\arraybackslash}X
                                   >{\raggedright\arraybackslash}p{0.11\textwidth}@{}}
\toprule
Result & Lean theorem(s) & Axioms \\
\midrule
Thm.~\ref{thm:wrp-closures} (basic closures) &
  \texttt{isWRP\_relabel}, \texttt{isWRP\_restrict},
  \texttt{isWRP\_reverse}, \texttt{isWRP\_disjointUnion},
  \texttt{isWRP\_concat} & none \\
Thm.~\ref{thm:wrp-strict-over-poly} (above $\polyreg$) &
  \texttt{polyreg\_strict\_subset\_wrp} & (iii) \\
Thm.~\ref{thm:wrp-logspace} (logspace evaluation) &
  \texttt{wrp\_isLogspaceMH}, \texttt{wrp\_isLogspaceTM},
  \texttt{wrp\_logspace\_polytime}, \texttt{wrp\_logspaceTM\_polytime} &
  (i) \\
Cor.~\ref{cor:srr-quadratic} (quadratic evaluation) &
  \texttt{srr\_quadratic} & (i) \\
Thm.~\ref{thm:wrp-strict-below-logspace} (below logspace) &
  \texttt{wrp\_strict\_below\_logspace},
  \texttt{wrp\_strict\_below\_logspaceTM} & (i) \\
Thm.~\ref{thm:wrp-not-closed} (closure failure, regular preimage) &
  \texttt{wrp\_not\_closed\_preimage\_comp} & none \\
Thm.~\ref{thm:wrp-not-closed} (closure failure, composition) &
  \texttt{wrp\_not\_closed\_composition} & (i) \\
Thm.~\ref{thm:bounded-rank-collapse} (bounded-rank collapse) &
  \texttt{bounded\_rank\_collapse} & none \\
Cor.~\ref{cor:rank-necessary} (unbounded rank needed) &
  \texttt{rank\_necessary} & (iii) \\
Thm.~\ref{thm:zeta-wrp} (zeta in $\SWR$) &
  \texttt{zetaMap\_realisedByWRP}, \texttt{zetaSweep\_isSRR1} & none \\
Prop.~\ref{prop:alw-sweep-swr} (additive level sorts in $\SWR$) &
  \texttt{additiveSweep\_isSRR1} & none \\
Prop.~\ref{prop:two-pyramid-criterion} (two-pyramid criterion) &
  \texttt{two\_pyramid\_criterion} & (iii) \\
Thm.~\ref{thm:zeta-not-polyregular} (zeta beyond $\polyreg$) &
  \texttt{zetaMap\_not\_polyregular} & (iii) \\
Cor.~\ref{cor:zeta-not-regular} (zeta beyond $\MSO$) &
  \texttt{zetaMap\_not\_regular} & (iii) \\
Thm.~\ref{thm:narayana-sweep} (Narayana sweep) &
  \texttt{heightSweep\_bijOn},
  \texttt{valleys\_heightSweep},
  \texttt{doubleRises\_heightSweep} & none \\
Thm.~\ref{thm:H-not-polyregular} (height sweep beyond $\polyreg$) &
  \texttt{heightSweep\_not\_polyregular},
  \texttt{heightSweep\_not\_regular} & (iii) \\
Thm.~\ref{thm:wrp-slice-semilinearity} (first-ascent semilinearity) &
  \texttt{wrp\_slice\_profile\_semilinear} & (i) \\
Cor.~\ref{cor:model-free-obstruction} (model-free obstruction) &
  \texttt{model\_free\_obstruction} & none \\
Thm.~\ref{thm:wrp-no-swap} (no area--dinv swap) &
  \texttt{wrp\_no\_area\_dinv\_swap} & (i) \\
Thm.~\ref{thm:two-parameter-semilinearity} (two-parameter semilinearity) &
  \texttt{two\_param\_profile\_semilinear\_unconditional} &
  (ii) \\
Cor.~\ref{cor:inverse-zeta-not-wrp} (arity one) &
  \texttt{inverse\_zeta\_not\_wrp\_arity1} & (i) \\
Cor.~\ref{cor:inverse-zeta-not-wrp} (general arity) &
  \texttt{inverse\_zeta\_not\_wrp} & (i), (ii) \\
\bottomrule
\end{tabularx}
\caption{The headline results and their Lean counterparts.  The last column
lists the admitted axioms (i)--(iii), described in the text, on which each
Lean proof depends beyond the kernel; ``none'' marks results proved with no
admitted axiom.  Lean names are cited without namespace prefixes; the
repository documentation maps the numbered statements of the paper, including
the intermediate lemmas, to their formal counterparts.}
\label{tab:lean}
\end{table}

\paragraph{Trust base.}
Every mathematical result in the development is built from Lean's kernel and
its three standard axioms (\texttt{propext}, \texttt{Classical.choice}, and
\texttt{Quot.sound}), together with exactly three admitted axioms of our own.
What a machine-checked development assumes is part of what it certifies, so we
state all three in full.  Below, $w\models\varphi(i_1,\ldots,i_k)$ means that
the word $w$ satisfies the $\MSO$ formula $\varphi$ when its free first-order
variables are interpreted by the positions $i_1,\ldots,i_k$.
\begin{enumerate}[label=(\roman*)]
\item \texttt{buchi}.  For every finite alphabet $\Sigma$ and every $\MSO$
sentence $\varphi$ over $\Sigma$, there is a DFA accepting exactly the words
$w\in\Sigma^*$ with $w\models\varphi$.  This is the logic-to-automata direction
of the B\"uchi--Elgot--Trakhtenbrot theorem~\cite{Thomas1997}.

\item \texttt{msoDefinableRel2\_semilinear\_general}.  Fix a finite alphabet
$\Sigma$, and call a map $F\colon\N^2\to\Sigma^*$ \emph{block-linear} if there
are words $b_1,\ldots,b_r\in\Sigma^*$ and constants
$\alpha_i,\beta_i,c_i\in\N$ with
\[
   F(m,n)=b_1^{\,\alpha_1m+\beta_1n+c_1}\,
          b_2^{\,\alpha_2m+\beta_2n+c_2}\cdots
          b_r^{\,\alpha_rm+\beta_rn+c_r}.
\]
For every block-linear $F$ and every $\MSO$ formula $\varphi(x_1,\ldots,x_k)$
over $\Sigma$ with $k$ free first-order variables, the set
\[
   \bigl\{(m,n,i_1,\ldots,i_k)\in\N^{k+2}\;:\;
          F(m,n)\models\varphi(i_1,\ldots,i_k)\bigr\}
\]
is semilinear.  The family $W_{m,n}$ of
Definition~\ref{def:two-parameter-family} is the block-linear map with blocks
$U$, $UD$, $D$ and coefficients $(1,0,0)$, $(0,1,0)$, $(1,0,0)$, and the slice
$W_n$ of Section~\ref{sec:wrapped-flat} is its row $m=1$.  The axiom packages
the consequence of (i) and the Ginsburg--Spanier
theorem~\cite{GinsburgSpanier1966} needed for these block-linear families.

\item \texttt{polyreg\_regular\_preimage}.  For finite alphabets
$\Sigma$ and $\Gamma$, a polyregular partial map
$f\colon\Sigma^*\rightharpoonup\Gamma^*$, and a regular language
$L\subseteq\Gamma^*$, the language
$\{w\in\Sigma^*: f(w)\text{ is defined and }f(w)\in L\}$ is
regular~\cite{Bojanczyk2018}.
\end{enumerate}

\paragraph{What is proved or avoided.}
The arguments in the body of this paper quote several further results from the
literature and also use the signed companion of (ii) described below.  In Lean,
these results are proved or bypassed by different arguments; none adds to the
trust base.
\begin{itemize}
\item \emph{The automata-to-logic direction of B\"uchi--Elgot--Trakhtenbrot},
that every language recognised by a DFA is definable by an $\MSO$ sentence.
The paper uses it for the restriction clause
of Theorem~\ref{thm:wrp-closures} and in the proof of
Theorem~\ref{thm:bounded-rank-collapse}; the development proves it, so both of
those results are axiom-free.

\item \emph{The Ginsburg--Spanier theorem}, quoted in
Section~\ref{sec:slice-semilinearity} to identify the semilinear sets with the
Presburger-definable ones.  It is a proved theorem in Mathlib and is used as
such.

\item \emph{Woods' Presburger counting theorem}~\cite[Theorem~1.10]{Woods2015}
and the Ehrhart theory behind it~\cite{Ehrhart1962}.
Section~\ref{sec:slice-semilinearity} uses them to derive
Lemma~\ref{lem:presburger-counting}.  The Lean development instead proves
directly the single-count case needed here, by decomposing the semilinear
relation into simple pieces and reducing the fibre count to elementary
counting of arithmetic progressions.  Thus, the formal proof of
Lemma~\ref{lem:presburger-counting} uses neither Woods' theorem nor Ehrhart
theory and requires no admitted axiom.

\item \emph{A signed companion of} (ii).  Axiom (ii) is about sets of
positions, whereas the rank comparisons of
Sections~\ref{sec:slice-semilinearity} and~\ref{sec:inverse-zeta} need the
analogous statement for the $\Z^d$-valued atom ranks produced by a $k$-ary
prefix-additive rank function $\kappa$: that
\[
   \bigl\{(m,n,i_1,\ldots,i_k,v)\;:\;
          \kappa^{F(m,n)}(i_1,\ldots,i_k)=v\bigr\}
\]
is semilinear, with $v\in\Z^d$ encoded by a pair of vectors in $\N^d$.  This
follows from axiom~(ii), the automata-to-logic direction of
B\"uchi--Elgot--Trakhtenbrot proved in the development, and
Lemma~\ref{lem:presburger-counting}.

\item \emph{The linear-growth collapse}
(Theorem~\ref{thm:polyregular-linear-collapse}) and \emph{closure of polyregular
maps under composition}~\cite{Bojanczyk2018}, both used in the proof of
Proposition~\ref{prop:two-pyramid-criterion}.  The formal proof needs neither;
see the discussion of statement fidelity below.
\end{itemize}

The dependencies in Table~\ref{tab:lean} can be audited mechanically with
Lean's \texttt{\#print axioms} command.  Among the supporting results not
listed separately there, the two-pyramid closed forms, the deficit-zero lemmas,
and the two stages of the simulation from the multihead bounded-counter model
to the worktape model are axiom-free.  The evaluator theorem itself uses~(i) to
turn the $\MSO$ data of a presentation into finite automata.

\paragraph{Statement fidelity.}
A machine-checked proof certifies exactly its formal statement.  The following
differences between the statements in the paper and their Lean counterparts
therefore matter when interpreting the headline results.
\begin{itemize}[leftmargin=*,itemsep=0.5em]
\item \emph{Two versions of $\WRP$.}
The formalisation contains two versions of $\WRP$.  The $\WRP$ class of
Definition~\ref{def:wrp} requires copy names to have positive arity and $\chi$
to be a strict total order on selected atoms.  A relaxed $\WRP$ class permits
arity zero and requires only the combined output order to be total on selected
atoms.  The relaxed class is a syntactic superset, so negative theorems proved
for it are stronger.  Each negative headline theorem is also restated for the
$\WRP$ class of Definition~\ref{def:wrp} with the same trust base.  The positive
memberships of $\zeta$, $H$, and the additive level sorts are proved directly
for that class.

\item \emph{Closure properties.}
Among the headline results, Theorem~\ref{thm:wrp-closures} has the most
substantial differences between its paper and Lean statements.  There are two
kinds of difference.  First, the Lean statements do not include the paper's
arity-preservation claim: if the input maps have presentations of arity at most
$k$, then so does the resulting map.  They also omit definition by cases and
letter-deleting relabellings.  Restriction is formulated using an $\MSO$
sentence, and tagging and concatenation are proved only for two maps; the
concatenation theorem permits only a one-letter separator.  Second, all five
Lean clauses are proved for the relaxed $\WRP$ class.  A closure theorem for a
larger class does not imply that a subclass is closed under the same operation.
Separate closure theorems for the $\WRP$ class of Definition~\ref{def:wrp} are
currently available only for relabelling and concatenation.

\item \emph{Two-pyramid criterion.}
The formal version of Proposition~\ref{prop:two-pyramid-criterion} is stronger:
it omits the growth hypothesis
$|f(P_{m,n})|=O(|P_{m,n}|)$ because its proof applies axiom~(iii) directly.

\item \emph{Machine-model statements.}
For Corollary~\ref{cor:zeta-not-regular} and the 2DFT clause of
Theorem~\ref{thm:H-not-polyregular}, Lean proves the corresponding
non-realisability statements for deterministic $\MSO$ string transductions.
The conclusions for deterministic 2DFTs in the paper then use
Theorem~\ref{thm:eh} externally; the equivalence itself is not formalised in
Lean.
For Theorem~\ref{thm:wrp-logspace}, Lean first constructs an evaluator in the
multihead bounded-counter model.  Such a machine has a fixed number of two-way
input heads and linearly bounded counters; their positions and values require
$O(\log n)$ bits.  Lean proves that this evaluator computes the given $\WRP$
map and has a polynomial upper bound on its number of steps.  A formally proved
simulation then yields a deterministic worktape transducer with read-only
input, write-only output, and an $O(\log n)$-space read-write work tape.  This
second model directly matches the resource convention in
Theorem~\ref{thm:wrp-logspace}, and Lean also proves a polynomial step bound for
the simulated evaluator.  These results use the fixed input alphabet
$\{U,D\}$; the output-length conclusion $|T(w)|=O(n^k)$ is not formalised.
For Corollary~\ref{cor:srr-quadratic}, Lean constructs an evaluator in the
multihead bounded-counter model and proves that every halting run on an input
of length $n$ has at most $D(n+1)^2$ steps, for a fixed constant $D$.  The
machine implements the position and atom-rank comparisons rather than treating
them as an external operation, and their cost is included in the quadratic
step bound.  This accounts for the operations treated as constant-time word
comparisons in the paper's $O(n^2)$ analysis.
\end{itemize}

\section{Related work and open problems}
\label{sec:discussion}\label{sec:related-work}

Garsia and Haiman introduced the algebraic $q,t$-Catalan
sequence~\cite{GarsiaHaiman1996}.  Haglund proposed the Catalan-word formula that
became the area--bounce model~\cite{Haglund2003}, and Haiman and Haglund
developed the zeta map relating the area--bounce and area--dinv
formulations~\cite[Section~2]{ThomasWilliams2018}.  Garsia and Haglund
announced a proof of the area--bounce formula in 2001~\cite{GarsiaHaglund2001}
and published the full proof in
2002~\cite{GarsiaHaglund2002}.  Our statistics and unlabelled formulation of
zeta follow Haglund's monograph~\cite{Haglund2008}.
An inverse form had appeared earlier, without the later zeta terminology, in
work of Andrews, Krattenthaler, Orsina, and Papi~\cite{AndrewsEtAl2002};
Armstrong, Loehr, and Warrington identify it in the sweep-map
framework~\cite{ArmstrongLoehrWarrington2015}.  Haglund and Loehr extended the
framework to labelled parking functions~\cite{HaglundLoehr2005}.

Armstrong, Loehr, and Warrington subsequently placed zeta in a broader
framework of maps that assign integer levels to steps and reorder them by level
\cite{ArmstrongLoehrWarrington2015,ArmstrongLoehrWarrington2016}.  The
starting-point levels are called ranks in part of the rational Dyck-path
literature~\cite{Xin2015}.  Ceballos, Denton, and Hanusa developed inversion
methods for rational zeta maps and explicit inverses for several
families~\cite{CeballosDentonHanusa2016}.  Thomas and Williams proved the
general sweep maps bijective via an inverse for the modular sweep
map~\cite{ThomasWilliams2018}; Pons later gave an iterative inverse for
classical zeta directly on area sequences~\cite{Pons2022}.

The map $H$ of Section~\ref{sec:narayana-sweep} is the starting-height,
increasing-level, right-to-left member of the Armstrong--Loehr--Warrington
sweep family, so its bijectivity follows from known sweep-map
theory~\cite{ThomasWilliams2018}.  After
translating path conventions, its breadth-first tree description agrees with
the plane-tree case of the Ceballos--Fang--M\"uhle encoding
$\Xi_{\mathrm{bounce}}$~\cite[Section~3.3]{CeballosFangMuhle2020}; Fang gives an
explicit breadth-first description~\cite[Construction~3.6 and
Proposition~3.7]{Fang2024}.  The exchange of valleys and double
rises is a classical Narayana symmetry.  Deutsch proved it using a recursive
involution~\cite{Deutsch1999}, and the corresponding rise--valley transport
under the classical zeta map is reviewed by Sulzgruber and
Thiel~\cite[Section~2.8]{SulzgruberThiel2018}.  Thus,
Theorem~\ref{thm:narayana-sweep} recalls a known combinatorial result; its role
here is to show that the small class $\SWR$, and hence $\WRP$, already realises
this symmetry.  Theorem~\ref{thm:H-not-polyregular} is the new computational
lower bound: it shows that the additive level sort used by this classical
bijection cannot be replaced by a polyregular mechanism.

The lower bounds also use the inequality $\dinv\le\coarea$ and uniqueness of
its equality case at fixed semilength and common value.  Both appear in
Ammar's thesis~\cite{Ammar2015}, and the same cases are encoded by the
deficit-zero framework of Lee, Li, and Loehr~\cite{LeeLiLoehr2018}.  We give
the explicit area-sequence form needed in Section~\ref{sec:main}.

To our knowledge, no explicit combinatorial, semilength-preserving bijection on
$\D$ that exchanges $\area$ and $\dinv$ is known~\cite{Pons2022}.  For the
closely related exchange of $\area$ and $\bounce$, Ayyer and Sundaravaradan construct an
explicit bijection on an exponentially large subset of the Dyck paths, while
the full problem remains open~\cite{AyyerSundaravaradan2025}.  Our question is
more constrained: can a statistic exchange be realised by a prescribed
word-transduction mechanism?  The class $\WRP$ isolates the single global
additive rank sort used by zeta and $H$.  It contains both of these known
bijections, both of which lie outside $\polyreg$, but the lower bounds show
that it cannot realise a full area--dinv exchange and, under the realisation
convention, that realisability is not preserved by inversion.

These Catalan statistics and sweep maps also extend beyond ordinary Dyck
paths.  The rational $q,t$-symmetry follows from Mellit's proof of the rational
shuffle theorem~\cite{Mellit2021}, while rational sweep maps use
slope-dependent integer levels~\cite{ArmstrongLoehrWarrington2015,
ArmstrongLoehrWarrington2016,ThomasWilliams2018}.  Xin and Zhang extend
$\dinv$, $\area$, and $\bounce$ to vector-$\vec k$ Dyck
paths~\cite{XinZhang2023}; constant vectors recover the Fuss--Catalan setting.
After the standard endpoint and step-convention identifications, these
frameworks include versions of the ordinary and Fuss--Catalan families, but
they generalise the classical setting in different directions.

The automata-theoretic history supplies the computational hierarchy used in
the paper.  Engelfriet and Hoogeboom proved that $\MSO$-definable string
transductions coincide with deterministic two-way finite-state
transducers~\cite{EngelfrietHoogeboom2001}.  Alur and \v{C}ern\'y later gave
the equivalent model of deterministic copyless streaming string
transducers~\cite{AlurCerny2010}.  Polyregular maps extend this regular class
and admit equivalent descriptions by pebble transducers, \emph{for}-programs,
and tuple interpretations~\cite{Bojanczyk2018,BojanczykKieferLhote2019}; their
possible output-growth rates are studied in~\cite{Bojanczyk2023Growth}.
Equivalence for the regular class is decidable, with a complete delay-based
characterisation~\cite{FiliotJeckerLodingWinter2023}.  Against this background,
$\WRP$ adds one operation to polyregular presentations: globally sorting
selected atoms by unbounded integer ranks.

The closest comparison is ranked $\MSO$ enumeration.  An $\MSO$ query selects
satisfying assignments, a weighted $\MSO$ formula assigns each a cost in an
ordered abelian group, and the assignments are enumerated without repetition
in nondecreasing cost order.  Bourhis, Grez, Jachiet, and Riveros give an
algorithm with linear preprocessing and logarithmic delay between
answers~\cite{BourhisEtAl2021}.

The comparison with $\WRP$ is precise but not an equivalence.  Every
prefix-additive rank function is a special case of a cost function defined by
a weighted $\MSO$ formula.  A selected atom $(c,\bar i)$ then plays the role of
a satisfying assignment: its selection formula is the query, and its atom rank
is the cost.  The rank sort orders atoms by cost, while $\chi$ orders atoms of
equal cost; their labels are then concatenated into one output word.  Ranked
$\MSO$ enumeration instead returns assignments and studies the delay between
answers, without prescribing their order within a cost class.  Thus, $\WRP$
uses a deterministic prefix-additive fragment of the cost formalism for a
different purpose.  This restriction supports the logspace evaluation and
semilinear lower bounds in Sections~\ref{sec:wrp}
and~\ref{sec:slice-semilinearity}.

Filiot, Lhote, and Reynier introduced lexicographic
transductions~\cite{FiliotLhoteReynier2025}.  This class contains polyregular
transductions, admits exponential output growth, and preserves regular
languages under inverse image.  Its
lexicographic enumeration of finite-alphabet annotations differs from sorting
selected atoms by unbounded additive integer atom ranks.  The two classes are
incomparable.  Exponential-growth lexicographic transductions cannot be
$\WRP$, since every fixed-arity $\WRP$ presentation has polynomial output
growth.  Conversely, the $\WRP$ map $D$ from
Theorem~\ref{thm:wrp-not-closed} has a nonregular inverse image of a regular
language, whereas lexicographic transductions preserve regular languages
under inverse image.

Thus, $\WRP$ is a deliberately narrow extension of the standard transducer
hierarchy, isolating one global sort by unbounded additive integer atom ranks.

A broader computational perspective on algebraic combinatorics comes from
work seeking to make the notion of a \emph{combinatorial interpretation}
precise.  Pak proposes membership in $\#\mathrm{P}$ as a workable criterion for
a nonnegative integer-valued counting function to have such an interpretation,
illustrating the question with quantities that include Kronecker and Schubert
coefficients~\cite{Pak2024WhatIs}.  Ikenmeyer and Pak develop the corresponding
study of membership and nonmembership in $\#\mathrm{P}$
\cite{IkenmeyerPak2022}.  In this programme, one asks whether the quantity can
be represented as the number of polynomially verifiable witnesses.  Our
question is adjacent but different: when two known families, or two statistics
on one known family, are already equidistributed, what resources are needed to
compute an explicit bijection?  The transducer hierarchy distinguishes
finite-state logic, polyregular copying, and the unbounded atom ranks used by
zeta.  Our lower bounds therefore constrain a mechanism for bijective proofs,
not the existence of a combinatorial interpretation in Pak's sense.

Other Catalan bijections point to mechanisms that are not explicit primitives
of $\WRP$.  Deutsch's involution is mirror reflection under the first-return
binary-tree encoding, while Hopkins and Joseph identify the
Lalanne--Kreweras involution with rowvacuation on the type~$A$ root poset and
hence with a composition of toggles~\cite{Deutsch1999,HopkinsJoseph2022}.  The
word-based models developed here do not formalise first-return recursion or
toggle operations, and we do not classify these maps.

\paragraph{Open problems.}
The results leave several natural questions about the rank sort and its
combinatorial scope.
\begin{enumerate}[label=(\arabic*)]
\item The class $\WRP$ permits one rank sort.  Is the hierarchy obtained by
allowing a fixed number of successive rank sorts strict?  In particular, does
a two-layer presentation suffice for $\zeta^{-1}$?
\item Is equivalence of two $\WRP$ presentations decidable?  More generally,
after fixing a finitely presented ambient transducer model, is it decidable
whether a presented map admits a $\WRP$ presentation?
\item Does the deficit-zero rigidity argument have an analogue for $\area$
and $\bounce$, leading to a computational obstruction for an area--bounce
exchange?
\item Which additive level sorts are polyregular?  Unbounded levels alone do
not force a lower bound: if $\nu(U)=\nu(D)=1$, then the level of position $i$
is $i-1$, and $\Phi_\nu$ is the identity map.  A classification by the step
weights and tie-order would separate such degenerate cases from examples such
as $H$ and other mixed-sign sweep maps.
\end{enumerate}

\section*{Declaration of AI use}
The mathematical theory of this paper was developed jointly by the human
authors and the AI models GPT~Pro (OpenAI), Codex (OpenAI), and Claude
(Anthropic).  The Lean formalisation described in Section~\ref{sec:lean} was
carried out fully by AI models: Claude, Codex, and Aristotle (Harmonic).  The
writing of the paper was assisted by Claude and Codex.

\section*{Acknowledgments}
We would like to thank Xiaoyu Huang for the inverse of the zeta map.
This work was supported by the National Research Foundation of Korea (NRF)
grant funded by the Korean Government (MSIT) (No.~RS-2023-00279680).

\bibliographystyle{plain}
\bibliography{paper}

\clearpage
\appendix
\section[Proofs of the structural properties of WRP]{Proofs of the structural properties of $\WRP$}
\label{app:wrp-structural-proofs}

This appendix supplies the detailed constructions and proofs for the
structural results stated in Section~\ref{sec:wrp}.  The main text retains the
definitions, theorem statements, and conceptual interpretation; here we give
the witnesses and verify the technical details.

\subsection{Closure constructions}

\begin{proof}[Proof of Theorem~\ref{thm:wrp-closures}]
Definition~\ref{def:wrp} provides only two kinds of ordering data for a map:
one prefix-additive rank function $\kappa_c$ for each copy name $c$
(Definition~\ref{def:prefix-additive-rank}) and one global $\MSO$ tie-order
$\chi$.
Several of the constructions below would be
easiest to describe by grouping the selected atoms into contiguous \emph{blocks}
laid out one after another (concatenation~(iii), for instance, wants all of
$f(w)$ as one block, then the separator, then all of $g(w)$); but
Definition~\ref{def:wrp} provides no primitive for such grouping, only the
functions $\kappa_c$ and the single order $\chi$.  Each construction must
therefore be realised \emph{through} legitimate functions $\kappa_c$ and a
single $\MSO$-definable $\chi$, with any block structure encoded in their atom
ranks by a dedicated \emph{output-block-tag} coordinate (as in~(ii) and~(iii)
below).  We first note two robustness facts that make this possible, both
immediate from the definitions, and then treat the five constructions.

\vspace{0.5em}
\noindent\emph{(R1) Robustness of prefix-additive rank functions.}
Definition~\ref{def:prefix-additive-rank} is closed under the elementary
changes used below.  A constant vector is prefix-additive (put it in $c_0$ and
use zero-weight sources and zero corrections).  The negation $-\kappa$ is
prefix-additive after negating $c_0$, every transition weight, and every
correction table.  A $d'$-dimensional prefix-additive rank function embeds,
with its coordinate order preserved, into a larger $\Z^d$ by applying that
coordinate embedding to $c_0$, all transition weights, and all corrections,
leaving the other coordinates $0$.  The rank
dimension $d$ is an unconstrained parameter of Definition~\ref{def:wrp}, so
raising it this way keeps the map in the class; only the
position-tuple \emph{arity} is bounded, and none of the constructions below
widens a tuple.  When two maps of arities $k\le k'$ are combined, each
atom of the arity-$k$ part is padded to arity $k'$ by adjoining dummy position
variables pinned by equalities (say $x_{k+1}=\cdots=x_{k'}=x_1$) and assigning
the new coordinates zero-weight sources and zero corrections.  This changes
neither the selected atoms, their labels, nor their atom ranks, so the combination
has arity $\max(k,k')$.

\vspace{0.5em}
\noindent\emph{(R2) Robustness of the tie-order.}
Suppose the set $C$ of copy names is partitioned into finitely many classes by
a fixed function of the copy name, each class carrying its own $\MSO$-definable
strict total order on its atoms, and fix a linear order on the classes.  Then
``earlier class first, and within a class that class's order'' is again a single
$\MSO$-definable strict total order on all selected atoms: the class comparison
is a finite case distinction on the copy names, and a finite combination of
$\MSO$ orders is $\MSO$.  This supplies the global $\chi$ whenever sets of copy
names are merged.

\vspace{0.5em}
\noindent\emph{(i) Restriction and definition by cases.}
A regular language is $\MSO$-definable (Section~\ref{sec:mso}).  To restrict a
map to a regular language $L$, conjoin the presentation's domain sentence
$\varphi_{\mathrm{dom}}$ with an $\MSO$ sentence defining $L$; all other data
are unchanged.  This gives precisely the domain
$L\cap\operatorname{dom}(T)$ and the original output $T(w)$ there.

For a finite case distinction, let $\lambda_j$ be an $\MSO$ sentence defining
$L_j$, and let $\varphi_{\mathrm{dom},j}$ be the domain sentence of the chosen
presentation of $T_j$.  Take the disjoint union of the sets of copy names in the
per-case maps, and replace the selection formula $\varphi_{j,c}$ for every copy
name $c$ from case $j$ by $\lambda_j\wedge\varphi_{j,c}$.  Use the domain sentence
\[
   \bigvee_{j=1}^r
   \bigl(\lambda_j\wedge\varphi_{\mathrm{dom},j}\bigr).
\]
Embed the atom ranks of all cases into a common dimension by (R1), and obtain a
single tie-order $\chi$ from (R2), indexed by case.  Since the $L_j$ are pairwise
disjoint, exactly one case contributes atoms on any word satisfying this
domain sentence.
Their labels, atom ranks, and relative order are those of that case, so the
output is exactly $T_j(w)$.  Padding position tuples as in (R1) gives arity at
most $\max_j k_j$.

\vspace{0.5em}
\noindent\emph{(ii) Combining outputs with source tags.}
Take the disjoint union of the sets of copy names of all $T_1,\ldots,T_r$, use
the conjunction of their domain sentences, and replace every label $a$ from the
$j$-th map by $(j,a)$.  Let $d_j$ be the rank dimension of $T_j$ and put
$d=1+\max_j d_j$.  For an atom from $T_j$, give its new atom rank the leading
coordinate $j-1$ and embed its original atom rank into the remaining $d-1$
coordinates by (R1).  Use (R2) to combine the original tie-orders into one
$\MSO$ tie-order.  The leading coordinate puts all atoms from $T_1$ first,
then all atoms from $T_2$, and so on.  Within the $j$-th block, comparison
reduces to the original atom ranks, with the tie-order of $T_j$ breaking ties.
Consequently the output
on the common domain is exactly
\[
   \operatorname{tag}_1(T_1(w))\cdots
   \operatorname{tag}_r(T_r(w)),
\]
over the flat tagged alphabet stated in the theorem.

\vspace{0.5em}
\noindent\emph{(iii) Concatenation with fixed separators on nonempty inputs.}
Let $f,g$ have rank dimensions $d_f,d_g$ and put $d=1+\max(d_f,d_g)$.  Form the
map whose set of copy names is the disjoint union of those of $f$ and $g$ with
a fresh arity-$1$ separator copy name $s$, selected exactly at the first input
position and labelled $\#$.  Its domain is
$\operatorname{dom}(f)\cap\operatorname{dom}(g)\setminus\{\varepsilon\}$,
which is specified by conjoining the two original domain sentences with the
$\MSO$ sentence asserting that an input position exists.
Define the atom rank in $\Z^d$ by (R1): coordinate $0$ is a constant
\emph{output-block tag} ($0$ on the copy names from $f$, $1$ on $s$, and $2$
on those from $g$); coordinates
$1,\ldots,d-1$ carry $\kappa_f$ (resp.\ $\kappa_g$) embedded by (R1), and are
$0$ on $s$.  Take $\chi$ from (R2) with the blocks ordered $f<s<g$.  In the
lexicographic order the output-block tag dominates, so every $f$-atom precedes
$s$, which precedes every $g$-atom; within the $f$-block the tags and the
padding coordinates agree, so the comparison reduces to $\kappa_f$ with $f$'s
tie-order and emits $f(w)$, and likewise $g$ emits $g(w)$.  The output is
$f(w)\,\#\,g(w)$.  More generally, gluing $r$ outputs by $r-1$ constant separator
strings uses the same idea with $2r-1$ blocks (indices $0,\ldots,2r-2$): the even
blocks carry the outputs.  If $s_j=b_1\cdots b_\ell$, its odd block has $\ell$
fresh arity-$1$ copy names, all selected at the first input position, labelled
$b_1,\ldots,b_\ell$, and ordered in that order by $\chi$.  An empty separator
uses no copy names.  The domain sentence is the conjunction of the $r$ original
domain sentences and the assertion that the input is nonempty.  The
output-block-tag coordinate therefore gives exactly
$T_1(w)s_1\cdots s_{r-1}T_r(w)$ on precisely the domain stated in the theorem.
The nonempty-domain condition supplies the first position to which inserted
separator atoms are attached; it is needed whenever some separator is
nonempty.

\vspace{0.5em}
\noindent\emph{(iv) Replacing or deleting output letters.}
Fix $h\colon\Gamma\to\Delta^*$ with $|h(a)|\le 1$.  For a copy name $c$, let
$\varphi_c$ be its selection formula and let $\psi_{c,a}$ be the formula saying
that a selected atom has label $a\in\Gamma$.  Replace its selection formula by
\[
   \varphi_c\ \wedge\!
   \bigvee_{\substack{a\in\Gamma\\ h(a)\ne\varepsilon}}\psi_{c,a},
\]
and, for each $b\in\Delta$, use the new label formula
\[
   \bigvee_{\substack{a\in\Gamma\\ h(a)=b}}\psi_{c,a}.
\]
These are finite $\MSO$ formulas.  Thus, an atom is removed exactly when its
old label $a$ satisfies $h(a)=\varepsilon$; otherwise it remains in the same
position in the output order and receives the unique letter $h(a)$.  The
domain, prefix-additive rank functions, tie-order, and arities are unchanged,
so the resulting output
is exactly $\widehat h(T(w))$.

\vspace{0.5em}
\noindent\emph{(v) Output reversal.}
Replace every prefix-additive rank function $\kappa$ by $-\kappa$, which is
again prefix-additive by
(R1), and $\chi$ by its converse, which is $\MSO$-definable (swap the two atom
arguments in the defining formula).  Negating every coordinate reverses the lexicographic
order on $\Z^d$, and the converse tie-order reverses the order among atoms with
equal atom rank.  The domain, selected atoms, and labels are unchanged, so the
emission order is exactly reversed and the output is
$\operatorname{rev}(T(w))$ on $\operatorname{dom}(T)$.

\vspace{0.5em}
The constructions in (i) and (ii) use only copy names from the original maps,
padded when necessary to the maximum input-tuple arity.  Construction (iii)
adds only arity-$1$ separator copy names, while (iv) and (v) do not change the
set of copy names.  Hence
the resulting arity is at most $\max_j k_j$, and arity $1$ stays arity $1$;
thus the ranked-regular fragment $\RR$ is closed under all five constructions.
The rank dimension may grow in (i)--(iii), but
Definition~\ref{def:wrp} bounds only the arity.
\end{proof}

\subsection{Evaluation algorithms}

\begin{proof}[Proof of Theorem~\ref{thm:wrp-logspace}]
Fix $T$ and write $n=|w|$.  Before producing any output, evaluate the fixed
$\MSO$ domain sentence of the presentation.  By the
B\"uchi--Elgot--Trakhtenbrot theorem this is a regular condition, so a
finite-state pass suffices.  If the condition fails, report that $T(w)$ is
undefined; henceforth assume $w\in\operatorname{dom}(T)$.

By Definition~\ref{def:wrp} the output is the
labels of the selected atoms listed in the order $\prec$.  There can be up to
$O(n^k)$ selected atoms (arity at most $k$, finitely many copy names), far too many
to store or sort in $O(\log n)$ space.  The key observation is that, for a
\emph{single}
potential atom, whether it is selected, its label, and its $\prec$-position
relative to any other atom are each recomputable from the input with only
$O(\log n)$ bits of bookkeeping; so we print the output one letter at a time,
in $\prec$-order.

\vspace{0.5em}
\noindent\emph{Recomputable primitives.}
A potential atom $\alpha=(c,\bar i)$ is named by its copy name $c$ (one of
finitely many) and a tuple $\bar i\in\{1,\ldots,n\}^{k_c}$, where $k_c\le k$; it
therefore uses $O(k\log n)$ bits.  The total number of potential atoms is
$\sum_{c\in C}n^{k_c}=O(n^k)$.  With $\alpha$ on the work tape:
\begin{itemize}
\item \emph{selection and label} are decided by evaluating the fixed $\MSO$
selection and label formulas on the input annotated with $\bar i$.  During a
scan, the stored indices in $\bar i$ determine the finite bit-vector marking
which free variables occupy the current position.  The resulting marked word
is tested by a fixed finite automaton, in constant space beyond $\bar i$.
\item \emph{the atom rank} $\kappa_c(\bar i)\in\Z^d$ is a fixed constant plus a
fixed sum of prefix-additive coordinate contributions
(Definitions~\ref{def:prefix-rank} and~\ref{def:prefix-additive-rank});
rescanning the finitely many fixed sources while accumulating their weights,
then adding the bounded local corrections and the constant, computes it.  Each
coordinate is bounded in absolute value by $Cn$ for a constant $C$ of $T$, and
so is held in an $O(\log n)$-bit counter.
\item \emph{the comparison} $\alpha\prec\beta$ is the lexicographic comparison
of the two atom ranks, with ties broken by the fixed $\MSO$ tie-order $\chi$ on the
input annotated with both tuples.
\end{itemize}
Thus, ``$\alpha$ is selected'' and the total order $\prec$ are available on
demand in $O(\log n)$ space, with no stored list of atoms.

\vspace{0.5em}
\noindent\emph{Emitting the output.}
Since $\prec$ is a total order on the selected atoms, the output is the list
of their labels in increasing $\prec$-order, and it suffices to produce, from
each emitted atom, its $\prec$-successor.  Maintain the atom $L$ emitted last,
initialised to a sentinel with $\bot\prec\alpha$ for every atom $\alpha$.  In
each round, scan all $O(n^k)$ potential atoms once, maintaining a best-so-far
atom $B$: the $\prec$-least selected atom seen so far with $L\prec B$.  For
each potential atom $\alpha$, test whether $\alpha$ is selected and, if so,
compare it against $L$ and $B$ with the recomputable comparison above,
updating $B$ to $\alpha$ when $L\prec\alpha$ and $\alpha\prec B$ (or when no
eligible selected atom has been recorded yet); when $B$ is updated, record its
label as well.  At the end of the scan, if no selected atom with
$L\prec\alpha$ was found, every selected atom has been emitted and the machine
halts.  Otherwise, emit the recorded label of $B$ and set $L:=B$.  Because
$\prec$ is a strict total
order on the finitely many selected atoms, round $j$ emits exactly the
$j$-th selected atom in $\prec$-order, so the concatenated labels are $T(w)$.

\vspace{0.5em}
\noindent\emph{Resources.}
At every moment the work tape holds a constant number of atoms ($O(k\log n)$
bits each: $L$, $B$, and the atom under examination), a constant number of
$O(\log n)$-bit counters (the source accumulators used by the primitives), and
finite control for the fixed formulas; total space $O(\log n)$.  For the time
bound, there is one round per output letter and one final round, hence
$O(n^k)$ rounds; each round scans $O(n^k)$ potential atoms; and each selection
test, atom-rank computation, or order test requires at most a constant number
of length-$n$ input scans.  Thus, under the unit-cost word convention used
above, the algorithm described above takes $O(n^{2k+1})$ time; at arity $1$
this is $O(n^3)$.  With bit-level time accounting, these bounds acquire an
additional factor of $O(\log n)$.  Finally there are at most $O(n^k)$ selected
atoms, each contributing one letter, so $|T(w)|=O(n^k)$.
\end{proof}

\begin{proof}[Proof of Corollary~\ref{cor:srr-quadratic}]
Write $n=|w|$.  As in the proof of Theorem~\ref{thm:wrp-logspace}, first use a
finite-state pass to test the presentation's domain sentence and report an
undefined value if it fails.  Assume below that $w\in\operatorname{dom}(T)$.
By Definition~\ref{def:wrp} a map in $\SWR$ has arity
$1$, so its potential atoms are the pairs $(c,i)$ with copy name $c\in C$ and
position $i\in\{1,\ldots,n\}$, $O(n)$ in all; its atom rank is a single
integer $\kappa_c(i)\in\Z$ (dimension $d=1$); and its tie-order $\chi$ is a scan
order.  Thus, $\prec$ is ``smaller atom rank first, ties broken by scan order''.

One point must be kept straight.  The atom rank is, by
Definitions~\ref{def:prefix-rank} and~\ref{def:prefix-additive-rank}, an
additive \emph{prefix} sum
$\rho_A^w(i)=\sum_{j<i}\omega(\cdot)$ of an automaton that reads $w$ left to
right, together with a constant and a bounded local correction.  This forward
accumulation is intrinsic to the atom rank and is unrelated to the scan order's
direction, which may be left-to-right or its
reverse.  A forward automaton is not reversible, so the atom rank cannot be
maintained by a right-to-left walk; the algorithm below therefore always
computes the atom rank in the forward direction, and lets the (possibly reversed)
scan direction enter \emph{only} through the positional tie-comparison, which
is read off the two indices with no accumulation.

\vspace{0.5em}
\noindent\emph{The algorithm.}
The procedure is a streaming stable sort of the selected atoms by the
weighted-automaton key $\kappa$, carried out without ever materialising the
sorted list.  As in Theorem~\ref{thm:wrp-logspace} we enumerate the selected
atoms in $\prec$-order without storing them, one output letter per round; here
the arity-one structure makes each round a single forward scan.  Since $\prec$ is a
strict total order, it suffices to find, for the atom $L$ emitted last (for the
first round a sentinel $\bot\prec$ everything), its $\prec$-successor in one
forward scan.  Scan $w$ once left to right, advancing the finitely many fixed
additive sources, so that on reaching position $i$ the atom rank $\kappa_c(i)$
of every $c\in C$ is available in $O(1)$.  Maintain a best-so-far atom $B$: the
$\prec$-least selected atom seen so far that is still $\succ L$.  At each
position $i$ and each $c\in C$, evaluate the selection formula; by the hypothesis on
$T$, both selection and the label, when selected, are decided in $O(1)$ by the
finite-state pass running left to right alongside the source accumulations.  If $(c,i)$ is
selected, compare it (using its freshly computed atom rank, the cached atom ranks of
$L$ and $B$, and the tie-order) against $L$ and $B$, updating $B$ when
$(c,i)\succ L$ and $(c,i)\prec B$; when $B$ is updated, cache its label as
well.  The tie-comparison asks which of two positions comes first in the fixed
scan direction and, when the positions coincide, uses the fixed order on
the set $C$.  At the end of the scan, if no selected atom was $\succ L$, halt:
the output is complete.  Otherwise, emit the cached label of $B$ and set
$L:=B$.  Because $\prec$ is a strict total order
on the finitely many selected atoms, the rounds emit exactly the selected atoms
in increasing $\prec$-order, which is the output of Definition~\ref{def:wrp}.

\vspace{0.5em}
\noindent\emph{Resources.}
There are at most $|C|\,n=O(n)$ selected atoms, hence $O(n)$ rounds, each a
single length-$n$ forward scan doing $O(1)$ work per position, so $O(n^2)$ time
in all.  The work tape holds the stored forms of $L$ and $B$ (a copy name, an
$O(\log n)$-bit position, and one $O(\log n)$-bit cached atom rank for each),
the cached
label of $B$ (constant space), the forward accumulators, and a position index,
all $O(\log n)$ bits; the space is
$O(\log n)$.  Both bounds hold whether the scan order runs left-to-right or
right-to-left.
\end{proof}

\subsection{Regular nonemptiness and the logspace separation}

The following elementary property is the technical ingredient used to separate
$\WRP$ from deterministic logspace.

\begin{lemma}[Regular nonemptiness]
\label{lem:wrp-nonempty-regular}
For every $\WRP$ map $T$, the language
$\{w\in\mathrm{dom}(T):|T(w)|\ge 1\}$ of inputs with nonempty output is
regular.
\end{lemma}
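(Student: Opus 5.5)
The plan is to show that nonemptiness of the output is expressed by a single $\MSO$ sentence, so that regularity follows from the B\"uchi--Elgot--Trakhtenbrot theorem recalled in Section~\ref{sec:mso}. The key observation is that the rank data of Definition~\ref{def:wrp} play no role: the atom ranks $\kappa_c$ and the tie-order $\chi$ only determine the order in which selected atoms are listed. By definition, the output length satisfies $|T(w)|=|\mathrm{At}(w)|$, because each selected atom contributes exactly one letter and the rank sort is a permutation of $\mathrm{At}(w)$. Hence, for $w\in\mathrm{dom}(T)$, we have $|T(w)|\ge1$ if and only if $\mathrm{At}(w)\neq\emptyset$.

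Next, I would write out the defining sentence. Fix a $\WRP$ presentation of $T$ with domain sentence $\varphi_{\mathrm{dom}}$, finite copy-name set $C$, and selection formulas $\varphi_c(x_1,\ldots,x_{k_c})$. Then
\[
   \{w\in\mathrm{dom}(T):|T(w)|\ge1\}
   =\Bigl\{w: w\models \varphi_{\mathrm{dom}}\wedge\bigvee_{c\in C}\exists x_1\cdots\exists x_{k_c}\,\varphi_c(x_1,\ldots,x_{k_c})\Bigr\}.
\]
The disjunction is finite because $C$ is finite, and each existential quantifier ranges over positions. The resulting expression is therefore an $\MSO$ sentence. Every arity satisfies $k_c\ge1$, so no arity-zero subtlety arises; if one allowed arity zero, the formula $\varphi_c$ would simply be a sentence and the same argument would apply.

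Finally, the B\"uchi--Elgot--Trakhtenbrot theorem shows that the language defined by this sentence is regular. I do not expect any real obstacle here. The only point requiring care is to justify explicitly that the rank sort never deletes or creates atoms: the order $\prec$ is a strict total order on $\mathrm{At}(w)$ itself, so listing the atoms in that order preserves their number. The argument then needs only the domain sentence and the selection formulas, and in particular does not depend on the label formulas.
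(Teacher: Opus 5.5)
Your proposal is correct and follows the paper's proof essentially verbatim. You observe that $|T(w)|$ equals the number of selected atoms, so the rank sort is irrelevant. You then express ``some atom is selected'' as the finite disjunction $\bigvee_{c\in C}\exists x_1\cdots\exists x_{k_c}\,\varphi_c$, conjoin it with $\varphi_{\mathrm{dom}}$, and conclude regularity by B\"uchi--Elgot--Trakhtenbrot.
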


\begin{proof}
By Definition~\ref{def:wrp} the output $T(w)$ is the concatenation of the
labels of the selected atoms, and by Definition~\ref{def:polyregular}(iv)
each selected atom contributes exactly one letter of $\Gamma$; hence
$|T(w)|$ equals the number of selected atoms, and $|T(w)|\ge 1$ if and
only if at least one atom is selected.  ``At least one atom is selected''
is expressed by the $\MSO$ sentence
\[
   \bigvee_{c\in C}\exists x_1\cdots\exists x_{k_c}\,
       \varphi_c(x_1,\ldots,x_{k_c}),
\]
the finite disjunction over the copy names $c\in C$ of the existential closure
of the selection formula $\varphi_c$ (Definition~\ref{def:polyregular}(iii));
the atom-rank order plays no role in the mere existence of a selected atom.
Conjoining with the $\MSO$ domain sentence $\varphi_{\mathrm{dom}}$ and
applying the B\"uchi--Elgot--Trakhtenbrot theorem
(Section~\ref{sec:mso}), the
language $\{w\in\mathrm{dom}(T):|T(w)|\ge 1\}$ is $\MSO$-definable, hence
regular.
\end{proof}

\begin{proof}[Proof of Theorem~\ref{thm:wrp-strict-below-logspace}]
Containment is Theorem~\ref{thm:wrp-logspace}.  For strictness, let
\[
   L_{\ge 0}=\{w\in\{U,D\}^*:\text{every prefix of }w\text{ has nonnegative height}\}
\]
and define $F_{\ge 0}(w)=w$ if $w\in L_{\ge 0}$ and
$F_{\ge 0}(w)=\varepsilon$ otherwise.
Write $n=|w|$.

\vspace{0.5em}
\noindent\emph{$F_{\ge 0}$ is in deterministic logspace.}  One left-to-right scan
over the read-only input of length $n$ maintains a single height counter
(in $[-n,n]$, hence $O(\log n)$ bits) and a single Boolean flag
recording ``some prefix has gone negative''.  At the end of the scan, if the
flag is $0$, re-read the input and copy it to the output;
otherwise emit $\varepsilon$.  The work tape holds the counter and one bit,
i.e.\ $O(\log n)$.

\vspace{0.5em}
\noindent\emph{$F_{\ge 0}$ is not $\WRP$.}  By
Lemma~\ref{lem:wrp-nonempty-regular}, every $\WRP$ map has a
\emph{regular} preimage of the nonempty-output language $\{y:|y|\ge 1\}$.
Were $F_{\ge 0}$ in $\WRP$, that preimage would be
$L_{\ge 0}\setminus\{\varepsilon\}$, forcing $L_{\ge 0}$ to be regular.  But
$L_{\ge 0}$ is not regular.  Indeed, if $p$ were a pumping length, then
$U^pD^p\in L_{\ge 0}$ would admit a decomposition $xyz$ with
$|xy|\le p$ and $|y|>0$.  Necessarily $y=U^t$ for some $t>0$, but pumping
down would give $xz=U^{p-t}D^p$, whose final prefixes have negative height.
This contradicts the pumping lemma.  Hence $F_{\ge 0}$ is not in $\WRP$.
\end{proof}

\subsection{Failure of composition closure}

\begin{proof}[Proof of Theorem~\ref{thm:wrp-not-closed}]
We construct the three objects asserted in the theorem: a $\WRP$ map $D$, a
regular language $K$, and a left-to-right deterministic 2DFT $S$.

\vspace{0.5em}
\noindent\emph{The map $D$.}
Take $D$ to have domain $\{U,D\}^*$ and output alphabet
$\Gamma_D=\{G,B,\#,U,D\}$.  On input
$w=w_1\cdots w_n$, it produces three groups of selected atoms, kept apart by a
leading atom-rank coordinate exactly as in the concatenation construction of
Theorem~\ref{thm:wrp-closures}: every atom rank is a pair
$(b_0,v)\in\Z^2$ whose first component $b_0\in\{0,1,2\}$ is a constant
\emph{output-block tag}, so that in the lexicographic order all block-$0$ atoms
precede the block-$1$ atom, which precedes all block-$2$ atoms.
\begin{itemize}
\item \emph{Block $0$} (the diagnostic block): one \emph{sentinel} atom labelled
$G$ with atom rank $(0,0)$, and, for each position $i$, one atom labelled $B$
with atom rank
$(0,h_w(i))$, where $h_w(i)$ is the height of the length-$i$ prefix of $w$.
For the copy name producing the $B$-atoms, use the two-dimensional height
source with transition weights
$(0,+1)$ on $U$ and $(0,-1)$ on $D$, take $c_0=(0,0)$, and use the local
correction $(0,+1)$ at a $U$-position and $(0,-1)$ at a $D$-position.  Its
prefix-additive rank function therefore assigns the atom rank $(0,h_w(i))$,
including the step at $i$
(Definition~\ref{def:prefix-additive-rank}).
\item \emph{Block $1$}: a single atom labelled $\#$ with atom rank $(1,0)$.
\item \emph{Block $2$}: a verbatim copy of the input, position $i$ emitting
$w_i$ with atom rank $(2,0)$, in input order.
\end{itemize}
Use the $\MSO$ tie-order that orders the three blocks by their indices, places
the sentinel before the $B$-atoms within block $0$, orders those $B$-atoms by
input position, and orders block $2$ by input position.  This completes a
strict total tie-order; in particular, among atoms with atom rank $(0,0)$ the
sentinel precedes every $B$-atom.
Each atom is named by a single position (the sentinel and the $\#$ by the
first input position).  The copy names producing the sentinel, separator, and
input letters have constant prefix-additive rank functions, and the preceding
construction gives the copy name producing the $B$-atoms a prefix-additive rank
function.  Thus, $D$ is a $\WRP$ map
(Definition~\ref{def:wrp}) of
arity $1$.  For nonempty $w$, its output is
$\bigl(\text{block-}0\text{ labels in }\prec\text{-order}\bigr)\,\#\,w_1\cdots w_n$,
the block-$0$ part being the $G$ and $B$ labels listed by increasing prefix
height.

\vspace{0.5em}
\noindent\emph{The first output letter detects $L_{\ge 0}$.}
Recall $L_{\ge 0}$ from the proof of
Theorem~\ref{thm:wrp-strict-below-logspace}.  All block-$0$ atoms share block
index $0$, so among themselves they are ordered by the second atom-rank
coordinate, the height value, with ties broken by the tie-order; the whole output begins
with the label of the $\prec$-least block-$0$ atom.  The sentinel sits at height
$0$.  If every prefix height is $\ge 0$, the least height present is $\ge 0$,
the least atom rank is $(0,0)$, and the tie-order puts the sentinel first there, so
the first output letter is $G$.  If some prefix height is negative, that
position's $B$-atom has atom rank $(0,\text{negative})\prec(0,0)$ and precedes the
sentinel, so the first output letter is $B$.  Hence, for nonempty $w$, the
first output letter is
$G$ if and only if $w\in L_{\ge 0}$.  (On the empty input there are no
positions, hence no atoms, and $D(\varepsilon)=\varepsilon$.)

\vspace{0.5em}
\noindent\emph{Regular inverse image fails.}
Let $K=G\,\Gamma_D^*$, the regular language of strings beginning with $G$.  By the
previous step $D^{-1}(K)=L_{\ge 0}\setminus\{\varepsilon\}$, which is not
regular: adding the single word $\varepsilon$ would make $L_{\ge 0}$ regular,
contradicting the pumping
argument in the proof of Theorem~\ref{thm:wrp-strict-below-logspace}.  Thus, $D$
is a $\WRP$ map and $K$ is regular, yet $D^{-1}(K)$ is not regular,
which proves the first claim.

\vspace{0.5em}
\noindent\emph{The left-to-right 2DFT and composition failure.}
This is the same failure of regularity preservation under inverse image,
expressed as a composition failure.
Let $S$ be the deterministic 2DFT that reads $D(w)$ in one left-to-right pass.
It initialises a Boolean bit to $0$, records in that bit whether the first
letter is $G$, skips the rest of block $0$ and the separator (advancing past
the first $\#$), and then copies the remaining block-$2$ letters to its output
if and only if the bit is $1$.  Its finite control also records whether it is
before or after the separator; every defined head transition moves right, and
it halts acceptingly at the right end marker.  On the empty input it therefore
halts immediately with output $\varepsilon$; its behaviour on other malformed
inputs, which are outside the range of $D$, may be fixed arbitrarily.  Hence,
on input $w$ the composite $S\circ D$ outputs $w$ when $w\in L_{\ge 0}$ (first
letter $G$, bit $1$, copy emitted) and $\varepsilon$ otherwise; that is,
$S\circ D=F_{\ge 0}$, the map of
Theorem~\ref{thm:wrp-strict-below-logspace}, which is not a $\WRP$
map.  By Theorem~\ref{thm:eh} and
Proposition~\ref{prop:conservative}, $S$ itself belongs to $\WRP$.  Hence two
$\WRP$ maps, $D$ and $S$, have a composite outside $\WRP$, proving that the
class is not closed under composition.
\end{proof}

\begin{remark}[Why this is a structural feature, not a defect]
Both $\TDFT/\MSO$ maps and $\polyreg$ maps have regular inverse
images of regular languages.  For deterministic $\MSO$ string transductions,
this follows from ordinary backward translation.  For polyregular maps it is
a nontrivial consequence of their equivalent pebble-transducer and
string-to-string $\MSO$-interpretation characterisations
\cite{Bojanczyk2018,BojanczykKieferLhote2019}.  Thus, in both classes, ``the
output lies in $K$'' pulls back to an $\MSO$ sentence on the input.  $\WRP$
adds a rank-sort layer whose
output order is governed by an unbounded integer key, and ``the atom rank of
one atom is less than that of another'' need not be $\MSO$-definable
on the input.  This is precisely what breaks backward $\MSO$
translation, and it is precisely what gives $\WRP$ the power to
realise $\zeta$.  The closure loss is therefore the sharp boundary that
identifies the additional computational resource, not an oversight in
the model design.

The loss is sharp, not total: by Lemma~\ref{lem:wrp-nonempty-regular} the
preimage of ``the output is nonempty'' stays regular for every $\WRP$
map.  What fails is regularity of the preimage of an \emph{arbitrary}
regular $K$, witnessed by the explicit map $D$ above.  The map $\zeta$
is a second witness: by the probe of Section~\ref{sec:zeta-not-polyregular} the
language $\zeta^{-1}(R)$ is not regular for a suitable regular $R$, although
this route, unlike $D$, leans on the zeta lower bound.

There are two related inversion phenomena.  Under the realisation convention,
$\WRP$-realisability of a Dyck-path bijection is not preserved by inversion:
$\zeta\in\WRP$ while $\zeta^{-1}\notin\WRP$
(Section~\ref{sec:inverse-zeta}).  Ordinary closure under functional inversion
for partial maps also fails, already for the unary squaring map
$s(a^n)=a^{n^2}$.  This map is polyregular and hence belongs to $\WRP$, but its
inverse has domain $\{a^{n^2}:n\ge0\}$, which is not regular, whereas the
domain of every partial polyregular or $\WRP$ map is $\MSO$-definable and hence
regular.  Thus, $s^{-1}$ belongs to neither class.  These inversion failures
are separate expressions of the directional nature of the underlying
transduction mechanisms.
\end{remark}

\subsection{Bounded-prefix-rank collapse}

\begin{proof}[Proof of Theorem~\ref{thm:bounded-rank-collapse}]
Each coordinate source $A_{c,r}$ is a deterministic automaton accumulating a
vector in $\Z^d$ (Definition~\ref{def:rank-source}); the hypothesis says its running total
stays in the finite cube $V=\{-B,\ldots,B\}^d$ on every input in the domain.
Augment the state of $A_{c,r}$ with the current total in $V$, and add an overflow
state for transitions whose new total leaves $V$.  This gives a genuine
deterministic finite automaton on all input words.  On every
$w\in\operatorname{dom}(T)$ the overflow state is never reached, and the
product state just before position $i$ records both the control state
$q^{A_{c,r}}_i$ and the exact value $\rho_{A_{c,r}}^w(i)$.

To test a value at a free position $x$, mark $x$ in the input alphabet.  The
product automaton inspects its stored total when it reaches the marked
position and then remembers the result while reading the suffix.  After
conjoining with the condition defining the domain of $T$, it recognises exactly
the marked words satisfying $\rho_{A_{c,r}}^w(x)=v$.  Hence, for every $v\in V$, that
predicate is $\MSO$-definable in the free variable $x$, by
B\"uchi--Elgot--Trakhtenbrot (Section~\ref{sec:mso}); the predicates specifying
the source state $q^{A_{c,r}}_x$ are definable in the same way.  The domain
qualification is essential here: a source may leave $V$ on words outside
$\operatorname{dom}(T)$, but the presentation is required to reproduce $T$
only on its domain.

A prefix-additive rank function $\kappa_c$
(Definition~\ref{def:prefix-additive-rank}) is a fixed sum of these prefix
ranks, the local data $(q^{A_{c,r}}_{x_r},a_{x_r})$, and a constant.  It
therefore takes only finitely many values on inputs in the domain, and for each
such value $u$ there is an $\MSO$ formula
$\theta_{c,u}(\bar x)$ that, on the domain, expresses
$\kappa_c(\bar x)=u$.  Thus, for copy names $c,c'$, both atom-rank comparison
and atom-rank equality are $\MSO$-definable by finite disjunctions:
\[
\begin{aligned}
 R^{<}_{c,c'}(\bar x,\bar x')
   &:=\bigvee_{u<_{\mathrm{lex}}u'}
      \bigl(\theta_{c,u}(\bar x)\land
             \theta_{c',u'}(\bar x')\bigr),\\
 R^{=}_{c,c'}(\bar x,\bar x')
   &:=\bigvee_u
      \bigl(\theta_{c,u}(\bar x)\land
             \theta_{c',u}(\bar x')\bigr),
\end{aligned}
\]
where the disjunctions range over the finite sets of possible atom ranks for
the two copy names.
The output order $\prec$ of Definition~\ref{def:wrp} is therefore expressed,
on selected atoms over inputs in the domain, by the single $\MSO$ formula
\[
   R^{<}_{c,c'}(\bar x,\bar x')
   \ \lor\
   \bigl(R^{=}_{c,c'}(\bar x,\bar x')\land
          \chi_{c,c'}(\bar x,\bar x')\bigr).
\]
This formula defines exactly the original strict total order: it compares
atom ranks first and applies $\chi$ precisely when they are equal.  Replacing the
rank-sort layer by this $\MSO$ order turns the presentation into a plain
polyregular presentation
(Definition~\ref{def:polyregular}) with the same selected atoms, the same
labels, and the same output order; so $T$ is a polyregular map.
\end{proof}

\begin{proof}[Proof of Corollary~\ref{cor:rank-necessary}]
Suppose that a $\WRP$ map realising $\zeta$ had a presentation in which every
rank source was uniformly bounded on the map's domain.
Theorem~\ref{thm:bounded-rank-collapse} would make that map polyregular,
contradicting Theorem~\ref{thm:zeta-not-polyregular}.  Therefore every such
presentation contains at least one source whose prefix ranks are unbounded on
the domain of the realising map.  In the usual presentation from
Theorem~\ref{thm:zeta-wrp}, this necessary unbounded source is the height
source: its prefix rank is the path height, which on $\D_n$ can reach $n$.
\end{proof}

\section{A self-contained proof of the height-sweep theorem}
\label{app:narayana-sweep-proof}

Theorem~\ref{thm:narayana-sweep} states a known consequence of classical
sweep-map and zeta-map theory~\cite{ThomasWilliams2018,CeballosFangMuhle2020,
Fang2024,SulzgruberThiel2018}.  We include the following direct forest proof to
make the combinatorial statement self-contained in the path conventions of
this paper.  The proof is not needed to establish that $H\in\SWR$, which
already follows from Proposition~\ref{prop:alw-sweep-swr}.

\begin{proof}[Proof of Theorem~\ref{thm:narayana-sweep}]
Fix $n\ge 0$ and $P\in\D_n$.  If $n=0$, then $P=\varepsilon$ and
$H(P)=\varepsilon$, so all the claims are immediate.  Assume henceforth that
$n\ge 1$.

Pass $P$ through the contour bijection $\D_n\cong\mathcal F_n$, where
$\mathcal F_n$ denotes the set of ordered plane forests $F$ on $n$ vertices:
scanning $P$ left to right, each $U$ enters a new
vertex (a child of the current vertex, or a new root at height $0$) and each $D$
leaves the current vertex; a vertex with no children is a \emph{leaf}.  Each
up-step creates one vertex and the matching down-step closes it, so the forest
has exactly $n$ vertices, one per up-step, and no extra root is added.  For
instance, scanning $UUDD$ the first $U$ opens a root, the second $U$ opens a
child of it, and the two $D$'s close the child and then the root, giving a root
with a single leaf child (two vertices); whereas $UD$ opens and at once closes
one root, a lone leaf.  The deeper path $P=UUUDDD$ of
Example~\ref{ex:narayana-sweep} is the chain root, child, grandchild, whose only
leaf is the grandchild.  Leaves correspond exactly to the peaks $UD$ of $P$, so
this forest has $\pk(P)$ leaves and $n-\pk(P)$ internal vertices.  Write $r$ for
the number of roots, $d(v)$ for the depth of a vertex (roots at depth $0$), and
$c(v)$ for its number of children.

After adjoining a super-root whose children are the roots of the forest and
translating between the path conventions, the breadth-first child-count
encoding used below is the plane-tree special case of the map
$\Xi_{\mathrm{bounce}}$ of Ceballos, Fang, and
M\"uhle~\cite[Section~3.3]{CeballosFangMuhle2020}; see also Fang's explicit
breadth-first formulation~\cite[Construction~3.6 and
Proposition~3.7]{Fang2024}.  We derive the normal form in the present contour
and step-word conventions because it also makes the statistic calculation
transparent.

\vspace{0.5em}
\noindent\emph{Normal form.}  We claim
\[
   H(P)\;=\;U^{r}\prod_{v}D\,U^{c(v)},
\]
the product running over all $n$ vertices of the forest in right-to-left
breadth-first order, that is by increasing depth and right to left within each
depth.  Group
the steps by their starting height, the level on which the sweep sorts.  The $U$
entering a vertex $v$ starts at height $d(v)$, and the $D$ leaving $v$ starts at
height $d(v)+1$.  Hence sweep level $0$ is exactly the $U$ steps entering the $r$
roots, giving $U^{r}$, and for $\ell\ge 1$ the sweep level $\ell$ collects, over
all depth-$(\ell-1)$ vertices $v$, the $D$ leaving $v$ together with the $U$
steps entering the children of $v$.  Two facts fix the order within this level.
Within a single $v$, its children
are entered before $v$ is left, so $v$'s leaving $D$ has a larger input position
than the entering $U$ of any child; ordering by decreasing input position
therefore places that $D$ first, then those $U$'s.  Across vertices, the subtrees
at distinct depth-$(\ell-1)$ vertices occupy disjoint position-intervals, so the
same ordering lists those vertices right to left.  Each thus contributes the
block $D\,U^{c(v)}$, and concatenating the levels gives the claimed form.

\vspace{0.5em}
\noindent\emph{Bijection.}  Write $\varphi\colon\D_n\to\mathcal F_n$ for the contour
bijection above, and let $\psi$ send a forest $F\in\mathcal F_n$, with $r$ roots
and breadth-first right-to-left vertex listing $v_1,\dots,v_n$ (the order of the
normal form), to
\[
   \psi(F)=U^{r}\,D\,U^{c_1}\,D\,U^{c_2}\cdots D\,U^{c_n},
   \qquad c_i=c(v_i).
\]
This is the normal form rewritten with the indices $v_1,\dots,v_n$, so
$H=\psi\circ\varphi$.  As $\varphi$ is a bijection, it suffices to prove that
$\psi$ is a bijection $\mathcal F_n\to\D_n$.  We prove three things in turn:
$\psi$ takes its values in $\D_n$, $\psi$ is injective, and
$|\mathcal F_n|=|\D_n|$.  An injection between two finite sets of the same size
is a bijection, so these suffice.

\vspace{0.5em}
\noindent\emph{$\psi$ takes values in $\D_n$.}  Fix $F\in\mathcal F_n$ and set
$w=\psi(F)$.  The step counts are correct: $w$ has $n$ down-steps, one per vertex, and
$r+\sum_v c(v)=r+(n-r)=n$ up-steps, the child-counts summing to the number of
non-root vertices.  Since $w$ loses height only at a down-step, it stays at or
above $0$ if and only if its height is at least $1$ just before each down-step.
Just before the $k$-th down-step $w$ has made $r+\sum_{i<k}c_i$ up-steps and
$k-1$ down-steps, so we must show
\[
   r+\sum_{i<k}c_i\;\ge\;k \qquad (1\le k\le n).
\]
The left-hand side counts the $r$ roots together with all children of
$v_1,\dots,v_{k-1}$, two disjoint kinds of vertex (a root has no parent, and the
children of distinct vertices are distinct).  Each of $v_1,\dots,v_k$ is one of
these: it is a root, or else a vertex whose parent lies earlier in breadth-first
order, hence among $v_1,\dots,v_{k-1}$, which then counts it as a child.  The $k$
distinct vertices $v_1,\dots,v_k$ thus lie inside a set of size
$r+\sum_{i<k}c_i$, giving the inequality.  Hence $w\in\D_n$, and $\psi$ is a
well-defined map $\mathcal F_n\to\D_n$.

\vspace{0.5em}
\noindent\emph{$\psi$ is injective.}  Suppose $w=\psi(F)=\psi(F')$ for forests
$F,F'\in\mathcal F_n$.  The leading $U$-run and the $U$-runs after the successive
down-steps recover, from this one word $w$, the same root count $r$ and the same
list $c_1,\dots,c_n$ for both; that is, writing $v_1,\dots,v_n$ and
$v'_1,\dots,v'_n$ for the breadth-first listings of $F$ and $F'$, we have
$c(v_i)=c(v'_i)=c_i$ for every $i$.  In any forest the breadth-first order places
the roots first, then the children of $v_1$, then those of $v_2$, and so on, so
the children of $v_i$ are exactly the consecutive vertices
\[
   v_{t_i+1},\dots,v_{t_i+c_i},\qquad t_i=r+\sum_{j<i}c_j,
\]
listed right to left in sibling order; the identical description, with the same
$t_i$ and $c_i$, gives the children of $v'_i$.  The index matching
$v_i\leftrightarrow v'_i$ thus sends the $m$-th child of $v_i$ in this listing to
the $m$-th child of $v'_i$, and since both blocks run right to left, this matches
each child of $v_i$ with the child of $v'_i$ in the same relative position.
Thus, the matching
preserves both the parent relation and the sibling order.  It is therefore an
isomorphism of ordered forests, and $F=F'$.

\vspace{0.5em}
\noindent\emph{Equal cardinality.}  Finally $|\mathcal F_n|=|\D_n|$, since $\varphi$ is a
bijection.  An injection between finite sets of the same size is onto, so
$\psi\colon\mathcal F_n\to\D_n$ is a bijection, and therefore so is
$H=\psi\circ\varphi$.

\vspace{0.5em}
\noindent\emph{Swap.}  The maximal $U$ runs of $H(P)$ are the initial $U^{r}$ and the
blocks $U^{c(v)}$, mutually separated by the $D$'s, and a block is nonempty
exactly when $v$ is internal.  So $H(P)$ has $1+(n-\pk(P))$ nonempty $U$ runs,
and since a run of length $m$ contributes $m-1$ double rises,
\[
   \dr(H(P))=n-\bigl(1+(n-\pk(P))\bigr)=\pk(P)-1=\val(P).
\]
Applying the elementary identities $\dr=n-\pk$ and $\val=\pk-1$ to the word
$H(P)$ then gives $\pk(H(P))=n-\dr(H(P))=n-\val(P)=n-\pk(P)+1$, whence
$\val(H(P))=\pk(H(P))-1=n-\pk(P)=\dr(P)$.  These are the two required
identities; since $H$ is a bijection of $\D_n$ that exchanges $\val$ and $\dr$,
it swaps the two exponents in $\sum_P q^{\val(P)}t^{\dr(P)}$, forcing
$\Nar_n(q,t)=\Nar_n(t,q)$.
\end{proof}

\section{Proofs of the deficit-zero lemmas}
\label{app:deficit-zero-proofs}

We give the proofs of Lemmas~\ref{lem:dinv-coarea}
and~\ref{lem:deficit-zero-targets}, whose statements are used in
Section~\ref{sec:main}.

\begin{proof}[Proof of Lemma~\ref{lem:dinv-coarea}]
Let $a=(a_1,\ldots,a_N)$ be the area sequence of $Q$.  Fix $j$.  The
number of dinv pairs ending at $j$ is $|\{i<j:a_i-a_j\in\{0,1\}\}|$.
Since an area sequence starts at $0$ and grows by at most $1$ per step,
the prefix $(a_1,\ldots,a_{j-1})$ contains at least one occurrence of each
level $0,1,\ldots,a_j-1$.  These $a_j$ entries are not counted, so the
contribution ending at $j$ is at most $(j-1)-a_j$.  Summing,
\[
   \dinv(Q)\le\sum_{j=1}^N\bigl((j-1)-a_j\bigr)
            =\binom N2-\area(Q)
            =\coarea(Q). \qedhere
\]
\end{proof}

\begin{proof}[Proof of Lemma~\ref{lem:deficit-zero-targets}]
\emph{Existence.}
Consider the word
\[
   P=U^a(DU)^{\,b-d}D(DU)^dD^{\,a-1}.
\]
It has $a+b=N$ up-steps and the same number of down-steps.  After the initial
$U^a$, the blocks $DU$ in the first group alternate between heights $a$ and
$a-1$.  The additional $D$ lowers the height to $a-1$, and the blocks $DU$ in
the second group alternate between heights $a-1$ and $a-2$.  Since
$c\le\binom N2$, the definition of $b$ gives $b\le N-1$ and hence
$a=N-b\ge1$, so the first group never goes below height zero.  If $d=0$, the
second group is absent.  If $d>0$, then $b\ne N-1$, since
otherwise $c=\binom N2+d>\binom N2$.  Thus, $b\le N-2$ and $a\ge2$, so the
second group also remains at nonnegative height.  Finally, $D^{a-1}$ descends
from height $a-1$ to zero.  Hence $P\in\D_N$.  The starting heights of its
up-steps are
\[
   (0,1,\ldots,a-1,
     \underbrace{a-1,\ldots,a-1}_{b-d\text{ times}},
     \underbrace{a-2,\ldots,a-2}_{d\text{ times}}),
\]
which is the area sequence in the statement.  Its coarea is
\[
   \binom N2-\Bigl(\binom a2+(b-d)(a-1)+d(a-2)\Bigr)
   =\binom{b+1}{2}+d=c.
\]
Its dinv is the sum of equal and adjacent pairs among the last two plateau
levels and the last two levels of the initial staircase:
\[
   \binom{b-d}{2}+\binom d2+(b-d)d+(b-d)+2d
   =\binom{b+1}{2}+d=c.
\]
This proves existence.

\vspace{0.5em}
\noindent\emph{Uniqueness.}
We first make explicit the equality condition behind the bound, in the notation
of the proof of Lemma~\ref{lem:dinv-coarea}.  Fix a position $j$.  Call an
earlier entry $a_i$ ($i<j$) \emph{counted} if it forms a dinv pair with $j$, that
is $a_i\in\{a_j,a_j+1\}$, and call the entries sitting at the levels
$0,1,\ldots,a_j-1$ the \emph{low witnesses}; each such level does occur before
$j$, since the area sequence climbs from $0$ to $a_j$ by unit steps.  A low
witness has $a_i<a_j$, so it is never counted. This is exactly why the
contribution at $j$ is at most $(j-1)-a_j$.  The contribution \emph{equals}
$(j-1)-a_j$ precisely when those low witnesses are the \emph{only} uncounted
earlier entries, i.e.\ when
\begin{enumerate}[label=(\roman*)]
\item each level $0,1,\ldots,a_j-1$ occurs exactly once before $j$ (a repeat
would be a second uncounted entry), and
\item every other earlier entry is counted, i.e.\ equals $a_j$ or $a_j+1$ (an
entry at level $\ge a_j+2$ would be uncounted and not a witness).
\end{enumerate}
Global equality in Lemma~\ref{lem:dinv-coarea} forces (i) and (ii) at every $j$.

Now let $(a_1,\ldots,a_N)$ be an equality case, and let $r$ be the length of its
initial staircase: $a_i=i-1$ for $1\le i\le r$, with either $r=N$ or
$a_{r+1}<r$.  If $r=N$ the sequence is the full staircase $(0,1,\ldots,N-1)$ and
$c=0$, so $b=d=0$ and this is the required unique area sequence.  We may
therefore assume that $r<N$.  Put $A=r-1$, so the earlier entries
$a_1,\ldots,a_r$ are
$0,1,\ldots,A$, one at each level.  Apply the condition at $j=r+1$.  Since
$a_{r+1}\le A$, the top earlier entry $a_r=A$ is \emph{not} one of the low
witnesses $0,\ldots,a_{r+1}-1$, so by~(ii) it must be counted, that is
$A\in\{a_{r+1},a_{r+1}+1\}$, which says $a_{r+1}\ge A-1$.  (Were
$a_{r+1}\le A-2$, the level-$A$ entry would be neither a low witness nor counted,
leaving an extra uncounted entry and breaking equality at $r+1$.)  With
$a_{r+1}\le A$, this leaves $a_{r+1}\in\{A,A-1\}$.

The same two conditions, read at each later position in turn, keep every
remaining entry in $\{A,A-1\}$.  A value $\le A-2$ is excluded exactly as above,
the level-$A$ staircase entry being left uncounted and not a witness; and a value
$\ge A+1$ is excluded because, once some tail entry has already repeated level
$A$ or $A-1$, condition~(i) can no longer hold at a position whose low levels
reach all the way up to $A$.  Finally, an $A-1$ entry cannot occur before a later
$A$ entry: at that later position $a_j=A$, the earlier $A-1$ would be a second
occurrence of a level (namely $A-1$) already supplied by the staircase,
violating~(i).  Thus, all the $A$'s precede all the $A-1$'s, the tail is
of the required form $(A,\ldots,A,A-1,\ldots,A-1)$.  Let $B=N-r$ be its
length, and let $d$ be the number of trailing entries equal to $A-1$.  The
same coarea calculation as in the existence part gives
\[
  c=\binom{B+1}{2}+d,\qquad 0\le d\le B.
\]
The intervals
\[
  \left[\binom{B+1}{2},\binom{B+2}{2}-1\right]\qquad(B\ge0)
\]
are pairwise disjoint and cover the nonnegative integers, so $c$ uniquely
determines $B=b$ and $d$.  Consequently, $r=N-b=a$ and $A=a-1$, and the form
and multiplicities just proved give exactly the area sequence in the
statement.  The equality case is therefore unique.
\end{proof}
\end{document}